\documentclass[letterpaper,10pt]{amsart}

\newif\iftikz
\tikztrue

\usepackage{comment}

\usepackage{amsmath,amssymb,amsthm,amsfonts,amscd,extpfeil}
\usepackage{hyperref}
\usepackage[dvipdfmx]{graphicx}
\usepackage[small,bf]{caption}
\usepackage[subrefformat=parens]{subcaption}
\usepackage{xcolor,colortbl}
\definecolor{ao}{rgb}{0.275,0.51,0.71}
\definecolor{mi}{rgb}{0,0.51,0.51}
\definecolor{gh}{gray}{1}
\usepackage{upgreek}
\usepackage[T1]{fontenc}   
\usepackage[mathscr]{euscript}
\usepackage{hhline}
\usepackage{marginnote}
\DeclareMathAlphabet{\mathpzc}{OT1}{pzc}{m}{it}
\usepackage{mathbbol,mathtools,calc}
\usepackage{bm}
\usepackage{framed}
\usepackage{musicography}
\usepackage{mathdots} 
\usepackage{blkarray}
\usepackage{longtable,booktabs}

\DeclareMathAlphabet{\mathpzc}{OT1}{pzc}{m}{it}
\DeclareMathAlphabet{\mathdutchcal}{U}{dutchcal}{m}{n}
\SetMathAlphabet{\mathdutchcal}{bold}{U}{dutchcal}{b}{n}

\newcommand{\grc}[1]{{\color{gray} #1}}

\newcommand{\midspan}{~\middle\vert~}

\allowdisplaybreaks
\numberwithin{equation}{section}
\usepackage{ytableau,youngtab}
\usepackage{mathdots}
\usepackage{tikz, tikz-cd}
\usetikzlibrary{
	shapes,
	arrows,
	positioning,
	decorations.markings,
	decorations.pathmorphing,
	circuits.logic.US,
	circuits.logic.IEC,
	fit,
	calc,
	plotmarks,
	matrix
}

\tikzset{
>=stealth',
help lines/.style={dashed, thick},
axis/.style={<->},
important line/.style={thick},
connection/.style={thick, dotted},
punkt/.style={
rectangle,
rounded corners,
draw=black, thick,
text width=4.5em,
minimum height=2em,
text centered,
},
pil/.style={
->,
thick,
gray,
shorten <=2pt,
shorten >=2pt,}
}

\usepackage{array}
\usepackage{adjustbox}
\usepackage{cleveref}
\usepackage{enumitem}
\usepackage{DotArrow}

\usepackage[tmargin=3cm,lmargin=3cm,rmargin=3cm,bmargin=3cm,headheight=1cm,footskip=1.5cm]{geometry}

\usepackage{fancyhdr}

\definecolor{darkred}{rgb}{0.7,0,0} 
\newcommand{\defn}[1]{{\color{darkred}\emph{#1}}} 

\newtheorem{proposition}{Proposition}[section]
\newtheorem{lemma}[proposition]{Lemma}
\newtheorem{corollary}[proposition]{Corollary}
\newtheorem{theorem}[proposition]{Theorem}
\newtheorem*{theorem*}{Theorem}
\newtheorem{conjecture}[proposition]{Conjecture}
\crefname{theorem}{Theorem}{Theorems}
\crefname{proposition}{Proposition}{Propositions}
\crefname{lemma}{Lemma}{Lemmas}
\crefname{corollary}{Corollary}{Corollaries}
\crefname{conjecture}{Conjecture}{Conjectures}

\theoremstyle{definition}
\newtheorem{definition}[proposition]{Definition}

\newtheorem{remark}[proposition]{Remark}
\newtheorem*{remark*}{Remark}
\newtheorem{example}[proposition]{Example}

\crefname{remark}{Remark}{Remarks}
\crefname{example}{Example}{Examples}
\crefname{definition}{Definition}{Definitions}

\crefname{section}{Section}{Sections}
\crefname{subsection}{Section}{Sections}
\crefname{subsubsection}{Section}{Sections}
\crefname{figure}{Figure}{Figures}
\crefname{appendix}{Appendix}{Appendices}

\newcommand{\ao}{\textcolor{ao}}
\newcommand{\re}{\textcolor{red}}

\newcommand{\oddr}{\mathrm{odd}}

\newcommand{\THL}{\mathscr{Q}'} 
\newcommand{\pr}{\mathrm{pr}}

\newcommand{\cRSK}{\mathbf{cRSK}}
\newcommand{\KKR}{\mathrm{KKR}}

\newcommand{\supp}{\operatorname{supp}}

\newcommand{\SST}{\mathrm{SST}}
\newcommand{\HWT}{\mathrm{HWT}}

\newcommand{\BBS}{\mathsf{BBS}}
\newcommand{\RC}{\mathsf{RC}}
\newcommand{\partitions}{\mathbb{Y}}
\newcommand{\signatures}{\mathbb{S}}

\newcommand{\abs}[1]{\lvert #1 \rvert}
\newcommand{\Abs}[1]{\left\lVert #1 \right\rVert}

\newcommand{\fsl}{\mathfrak{sl}}
\newcommand{\asl}{\widehat{\fsl}}

\newcommand{\zero}{\mathbf{0}}
\newcommand{\hw}{\mathrm{hw}}
\newcommand{\lw}{\mathrm{lw}}

\newcommand{\KR}[1]{\mathcal{H}^{#1}}  

\DeclareMathOperator{\wt}{wt}  

\usepackage{xparse}

\makeatletter
\protected\def\specialmergetwolists{%
  \begingroup
  \@ifstar{\def\cnta{1}\@specialmergetwolists}
    {\def\cnta{0}\@specialmergetwolists}%
}
\def\@specialmergetwolists#1#2#3#4{%
  \def\tempa##1##2{%
    \edef##2{%
      \ifnum\cnta=\@ne\else\expandafter\@firstoftwo\fi
      \unexpanded\expandafter{##1}%
    }%
  }%
  \tempa{#2}\tempb\tempa{#3}\tempa
  \def\cnta{0}\def#4{}%
  \foreach \x in \tempb{%
    \xdef\cnta{\the\numexpr\cnta+1}%
    \gdef\cntb{0}%
    \foreach \y in \tempa{%
      \xdef\cntb{\the\numexpr\cntb+1}%
      \ifnum\cntb=\cnta\relax
        \xdef#4{#4\ifx#4\empty\else,\fi\x#1\y}%
        \breakforeach
      \fi
    }%
  }%
  \endgroup
}
\makeatother

\DeclareDocumentCommand\rpp{ m m g }{
	\foreach \x [count=\s from 1] in {#1}{
	        {\ifnum\s=1
	                \draw (0,-\s)--(\x,-\s);
	                \fi}
	   \draw (0,-\s-1) to (\x,-\s-1);
	   \foreach \y in {0, ..., \x} {\draw (\y,-\s)--(\y,-\s-1);}
	}
	\specialmergetwolists{/}{#1}{#2}\ziplist
	\foreach \x/\y [count=\yi from 1] in \ziplist{
	    \node[anchor=west] at (\x,-\yi - .5) {$\y$};  
	}
	\IfValueT {#3}
	{\foreach \z [count=\zi from 1] in {#3} {\node[anchor=east] at (0,-\zi - .5) {$\z$};}}  
	{}
}

\def\C{\mathbb{C}}

\def\i{\infty}

\def\N{\mathbb{N}}

\def\1{\bf{1}}

\def\sfT{\mathsf{T}}

\def\s{\sigma}

\def\Z{\mathbb{Z}}

\def\z{\zeta}

\def\Fomin{\overline{\mathsf{F}}}

 \newcommand{\fulltoday}{\number\day\space \ifcase\month\or
    January\or February\or March\or April\or May\or June\or
    July\or August\or September\or October\or November\or December\fi
    \space\number\year}

\definecolor{UQgold}{RGB}{196, 158, 54} 
\definecolor{UQpurple}{RGB}{73, 7, 94} 
\definecolor{dbluecolor}{rgb}{0.01,0.02,0.7}
\definecolor{dgreencolor}{rgb}{0.2,0.4,0.0}

\usepackage{stmaryrd}
\SetSymbolFont{stmry}{bold}{U}{stmry}{m}{n}

\usepackage[colorinlistoftodos]{todonotes}

\usepackage{silence}
\begin{document}

\title[Skew column RSK dynamics and BBS]{Skew column RSK dynamics and the box-ball system}

\author[T. Imamura]{Takashi Imamura}
\address[T. Imamura]{ 
Department of Mathematics and Informatics, Chiba University, Chiba 263-8522 Japan}
\email{imamura@math.s.chiba-u.ac.jp}

\author[M. Mucciconi]{Matteo Mucciconi}
\address[M. Mucciconi]{ 
 Department of Mathematics, National University of Singapore,
 S17, 10 Lower Kent Ridge Road, 119076, Singapore}
\email{matteomucciconi@gmail.com}

\author[T. Sasamoto]{Tomohiro Sasamoto}
\address[T. Sasamoto]{
Department of Physics, Institute of Science Tokyo, Tokyo 152-8551 Japan}
\email{sasamoto@phys.titech.ac.jp}

\author[T. Scrimshaw]{Travis Scrimshaw}
\address[T. Scrimshaw]{
Department of Mathematics, Hokkaido University, 5 Chome Kita 8 Jonishi, Kita Ward, Sapporo, Hokkaido 060-0808 Japan}
\email{tcscrims@gmail.com}

\begin{abstract}
    The Fomin local rules for Schensted column insertion can be seen as a two-lane box-ball system, in which a carrier moves particles forward or laterally.
    Running such two-lane dynamics in parallel on a periodic lattice gives rise to a two-dimensional generalization of the box-ball system, which we call the \emph{skew column RSK dynamics}.
    Equivalently, this is a deterministic dynamics on pairs of skew semistandard Young tableaux $(P_t,Q_t)_{t \in \Z}$. 
    We prove that this dynamics exhibits solitonic behavior and construct an explicit bijection $(P,Q) \leftrightarrow (H_1,H_2,\kappa,\nu)$ that linearizes the time evolution.
    The resulting coordinates consist of two horizontally weak tableaux $H_1,H_2$ recording the asymptotic soliton data, integer riggings $\kappa$, and a weakly decreasing sequence of integers $\nu$.
    A key feature of the construction is an explicit projection from the skew column RSK dynamics to the classical box-ball system; under this projection, the rigging $\kappa$ is precisely the Kerov--Kirillov--Reshetikhin rigging of the associated box-ball configuration.
    Our proof uses two commuting affine crystal structures on pairs of skew tableaux and a novel connectivity theorem for distinguished subgraphs of tensor products of Kirillov--Reshetikhin crystals.
    We also derive Greene-type formulas for the soliton lengths in terms of last-passage percolation on the associated cylindrical environment.
    Finally, by taking generating functions in the linearizing coordinates, we obtain bijective proofs of Cauchy and Kawanaka--Littlewood-type identities for transformed Hall--Littlewood polynomials.

%
\end{abstract}

\maketitle
\tableofcontents

\section{Introduction}
\label{sec:intro}

\subsection{The box-ball system}
\label{subs:overview}
The box-ball system (BBS) is a discrete deterministic dynamical system on a one dimensional discrete lattice introduced by Takahashi and Satsuma~\cite{Takahashi_BBS,Takahashi_Satsuma} as one of the simplest examples of a cellular automaton exhibiting solitonic behavior.
The states of the BBS are given by
\begin{equation} \label{eq:BBS_state}
    \BBS
    =
    \left\{ B\in\{0,1\}^{\Z} \midspan
    \sum_{x \in \Z} b_x < \infty \right\},
\end{equation}
where $1$'s represent balls and $0$'s represent empty boxes.
The BBS transfer matrix
\begin{equation}
    \sfT\colon \BBS\to\BBS,
\end{equation}
giving the one time step evolution of the dynamics is defined by the following carrier algorithm.
Let $B\in \BBS$ be a configuration.
An initially empty carrier of infinite capacity enters the lattice from far to the left and scans the sites from left to right.
Whenever it encounters an occupied site, it picks up the ball and empties that site; whenever it encounters an empty site while carrying at least one ball, it deposits exactly one ball there.
After the carrier has passed all initially occupied sites and deposited all remaining balls, the resulting configuration is \(\sfT(B)\).
Equivalently, the balls are moved from left to right, each to the nearest available empty site strictly to its right.
Iterating $\sfT$ gives a deterministic and reversible dynamics, where an example is illustrated in \Cref{fig:BBS}.

\begin{figure}
    \centering
    \includegraphics[width=0.7\linewidth]{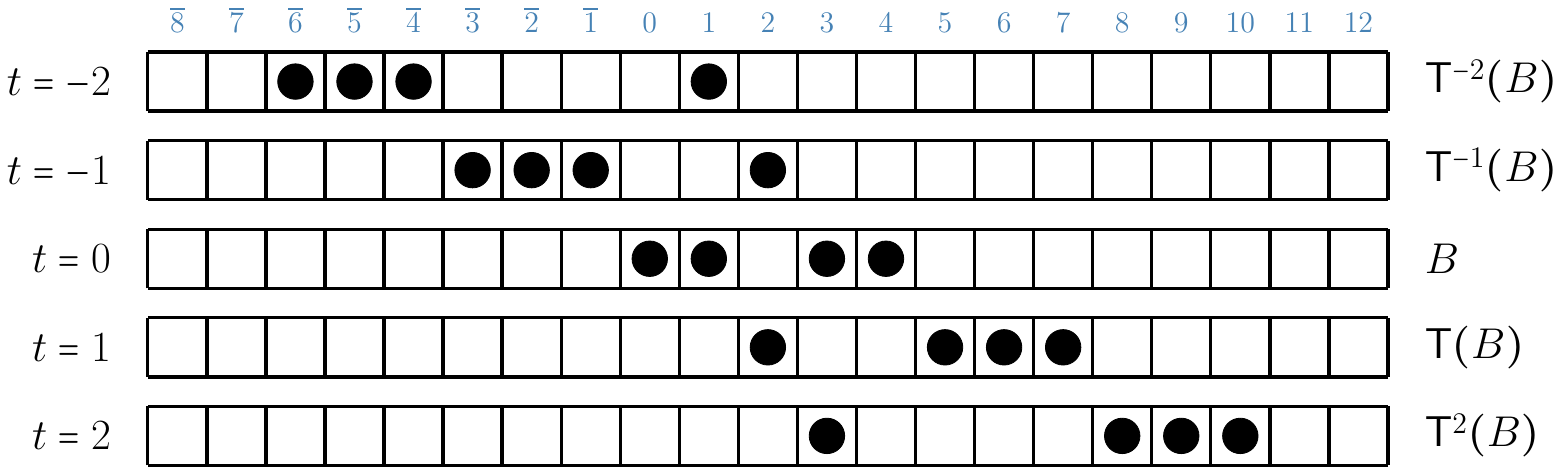}
    \caption{An example of the evolution of the box-ball system. Blue labels on top denote space coordinates; we use the convention $\overline{k} = -k$.}
    \label{fig:BBS}
\end{figure}

The mathematical structures underlying the BBS are remarkably rich and have been studied extensively from the standpoint of combinatorial representation theory, integrable systems, and more recently probability.
Despite its elementary definition, the BBS captures several hallmark features of integrable systems as it is an ultradiscrete limit of the (nonlinear) Korteweg--de Vries (KdV) equation
\[
\frac{\partial\phi}{\partial t} + \frac{\partial^3\phi}{\partial x^3} - 6 \phi \frac{\partial\phi}{\partial x} = 0,
\qquad\qquad
\phi := \phi(x,t).
\]
In more detail, integrability in the KdV equation manifests in the existence of an infinite number of conserved quantities and solitary wave solutions called solitons, which are derived from a Lax formulation~\cite{Lax68} (see also~\cite[S.10.1]{PZ12}), move with speed proportional to their size, and separate after interaction.
To obtain the BBS, we first discretize time, where we obtain the discrete Toda lattice or the discrete modified KdV (see, e.g.,~\cite{HT95}), and then we ultradiscretize (tropicalize) the values $\phi$ (see, e.g.,~\cite{takahashi1997box,tokihiro1996soliton} or~\cite{Inoue-Kuniba-Takagi2012BBS} for a survey).
The BBS retains infinitely many conserved quantities and, as demonstrated in \Cref{fig:BBS}, any configuration under repeated time evolution $\sfT$ eventually decomposes into a collection of (noninteracting) stable clusters of balls which propagate with speeds determined by their sizes (or amplitudes, which play the role of the $\phi$ value) and retain their sizes after interacting (or scattering).
These stable clusters are the \defn{solitons} of the BBS, and under repeated inverse time evolution $\sfT^{-1}$, the state again eventually separates into solitons of the \emph{same} sizes (but in the opposite order).
The nonlinearity of the KdV equation manifests in the phase shift, the change in the positions of the solitons compared to if they did not interact.

Another manifestation of integrability comes from the work of~\cite{HHIKTT01,HKT00} (see also~\cite{Inoue-Kuniba-Takagi2012BBS}), where the dynamics correspond to the crystal limit of the row transfer matrix of the XXZ Heisenberg spin chain~\cite{KS79,KS82}.
More precisely, the states correspond to certain basis elements of a tensor product of the natural 2 dimensional representation of the affine quantum group $U_q(\asl_2)$, known as the spin $\frac{1}{2}$ module, and the carrier corresponds to a fusion module~\cite{KulishReshSkl1981yang} of sufficiently large spin.
By~\cite{KKMMNN92}, these $U_q(\asl_2)$-modules can be encoded combinatorially using Kashiwara's crystal bases~\cite{Kashiwara_crystalizing,Kashiwara_On_Crystal_bases}, and the local interactions of the carrier update rule are precisely the crystal limit of $R$-matrix isomorphism between the tensor products of the large spin and spin $\frac{1}{2}$ modules.

A central result~\cite{KKMMNN92,KKR86,KR86} (see also, \textit{e.g.},~\cite{fukuda_okado_yamada_BBS_energy,HKOTY99,Takagi2005RC} or~\cite{LS19} for a more recent overview) is that the nonlinear BBS dynamics can be linearized through a combinatorial change of coordinates, known as the Kerov--Kirillov--Reshetikhin (KKR) bijection (for $\asl_2$) that can be described by piecewise-linear functions~\cite{KOSTY2006RC}; see also \cite{ferrari2021soliton,mucciconi2024relationships,TNS_BBS} for related linearization procedures.
This transformation maps the nonlinear particle evolution into a simple shift on the angle (or rigging) parameters, providing a discrete analogue of the action-angle transformation for Hamiltonian systems~\cite{KOSTY2006RC,Takagi2005RC}.
This is also considered to be a discrete analog of the quantum inverse scattering transform.
This linearization is well represented by the commutative diagram
\begin{equation} \label{eq:BBS commutative diagram}
    \begin{tikzpicture}[baseline=-2cm]
        \node[] at (2,-1.15) {$B$};
        \draw[->] (2.5,-1.15) -- (5.75,-1.15) node[midway,above] {\scriptsize $\KKR$};
        \draw[->] (2,-1.5) -- (2,-2.5) node[midway,left] {\scriptsize $\sfT$}; 
        \node[] at (6.5,-1.15) {$(\mu, J)$};
        \draw[->] (6.5,-1.5) -- (6.5,-2.5);
        \node[] at (2,-2.85) {$B'$};
        \draw[->] (2.5,-2.85) -- (5.5,-2.85) node[midway,above] {\scriptsize $\KKR$}; 
        \node[] at (6.5,-2.85) {$(\mu, J+\mu)$};
    \end{tikzpicture}
\end{equation}
where the image $(\mu,J)$ of the KKR map consists of a partition $\mu$ encoding the soliton amplitudes (in \Cref{fig:BBS}, $\mu = (3,1)$) and an array of integers $J$ that essentially identifies the location of solitons (in \Cref{fig:BBS}, $J=(-4+3t, -1+t)$).

More recently, the BBS has attracted attention from the viewpoint of non-equilibrium statistical mechanics. The theory of generalized hydrodynamics predicts that integrable systems exhibit atypical hydrodynamic behavior, dominated by persistent quasiparticle modes, whose evolution follow a system of infinitely many coupled partial differential equations; see \cite{Doyon_lecture_notes_GHD} and references therein.
While such predictions remain conjectural for many models, they can be rigorously verified for the BBS thanks to its combinatorial tractability. The stationary and translation invariant measures of the BBS can be explicitly constructed using variants of the KKR bijection, which can be rigorously analyzed in several asymptotic limits \cite{croydon2021generalized,ferrari2021soliton,kuniba2020generalized,olla2024scaling}.


\subsection{Overview}

The goal of this paper is to introduce and study a two-dimensional extension of the BBS. The starting point is a two-lane version of the BBS, defined in \Cref{subs:two_lane_BBS}, in which the carrier transports pairs of particles which are allowed to move laterally between the two lanes. By running these two-lane carrier dynamics in parallel on a periodic planar lattice of width $2n$, we obtain a deterministic particle system that we call the \defn{skew column RSK dynamics} (cRSK). The name reflects the fact that the local particle rule is a formulation of the Fomin growth rule associated with the Robinson--Schensted--Knuth (RSK) correspondence under column Schensted insertion. 

This particle system admits several equivalent geometric and combinatorial representations. It can be represented as an evolving ensemble of non-intersecting lattice paths with $2n$-steps periodicity, as a two-dimensional field of interlacing signatures, or as a sequence of pairs of tableaux with entries at most $n$, illustrated respectively in  \Cref{fig:scattering example}, \Cref{fig:cylinder} and \Cref{fig:RSK dynamics example}.

\begin{figure}
    \centering
    \includegraphics[width=.7\linewidth]{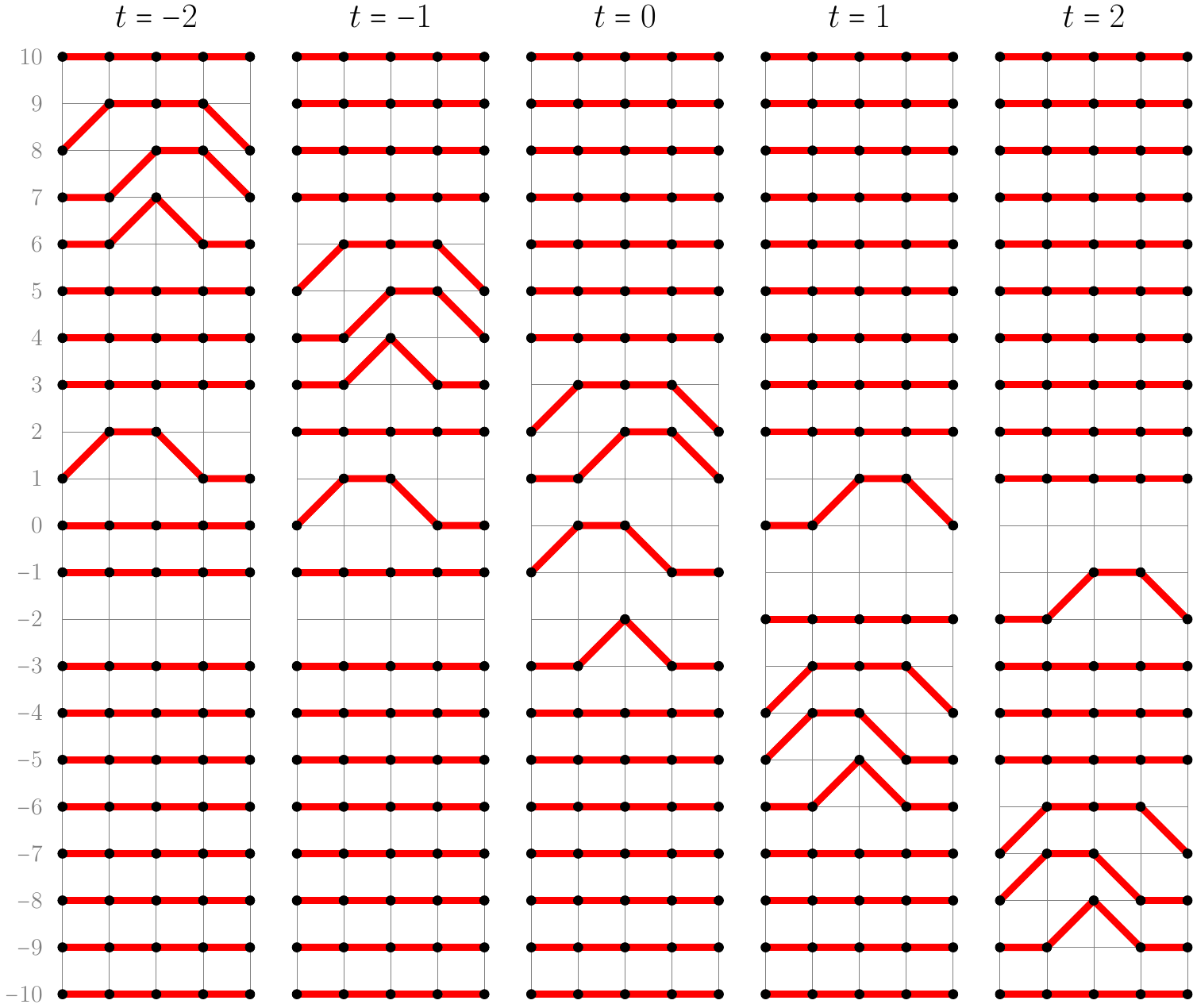}
    \caption{A visualization of the skew column RSK dynamics as an evolving ensemble of non-intersecting lattice paths.} \label{fig:scattering example}
\end{figure}

Two-dimensional analogues of the BBS have appeared in the literature under the name of colored BBS, formulated through higher-rank combinatorial $R$-matrices. The cRSK dynamics is different from these constructions. Its definition does not directly involve an $R$-matrix. Moreover, its state space carries two commuting affine crystal structures, giving it a symmetry structure larger than the one of the ordinary colored BBS; see \Cref{th:crystal_sym}. As the snapshots in \Cref{fig:scattering example} illustrate, the dynamics also admits a natural geometric representation as an ensemble of non-intersecting lattice paths evolving on a cylinder, a viewpoint that could be valuable for probabilistic questions.

A closely related integrable dynamics was defined in \cite{imamura2021skewRSK} by the first three authors under the name of skew RSK dynamics. That dynamics, however, does not generalize the BBS. 
In contrast, the column-insertion version considered here contains the classical BBS as the $n=1$ case, and we believe that the use of column insertion is a more canonical option from a combinatorial standpoint. For instance, column insertion admits generalizations to other root systems and these provide direct description of the combinatorial $R$-matrices defining BBS of other affine Lie types (more generally known as \emph{soliton cellular automata}); e.g.,~\cite{baker2000insertion,HKOT00,HKOT02,lecouvey2002schensted,lecouvey2003schensted}. By contrast, to the best of the authors' knowledge, generalizations of row insertion \cite{BS91,Berele86,Okada93,Sundaram86,Sundaram90II,Sundaram90,Terada93} have not been linked to BBS-type dynamics.

Our main result (\Cref{th:main_bij}) is a linearization algorithm for the cRSK dynamics, generalizing the KKR bijection to this two dimensional setting. This is a combinatorial bijection, which we denote by $\Upsilon_{\mathrm{col}}$, between pairs of semistandard Young tableaux $(P,Q)$, which encode the paths represented in \Cref{fig:scattering example} and quadruples $(H_1,H_2,\kappa,\nu)$. Here $H_1,H_2$ are \emph{horizontally weak tableaux}, which encode the conservation laws of the dynamics, $\nu$ is a signature (a weakly decreasing finite sequence of integers) and $\kappa$ is a list of integers identifying the coordinates of the solitons. The linearization produced by the bijection $\Upsilon_{\mathrm{col}}$ of \Cref{th:main_bij} is well illustrated by the commutative diagram
\begin{equation} \label{eq:cRSK commutative diagram}
    \begin{tikzpicture}[baseline=-2cm]
        \node[] at (2,-1.15) {$(P,Q)$};
        \draw[->] (2.6,-1.15) -- (5.25,-1.15) node[midway,above] {\scriptsize $\Upsilon_{\mathrm{col}}$};
        \draw[->] (2,-1.5) -- (2,-2.5) node[midway,left] {\scriptsize $\cRSK$}; 
        \node[] at (6.5,-1.15) {$(H_1,H_2,\kappa,\nu)$};
        \draw[->] (6.5,-1.5) -- (6.5,-2.5);
        \node[] at (2,-2.85) {$(P',Q')$};
        \draw[->] (2.75,-2.85) -- (5,-2.85) node[midway,above] {\scriptsize $\Upsilon_{\mathrm{col}}$}; 
        \node[] at (6.5,-2.85) {$(H_1,H_2,\kappa+\mu,\nu)$};
    \end{tikzpicture}
\end{equation}
generalizing that of the BBS in \eqref{eq:BBS commutative diagram}. Moreover the cRSK generalizes the BBS dynamics in the sense that there exists a projection $\Pr$ mapping the space of states of the cRSK dynamics onto BBS configurations that lifts the KKR bijection as described by the following commutative diagram
\begin{equation} \label{eq:commutative_diagram_RSK_BBS}
        \begin{tikzpicture}[baseline=-2cm]
        \node[] at (0,0) {$(P, Q)$};
        \draw[->] (.75,0) -- (7.25,0) node[midway,above] {\scriptsize $\Upsilon_{\mathrm{col}}$}; 
        \draw[->] (0,-.35) -- (0,-3.65) node[midway,left] {\scriptsize $\cRSK$};
        \draw[->] (.6,-.35) -- (1.6,-.85) node[midway,xshift=.2cm,yshift=.2cm] {\scriptsize $\mathrm{Pr}$};
        \node[] at (8.5,0) {$(H_1, H_2, \kappa, \nu)$};
        \draw[->] (8.5,-.35) -- (8.5,-3.65);
        \draw[->] (8.5-.6,-.35) -- (8.5-1.6,-.85) node[midway,xshift=-.2cm,yshift=.2cm] {\scriptsize $\mathrm{Pr}$};
        \node[] at (0,-4) {$(P', Q')$};
        \draw[->] (1,-4) -- (7,-4) node[midway,above] {\scriptsize $\Upsilon_{\mathrm{col}}$}; 
        \draw[->] (.6,-4+.35) -- (1.6,-4+.85) node[midway,xshift=-.2cm,yshift=.2cm] {\scriptsize $\mathrm{Pr}$};
        \node[] at (8.5,-4) {$(H_1, H_2, \kappa+\mu, \nu)$};
        \draw[->] (8.5-.6,-4+.35) -- (8.5-1.6,-4+.85) node[midway,xshift=.2cm,yshift=.2cm] {\scriptsize $\mathrm{Pr}$};
        \node[] at (2,-1.15) {$B$};
        \draw[->] (2.5,-1.15) -- (5.75,-1.15) node[midway,above] {\scriptsize $\KKR$};
        \draw[->] (2,-1.5) -- (2,-2.5) node[midway,left] {\scriptsize $\sfT$}; 
        \node[] at (6.5,-1.15) {$(\mu, J)$};
        \draw[->] (6.5,-1.5) -- (6.5,-2.5);
        \node[] at (2,-2.85) {$B'$};
        \draw[->] (2.5,-2.85) -- (5.5,-2.85) node[midway,above] {\scriptsize $\KKR$}; 
        \node[] at (6.5,-2.85) {$(\mu, J+\mu)$};
    \end{tikzpicture}
\end{equation}
where $\kappa = J+\mu$; see \Cref{thm:projection BBS}.

Beyond dynamics, the linearization scheme also yields new combinatorial consequences. In particular it provides bijective proofs of summation identities for the class of symmetric polynomials called \defn{transformed Hall--Littlewood polynomials} $\THL_{\mu}(x; q)$ (\Cref{def:transformed_Hall_Littlewood}); see \Cref{thm:Cauchy and Littlewood id} and \Cref{thm:refined Cauchy and Littlewood id}.
Related special functions have previously appeared in the study of the BBS \cite{KS10,KMOTU00,Nakayashiki_Yamada}, but using such connection as a tool to establish summation identities has been limited to date.
On the other hand, transformed Hall--Littlewood polynomials arise in probabilistic context as they describe the law of certain models of randomly growing interfaces \cite{aggarwal_borodin_wheeler_tPNG,BufetovMucciconiPetrov2018}. As a result these new summation identities provide probabilistic insights about these solvable growth processes; see \cite{He2023boundaryASEP,IMS2022KPZ_free_fermions,imamura_periodic_PNG_FPSAC} for related results.

\subsection{A two-lane box-ball map} \label{subs:two_lane_BBS}

The two-dimensional dynamics we study in this paper are built by composing a simple local update acting on BBS configurations on two adjacent lanes. We call this the \defn{two-lane box-ball map}
\begin{equation} \label{eq:two_lane_transfer_matrix}
    \mathsf{F} \colon (A,B) \in \BBS \times \BBS \longrightarrow (A',B') \in \BBS \times \BBS
\end{equation}
and it is defined as follows.

Let $A=(a_x)_{x\in \Z},B=(b_x)_{x\in \Z} \in \BBS$ be two particle configurations. 
An initially empty carrier enters the lattice $\mathbb{Z}$ from far to the left and scans the sites from left to right. The carrier transports pairs of balls. For $x \in \mathbb{Z}$, let $c_x \in \N$ denote the number of pairs of balls carried immediately after the carrier has passed site $x$. Suppose that the carrier arrives at site $x$ carrying $c_{x-1}$ pairs. If both boxes at site $x$ are occupied, the carrier removes the two balls and adds one pair to its load. If exactly one of the two boxes is occupied, the carrier swaps the occupation between the two lanes. Finally, if both boxes are empty, the carrier deposits one ball in each lane, provided that $c_{x-1}>0$. After the carrier has passed all initially occupied sites and deposited all remaining pairs of balls, the resulting configuration is $(A',B')= \mathsf{F}(A,B)$. For an example, see \Cref{fig:two_lane_BBS_example}, left panel.

\begin{figure}
    \centering
    \includegraphics[width=0.9\linewidth]{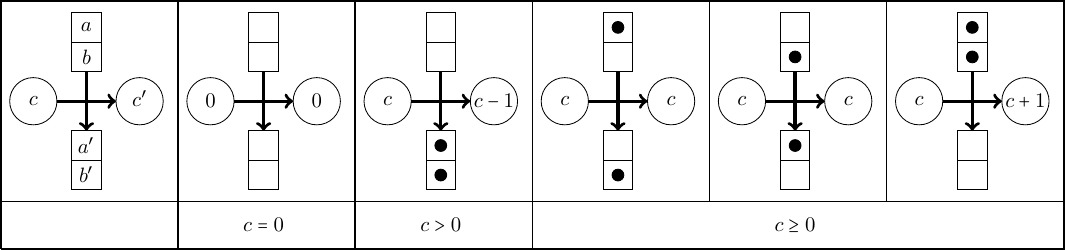}
    \caption{The local update $\mathsf{f}(a,b,c)=(a',b',c')$ in the two-lane BBS.}
    \label{fig:two-lane R-matrix}
\end{figure}

Equivalently, define
\begin{equation} \label{eq:f Fomin}
    \mathsf{f} \colon \{0,1\} \times \{0,1\} \times \N
    \longrightarrow
    \{0,1\} \times \{0,1\} \times \N,
    \qquad
    \mathsf{f}(a,b,c) = (a',b',c'),
\end{equation}
where
\begin{equation} \label{eq:f Fomin_1}
    c'=\max\{0,a+b+c-1\},
    \qquad
    a'=b+c-c',
    \qquad
    b'=a+c-c'.
\end{equation}
See \Cref{fig:two-lane R-matrix} for a representation of the map $\mathsf{f}$. Then, the states $A'=(a'_x)_{x\in \Z}$, $B'=(b'_x)_{x\in \Z}$ are given by
\[
    (a_x',b_x',c_x)
    =
    \mathsf{f}(a_x,b_x,c_{x-1})
    \qquad \text{for all } x\in \mathbb{Z},
    \qquad
    c_{-\infty}=0.
\]
It will be useful to record by \defn{$c_0(A,B)$} the carrier load immediately after the carrier has crossed the origin during
the update of the pair $(A,B)$.

\begin{remark} \label{rem:from_two_lane_to_BBS}
    The classical BBS transfer matrix $\sfT$ is recovered from $\mathsf{F}$ when the configurations in the two lanes are identical. In other words $B'=\sfT(B)$ if and only if $(B',B') = \mathsf{F} (B,B)$.
\end{remark}

\begin{figure}
    \centering
    \begin{minipage}{0.45\textwidth}
        \centering
        \includegraphics[width=0.6\textwidth]{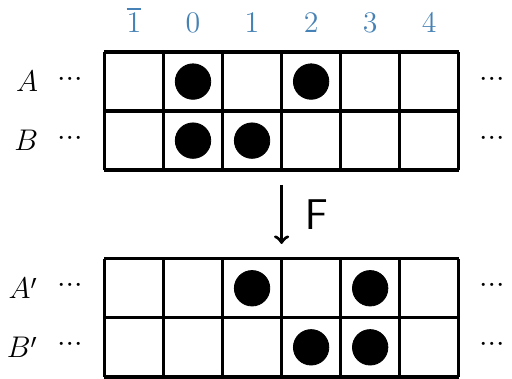}
    \end{minipage}
    \quad
    \begin{minipage}{0.45\textwidth}
        \centering
        \includegraphics[width=.7\linewidth]{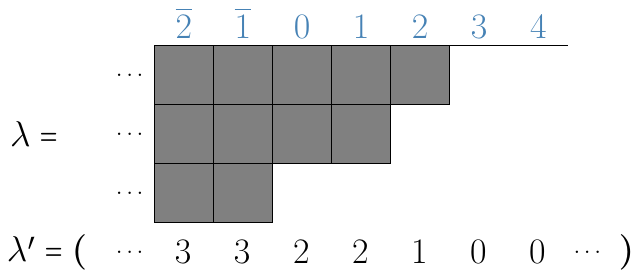}
    \end{minipage}

    \caption{{\bf Left:} A two-lane BBS update $(A,B) \to (A',B')=\mathsf{F}(A,B)$. In this case $c_0(A,B)=1$. {\bf Right:} The diagram of the signature $\lambda = (2,1,-1)$ and its conjugate $\lambda'$.}
    \label{fig:two_lane_BBS_example}
\end{figure}

Compared with the classical BBS transfer matrix, the two-lane BBS map allows particles to move laterally: whenever the carrier encounters a ball-hole pair, the ball is transferred to the other lane. This makes it possible to build nontrivial multi-lane dynamics by applying the two-lane map \(\mathsf F\) in a staggered fashion to adjacent pairs of lanes. For instance, on a collection of lanes arranged cyclically, one may update in parallel all pairs \((1,2),(3,4),\dots\) at odd times and all pairs \((2,3),(4,5),\dots\) at even times, with the last and first lanes also paired when periodic boundary conditions are imposed in the vertical direction. In this way one obtains a genuinely two-dimensional dynamics in which particles may move both longitudinally and laterally; see \Cref{fig:four_lane_BBS_example} for an illustration on four lanes. 

\begin{figure}
    \centering
    \includegraphics[width=\linewidth]{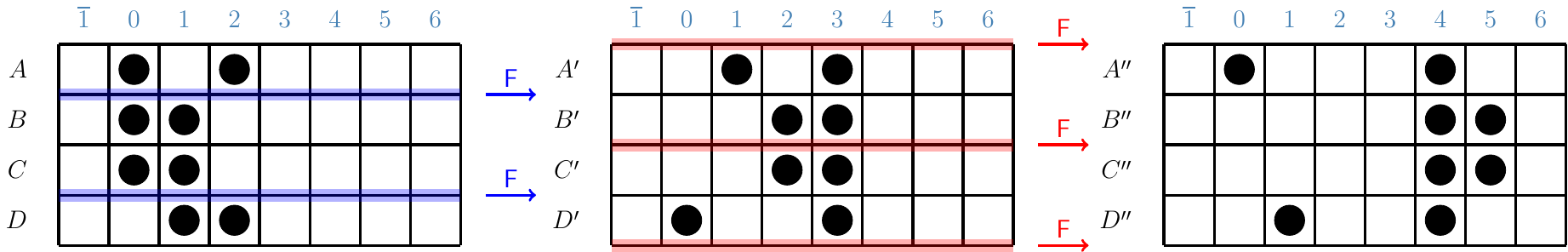}
    \caption{Two time steps of the staggered two-lane box-ball dynamics on four lanes with periodic boundary conditions in the vertical direction, so that the top and bottom lanes are adjacent. At each step the two-lane BBS map $\mathsf{F}$ is applied in parallel to disjoint adjacent pairs of lanes: at odd times to the pairs indicated by the blue shaded lines, and at even times to the pairs indicated by the red shaded lines. Starting from the configuration $(A,B,C,D)$, this gives $(A',B',C',D')$ after one step and $(A'',B'',C'',D'')$ after two steps.}
    \label{fig:four_lane_BBS_example}
\end{figure}

\subsection{Fields of interlacing signatures on a cylindrical lattice} 

It will be convenient to encode BBS particle configurations as differences of interlacing signatures and to translate the two-lane BBS update into an update of pairs of signatures. In this language, the staggered multi-lane dynamics discussed above is naturally encoded by a field of signatures on a cylindrical lattice. We therefore begin by introducing the relevant notions.


\medskip

A \defn{signature} of length $\ell \ge 0$ is a weakly decreasing sequence of integers $\lambda = (\lambda_1 \ge \cdots \ge \lambda_\ell)$. We denote by $\mathbb{S}_\ell$ the set of signatures of length $\ell$, and set $\mathbb{S} = \bigsqcup_{\ell \ge 0} \mathbb{S}_\ell$. The \defn{conjugate}\footnote{With this convention, $\lambda'$ is not itself a signature, since it is a sequence indexed by $\mathbb{Z}$.} of a signature $\lambda$ is the sequence $\lambda'=(\lambda_i')_{i\in\mathbb{Z}}$ defined by
\[
    \lambda_i' = \#\{j \mid \lambda_j \ge i\}.
\]
See \Cref{fig:two_lane_BBS_example}, right panel, for an example. Two signatures $\kappa,\mu \in \mathbb{S}_\ell$ of the same length are said to \defn{interlace}, which we write as $\kappa \preceq \mu$, by the equivalent conditions:
\begin{equation}
    \kappa \preceq \mu
    \qquad \Longleftrightarrow \qquad
    \mu' - \kappa' \in \BBS
    \qquad \Longleftrightarrow \qquad
    \mu_1 \ge \kappa_1 \ge \mu_2 \ge \kappa_2 \ge \cdots.
\end{equation}
Partitions are the special case of signatures with non-negative entries and their set is denoted by $\partitions$.

We now use the two-lane BBS to define a local growth rule for signatures. Let
$\kappa,\lambda,\mu$ be signatures satisfying $\lambda\succeq\kappa\preceq\mu$. The two increments $\mu'-\kappa'$, $\lambda'-\kappa'$ are BBS configurations, and we regard them as input for the two-lane BBS. Recalling the map $\mathsf F$ of \eqref{eq:two_lane_transfer_matrix}, we define the \defn{Fomin growth operator}
\begin{equation}
\label{eq:fomin-operator}
\Fomin(\kappa,\mu,\lambda)=\nu,
\qquad\text{where}\qquad
(\nu'-\mu',\nu'-\lambda')
=
\mathsf F(\mu'-\kappa',\lambda'-\kappa')
.
\end{equation}
For a graphical representation see \Cref{fig:cylinder}, left panel. For example, using the two-lane BBS update of \Cref{fig:two_lane_BBS_example}, left panel we can compute 
    \[
        \nu = \Fomin (\kappa,\mu,\lambda) \qquad \text{with} \qquad \kappa = (2,1,-1), \quad \mu = (2,2,0), \quad \lambda= (2,1,1), \quad \nu = (3,2,1),
    \]
since $A= \mu'-\kappa'$, $B= \lambda'-\kappa'$, $A'=\nu'-\mu'$, $B'=\nu'-\lambda'$. 
The operator $\Fomin$ is invertible in the following sense: Given signatures $\lambda,\mu,\nu$ satisfying $\lambda \preceq \nu \succeq \mu$, there exists a unique signature $\kappa$ such that $\Fomin(\kappa,\mu,\lambda)=\nu$. This is an immediate consequence of the fact that the map $\mathsf{F}$ is a bijection.

We proceed by imposing periodic boundary conditions. For any $n \ge 1$, define the cylindrical lattice
\begin{equation}
\label{eq:cylinder}
\mathscr C_n=\mathbb Z^2/\sim,
\qquad
(x,y)\sim(x',y')
\Longleftrightarrow
(x',y')=(x+kn,y-kn)
\quad\text{for some }k\in\mathbb Z.
\end{equation}
Similarly we define the dual lattice $\mathscr{C}_n' = (\Z')^2 / \sim$, which is the lattice of faces of $\mathscr{C}_n$ and $\Z' = \Z + \frac{1}{2}$.
Let $\mathbf{e}_1=(1,0)$ and $\mathbf{e}_2=(0,1)$. A \defn{simple loop} (or down-right loop) on $\mathscr C_n$ is a path
\begin{equation}
    \mathbf{p}= (p_0 , p_1, \dots,  p_{2n} ) \in \mathscr{C}_n^{2n+1},
    \qquad \text{such that}
    \qquad p_{i+1} - p_i \sim  \mathbf{e}_1, -\mathbf{e}_2
    \quad \text{and} \quad p_{2n} \sim p_0.
\end{equation}

\begin{definition}[Fomin field of interlacing signatures]
\label{def:fomin_fields}
A \defn{Fomin field} of interlacing signatures of length $\ell$ on $\mathscr C_n$ is a map $\boldsymbol{\lambda}\colon \mathscr{C}_n \to \mathbb{S}_\ell$ such that for every $p\in\mathscr C_n$,
\begin{equation}
    \boldsymbol{\lambda}(p) \preceq \boldsymbol{\lambda}(p+\mathbf{e}_i), \quad\text{for } i\in\{ 1,2\}, \qquad \text{and} \qquad \boldsymbol\lambda(p+\mathbf e_1+\mathbf e_2)
    =
    \Fomin \left( \boldsymbol\lambda(p), \boldsymbol\lambda(p+\mathbf e_1), \boldsymbol\lambda(p+\mathbf e_2) \right).
\end{equation}
We associate to a Fomin field $\boldsymbol{\lambda}$ an \defn{environment} $\mathbf{c}\colon \mathscr{C}_n'\to\N$ as follows. If $p'=p+(\mathbf{e}_1+\mathbf{e}_2)/2$, then 
\[
    \mathbf{c}(p') = c_0 (\boldsymbol{\lambda}'(p+\mathbf{e}_1)-\boldsymbol{\lambda}'(p), \boldsymbol{\lambda}'(p+\mathbf{e}_2)-\boldsymbol{\lambda}'(p)),
\]
where $c_0(A,B)$ was defined in \Cref{subs:two_lane_BBS}.
\end{definition}

\begin{figure}
    \centering
    \begin{minipage}{0.2\textwidth}
        \centering
        \includegraphics[width=0.9\textwidth]{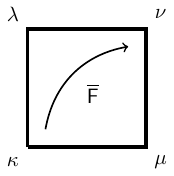}
    \end{minipage}
    \qquad
    \begin{minipage}{0.7\textwidth}
        \centering
        \includegraphics[width=1\linewidth]{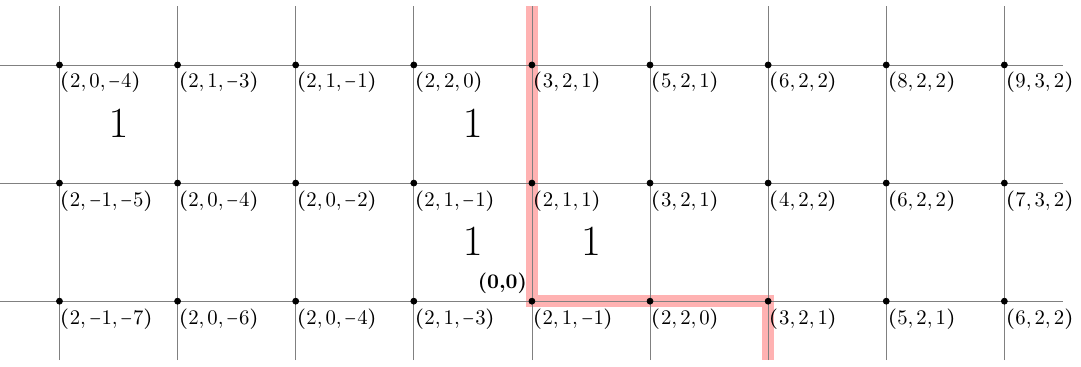}
    \end{minipage}

    \caption{{\bf Left:} A depiction of the Fomin growth operator. {\bf Right:} A Fomin field on the cylindrical lattice $\mathscr{C}_2$. The red shaded path is the simple loop $\mathbf{p} = ((0,2),(0,1),(0,0),(1,0),(2,0)) $. The non-zero values of the environment $\mathbf{c}(p')$ are drawn on faces of $\mathscr{C}_2$.}
    \label{fig:cylinder}
\end{figure}

An example of a Fomin field of interlacing signatures on $\mathscr{C}_n$ is shown in \Cref{fig:cylinder}; the values of the corresponding environment are drawn in the faces of the lattice. 
Since the Fomin operator is invertible, specifying the values of a Fomin field on any simple loop $\mathbf{p}\subset \mathscr{C}_n$ determines all neighboring values recursively, both forward and backward. Thus any compatible interlacing data along such a loop extends uniquely to a Fomin field on the whole cylinder.

Choosing either the rightward or the upward direction on $\mathscr{C}_n$ as a
time direction turns the same construction into a dynamical system on
sequences of signatures. This point of view is illustrated in
\Cref{fig:scattering example}. There, for the cylinder $\mathscr{C}_2$ shown
in \Cref{fig:cylinder}, we consider the simple loops
\[
    \mathbf{p}_t
    =
    \bigl(
        p_0^t,p_1^t,p_2^t,p_3^t,p_4^t
    \bigr),
    \qquad
    t\in\{-2,-1,0,1,2\},
\]
where
\begin{equation}
    p_0^t=(2t,2),\qquad
    p_1^t=(2t,1),\qquad
    p_2^t=(2t,0),\qquad
    p_3^t=(2t+1,0),\qquad
    p_4^t=(2t+2,0).
\end{equation}
The signatures along these loops are represented by the point configurations
\begin{equation*}
    \begin{aligned}
        &\quad \left\{ \bigl(j,\boldsymbol{\lambda}_i'(p_j^t)-i\bigr) \midspan i\in\Z \right\}, \quad j\in \{0,\dots,4\},
        \\[1em]
        &
        \qquad \qquad \qquad t\in\{-2,\dots,2\}
    \end{aligned}
    \qquad \qquad
    \vcenter{\hbox{\includegraphics[width=.25\linewidth]{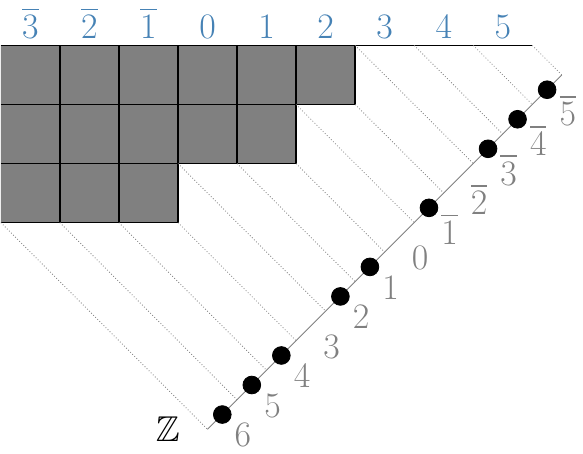}}}.
\end{equation*}
On the side we depicted the point configuration $\{\boldsymbol{\lambda}_i'(p^0_0)-i \mid i\in\mathbb{Z}\}$, where $p^0_0=(0,0)$ and $\boldsymbol{\lambda}(0,0) = (2\ge 1\ge -1)$. 

A continuous limit of the dynamics illustrated in \Cref{fig:scattering example} was introduced in \cite{imamura_periodic_PNG_FPSAC} under the name of ``periodic multi-layer polynuclear growth''. In such special limit the dynamical rules admit a simpler direct description which bypasses the two-lane BBS.

The example in \Cref{fig:scattering example} suggests that the resulting dynamics on sequences of signatures exhibits solitonic behavior analogous to that of the BBS. This solitonic behavior is reflected in the following stabilization phenomenon.

\begin{theorem}[Stabilization phenomenon and conservation laws]
\label{prop:stabilization intro}
    Let $\boldsymbol{\lambda}$ be a Fomin field of interlacing signatures of length $\ell$ on $\mathscr{C}_n$. Then there exists an integer partition $\mu=(\mu_1,\dots,\mu_{\ell'})$, with $\ell'\le\ell$ and a time $t^*>0$ such that, for all
    $t\ge t^*$,
    \begin{equation}
        \boldsymbol{\lambda}(t+n,i)
        =
        \boldsymbol{\lambda}(t,i)+(\mu_1,\dots, \mu_{\ell'},0\dots,0),
        \qquad
        \boldsymbol{\lambda}(-t-n,i)
        =
        \boldsymbol{\lambda}(-t,i)-(0,\dots,0,\mu_{\ell'}, \dots,\mu_1).
    \end{equation}
\end{theorem}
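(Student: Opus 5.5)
The plan is to study the one-period increment
\[
    \delta_t(i) := \boldsymbol{\lambda}(t+n,i)-\boldsymbol{\lambda}(t,i)\in\Z^\ell .
\]
I would show that it takes finitely many values, that it evolves monotonically, and that it is therefore eventually constant with the stated shape. The first step uses the cylinder structure. Going from $(t,i)$ to $(t+n,i)$ along rightward edges gives a chain of $n$ interlacing signatures, so every entry of $\delta_t(i)$ is non-negative. Moreover, $(t+n,i)\sim(t,i+n)$ on $\mathscr{C}_n$, so the same chain closes up vertically. Summing the BBS configurations $\boldsymbol{\lambda}'(p+\mathbf e_j)-\boldsymbol{\lambda}'(p)$ around a simple loop therefore gives a quantity independent of the loop. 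This follows from \eqref{eq:f Fomin_1}, which conserves the number of balls: $a'+b' = a+b$ after summing over sites. The conclusion is that $|\delta_t(i)|$ is a conserved integer $N$, independent of $t$ and $i$. Hence $\delta_t(i)$ ranges over a finite set of vectors.

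The second step is a Greene-type monotonicity statement. I would encode the field along a loop as a pair of skew tableaux $(P_t,Q_t)$, as announced in the introduction. Advancing $t$ by one step applies column RSK in a skew, periodic fashion. For each $k$, I would then prove that the partial sums $\delta_t(i)_1+\dots+\delta_t(i)_k$ are weakly increasing in $t$. My first attempt would express $\sum_{j\le k}\boldsymbol{\lambda}(t,i)_j$ as a last-passage value of $k$ non-intersecting paths in the environment $\mathbf c$, via the Fomin local rule (Greene's theorem for $\mathsf f$). The increment is then a difference of last-passage values over a window of length $n$. In the cylindrical environment this difference can only grow with $t$, because optimal paths for a later window may be concatenated with a periodic translate. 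Granting this, the vector of partial sums of $\delta_t$ is bounded by $N$ and weakly increasing in $t$, so it is eventually constant. That gives $\delta_t(i)=\tilde\mu(i)$ for $t\ge t^*$.

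The third step identifies the limiting increment as $(\mu_1,\dots,\mu_{\ell'},0,\dots,0)$ with $\mu$ a partition, independent of $i$. Independence of $i$ comes from applying the Fomin rule to two vertically adjacent sites: once both increments are stable, invertibility of $\mathsf F$ forces them to agree. To see that the increment is weakly decreasing and supported at the top rows, I would use that in the stable regime the chain $\boldsymbol{\lambda}(t)\preceq\cdots\preceq\boldsymbol{\lambda}(t+n)$ repeats itself forever. The largest parts therefore grow without bound relative to the smaller ones, and the interlacings force the increments to be non-increasing down the rows. The backward statement follows by the same argument for $\mathsf F^{-1}$, using the reflected Fomin rule. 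This gives the bottom-aligned increment $(0,\dots,0,\mu_{\ell'},\dots,\mu_1)$. The fact that the backward partition coincides with the forward one is a consequence of the conservation laws: the last-passage asymptotics are symmetric under time reversal.

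I expect the monotonicity in the second step to be the main obstacle. Proving it rigorously needs the precise Greene-type formula on the cylinder and a superadditivity argument for periodic last-passage percolation. If that fails directly, I would fall back on projecting to the classical BBS (\Cref{rem:from_two_lane_to_BBS}). There the analogous stabilization of soliton sizes is known via the KKR bijection, and I would transfer it to the cylinder field through the conserved quantities.
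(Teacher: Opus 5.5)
Your first step is fine, and so is the observation that a stable increment must be weakly decreasing. The genuine gap is the last sentence of your third step: the claim that the backward increment is the reversal of the \emph{same} partition $\mu$.

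This is the half of the theorem the paper calls substantially more delicate, and conservation laws do not give it. The conserved quantity you actually have, $|\delta_t(i)|=N$, only yields $|\mu^-|=|\mu|$; nothing in your argument controls the individual parts of the backward increment. Your Greene-type formula does not help either. It concerns $\sum_{j\le k}\boldsymbol{\lambda}_j(t,i)$, and as $t\to-\infty$ the increment sits in the bottom rows, so there is no visible time-reversal symmetry to invoke. An LPP route would need a Greene theorem for the asymptotic shape itself, in both time directions, plus invariance of the relevant statistics under the rotation of $\mathscr{C}_n$ that reverses time. You supply neither.

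The paper's proof (\Cref{thm:scattering_rules}) rests on a different key idea:
\begin{itemize}
\item The Kashiwara operators $E^{(k)}_i,F^{(k)}_i$ commute with $\cRSK$ (\Cref{th:crystal_sym}), so they preserve both the forward and the backward asymptotic shapes (\Cref{prop:Kashiwara_ker}).
\item Connectivity of the (dual) regular subgraph of $\KR{\mu}$ (\Cref{thm:connectedness}, \Cref{thm:dual_connectedness}, for $\ell(\mu)\le n$) gives a leading path taking any $(P,Q)$ to a pair $(T,T)$ with all entries $1$.
\item The dynamics of $(T,T)$ is the classical BBS (\Cref{prop:cRSK_BBS}), where forward and backward shapes agree by \Cref{thm:KKR linearization}.
\end{itemize}
Your fallback of projecting to the BBS ``through the conserved quantities'' lacks exactly this mechanism. \Cref{rem:from_two_lane_to_BBS} applies only when the two lanes coincide, which a general field does not satisfy, and none of the quantities you name determines the backward shape.

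Your second step is only conditional (``granting this''), and the formula behind it does not hold as stated. Every constant field $\boldsymbol{\lambda}\equiv\nu$ has environment $\mathbf c\equiv 0$, so $\sum_{j\le k}\boldsymbol{\lambda}_j(t,i)$ is not a last-passage value of $\mathbf c$. The concatenation argument for window differences is also only asserted.

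The monotonicity you want is nevertheless true, and it has an elementary proof:
\begin{itemize}
\item $\delta_t(i)$ is the row content of the $Q$-tableau of the loop based at $(t,i)$.
\item By \Cref{prop:iota2 by column insertion} with $P$ and $Q$ swapped, $\iota_1$ acts on that tableau by column internal insertions.
\item By \Cref{prop:cRSK_upright}, every labelled cell moves weakly up and to the right, so partial row sums can only increase.
\end{itemize}
After partial standardization (\Cref{prop:standardization_field}), each cell's row index is weakly decreasing and bounded below, which gives eventual constancy directly. This is the paper's proof of \Cref{prop:stable field}.

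Finally, independence of $i$ does not come from invertibility of $\mathsf F$. $\Fomin$ commutes with column-wise shifts (\Cref{prop:symmetry Fomin}), not row-wise ones. It follows instead from $\boldsymbol{\lambda}(t+kn,i)\preceq\boldsymbol{\lambda}(t+kn,i+1)$ for all $k\ge 0$. This forces $\alpha_j\le\beta_j$ for the two stable increments $\alpha,\beta$, and $|\alpha|=|\beta|=N$ then gives $\alpha=\beta$.
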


\Cref{prop:stabilization intro} contains two statements.
The first one, which is relatively elementary, is that the forward and backward asymptotic signatures of a Fomin field have constant increments.
The second, and substantially more delicate one, is that the two asymptotic increments coincide up to reversal.
This second statement is \Cref{thm:scattering_rules} in the text.

To describe more explicitly the solitonic features of Fomin fields and their relation with the BBS, we next introduce the cRSK dynamics.

\subsection{The skew column RSK dynamics}

Recall that given signatures $\rho \subset \lambda$ (i.e.~$\rho_i \le \lambda_i$ for all $i$), a semistandard Young tableau of skew shape $\lambda/\rho$ is a sequence of $n+1$ interlacing signatures
\begin{equation} \label{eq:semistandard-tab}
    P = (\rho = \lambda^{(0)} \preceq \lambda^{(1)} \preceq \cdots \preceq \lambda^{(n)} = \lambda).
\end{equation}
Equivalently, $P$ is a filling of the cells of the Young diagram $\lambda/\rho$ in which the cells $(j,k)$ such that $\lambda_j^{(i-1)} < k \le \lambda_j^{(i)}$ are assigned the label $i$, for each $i=1,\dots,n$. This filling is weakly
increasing along rows and strictly increasing along columns. We denote by \defn{$\mathrm{SST}(\lambda/\rho,n)$} the set of semistandard Young tableaux of skew shape $\lambda/\rho$ with labels in $\{1,\dots,n\}$.

    
The following is the main definition of this paper.

    \begin{definition}[Skew column RSK dynamics] \label{def:cRSK}
        Fix a pair $(P,Q)$ of semistandard Young tableaux of skew shape $\lambda/\rho$
        \begin{equation} \label{eq:tableauxPQ}
            P = (\rho = \lambda^{(0)} \preceq \lambda^{(1)} \preceq  \cdots \preceq \lambda^{(n)}= \lambda), \qquad Q = (\rho = \mu^{(0)} \preceq \mu^{(1)} \preceq  \cdots \preceq \mu^{(n)}= \lambda) 
        \end{equation}
        and construct the Fomin field $\boldsymbol{\lambda}=\boldsymbol{\lambda}[P,Q]$ on $\mathscr{C}_n$ such that
        \begin{equation} \label{eq:tableaux_from_loop}
            \boldsymbol{\lambda}(i,0) = \lambda^{(i)}, \qquad \boldsymbol{\lambda}(0,i) = \mu^{(i)},
            \qquad
            \text{for } i=0,\dots,n.
        \end{equation}
        The \defn{skew RSK column dynamics} (or cRSK dynamics) with initial conditions $(P,Q)$ is the sequence of pairs of semistandard Young tableaux $(P_t,Q_t)_{t\in \mathbb{Z}}$ of same skew shape such that 
        \begin{equation*}
            P_t = (\boldsymbol{\lambda}(tn,0) \preceq \boldsymbol{\lambda}(tn+1,0) \preceq  \cdots \preceq \boldsymbol{\lambda}(tn+n,0)), \qquad Q_t = (\boldsymbol{\lambda}(tn,0) \preceq \boldsymbol{\lambda}(tn,1) \preceq  \cdots \preceq \boldsymbol{\lambda}(tn,n)).
        \end{equation*}
        See \Cref{fig:RSK dynamics example} for an example.
    \end{definition}

    \begin{figure}
        \centering
        \includegraphics[width=.9\linewidth]{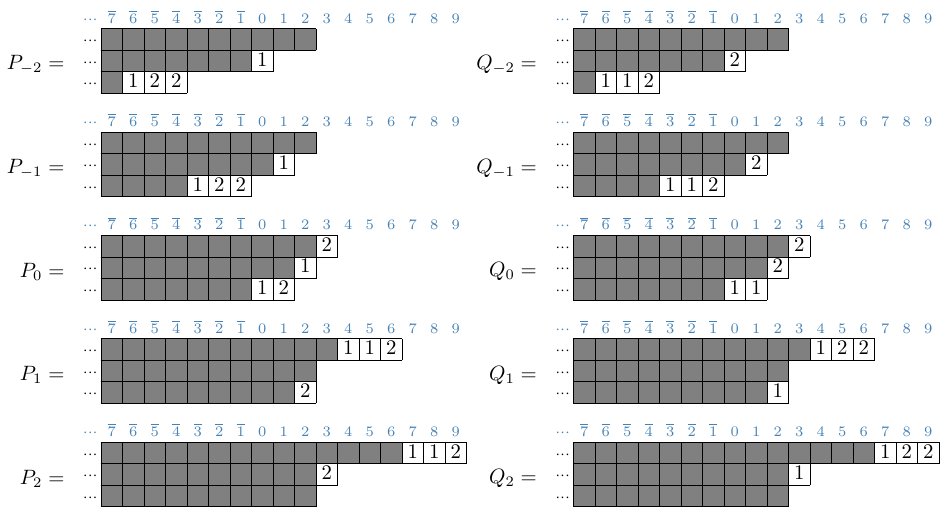}
        \caption{An example of evolution of the cRSK dynamics.}
        \label{fig:RSK dynamics example}
    \end{figure}

    In \Cref{sec:cRSK}, we give an alternative description of the cRSK dynamics in terms of Schensted column insertion, which is sometimes more convenient for proving properties of the system.

    \begin{remark} \label{rem:RSK to BBS}
        For $n=1$, the cRSK dynamics become the classical BBS. Indeed, if $\kappa \preceq \lambda$ and $\nu=\Fomin(\kappa,\lambda,\lambda)$, then the configurations $B=\lambda'-\kappa'$, $B'=\nu'-\lambda'$ satisfy $B'=\mathsf{T}(B)$ as described in \Cref{rem:from_two_lane_to_BBS}.
    \end{remark}

A consequence of the stabilization phenomenon of \Cref{prop:stabilization intro} of Fomin fields is that, asymptotically, pairs of tableaux $(P_t,Q_t)$ evolving according to the cRSK dynamics evolve linearly.
Namely, for large enough $t$, $P_{t+1}$ (resp.~$Q_{t+1}$) will be obtained from $P_t$ (resp.~$Q_t$) by translating its partitions (or labeled cells) by $\mu$.
This is evident in the example of \Cref{fig:RSK dynamics example}, where $P_2$ (resp.~$Q_2$) is obtained from $P_1$ (resp.~$Q_1$) by rigidly shifting the first and second row by $3$ and $1$ boxes to the right.
Similarly $P_{-2}$ (resp.~$Q_{-2}$) is obtained from $P_{-1}$ (resp.~$Q_{-1}$) by rigidly shifting the last and second last row by $3$ and $1$ boxes to the left.

We now extract from the asymptotic regime a pair of tableaux encoding the soliton data. Given a partition $\mu$, a \defn{horizontally weak tableau} of shape $\mu$ is a filling of the Young diagram of $\mu$ whose entries are weakly increasing along rows, with no condition imposed along columns.
We denote by \defn{$\HWT(\mu,n)$} the set of horizontally weak tableaux of shape $\mu$ with labels in $\{1,\dots,n\}$.
    
\begin{definition}[Soliton data] \label{def:Phi}
    Given a pair $(P,Q)$ of semistandard Young tableaux of the same skew shape, define the map 
    \begin{equation}
        \Phi \colon (P,Q) \in \bigsqcup_{\lambda,\rho \in \mathbb{S}} \mathrm{SST} ( \lambda/\rho , n)^2 \longrightarrow (H_1, H_2) \in \bigsqcup_{\mu\in\partitions} \HWT(\mu,n)^2,
    \end{equation}
    where, letting $\boldsymbol{\lambda} = \boldsymbol{\lambda}[P,Q]$, we set
    \begin{subequations}
    \begin{align}
        \label{eq:H_1 limit}
        \#\{ i\text{-cells at row }j \text{ of }H_1 \} & = \lim_{t\to \infty} \boldsymbol{\lambda}_j(t,i)-\boldsymbol{\lambda}_j(t,i-1),
        \\
        \label{eq:H_2 limit}
        \#\{ i\text{-cells at row } j \text{ of }H_2 \} & = \lim_{t\to \infty} \boldsymbol{\lambda}_j(t+i,0)-\boldsymbol{\lambda}_j(t+i-1,0). 
    \end{align}
    \end{subequations}
\end{definition}

For example, setting $(P,Q)=(P_0,Q_0)$ as in \Cref{fig:RSK dynamics example}, we have
\begin{equation}
    \ytableausetup{aligntableaux = center,smalltableaux}
    \Phi(P,Q) = (H_1,H_2) \qquad \text{with} \qquad H_1 = \begin{ytableau} 
        1 & 1 & 2 \\ 2
    \end{ytableau}
    ,
    \quad
    H_2 = \begin{ytableau} 
        1 & 2 & 2 \\ 1
    \end{ytableau}
    ~
    .
\end{equation}

\subsection{Main results}

The main result of this paper is an explicit linearization algorithm for the
cRSK dynamics. The construction relates the dynamics introduced
above to the classical BBS, and identifies the asymptotic soliton
data as part of a complete set of coordinates.

Let $\N = \{0, 1, \ldots\}$ be the set of natural numbers, including 0. To state our main result, we introduce the \defn{dual energy} $D' \colon \mathrm{HWT}(\mu,n)\rightarrow \mathbb{N}$ of a horizontally weak tableau. It is a natural statistic originating in the study of lattice models (see, e.g.,~\cite{Kang_Kashiwara_et_al} for some history) and its definition is given in \Cref{def:energy_function}. The energy has the property that $D'(H) = 0$ if and only if the horizontally weak tableau $H$ satisfies the semistandard property; i.e., if the filling is also strictly increasing column-wise. On the other hand if $H$ does not have the semistandard property, we have $D'(H)>0$ and $D'(H)$ assumes maximal value if the tableau $H$ only has entries equal to 1. We also introduce the sets
\begin{subequations}
\begin{align} \label{eq:tKappa}
    \mathcal{K}(\mu) := \{\kappa=(\kappa_1,\dotsc,\kappa_{\ell(\mu)})\in\Z^{\ell(\mu)} \mid \kappa_i\ge \kappa_{i+1} \text{~if~} \mu_i=\mu_{i+1}\},
\allowdisplaybreaks \\
    \label{eq:tKappa+}
    \mathcal{K}_+(\mu) := \{\kappa=(\kappa_1,\dotsc,\kappa_{\ell(\mu)})\in\N^{\ell(\mu)} \mid \kappa_i\ge \kappa_{i+1} \text{~if~} \mu_i=\mu_{i+1}\}.
\end{align}
\end{subequations}

\begin{theorem} \label{th:main_bij}
There exists an explicit bijection
\begin{equation} \label{eq:Upsilon}
    \Upsilon_{\mathrm{col}} \colon (P,Q) \in \bigsqcup_{\lambda,\rho \in \signatures} \mathrm{SST}(\lambda/\rho, n)^2
    \longrightarrow (H_1,H_2,\kappa,\nu) \in
    \bigsqcup_{\mu\in\partitions} \HWT(\mu,n)^2 \times \mathcal{K}(\mu) \times \signatures
\end{equation}
with the following properties: 
\begin{gather}
    \label{eq:PQHH}
    (H_1, H_2) = \Phi(P,Q),
    \\
    \label{eq:volume_pre}
    \abs{\rho} = D'(H_1) + D'(H_2) + \abs{{\kappa}} + \abs{\nu},
    \\
    \label{eq:length_pre}
    \ell(\lambda) = \ell(\mu)+\ell(\nu),
\end{gather}
where $\ell(\mu)$ is the number of positive entries of the partition $\mu$ and\footnote{notice that if $\rho$ is a signature, then $|\rho|$ is not necessarily positive.} $\abs{\rho} = \sum_i\rho_i$.
Moreover, $\rho, \lambda$ are integer partitions if and only if $\nu$ is also a partition and $\kappa \in \mathcal{K}_+(\mu)$.
\end{theorem}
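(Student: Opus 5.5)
The plan is to build $\Upsilon_{\mathrm{col}}$ as a composition of two steps. The first step runs the dynamics forward to the asymptotic regime and reads off $(H_1,H_2)$ together with a "bare" tableau pair. The second step removes the solitons one at a time, recording their positions as riggings, in the spirit of the KKR bijection.

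\textbf{Step 1: the soliton data.} By \Cref{prop:stabilization intro}, for $t\ge t^*$ the pair $(P_t,Q_t)$ is obtained from $(P_{t-1},Q_{t-1})$ by shifting the first $\ell'$ rows rigidly by $\mu$. Hence the limits in \eqref{eq:H_1 limit}--\eqref{eq:H_2 limit} exist and define $(H_1,H_2)=\Phi(P,Q)$ with shape $\mu$. The rows $\ell'+1,\dots,\ell$ of $\boldsymbol\lambda(t,\cdot)$ eventually freeze, and I define $\nu$ to be the frozen bottom part of $\boldsymbol\lambda(t,0)$. Because those rows are stationary, they do not interact with the soliton rows.

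\textbf{Step 2: the riggings and \eqref{eq:volume_pre}.} Using the commuting affine crystal structures on pairs of skew tableaux, with $\cRSK$ commuting with both, I would move $(P,Q)$ within its connected component to a distinguished representative. In that representative, $H_1$ and $H_2$ are replaced by their highest-weight (semistandard) versions, and each such move changes $\abs{\rho}$ by a controlled amount. The amount equals the dual energy $D'$ through the usual energy-function / combinatorial-$R$ argument.

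The hard part is showing that every component actually reaches this distinguished subgraph, and that the cost is exactly $D'(H_1)+D'(H_2)$. This is the connectivity theorem for distinguished subgraphs of tensor products of Kirillov--Reshetikhin crystals, and I expect it to be the main obstacle. I would attack it first by induction on $n$. In the reduced situation, the tableaux are "frozen" except for the soliton positions. Applying the projection $\mathrm{Pr}$ to the BBS and the KKR bijection, I read off a rigging $J$ and set $\kappa=J+\mu$ (normalized at $t=0$). Then $\kappa\in\mathcal{K}(\mu)$, with the ordering constraint among equal-length solitons coming from the KKR rigging conventions. Linear evolution $\kappa\mapsto\kappa+\mu$ then follows from \eqref{eq:BBS commutative diagram}. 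The remaining part of $\abs\rho$ is $\abs\kappa+\abs\nu$, since a soliton of length $\mu_i$ at rigging $\kappa_i$ contributes $\kappa_i$ to the base shape $\rho$.

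\textbf{Step 3: bijectivity and the remaining claims.} I would construct the inverse explicitly. Start from $\nu$ and insert solitons of lengths $\mu_i$ at positions $\kappa_i$ far in the future, where they do not interact, using $H_1,H_2$ for their fillings. Then run $\cRSK$ backward $t$ steps and shift $\kappa$ accordingly; this is well defined by invertibility of $\Fomin$. The length identity \eqref{eq:length_pre} holds by construction. For the positivity statement, nonnegativity of $\rho$ corresponds to every part of $\nu$ being nonnegative and to the KKR vacancy condition $\kappa_i\ge0$. This is the standard fact that a BBS configuration supported on $\N$ has nonnegative KKR riggings, transported through $\mathrm{Pr}$.
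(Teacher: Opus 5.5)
\textbf{There is a genuine gap in Step 2.} Your outline has the right ingredients, but the step that is supposed to produce \eqref{eq:volume_pre} is set up wrongly, and the two halves of your Step 2 do not fit together.

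The only projection to the BBS you may use as an input is $\Pr$ on $\SST(1)$; the general $\Pr$ of \Cref{thm:projection BBS} is itself built from the reduction you are trying to establish. So the crystal moves must end at a pair $(T,T)$ with $T\in\SST(1)$. That is, they must drive $(H_1,H_2)$ to $(H_{\max},H_{\max})$, the all-ones element, which has \emph{maximal} dual energy $D'(H_{\max})=\Abs{\mu}$. Moving to highest-weight or semistandard elements does not work: classical moves do not change the shape at all, and semistandard means $D'=0$.

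The correct bookkeeping is elementary. Every nonzero $E^{(k)}_0=\iota_k E^{(k)}_1\iota_k^{-1}$ adds exactly one cell to $\rho$: the inner shape of $\iota_k(P_1,P_2)$ is $\rho$ enlarged by the $1$-cells of $P_k$, and $E^{(k)}_1$ creates one more $1$. A leading path for $H_k$ uses exactly $\Abs{\mu}-D'(H_k)$ dual-regular $e_0$-arrows, so $\abs{\bar\rho}=\abs{\rho}+2\Abs{\mu}-D'(H_1)-D'(H_2)$. The energy/$R$-matrix input is only needed to show that the chosen arrows $\underline{H_{\mathrm{lw}}}\to e_0(\underline{H_{\mathrm{lw}}})$ raise $D'$ by exactly one.

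You then need the nontrivial BBS identity $\abs{\bar\rho}=\abs{\nu}+\abs{J}+\abs{\mu}+2\Abs{\mu}$ (\Cref{prop:volume_BBS_tableau}, proved via the Kuniba--Sakamoto--Yamada identity $\Gamma_B=\tau_{\mu,J}$). The two $2\Abs{\mu}$ terms then cancel to give \eqref{eq:volume_pre}.

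Your heuristic that a soliton with rigging $\kappa_i$ contributes $\kappa_i$ to $\rho$ is false. For $n=1$ there are no crystal moves and $H_1=H_2=H_{\max}$, so your accounting would give $\abs{\rho}=\abs{\kappa}+\abs{\nu}$. Yet the BBS state with balls at $1$ and $3$ has $\KKR(B)=((1,1),(-1,-1))$, $\kappa=(0,0)$, $\nu=\varnothing$ and $\rho=(2,0)$. The missing $2=D'(H_1)+D'(H_2)$ is the KKR phase shift.

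\textbf{The same issue breaks your inverse.} Since $\abs{\rho_{t+1}}-\abs{\rho_t}=\abs{\mu}$ for all $t$, the asymptotic positions $x_i=\lim_{t\to\infty}(\rho_{t,i}-t\mu_i)$ satisfy $\abs{\rho}=\abs{x}+\abs{\nu}$ exactly. So placing soliton $i$ at $\kappa_i+t\mu_i$ and running $\cRSK$ backward gives $\abs{\rho}=\abs{\kappa}+\abs{\nu}$, contradicting \eqref{eq:volume_pre}. For equal parts it can even be impossible: for $n=1$, $\mu=(1,1)$ and $\kappa=(0,0)\in\mathcal{K}(\mu)$, it puts both solitons on the same site. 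Riggings differ from asymptotic positions by phase shifts totalling $D'(H_1)+D'(H_2)$, which your construction does not specify. The inverse must be $\KKR^{-1}$, then $\Xi^{-1}$, then $\mathcal{L}^{-1}_{H_1,H_2}$ applied to $(T,T)$.

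Two further gaps remain.
\begin{itemize}
\item The (dual) regular subgraph is connected only when $n\ge\ell(\mu)$; it is disconnected for $n=2$, $\mu=(2,1,1)$. You must first enlarge the alphabet to $n'=\max(n,\ell(\mu))$, as a set embedding rather than a crystal embedding. The paper then proves connectivity by an explicit regular arrow $\overline{H_{\mathrm{hw}}}\to f_0(\overline{H_{\mathrm{hw}}})$ out of every non-maximal classical component, transported by $\eta$, not by induction on $n$.
\item The positivity statement needs that the leading path never changes the last row, $\bar\rho_\ell=\rho_\ell$ (\Cref{cor:bottom_row}). This is what gives $B\in\BBS_{>0}$, hence $J_i\ge-\mu_i$, i.e.\ $\kappa\in\mathcal{K}_+(\mu)$, and conversely.
\end{itemize}
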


The first two components $H_1,H_2$ of $\Upsilon_{\mathrm{col}}(P,Q)$ are the soliton data introduced in \Cref{def:Phi} as prescribed by~\eqref{eq:PQHH}.
The component $\nu$ is also obtained directly from the Fomin field.
Let $\boldsymbol{\lambda}=\boldsymbol{\lambda}[P,Q]$, and set
\[
    m
    =
    \max
    \left\{
        i \midspan
        \lim_{t\to -\infty}
        \boldsymbol{\lambda}_i(t,0)
        \text{ exists and is finite}
    \right\}.
\]
Then $\nu\in\mathbb{S}_m$ is defined by
\begin{equation} \label{eq:nu}
    \nu_i
    =
    \lim_{t\to -\infty}
    \boldsymbol{\lambda}_i(t,0),
    \qquad
    i=1,\dots,m.
\end{equation}
If $m=0$, then $\nu=\varnothing$. In the example of \Cref{fig:RSK dynamics example}, one has $m=1$, since the backward cRSK dynamics leaves only the first row constant. Hence, for $(P,Q)=(P_0,Q_0)$ in that example, the corresponding signature is $\nu=(2)$.


It remains to describe the component $\kappa$. This is the most delicate part
of the construction. Its definition relies on a fundamental projection from
the cRSK dynamics to the classical BBS.

\begin{theorem} \label{thm:projection BBS}
    There exists an explicit projection
    \begin{equation}
        \Pr \colon (P,Q) \in \bigsqcup_{\lambda,\rho \in \mathbb{S}} \mathrm{SST} ( \lambda/\rho , n)^2 \longrightarrow B \in \BBS,
    \end{equation}
    such that, assuming $\Upsilon_{\mathrm{col}}(P,Q) = (H_1,H_2,\kappa,\nu)$ with $H_1,H_2$ of shape $\mu$, then
    \begin{equation}
        \KKR(B)=(\mu,J)
        \qquad
        \text{and}
        \qquad
        \kappa = J+\mu.
    \end{equation} 
\end{theorem}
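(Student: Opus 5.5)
Since both $\Upsilon_{\mathrm{col}}$ and $\Pr$ are only asserted to exist, the plan is to build them together so that the claimed identity becomes a structural consequence. The idea is to reduce a general pair $(P,Q)$ to the diagonal case $n=1$ of \Cref{rem:RSK to BBS}, where the cRSK dynamics is literally the BBS. For that case the identity $\kappa=J+\mu$ will be taken as the normalization defining $\kappa$.

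The reduction uses the two commuting affine crystal structures on pairs of skew tableaux. These are Kashiwara operators acting on $P$ and on $Q$ respectively, and both commute with $\cRSK$; this is \Cref{th:crystal_sym}. Concretely, I will proceed as follows.

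\begin{enumerate}
\item Show that the crystal operators, including the affine $0$-arrows, preserve the soliton data $\Phi$ up to the corresponding crystal action on $H_1,H_2$. This should follow from commutation with the dynamics, by passing to $t\to\infty$ in \eqref{eq:H_1 limit}--\eqref{eq:H_2 limit}.
\item Apply the connectivity theorem for the distinguished subgraph of tensor products of Kirillov--Reshetikhin crystals. It should say that every $(P,Q)$ can be moved, along arrows of the two crystals, to a distinguished representative $(P^\circ,Q^\circ)$ whose tableaux have all entries equal to $1$, or some similar extremal form. Such a representative is determined by a single chain $\rho\preceq\lambda$, that is, by a BBS configuration $B=\lambda'-\rho'$.
\item Define $\Pr(P,Q):=B$. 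Well-definedness requires that $B$ be independent of the path in the crystal graph. I will deduce this from commutation with $\cRSK$ together with invariance of the conserved quantities, which pins down the representative.
\end{enumerate}

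Next, define $\kappa$ through the projection. Write $\KKR(\Pr(P,Q))=(\mu,J)$ and set $\kappa:=J+\mu$. Two things must then be checked. First, the partition $\mu$ obtained from $\KKR$ agrees with the shape of $H_1,H_2$ from $\Phi$. Second, this choice of $\kappa$ satisfies properties \eqref{eq:volume_pre}--\eqref{eq:length_pre} of \Cref{th:main_bij}.

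The first check comes from stabilization. By \Cref{prop:stabilization intro}, the increment of $\boldsymbol\lambda(t+n,\cdot)$ is the same partition $\mu$ for every pair in a crystal component. For the representative $(P^\circ,Q^\circ)$ this increment is the BBS soliton content, which the classical KKR theory identifies with $\mu$. The second check uses the volume formula: the crystal moves change $|\rho|$ exactly by the energies $D'(H_1)+D'(H_2)$ (energy behaves this way under $0$-arrows), and for the representative the classical identity $|\rho|=|J+\mu|+|\nu|$ should follow from the KKR volume formula.

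Linearity is then automatic. Commutation of $\Pr$ with the dynamics gives $\Pr\circ\cRSK=\sfT\circ\Pr$, and the KKR diagram \eqref{eq:BBS commutative diagram} yields \eqref{eq:cRSK commutative diagram}. This yields the full commutative diagram \eqref{eq:commutative_diagram_RSK_BBS}.

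The main obstacle is the connectivity and well-definedness in step 2. One must show that the distinguished subgraph, built from the classical arrows together with energy-controlled $0$-arrows, connects every pair to a diagonal representative, and that the resulting $B$ is independent of the path taken. My first attempt would be to argue via highest weight elements of the classical $\mathfrak{sl}_n\times\mathfrak{sl}_n$ action, combined with an affine $0$-arrow argument on the soliton part, using the combinatorial $R$-matrix to reorder the Kirillov--Reshetikhin factors.
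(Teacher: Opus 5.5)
Your step 3 fails. The diagonal representative reached by crystal moves, and hence $B$, is \emph{not} independent of the path, so $\Pr$ as you define it is not a function. Take $n=2$, $\rho=(r)$, $\lambda=(r+1)$ and $P=Q$ the one-cell tableau with entry $1$. This pair is already diagonal with all entries $1$, and the empty path gives $B=\mathbf{e}_{r+1}$. On the other hand, $F_1^{(1)}$ turns the entry of $P$ into a $2$. Then $F_0^{(1)}$ turns it back into a $1$ while shifting both tableaux one box to the left (compute directly, or use \Cref{prop:EF_stable}: one-cell pairs are stable and $f_0$ acts on the unique row of $H_1$). So $F_0^{(1)}F_1^{(1)}(P,Q)$ is another diagonal all-$1$ pair, now with $B=\mathbf{e}_r=\sfT^{-1}(\mathbf{e}_{r+1})$. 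In general each $E_0^{(k)}$ (resp.\ $F_0^{(k)}$) changes $\abs{\rho}$ by exactly $+1$ (resp.\ $-1$), directly from \eqref{eq:EF0}. Loops of $\KR{\mu}$ through $0$-arrows, such as $1\to 2\to 1$ here, therefore lift to open paths. All diagonal pairs reachable from $(P,Q)$ share $\mu$, $\nu$ and $\Phi=(H_{\max},H_{\max})$ by \Cref{prop:Kashiwara_ker}. They differ precisely in the rigging, which is the one non-conserved datum you are trying to define. So ``commutation with $\cRSK$ and invariance of the conserved quantities'' cannot single one out. The identity $\Pr\circ\cRSK=\sfT\circ\Pr$, which you derive from path-independence, is also left without justification.

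The missing idea is to fix the path as a canonical function of $\Phi(P,Q)$ alone, instead of trying to prove path-independence. The paper enlarges the alphabet to $n'=\max(n,\ell(\mu))$ so that \Cref{thm:dual_connectedness} applies. It then uses the leading path $\mathcal{L}_{H_1,H_2}$ of \Cref{def:leading_path}: classical moves to $\underline{H_{\mathrm{lw}}}$, then one dual regular $e_0$-arrow, repeated. Implicitly in \Cref{def:upsilon} it sets $\Pr(P,Q):=\Pr(T)$ with $(T,T)=\mathcal{L}_{P,Q}(P,Q)$. Well-definedness then holds by construction, and $\kappa=J+\mu$ is a definition. The KKR shape equals the shape of $H_k$ by \Cref{prop:Kashiwara_ker}, \Cref{prop:cRSK_BBS} and \Cref{thm:KKR linearization}. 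Commutation with the dynamics follows from \Cref{th:crystal_sym}, because $\Phi\circ\cRSK=\Phi$ means the same path serves $\cRSK(P,Q)$. Restricting to dual regular arrows is what makes \eqref{eq:volume_pre} hold. The second path in the example lowers $\abs{\rho}$ while $D'\equiv 0$ on $\KR{(1)}$, so it violates \eqref{eq:volume_pre}.

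Your bookkeeping for the volume identity is also off. By \Cref{prop:volume_BBS_tableau} the representative satisfies $\abs{\overline{\rho}}=\abs{\nu}+\abs{J}+\abs{\mu}+2\Abs{\mu}$, not $\abs{J+\mu}+\abs{\nu}$. The leading path changes $\abs{\rho}$ by $2\Abs{\mu}-D'(H_1)-D'(H_2)$ (\Cref{cor:bottom_row}), not by $\pm\bigl(D'(H_1)+D'(H_2)\bigr)$. The $2\Abs{\mu}$ terms cancel only in the final formula. Finally, in step 1, letting $t\to\infty$ handles only the classical operators. For $E_0^{(k)},F_0^{(k)}$ you need the identification of $\iota_k^{-1}$ on stable pairs with promotion (\Cref{prop:EF_stable}).
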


\begin{remark}
    \Cref{thm:projection BBS} explains why the map $\Upsilon_{\mathrm{col}}$ gives linearized coordinates for the cRSK dynamics, as illustrated in \eqref{eq:cRSK commutative diagram} and \eqref{eq:commutative_diagram_RSK_BBS}. Indeed, under $\Upsilon_{\mathrm{col}}$, the data of a pair $(P,Q)$ is decomposed into conserved quantities $(H_1,H_2,\nu)$ and an angle variable $\kappa$. The latter is identified, via the projection $\Pr$ and the KKR correspondence, with the rigging data of the associated BBS configuration. Since the KKR correspondence linearizes the BBS evolution by translating the riggings, the $\kappa$ coordinates evolve linearly under the cRSK dynamics.
\end{remark}

The construction of the projection $\Pr$ is the main novel technical result of this paper and it uses symmetries of the cRSK dynamics, namely the operators
\[
    E_i^{(1)}, F_i^{(1)} ,E_i^{(2)}, F_i^{(2)},
    \qquad
    i\in\{0,\dots,n-1\},
\]
defined in \Cref{sec:crystals_pairs}. These are Kashiwara operators for an $(\widehat{\mathfrak{sl}}_n\oplus\widehat{\mathfrak{sl}}_n)$-crystal structure on pairs of skew semistandard tableaux; equivalently, they define two commuting affine crystal structures. A key input in the construction is a novel connectivity property of certain subgraphs of the Kirillov--Reshetikhin
crystals $\HWT(\mu,n)$, which we call \emph{(dual) regular subgraphs}; see \Cref{def:reg_subgraph} and \Cref{def:dual_subgraph}. These connectivity results, proved in \Cref{thm:connectedness} and \Cref{thm:dual_connectedness} are one of the main technical ingredients of the paper and may be of independent interest. 

In \Cref{fig:projection example}, we illustrate the construction of $\Pr$ for
the pair $(P,Q)=(P_0,Q_0)$ from \Cref{fig:RSK dynamics example}. A repeated application of the
crystal operators reduces the pair $(P,Q)$ to a pair of equal tableaux, which is then
identified with the BBS configuration $B$
shown in \Cref{fig:BBS}.

\begin{figure}[h]
    \centering
    \includegraphics[width=.8\linewidth]{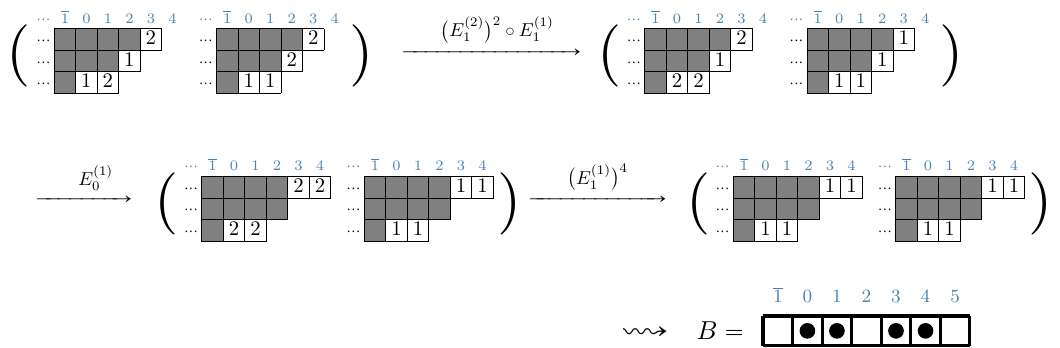}
    \caption{
    The construction of the projection $\Pr$ in \Cref{thm:projection BBS}.
    The final pair of equal skew tableaux, whose entries are all $1$, is identified with a BBS configuration by placing balls at coordinates of the columns containing
    $1$-labelled cells.
    }
    \label{fig:projection example}
\end{figure}

The connectivity results described above ensure that any pair $(P,Q)$ can be moved, by crystal operators, to a pair $(T,T)$ of identical tableaux whose entries are all $1$. This representative determines the projected BBS configuration. Moreover, the change in the shape of the tableaux along this transformation is controlled by the dual energies of the soliton data $(H_1,H_2)$; this bookkeeping is the key ingredient in the proof of the identity \eqref{eq:volume_pre}.


\subsection{Greene's invariants}

It is possible to give an alternative description of the asymptotic partition $\mu$ associated to a Fomin field of signatures $\boldsymbol{\lambda}$. This is a generalization of Greene's theorem which describes the shape of semistandard tableaux which arise as the image of a matrix of natural numbers under the RSK correspondence \cite{Greene1974}. 

Given a Fomin field of signatures $\boldsymbol{\lambda}$, it is straightforward to notice, from the definition of the Fomin growth operator \eqref{eq:fomin-operator}, that $\boldsymbol{\lambda}$ can be reconstructed from the environment $\mathbf{c}$ and from the signature $\nu$ given by \eqref{eq:nu}. The environment records the carrier data at the origin in every elementary square. Together with the backward asymptotic signature $\nu$, this data determines the field $\boldsymbol{\lambda}$ uniquely, by recursively applying the inverse Fomin growth rule. This proves a variant of periodic RSK correspondence of \cite{sagan1990robinson}, which we state next.

For any set $A$ and any set $B \subseteq \mathbb{Z}$ such that $0\in B$, we denote the set of finitely supported functions on $A$ taking values in $B$ as 
\begin{equation} \label{eq:set_environments}
    \mathfrak{F}_{\mathrm{fin}}(A,B) = \{ \mathbf{c}\colon A \to B \mid \mathrm{supp}(\mathbf{c}) \text{ is finite} \}.
\end{equation}

\begin{proposition}[Periodic column RSK correspondence] \label{prop:column RSK}
    We have a bijection
    \[
        \Psi\colon (P,Q) \in \bigsqcup_{\lambda,\rho \in \mathbb{S}} \mathrm{SST} ( \lambda/\rho , n)^2 \longrightarrow (\mathbf{c},\nu) \in \mathfrak{F}_{\mathrm{fin}}(\mathscr{C}_n',\N) \times \mathbb{S},  
    \]
    where letting $\boldsymbol{\lambda} = \boldsymbol{\lambda}[P,Q]$, we define $\mathbf{c}$ as the environment associated to $\boldsymbol{\lambda}$ and $\nu$ is defined as in \eqref{eq:nu}.
\end{proposition}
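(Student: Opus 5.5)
The proof will show that $\Psi$ is well defined and then construct an explicit inverse $(\mathbf{c},\nu)\mapsto(P,Q)$. The basic tool is a \emph{local} bijection. For fixed outer data $\lambda\preceq\nu\succeq\mu$ around an elementary square, the map
\[
(\kappa,c)\mapsto(\nu,c),
\]
with $c=c_0(\mu'-\kappa',\lambda'-\kappa')$, must be bijective in the following sense. Given $\kappa,\lambda,\mu$, the pair $(\nu,c)$ is determined. Conversely, given $\nu,\lambda,\mu$, the signature $\kappa$ is determined by invertibility of $\mathsf{F}$, already noted after \eqref{eq:fomin-operator}.

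The real content is a finer statement: given $\lambda,\mu$ and the single number $c$, the pair $(\kappa,\nu)$ should be recoverable in the RSK sense. That is, "growth" from $(\lambda,\mu)$ with the entry $c$ should determine both $\kappa$ and $\nu$. I will prove this by analyzing $\mathsf{f}$ directly from \eqref{eq:f Fomin_1}. The carrier values $c_x$ for $x<0$ and $x>0$ are forced by the local data except at the origin. So $c_0$ is precisely the "extra information" lost in passing from $(\mu,\lambda)$ to $\nu$. This mirrors Fomin's classical local rules, where the matrix entry records the extra growth. I will verify this with the column-insertion description of \Cref{sec:cRSK}.

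Well-definedness of $\Psi$ comes next. First, $\mathbf{c}$ must be finitely supported on $\mathscr{C}_n'$. By the stabilization of \Cref{prop:stabilization intro}, for $|t|$ large the field evolves by rigid translation along a loop. In that regime no pairs are carried across the origin, so $c_0=0$ on all faces with $|t|\ge t^*$, up to periodic identification. Second, $\nu$ must be a genuine signature. This means the set of rows $i$ with $\lim_{t\to-\infty}\boldsymbol{\lambda}_i(t,0)$ finite must be an initial segment $\{1,\dots,m\}$. That follows since the backward increment $-(0,\dots,0,\mu_{\ell'},\dots,\mu_1)$ vanishes exactly on the first $\ell-\ell'$ rows.

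For the inverse, start from $(\mathbf{c},\nu)$. Choose $t_0$ below the support of $\mathbf{c}$. On a simple loop at time $t_0$, the field must equal "$\nu$ padded with rows going to $-\infty$", which is rigidly translating. Then fill in the cylinder square by square in the forward direction, applying the local growth with input $\mathbf{c}(p')$. The main obstacle is the starting data: $\nu$ alone does not fix the rows $m+1,\dots,\ell$ at time $t_0$. One needs to argue that in the region below the support, those rows are at $-\infty$ effectively. I would handle this by working with signatures of length $\ell$ as the limit $N\to\infty$ of rows sitting at $-N$. Alternatively, I would reduce to partitions by shifting the field by a constant. Either way, I would then check that the growth rules with zero environment are consistent with any such choice. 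Finally, $P$ and $Q$ are read off along the loop through the origin, and the two constructions are checked to be mutually inverse by local uniqueness.
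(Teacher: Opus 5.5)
\textbf{Gap: the inverse map is not constructed.} Your inverse construction rests on a local statement that is false. The operator $\overline{\mathsf{F}}$ has no free input: $\nu$ and $c_0$ are both functions of $(\kappa,\mu,\lambda)$. So for fixed $(\lambda,\mu)$ the only local bijection is $\kappa\leftrightarrow\nu$, and $c_0$ is an output.

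Your ``finer statement'' that $(\lambda,\mu,c)$ recovers $(\kappa,\nu)$ already fails for $\ell=1$. Take $\lambda=\mu=(5)$ and $\kappa=(k)$ with $0\le k\le 5$. The carrier is empty up to column $k\ge 0$, so $c_0=0$ for all six choices, while $\nu=(10-k)$.

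Hence ``filling the cylinder forward with input $\mathbf c(p')$'' from a loop at time $t_0$ is not a well-defined procedure. The obstacle you flag is not a technicality: the positions of rows $m+1,\dots,\ell$ at time $t_0$ are exactly where the information in $\mathbf c$ is stored. For instance, with $n=1$ and $\nu=\varnothing$, a single ball at site $x$ gives an environment equal to $1$ on the one face at time $-x$ and $0$ elsewhere. A zero environment before that face is compatible with every $x$. Neither a limit of rows at $-N$ nor a global shift recovers $x$; a shift moves the origin and therefore changes $\mathbf c$. Likewise, ``local uniqueness'' cannot show that your two maps are mutually inverse.

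\textbf{Missing idea: split each conjugate sequence $\boldsymbol{\lambda}'(p)$ at the origin.}
\begin{itemize}
\item On columns $i\ge 1$, the carrier started with load $c_0$ is a classical partition-shape RSK local rule with $c_0$ as the input entry. For fixed positive parts of $\lambda,\mu$ it gives a bijection $(\kappa^{+},c_0)\leftrightarrow\nu^{+}$.
\item On columns $i\le 0$ the roles are reversed: $\kappa^{-}\leftrightarrow(\nu^{-},c_0)$, because there $c_0$ is what the carrier emits.
\end{itemize}
So the positive parts must be grown \emph{forward} in time from the far past, where they coincide with the positive part of $\nu$. The nonpositive parts, recorded as $\#\{j:\boldsymbol{\lambda}_j(p)<i\}$ for $i\le 0$, must be grown \emph{backward} from the far future, where they are again read off from $\nu$.

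Each half is determined by $\nu$ and $\mathbf c$, which gives injectivity. Any two such halves glue to a field satisfying the Fomin rule on every face, because the carrier load at the origin matches.

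For surjectivity you still have to fix the length. Let $S(p)$ be the number of nonzero entries at $p$. The glued field of length $\max_p S(p)$ is a genuine Fomin field whose kernel has no zero entry; otherwise every $\boldsymbol{\lambda}(p)$ would contain a $0$, contradicting maximality. By \Cref{prop:properties_kernel}, the maximum of $S$ is therefore attained in the far past. Taking $\ell=\max_p S(p)+\#\{j:\nu_j=0\}$ then produces backward limit exactly $\nu$.

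For $n=1$ this is just the box-ball system on the two half-lines, with the loads $\mathbf c$ passed across the origin.
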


Composing the periodic column RSK correspondence from \Cref{prop:column RSK} with the bijection of \Cref{th:main_bij} we obtain a refined correspondence, in which the soliton data are read directly from the environment $\mathbf{c}$. In particular, the shape $\mu$ appearing in the linearizing coordinates admits a Greene-type description in terms of last passage percolation times across $\mathbf{c}$.

A \defn{strict up-right path} is a bi-infinite sequence $\boldsymbol{\omega} = (p_i)_{i\in \mathbb{Z}} \subset \mathscr{C}_n'$ such that $p_{i+1} - p_i \sim (a,b)$ with $a,b>0$, for all $i\in \mathbb{Z}$. We define two variants of \defn{last passage percolation times} across an environment $\mathbf{c} \in \mathfrak{F}_{\mathrm{fin}}(\mathscr{C}_n',\N)$ as
\begin{subequations}
\begin{align}
    \mathrm{LPP}_k^{\circlearrowleft} (\mathbf{c}) & = \max_{\substack{\mathbf{p}^{(1)},\dots,\mathbf{p}^{(k)}\\ \text{non-intersecting} \\ \text{simple loops} }} \left\{ \sum_{i=1}^k \sum_{j=0}^{2n-1} \mathbf{c}(\mathbf{p}^{(i)}_j) \right\}
\\
    \mathrm{LPP}_k^{\nearrow} (\mathbf{c}) & = \max_{\substack{\mathbf{\omega}^{(1)},\dots,\mathbf{\omega}^{(k)}\\ \text{strict up-right} \\ \text{paths} }} \left\{ \sum_{p \in \mathscr{C}_n' } \min\left\{ \mathbf{c(p)}, \sum_{i=1}^{k} \mathbf{1}_{p \in \mathbf{\omega}^{(i)}} \right\}  \right\}.
\end{align}
\end{subequations} 
In words, the statistics $\mathrm{LPP}_k^{\circlearrowleft} (\mathbf{c})$ measures the maximal cumulative weight that $k$ walkers can collect from $\mathbf{c}$ after performing a simple loop each (loops cannot intersect each other). To interpret the statistics $\mathrm{LPP}_k^{\nearrow} (\mathbf{c})$, imagine that each cell $p \in \mathscr{C}_n'$ carries $\mathbf{c}(p)$ units of weight and each time an individual visits $p$ it may extract at most one unit from that cell. Then $\mathrm{LPP}_k^{\nearrow} (\mathbf{c})$ is the maximal weight that $k$ individuals can extract from $\mathbf{c}$ along $k$ strict up-right paths.

For the next theorem we define the negative half infinite cylinder
\[
    \mathscr{C}_n'^- = \left\{ p \in \mathscr{C}_n' \midspan p \sim (i',j') \text{ for some } i' \in \Z'_{< 0} \text{ and } j' \in \left\{ \frac{1}{2},\dots,n-\frac{1}{2} \right\}\right\}.
\]

\begin{theorem} \label{thm:LPP}
    There exists a bijection 
    \begin{equation} \label{eq:Upsilon_tilde}
        \widetilde{\Upsilon}_{\mathrm{col}} \colon \mathbf{c} \in \mathfrak{F}_{\mathrm{fin}}(\mathscr{C}_n',\N) \longrightarrow (H_1,H_2,\kappa) \in \bigsqcup_{\mu \in \partitions} \HWT(\mu,n)^2 \times \mathcal{K}(\mu),
    \end{equation}
    such that, assuming $\Upsilon_{\mathrm{col}}(P,Q) = (H_1,H_2,\kappa,\nu)$ and $\Psi(P,Q)=(\mathbf{c},\nu)$, we have $\widetilde{\Upsilon}_{\mathrm{col}}(\mathbf{c}) = (H_1,H_2, \kappa)$. Moreover, the following properties hold.
    \begin{enumerate}[label = {\rm (\Roman*)}]
        \item \label{item:LPP} Assuming that $\mu$ is the common shape of tableaux $H_1,H_2$, we have
        \begin{equation} \label{eq:LPP}
            \mathrm{LPP}_k^{\circlearrowleft} (\mathbf{c}) = \mu_1 + \cdots + \mu_k, \qquad \text{and} \qquad
            \mathrm{LPP}_k^{\nearrow} (\mathbf{c}) = \mu_1'+\cdots+\mu_k'.
        \end{equation}
        \item \label{item:weight_environment} For all $i \in \{1,\dots,n\}$ we have
        \begin{equation} \label{eq:weight_environment}
            \#\{ i\text{-cells of } H_1 \} = \sum_{\ell \in \mathbb{Z}'} \mathbf{c} \left(i-\frac{1}{2},\ell \right) \qquad
            \text{and}
            \qquad
            \#\{ i\text{-cells of } H_2 \} = \sum_{\ell \in \mathbb{Z}'} \mathbf{c}\left (\ell,i-\frac{1}{2} \right).
        \end{equation}
        \item \label{item:volume_environment}
        We have
        \begin{equation} \label{eq:volume_environment}
            \sum_{i,j=1}^n \sum_{k \in \Z } k \, \mathbf{c} \left(i - \frac{1}{2},j-kn-\frac{1}{2} \right) = D'(H_1)+ D'(H_2)+ | \kappa|.
        \end{equation}
        \item \label{item:bij environment HWT restriction} The map $\widetilde{\Upsilon}_{\mathrm{col}}$ restricts to a bijection
        \[
            \widetilde{\Upsilon}_{\mathrm{col}}\colon \mathbf{c} \in \mathfrak{F}_{\mathrm{fin}}(\mathscr{C}_n'^-,\N) \longrightarrow (H_1,H_2,\kappa) \in \bigsqcup_{\mu \in \partitions} \HWT(\mu,n)^2 \times \mathcal{K}_+(\mu). 
        \]
    \end{enumerate}
\end{theorem}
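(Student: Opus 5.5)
Define $\widetilde{\Upsilon}_{\mathrm{col}}$ by composing the two bijections already available: $\widetilde{\Upsilon}_{\mathrm{col}} := \pi\circ\Upsilon_{\mathrm{col}}\circ\Psi^{-1}(\,\cdot\,,\nu)$, where $\pi$ forgets the last coordinate. This is well defined, i.e. independent of $\nu$, because the Fomin field reconstructed from $(\mathbf{c},\nu)$ with $\nu$ changed is modified only in rows that stay frozen as $t\to-\infty$. Concretely, I would show that if $\Psi(P,Q)=(\mathbf{c},\nu)$ and $\Psi(\tilde P,\tilde Q)=(\mathbf{c},\tilde\nu)$, then $(H_1,H_2,\kappa)$ agree. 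For $(H_1,H_2)$ this is immediate from \eqref{eq:H_1 limit}--\eqref{eq:H_2 limit}, because the increments of the first $\ell(\mu)$ rows as $t\to+\infty$ are determined by the carrier data $\mathbf{c}$ alone. For $\kappa$ I would use \Cref{thm:projection BBS}: the projected BBS configuration $\Pr(P,Q)$ should depend only on $\mathbf{c}$, since the crystal operators act on the soliton part and leave the frozen rows encoded by $\nu$ untouched. With this independence in hand, bijectivity follows because $\Upsilon_{\mathrm{col}}$ and $\Psi$ are bijections that both carry $\nu$ as an independent free coordinate.

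For \ref{item:weight_environment}, sum the carrier loads. The two-lane map conserves particles, and summed over a column strip the carrier values $c_0$ record how many balls pass from lane $i-1$ to lane $i$ at the origin. Telescoping the row increments $\boldsymbol{\lambda}_j(t,i)-\boldsymbol{\lambda}_j(t,i-1)$ over $t$ from $-\infty$ to $+\infty$ then gives the total number of $i$-cells of $H_1$ as $\sum_\ell\mathbf{c}(i-\tfrac12,\ell)$. The statement for $H_2$ follows by the symmetric argument in the other direction.

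For \ref{item:volume_environment}, combine \eqref{eq:volume_pre} with a direct computation of $|\rho|$: running the inverse Fomin rules from the loop through the origin down to $t=-\infty$, where the field equals $\nu$ padded by $-\infty$ rows, shows $|\rho|-|\nu|=\sum k\,\mathbf{c}(\cdot)$, with each unit of carrier load on the $k$-th translate of the fundamental domain contributing $k$. Item \ref{item:bij environment HWT restriction} then follows from the last sentence of \Cref{th:main_bij}: $\mathbf{c}$ is supported in $\mathscr{C}_n'^-$ exactly when $\rho,\lambda$ can be taken to be partitions for $\nu=\varnothing$, and in that case $\kappa\in\mathcal{K}_+(\mu)$.

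The main obstacle is \ref{item:LPP}, the Greene-type formulas. First, $\mathrm{LPP}^{\circlearrowleft}_k$ and $\mathrm{LPP}^{\nearrow}_k$ are invariant under the local Fomin moves that shift the simple loop, since each local rule is a tropical (max-plus) identity, just as in the proof of Greene's theorem via Fomin growth diagrams. Applying enough moves pushes the configuration into the separated soliton regime of \Cref{prop:stabilization intro}. In that regime the asymptotic pair $(P_t,Q_t)$ is a translation of a fixed pair, and the environment decomposes into independent solitons of lengths $\mu_1\ge\mu_2\ge\cdots$, each carried around a single loop. There the $k$ best non-intersecting loops collect $\mu_1+\dots+\mu_k$, and the $k$ up-right paths each extract one unit per soliton row, giving $\mu'_1+\dots+\mu'_k$. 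I expect the invariance step, a non-intersecting-path version of the classical Greene--Kleitman argument on the cylinder, to require the most care, since periodicity forbids the usual acyclic Lindström--Gessel--Viennot reduction.
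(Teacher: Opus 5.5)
Your handling of the definition of $\widetilde{\Upsilon}_{\mathrm{col}}$ and of items \ref{item:weight_environment}--\ref{item:bij environment HWT restriction} essentially follows the paper, which simply defines $\widetilde{\Upsilon}_{\mathrm{col}}$ through the kernel-free representative $\Psi^{-1}(\mathbf{c},\varnothing)$. Item \ref{item:LPP}, however, has a genuine gap.

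The environment $\mathbf{c}$ is a single finitely supported function attached to the whole Fomin field. It does not depend on the simple loop used to read off $(P,Q)$. Replacing $(P,Q)$ by $\cRSK^t(P,Q)$, or moving the loop by Fomin moves, only translates $\mathbf{c}$. So your claimed invariance of $\mathrm{LPP}^{\circlearrowleft}_k$ and $\mathrm{LPP}^{\nearrow}_k$ under loop moves is just translation invariance and says nothing about $\mu$. Likewise, ``pushing into the separated soliton regime'' leaves $\mathbf{c}$ unchanged.

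Your final step is then false as stated. $\mathbf{c}$ is supported on the faces where labelled cells cross column $0$, and this can happen while solitons are still interacting. Once all moving cells lie in positive columns, the environment vanishes identically. So $\mathbf{c}$ does not ``decompose into independent solitons, each carried around a single loop.''

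A working version of your strategy needs a genuine Greene-type invariant proved on the cylinder. It would have to couple the data on a loop with LPP of $\mathbf{c}$ on one side of that loop, or else establish invariance under shifting the reference column (which does change $\mathbf{c}$). That is exactly the step you defer, and it is the entire content of \ref{item:LPP}.

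The paper does not prove such an invariant from scratch. Its route is:
\begin{enumerate}
\item Pass to the fully refined field on $\mathscr{C}_N$, whose environment $\widetilde{\mathbf{c}}$ is $\{0,1\}$-valued (\Cref{prop:standardization_field}, \Cref{prop:refinement_environment}).
\item Transpose the resulting standard tableaux. This turns the column dynamics into the skew row RSK dynamics of \cite{imamura2021skewRSK}, whose asymptotic shape is $\mu'$.
\item Import the generalized Greene theorem (\Cref{prop:Generalized_Greene_for_skew_row_RSK}). The transposition is what makes loops measure $\mu$ and up-right paths measure $\mu'$.
\item Prove $\mathrm{LPP}_k(\widetilde{\mathbf{c}})=\mathrm{LPP}_k(\mathbf{c})$. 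This uses the fact that a face with $\mathbf{c}(p')=c$ refines to a rectangle containing $c$ unit faces on an antidiagonal, together with a loop-rerouting argument.
\end{enumerate}

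Two smaller points. For \ref{item:bij environment HWT restriction}, you still have to show that $\mathbf{c}$ is supported in $\mathscr{C}_n'^-$ exactly when the kernel-free pair has partition shape; the paper derives this from \eqref{eq:value_c_0} and \Cref{prop:conservation_Fomin}. Your independence-of-$\nu$ claim should rest on the fact that changing $\nu$ is column-wise addition (\Cref{prop:symmetry Fomin}), i.e.\ inserting constant rows into every signature. It should not rest on crystal operators ``leaving those rows untouched,'' because in the tableau picture the inserted rows redistribute labelled cells among rows.
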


The proof of \Cref{thm:LPP} is given in \Cref{subs:LPP proof}.

\begin{remark}
    \Cref{thm:LPP} appears to be a generalization of the affine RS(K) correspondence by~\cite{CPY18,CLP23,KL25,Shi_affine_RS}.
    It would be interesting to establishing a precise connection between these related constructions and results.
\end{remark}

\begin{remark}
    Consider the $n=1$ case of \Cref{thm:LPP}. In this case simple loops on $\mathscr{C}_1'$ are pairs $(p,p+\mathbf{e}_1)$ with $p \in \mathscr{C}_1'$ while strict up-right paths are sequences $(p_i)_{i\in \mathbb{Z}}$ such that $p_{i+1} = p_i + k \mathbf{e}_1$ with $k\ge 2$. Moreover, by \Cref{rem:RSK to BBS} in the $n=1$ case the numbers $\mathbf{c}(t-1/2,1/2)$ record the number of balls carried by the BBS carrier while crossing the origin at time $t$.
    Then, \Cref{thm:LPP} states that the total size of the $k$ largest solitons is $\mathrm{LPP}_k^{\circlearrowleft} (\mathbf{c})$. Similarly the difference $\mathrm{LPP}_k^{\nearrow} (\mathbf{c})-\mathrm{LPP}_{k-1}^{\nearrow} (\mathbf{c})$ counts the number of solitons of length $k$ or more. These statements appear to be new, but they are reminiscent of Greene-type theorems for the multi-species BBS found in \cite{lewis2024scaling}.
\end{remark}

\subsection{Application to symmetric functions}

The linearization of the cRSK dynamics also has applications to symmetric functions. By taking suitable generating functions on both sides of the bijection \eqref{eq:Upsilon_tilde}, we obtain bijective proofs of Cauchy and Kawanaka--Littlewood identities for transformed Hall--Littlewood
polynomials. 

We will use the $q$-Pochhammer symbols
\[
    (z;q)_k = \prod_{i=0}^{k-1} (1-z q^i) \quad \text{for } k\in \N\cup\{\infty\}.
\]
The functions $\THL_\mu(x;q)$ and the coefficients $c_\mu(q)$, $c_\mu(\alpha,\beta;q)$ appearing in the statements below are defined explicitly in \Cref{sec:summation_id}.

\begin{theorem}[Cauchy and Kawanaka--Littlewood identities] \label{thm:Cauchy and Littlewood id}
    Fix a parameter $|q|<1$ and $2n$ parameters $x_1,\dots, x_n, y_1,\dots, y_n$ with absolute value $<1$. Then, we have
    \begin{equation} \label{eq:Cauchy_id}
        \prod_{i,j=1}^n \frac{1}{(x_i y_j;q)_\infty}  = \sum_{\mu} c_{\mu}(q) \THL_\mu(x;q) \THL_\mu(y;q).
    \end{equation}
    Moreover, for any $\alpha,\beta \in \mathbb{R}$ we have
    \begin{equation} \label{eq:littlewood_id}
            \prod_{i = 1}^n \frac{(-\alpha x_i;q^2)_\infty (-q \beta x_i;q^2)_\infty}{(x_i^2;q^2)_\infty} \prod_{1\le i < j \le n} \frac{1}{(x_i x_j;q^2)_\infty}
            =
            \sum_{\mu} c_\mu(\alpha,\beta;q) \THL_\mu(x;q^2).
    \end{equation}
\end{theorem}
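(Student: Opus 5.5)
The plan is to take weighted generating functions on both sides of the bijection $\widetilde{\Upsilon}_{\mathrm{col}}$ of \Cref{thm:LPP}, restricted to the negative half cylinder as in part \ref{item:bij environment HWT restriction}, so that every series converges for $|q|,|x_i|,|y_j|<1$.

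\textbf{The Cauchy identity.} To an environment $\mathbf{c}\in\mathfrak{F}_{\mathrm{fin}}(\mathscr{C}_n'^-,\N)$ assign the weight
\[
\prod_{i,j=1}^n\prod_{k\ge 1}\bigl(x_iy_jq^{k-1}\bigr)^{\mathbf{c}(i-\frac12,\,j-kn-\frac12)}.
\]
Here $(i,j,k)$ is a parametrization of the faces of $\mathscr{C}_n'^-$, and the normalization of $k$ (starting at $1$ or at $0$) is to be matched with \eqref{eq:volume_environment}.

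\begin{itemize}
\item Since the values of $\mathbf{c}$ are independent and range over $\N$, summing over all environments gives $\prod_{i,j}\prod_{k\ge0}(1-x_iy_jq^k)^{-1}=\prod_{i,j}(x_iy_j;q)_\infty^{-1}$.
\item Transport the weight through $\widetilde{\Upsilon}_{\mathrm{col}}$. By \eqref{eq:weight_environment}, the $x$-monomial becomes $x^{\wt(H_1)}$ and the $y$-monomial becomes $y^{\wt(H_2)}$.
\item By \eqref{eq:volume_environment}, the $q$-exponent becomes $D'(H_1)+D'(H_2)+|\kappa|$, possibly shifted by the total weight $|\mu|$ depending on the indexing convention.
\item The sum therefore factorizes over $\mu$ as
\[
\Bigl(\sum_{H\in\HWT(\mu,n)}q^{D'(H)}x^{\wt H}\Bigr)\Bigl(\sum_{H\in\HWT(\mu,n)}q^{D'(H)}y^{\wt H}\Bigr)\sum_{\kappa\in\mathcal{K}_+(\mu)}q^{|\kappa|}.
\]
\item The first two factors should be $\THL_\mu(x;q)$ and $\THL_\mu(y;q)$, via the known energy (one-dimensional sum) formula for transformed Hall--Littlewood polynomials over tensor products of single-row Kirillov--Reshetikhin crystals. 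If \Cref{def:transformed_Hall_Littlewood} uses a different normalization, that formula gives the match up to a power of $q$.
\item Splitting $\kappa$ into weakly decreasing blocks over the equal parts of $\mu$ gives $\sum_{\kappa\in\mathcal{K}_+(\mu)}q^{|\kappa|}=\prod_{i}(q;q)_{m_i(\mu)}^{-1}$, which should be $c_\mu(q)$.
\end{itemize}

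\textbf{The Kawanaka--Littlewood identity.} I would use the symmetric case $P=Q$, i.e.\ environments invariant under the reflection $(a,b)\mapsto(b,a)$ of $\mathscr{C}_n'$.

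\begin{itemize}
\item Such an environment is determined by its values on the fixed (diagonal) faces and on one representative of each off-diagonal orbit. Give a diagonal face the weight $x_i^{c}$ times a $q$-power, and a paired orbit the weight $(x_ix_j)^{c}$ with $i<j$. Doubling the volume statistic from \eqref{eq:volume_environment} turns $q$ into $q^2$.
\item The unrestricted sum then gives $\prod_i(x_i^2;q^2)_\infty^{-1}\prod_{i<j}(x_ix_j;q^2)_\infty^{-1}$, up to a convention on how diagonal entries are counted. The extra factors $(-\alpha x_i;q^2)_\infty(-q\beta x_i;q^2)_\infty$ should come from weighting each diagonal entry's parity, with $\alpha$ marking odd entries in even $q$-layers and $q\beta$ marking those in odd layers.
\item On the other side, I need that the symmetry $\mathbf{c}\leftrightarrow\mathbf{c}^{T}$ corresponds under $\widetilde{\Upsilon}_{\mathrm{col}}$ to $(H_1,H_2,\kappa)\mapsto(H_2,H_1,\kappa)$. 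This should follow from the symmetry of the Fomin rule in its two arguments and of the construction of $\Pr$ under swapping the two commuting crystal structures.
\item Symmetric environments would then correspond to triples $(H,H,\kappa)$. The diagonal data, with its $\alpha,\beta$ parity weights, must be tracked into a statistic on $(\mu,\kappa)$; its generating function is what defines $c_\mu(\alpha,\beta;q)$. The sum over $H$ gives $\THL_\mu(x;q^2)$.
\end{itemize}

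\textbf{Main obstacle.} The hard step is this last one. The parity and diagonal statistics on the environment side have to be transported through $\widetilde{\Upsilon}_{\mathrm{col}}$ to statistics depending only on $\mu$ and $\kappa$. I would first try the $n=1$ case, where the statement reduces to the BBS and KKR riggings via \Cref{thm:projection BBS}, and then argue that the crystal operators used to build $\Pr$ preserve diagonal symmetry and the relevant parity counts, reducing the general case to the $n=1$ computation.
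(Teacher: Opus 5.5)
\textbf{Cauchy identity.} Your argument for \eqref{eq:Cauchy_id} is the paper's: expand over the faces of $\mathscr{C}_n'^-$, pass through \Cref{thm:LPP} using \eqref{eq:weight_environment} and the volume relation, and sum $q^{|\kappa|}$ over $\mathcal{K}_+(\mu)$. You can drop your hedges.
The weight $q^{k-1}$ for $k\ge1$ is the right normalization. For $\mathbf{c}$ supported on $\mathscr{C}_n'^-$, the exponent $\sum_{i,j}\sum_{k\ge1}(k-1)\,\mathbf{c}(i-\frac12,j-kn-\frac12)$ equals $|\rho|=D'(H_1)+D'(H_2)+|\kappa|$ by \eqref{eq:volume_pre}. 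A check: for $n=1$ with one ball at site $1$, a single unit of $\mathbf{c}$ sits at $k=1$ while $|\rho|=0$. So \eqref{eq:volume_environment} should be read with $k-1$ in place of $k$; the paper's proof does exactly this by labelling layers from $0$.
No external energy formula is needed either. $\THL_\mu$ is \emph{defined} as $\sum_H q^{D'(H)}x^H$, and $c_\mu(q)=\mathpzc{b}_\mu(q)=\prod_i(q;q)_{m_i(\mu)}^{-1}$.

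\textbf{The gap in the Kawanaka--Littlewood identity.} For \eqref{eq:littlewood_id} there is a genuine gap, at exactly the step you flag. On a symmetric environment, the left-hand side weights $\mathbf{c}$ by $\alpha$ to the number of odd diagonal values in layers carrying even powers of $q$, and by $\beta$ to the number in layers carrying odd powers. You must show these two numbers equal $\mathrm{odd}(\kappa+\mu)$ and $\mathrm{odd}(\kappa)$; this is \eqref{eq:odd_Trace} in \Cref{cor:symmetric_env}.
Without this you do not know that the weight depends only on $(\mu,\kappa)$ and not on $H$. The sum then need not factor as $\sum_\mu(\cdots)\THL_\mu(x;q^2)$.
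Saying that the $\kappa$-generating function ``defines'' $c_\mu(\alpha,\beta;q)$ is circular, since $c_\mu(\alpha,\beta;q)=\mathpzc{b}_\mu(\alpha,\beta;q)$ is the explicit product \eqref{eq:b_mu}. Matching it is the content of \Cref{lem:b_mu}:
\begin{itemize}
\item split $\kappa\in\mathcal{K}_+(\mu)$ into partitions $\varkappa^{(i)}$ with at most $m_i(\mu)$ parts;
\item note that $\mathrm{odd}(i+\varkappa^{(i)})$ is $m_i(\mu)-\mathrm{odd}(\varkappa^{(i)})$ for $i$ odd and $\mathrm{odd}(\varkappa^{(i)})$ for $i$ even;
\item sum each block with \Cref{lem:sum_from_IMS}.
\end{itemize}

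\textbf{Your suggested route to \eqref{eq:odd_Trace} does not work as it stands.}
\begin{itemize}
\item The leading path $\mathcal{L}_{H_1,H_2}$ applies all the $G^{(1)}$'s before the $G^{(2)}$'s, so it leaves the symmetric locus at once. You would need the diagonal operators $g\times g$, which are legitimate by \Cref{prop:bicrystal_sym} and the swap symmetry of the Fomin rule.
\item More seriously, nothing in the paper controls how $\mathbf{c}$, let alone the parity of its diagonal values, changes under $E^{(k)}_i,F^{(k)}_i$. These operators act on tableaux, and their effect on $\mathbf{c}$ passes through the whole periodic RSK correspondence.
\item Any invariance of the diagonal parities would have to be specific to regular arrows, because $\kappa$ is not a crystal invariant. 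By \eqref{eq:volume_pre}, an $E^{(k)}_0$ always raises $|\rho|$ by one, so if it does not raise $D'$ by one it changes $|\kappa|$.
\item The $n=1$ endpoint is itself a nontrivial parity statement about BBS carrier loads versus $J+2\mu$ and $J+\mu$. \Cref{thm:projection BBS} does not supply it.
\end{itemize}

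\textbf{What the paper does instead.} It refines $\mathbf{c}$ to a symmetric $\{0,1\}$-valued environment $\widetilde{\mathbf{c}}$ (\Cref{prop:standardization_field,prop:refinement_environment}). There, a diagonal face of value $c$ becomes an antidiagonal of $c$ ones in a square block, which meets the diagonal if and only if $c$ is odd. Hence $\mathrm{oddTr}(\mathbf{c})=\mathrm{Tr}(\widetilde{\mathbf{c}})$ and $\mathrm{oddTr}^-(\mathbf{c})=\mathrm{Tr}^-(\widetilde{\mathbf{c}})$. It then transposes to the skew row RSK dynamics of \cite{imamura2021skewRSK}, where the trace identities are \cite[Lemma~3.6]{He2023boundaryASEP}.
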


Taking suitable generating function of left and right hand sides of \eqref{eq:Upsilon} we obtain the following refinement of \Cref{thm:Cauchy and Littlewood id}. Analogous identities for $q$-Whittaker polynomials were proven in \cite{imamura2021skewRSK,IMS_matching,He2023boundaryASEP}. These were further generalized through the addition of an extra parameter in \cite{He-Wheeler2023qWhittaker,he_wheeler_FPSAC2025,he2025free} by He and Wheeler, through algebraic techniques.

\begin{theorem}[Refined Cauchy and Kawanaka--Littlewood identities] \label{thm:refined Cauchy and Littlewood id}
    Fix a parameter $|q|<1$ and $2n$ parameters $x_1,\dots, x_n, y_1,\dots, y_n$ with absolute value $<1$. Then, for any $N\in \N$, we have
    \begin{align} \label{eq:refined_Cauchy_id}
        \sum_{\rho \subset \lambda \, : \, \lambda_1' = N} q^{|\rho|} s_{\lambda/\rho}(x) s_{\lambda/\rho}(y) & = \sum_{\ell =0}^N \frac{q^\ell}{(q;q)_\ell} \sum_{\mu \, : \, \mu_1' = N-\ell} c_{\mu}(q) \THL_\mu(x;q) \THL_\mu(y;q),
    \\
    \label{eq:littlewood_id_refined}
        \sum_{\rho \subset \lambda \,:\,\lambda_1'=N}  \alpha^{\oddr(\lambda)} \beta^{\oddr(\rho)} q^{|\rho|} s_{\lambda / \rho}(x) & =  \sum_{k=0}^N \frac{[q^2 + q \alpha \beta]_{q^2}^k}{(q^2;q^2)_k} 
        \sum_{\mu_1'=N-k} c_\mu(\alpha,\beta;q) \THL_\mu(x;q^2),
    \end{align}
    where $\oddr(\lambda)$ counts the number of odd elements of $\lambda$ and the notation $[a+b]^n_p$ is defined in \eqref{eq:rogers_szego}.
\end{theorem}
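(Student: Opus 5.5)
The plan is to derive both refined identities by taking weighted generating functions of the two sides of the bijection $\Upsilon_{\mathrm{col}}$ of \Cref{th:main_bij}, restricted to partitions. The restriction is available because \Cref{th:main_bij} says $\rho,\lambda$ are partitions exactly when $\nu$ is a partition and $\kappa\in\mathcal{K}_+(\mu)$.

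For \eqref{eq:refined_Cauchy_id}, give each pair $(P,Q)\in\SST(\lambda/\rho,n)^2$ with $\lambda_1'=\ell(\lambda)=N$ the weight $q^{|\rho|}x^{P}y^{Q}$. Summing over such pairs yields the left-hand side, since $s_{\lambda/\rho}$ is the generating function of skew semistandard tableaux. On the right I need two facts.
\begin{itemize}
\item The weight of $P$ (resp. $Q$) equals the content of $H_1$ (resp. $H_2$). This follows from \Cref{thm:LPP}\ref{item:weight_environment}, or directly from conservation: $\boldsymbol\lambda(t+n,0)-\boldsymbol\lambda(t,0)$ has constant size, and by \Cref{prop:stabilization intro} the number of $i$-cells is preserved along the dynamics.
\item The statistic $|\rho|$ decomposes as $D'(H_1)+D'(H_2)+|\kappa|+|\nu|$ by \eqref{eq:volume_pre}, and $N=\ell(\mu)+\ell(\nu)$ by \eqref{eq:length_pre}.
\end{itemize}
Setting $\ell=\ell(\nu)$, the sum over partitions $\nu$ with exactly $\ell$ positive parts gives $\sum q^{|\nu|}=q^\ell/(q;q)_\ell$. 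The sum over $\kappa\in\mathcal{K}_+(\mu)$ of $q^{|\kappa|}$ factorizes over the blocks of equal parts of $\mu$ as $\prod_i 1/(q;q)_{m_i(\mu)}$. I expect this to be (part of) $c_\mu(q)$. Finally, $\sum_{H\in\HWT(\mu,n)}q^{D'(H)}x^H$ should be, up to the normalization absorbed in $c_\mu(q)$, the transformed Hall--Littlewood polynomial $\THL_\mu(x;q)$. This is the standard one-dimensional-sum (energy) formula for $\THL_\mu$ as a graded character of tensor products of KR crystals $B^{1,\mu_i}$, presumably essentially \Cref{def:transformed_Hall_Littlewood}. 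Combining these gives \eqref{eq:refined_Cauchy_id}.

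For \eqref{eq:littlewood_id_refined} I would use the same skeleton but with a single tableau, i.e.\ restricting to the fixed locus $P=Q$. This locus corresponds to a symmetric environment $\mathbf c$ under reflection of the cylinder, or to $H_1=H_2$ in coordinates. The main obstacle is showing that the involution $(P,Q)\mapsto(Q,P)$ corresponds under $\Upsilon_{\mathrm{col}}$ to $(H_1,H_2,\kappa,\nu)\mapsto(H_2,H_1,\kappa,\nu)$. I would try to obtain this from the symmetry of the Fomin rule under swapping the two lanes together with the projection $\Pr$ of \Cref{thm:projection BBS}, using that $\kappa$ is the KKR rigging of a symmetric object. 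Granting it, pairs with $P=Q$ correspond to $H_1=H_2$ with $(\kappa,\nu)$ free.

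The remaining difficulty is tracking $\oddr(\lambda)$ and $\oddr(\rho)$. One route is to interpret $\alpha^{\oddr(\lambda)}\beta^{\oddr(\rho)}$ through the row/column parity of the diagonal environment values, which produces the factor $[q^2+q\alpha\beta]_{q^2}^k/(q^2;q^2)_k$ from $\nu$ (a Rogers--Szeg\H{o} sum over rows paired by parity). The $\alpha,\beta$-dependence on the soliton side would be absorbed into $c_\mu(\alpha,\beta;q)$ and $\THL_\mu(x;q^2)$. This parity bookkeeping is where I expect the most work, likely requiring a finer version of \eqref{eq:volume_pre} that records parities row by row.

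Finally, letting $N\to\infty$ in either identity, combined with the ordinary skew Cauchy/Littlewood summation on the left (the left-hand sides then become the stated infinite products), recovers \Cref{thm:Cauchy and Littlewood id}.
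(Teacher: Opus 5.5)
Your argument for \eqref{eq:refined_Cauchy_id} is the paper's argument. The contents of $P,Q$ are those of $H_1,H_2$, \eqref{eq:volume_pre} and \eqref{eq:length_pre} split $q^{|\rho|}$ and $N$, and the geometric sums give $\prod_i 1/(q;q)_{m_i(\mu)}$ and $q^\ell/(q;q)_\ell$. No normalization question arises, since $\THL_\mu$ is by definition the $D'$-generating function of $\HWT(\mu,n)$.

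The gap is in \eqref{eq:littlewood_id_refined}. The identity rests on an exact parity statement. If $P$ has shape $\lambda/\rho$ and $\Upsilon_{\mathrm{col}}(P,P)=(H,H,\kappa,\nu)$ with $H$ of shape $\mu$, then
\[
\oddr(\lambda)=\oddr(\nu)+\oddr(\kappa+\mu),
\qquad
\oddr(\rho)=\oddr(\nu)+\oddr(\kappa),
\]
which is \eqref{eq:odd}. You neither state nor prove anything of this kind. Your remark that the $\alpha,\beta$-dependence is ``absorbed into $c_\mu(\alpha,\beta;q)$ and $\THL_\mu(x;q^2)$'' also points the wrong way: $\THL_\mu(x;q^2)$ does not involve $\alpha,\beta$. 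What must be shown is that $\oddr(\lambda)$ and $\oddr(\rho)$ depend only on $(\mu,\kappa,\nu)$, not on $H$, and in precisely this form.

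Neither of your suggested routes supplies this.
\begin{itemize}
\item A row-by-row parity version of \eqref{eq:volume_pre} is not something the crystal argument delivers. The leading path controls only $|\rho|$ and the bottom row (\Cref{cor:bottom_row}).
\item The diagonal-environment route needs two things. It needs a relation between $\oddr(\lambda),\oddr(\rho)$ and the parities of the diagonal values of $\mathbf{c}$ together with $\nu$, which you do not give. It also needs \eqref{eq:odd_Trace}, which the paper imports from \cite{He2023boundaryASEP}.
\end{itemize}
The paper obtains \eqref{eq:odd} from outside the crystal machinery. It reduces to standard $P$, since the identities involve only shapes, $\kappa$ and $\nu$. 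It then transposes to the skew row RSK dynamics of \cite{imamura2021skewRSK} and imports the parity lemmas of \cite{He2023boundaryASEP}.

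Once \eqref{eq:odd} is available, the rest goes as you anticipate.
\begin{itemize}
\item With $H_1=H_2=H$, \eqref{eq:volume_pre} becomes $|\rho|=2D'(H)+|\kappa|+|\nu|$, which produces $\THL_\mu(x;q^2)$.
\item The sum over $\kappa\in\mathcal{K}_+(\mu)$ factorizes over blocks of equal parts. Using $\oddr(i+\varkappa)=m_i(\mu)-\oddr(\varkappa)$ for odd $i$, it gives $c_\mu(\alpha,\beta;q)$ (cf.\ \Cref{lem:b_mu}).
\item The sum over $\nu$ with weight $(\alpha\beta)^{\oddr(\nu)}q^{|\nu|}$ gives $[q^2+q\alpha\beta]^k_{q^2}/(q^2;q^2)_k$ by \Cref{lem:sum_from_IMS}.
\end{itemize}

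The step you flagged as the main obstacle is easy and needs neither $\Pr$ nor KKR. By the symmetry of the Fomin rule, exchanging the two components conjugates $\iota_1$ into $\iota_2$. Hence it conjugates $E^{(1)}_i,F^{(1)}_i$ into $E^{(2)}_i,F^{(2)}_i$. Together with \Cref{prop:bicrystal_sym}, this makes $\mathcal{L}_{H,H}$ commute with the swap. So $\mathcal{L}_{H,H}^{-1}(T,T)$ has the form $(P,P)$, and the diagonal corresponds exactly to $H_1=H_2$ with $(\kappa,\nu)$ free.
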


The proof of \Cref{thm:refined Cauchy and Littlewood id} is given in \Cref{sec:summation_id}.

\subsection{Discussion}

The system with cRSK dynamics is a direct integrable two-dimensional generalization of the BBS.
This model lies at the intersection of several well-developed areas of combinatorics, probability, and integrable systems.
On one side is the theory of the BBS: the classical BBS has been solved by combinatorial methods, related to continuous soliton equations through ultradiscretization, generalized using affine crystal theory and other Lie symmetries, and studied asymptotically in connection with generalized hydrodynamics.
The other side consists of growth processes and line ensembles, which are related through RSK.
They are central objects in integrable probability and in the study of KPZ universality; see, e.g.,~\cite{BorodinGorinSPB12,dauvergne2022rsk,zygouras_review} and references therein.
The cRSK dynamics interpolates between these two structures.
We have defined it as a dynamics on $2n$-step periodic line ensembles:
When $n=1$ it reduces to the classical BBS, and after choosing suitable random environments and sending $n\to\infty$, these periodic line ensembles converge to the line ensembles associated with last passage percolation studied in~\cite{corwin2014brownian,johansson2003discrete,PhahoferSpohn2002}.

One natural direction suggested by this connection is therefore probabilistic.\footnote{This direction arose from a conversation with Amol Aggarwal, whom we thank.}
Given a random environment on the cylindrical lattice, one may run the cRSK dynamics and study the long-time behavior of the associated line ensemble.
Questions of local convergence to equilibrium and mixing for this dynamics are then closely related to universality questions in last passage percolation.
Indeed, the Greene-type theorem proved in this paper expresses the soliton data encoded in the line ensemble in terms of last-passage values of the environment.
Thus, one may hope to combine these Greene-type formulas with structural characterizations of line ensembles, such as those proved by Aggarwal and Huang \cite{aggarwal2026strong}, to obtain new perspectives on universality in last passage percolation.
This is particularly intriguing because convergence to equilibrium for discrete integrable systems appears to be largely unexplored, even for the classical BBS.

It would also be interesting to look for a rational lift of the cRSK dynamics.
The classical BBS arises as an ultradiscrete limit of integrable lattice models such as the Toda lattice or the modified KdV equation, while RSK-type growth rules are closely related to tropical forms of the octahedron recurrence (see, e.g.,~\cite{danilov2005arrays,danilov2007octahedron,kuniba2011TY_Syst,NoumiYamada2004}).
The direct connection with the BBS also points toward possible extensions to other Lie types.
As commented above, the use of column insertion may be instrumental here, since column insertion procedures have analogues for other root systems and are closely related to combinatorial $R$-matrices for soliton cellular automata.

A separate direction is to keep the two-lane BBS local rule fixed but consider it in other geometries.
In this paper, the dynamics is run in parallel on a periodic lattice, leading to the cylindrical cRSK dynamics.
One could instead impose different boundary conditions, for instance on intervals with reflecting or open boundaries; such variants may produce new symmetry structures and models related to other Lie types.

These possible extensions may also have consequences for symmetric functions.
In the present paper, the linearization of the cRSK dynamics gives bijective proofs of Cauchy and Kawanaka--Littlewood type identities for transformed Hall--Littlewood polynomials (of type $A$), while the linearization in~\cite{imamura2021skewRSK} led to identities for (type $A$) $q$-Whittaker polynomials.
This suggests that analogous linearization schemes, arising from other Lie symmetries, boundary geometries, or rational lifts, could provide bijective proofs of Cauchy-type identities for the corresponding families of (specializations of) Macdonald-type polynomials.


\subsection{Acknowledgements}

The authors thank Masato Okado and Atsuo Kuniba for valuable conversations about results of \Cref{sec:affine_leading}. The authors also thank Amol Aggarwal and Mark Shimozono for useful discussions.

The work of T.I.\ was partially supported by JSPS KAKENHI Grant Nos.\ JP20K03626, JP22H01143, and JP24K06773.
T.Sc.\ was partially supported by Grant-in-Aid for JSPS Fellows 21F51028 and JSPS KAKENHI Grant Number JP23K12983.
T.I., M.M.\ and T.Sa.\ thank the International Centre for Theoretical Sciences (ICTS) for the program ``Discrete integrable systems: difference equations, cluster algebras and probabilistic models,'' where part of the work was completed.
This material is based upon work supported by the National Science Foundation under Grant No.\ DMS-1929284 while T.Sc.\ was in residence at the Institute for Computational and Experimental Research in Mathematics in Providence, RI, during the ``Categorification and Computation in Algebraic Combinatorics'' Fall 2025 semester program.

\section{Preliminaries}\label{sec:Pre}

\subsection{Signatures and partitions}
In the introduction we have defined signatures $\lambda$, their conjugates $\lambda'$ and the notion of interlacing. Although the conjugate of a signature is not a partition, but an infinite sequence of non-negative integers, we will use the convention that $(\lambda')'=\lambda$ for any signature $\lambda$. We have also defined semistandard Young tableaux with skew shape given by signatures. A \defn{partition} is a signature whose entries are non-negative, with trailing zeros usually omitted. If $\lambda$ is a partition its Young diagram consists only of cells $(i,j)$ such that $1 \le j \le \lambda_i$ and its size is $|\lambda| = \lambda_1 + \cdots + \lambda_\ell$. The conjugate of a partition $\lambda$ is $\lambda' = (\lambda_i')_{i \ge 1}$ and it is also a partition.

\subsection{Standard tableaux} \label{subs:standard_tab}

In the introduction we have defined the notion of a semistandard tableau. Given a semistandard tableau $P =  (\rho = \lambda^{(0)} \preceq \lambda^{(1)} \preceq \cdots \preceq \lambda^{(n)} = \lambda)$ we call $\lambda/\rho$ its \defn{shape}, $\rho$ its \defn{internal shape} and $\lambda$ its \defn{external shape}. We also define the multiplicities
\begin{equation}\label{eq:multiplicities_m}
    \wt_i(P) = |\lambda^{(i)} / \lambda^{(i-1)}|, \qquad \text{and} \qquad \mathsf{M}_i(P) = \sum_{j =1 }^i \wt_j(P) + \mathbf{1}_{\wt_j(P)=0}, \qquad  \text{for } i\ge 1.
\end{equation}
Representing $P$ as a filling of a Young diagram, $\wt_i(P)$ counts the number of $i$-labeled cells.
We say that a tableau $P$ is \defn{partially standard}\footnote{In literature a tableau of $n$ boxes is \emph{standard} if $\wt_i(P)=1$ for $i=1,\dots,n$.} if we have $\wt_i(P) \in \{0,1\}$ for all $i$. 
From a semistandard tableau $P$ we can produce a partially standard one in a canonical way, called \defn{partial standardization}. For all $i$, whenever $\wt_i(P)>0$ we add between consecutive partitions $\lambda^{(i-1)}$ and $\lambda^{(i)}$ intermediate partitions $\lambda^{(i,0)},\dots,\lambda^{(i,\wt_i(P))}$ so that
\[
    \lambda^{(i-1)} = \lambda^{(i,0)} \preceq \lambda^{(i,1)} \preceq \cdots \preceq \lambda^{(i,\wt_i(P))} = \lambda^{(i)}
    \qquad \text{with} \qquad \left( \lambda^{(i,j)} \right)' - \left( \lambda^{(i,j-1)} \right)' = \mathbf{e}_{m_{i,j}},
\]
where
$$
    m_{i,1} < \cdots < m_{i,\wt_i(P)}.
$$
In terms of filling of Young diagrams the above procedure consists of replacing in the order from left to right the $\wt_i(P)$ $i$-entries in $P$ (whose column coordinates are $m_{i,1} < \cdots < m_{i,\wt_i(P)}$) with $\mathsf{M}_{i-1}(P)+1,2,\dotsc, \mathsf{M}_i(P)$ (where by convention $\mathsf{M}_0(P) = 0$), for all $i=1,\dots,n$.

\begin{example}
    \label{ex:sst}
    An example of a semistandard Young tableau in $\SST(\lambda/\rho,n)$ with $n=3$ and $ \lambda = (2,0,0), \rho=(0,-1,-2)$ is
    \begin{equation} \label{eq:example tableau}
        \ytableausetup{aligntableaux = center,smalltableaux}
        ((0,-1,-2) \preceq (1,0,-2) \preceq (1,0,0) \preceq (2,0,0)) =
            \begin{ytableau}
                \none[\scriptstyle \ao{\cdots}] & \none[\scriptstyle \ao{ \overline{2}}] & \none[\scriptstyle \ao{\overline{1}}] & \none[\scriptstyle \ao{0}] & \none[\scriptstyle \ao{1}] & \none[\scriptstyle \ao{2}] 
                \\
                \none[\scriptstyle \cdots] & *(gray) & *(gray) & *(gray) & 1 & 3
                \\
                \none[ \scriptstyle \cdots] & *(gray) & *(gray) & 1 
                \\
                \none[\scriptstyle \cdots] & *(gray) & 2 & 2
            \end{ytableau}
            \,,
    \end{equation}
    while an example of a semistandard Young tableau in $\SST(\lambda/\rho,n)$ with $n=3$ and shape $\lambda= (5,4,2)$, $\rho = (2,1,1)$ is
    \begin{equation}
            ((2,1,1) \preceq (4,2,1) \preceq (5,4,1) \preceq (5,4,2)) =
            \begin{ytableau} 
                *(gray) & *(gray)  &1&1&2
                \\
                *(gray) & 1 & 2 & 2
                \\
                *(gray) & 3 
            \end{ytableau}
            \, .
    \end{equation}
    Unless the shape of a semistandard Young tableau is a skew partition we will write labels on top of each column (reported in \ao{blue} in the right hand side tableau in \eqref{eq:example tableau}) where barred labels $\overline{i}$, $i=1,2,\ldots$, denote negative numbers $-i$. In this case we also think of the tableaux as having infinitely many empty boxes to the left of its shape. The partial standardization of the tableau in \eqref{eq:example tableau} is
    \begin{equation}
        ((0,-1,-2) \preceq (0,0,-2)  \preceq (1,0,-2) \preceq (1,0,-1) \preceq (1,0,0) \preceq (2,0,0)) =
            \begin{ytableau}
                \none[\scriptstyle \ao{\cdots}] & \none[\scriptstyle \ao{ \overline{2}}] & \none[\scriptstyle \ao{\overline{1}}] & \none[\scriptstyle \ao{0}] & \none[\scriptstyle \ao{1}] & \none[\scriptstyle \ao{2}] 
                \\
                \none[\scriptstyle \cdots] & *(gray) & *(gray) & *(gray) & 2 & 5
                \\
                \none[ \scriptstyle \cdots] & *(gray) & *(gray) & 1 
                \\
                \none[\scriptstyle \cdots] & *(gray) & 3 & 4
            \end{ytableau}
            \,.
    \end{equation}
\end{example}

\subsection{Graphical presentation of Fomin growth operator} \label{subs:fomin operator}

The Fomin growth operator was defined in \Cref{sec:intro}. Here we present a slightly different formulation for it and give its graphical interpretation (which is a variant of Viennot's shadow lines construction \cite{Viennot_une_forme_geometrique}), which will make some of its properties transparent. First recall the piecewise linear transformation $\mathsf{f}$ defined in \eqref{eq:f Fomin}.

To evaluate $\mathsf{f}(a,b,c) = (a',b',c')$, consider a unit cell containing $c$ distinct points arranged along a diagonal, with points on the right placed lower than those on the left. From each of the $c$ points, draw two rays: one upward and one rightward. If $a = 1$, insert a vertical ray entering the cell from below, positioned just to the right of the rightmost of the $c$ points in the cell. If $b = 1$, insert a horizontal ray entering the cell from the left, located just above the topmost point.  Rays travel straight and stop when they intersect another ray or exit through the top or right boundary of the cell. Then $c'$ counts number of ray intersections within the cell, $a'$ counts the number of rays exiting through the top and $b'$ counts number of rays exiting through the right.

The example below shows the evaluations:
\begin{equation}
    \begin{array}{c@{\quad}c@{\quad}c}
    \mathsf{f}(1,0,3) = (1,0,3) & \mathsf{f}(1,1,3) = (0,0,4) & \mathsf{f}(0,0,3) = (1,1,2)
    \\
        \includegraphics[width = .28\linewidth]{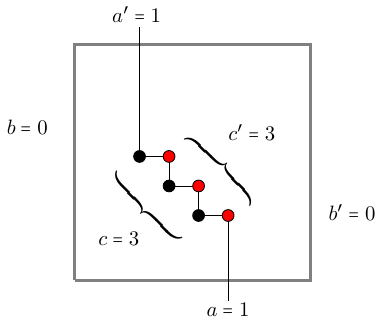}
        &
        \includegraphics[width = .28\linewidth]{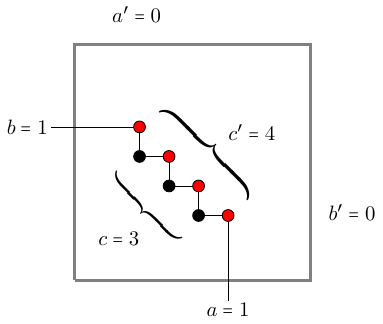}
        &
        \includegraphics[width = .28\linewidth]{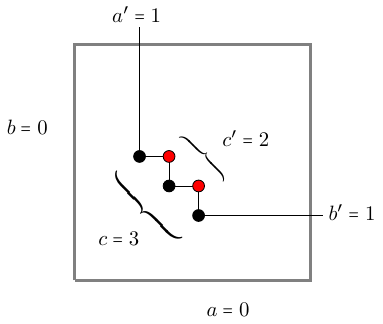}
    \end{array}.
\end{equation}

The map $\mathsf{f}$ is the building block for the Fomin growth operator $\Fomin$ defined in \eqref{eq:fomin-operator}. For signatures $\mu \succeq \kappa \preceq \lambda$ of the same length, we have
\begin{equation}
    \Fomin(\kappa, \mu, \lambda) = \nu,
\end{equation}
where the signature $\nu$ satisfying $\lambda \preceq \nu \succeq \mu$ is defined by the relations
\begin{equation} \label{eq:explicit_fomin}
    \mu'-\kappa' = \sum_{i\in \mathbb{Z}} a_i \mathbf{e}_i,
    \qquad
    \lambda'-\kappa' = \sum_{i\in \mathbb{Z}} b_i \mathbf{e}_i,
    \qquad
    \nu'-\lambda' = \sum_{i\in \mathbb{Z}} a_i' \mathbf{e}_i,
    \qquad
    \nu'-\mu' = \sum_{i\in \mathbb{Z}} b_i' \mathbf{e}_i.
\end{equation}
where $\mathbf{e}_i = \left( \mathbf{1}_{i=k} \right)_{k \in \Z}$ for any $i \in \Z$ and
\begin{equation} \label{eq:f_recursive}
    (a_i',b_i',c_i) = \mathsf{f}(a_i,b_i,c_{i-1}) \qquad \text{for all } i \in \mathbb{Z}, \quad \text{with } \quad c_{-\infty} = 0.
\end{equation}
Since the sequences $(a_i)_{i\in \mathbb{Z}}$, $(b_i)_{i\in \mathbb{Z}}$ are finitely supported, consequence of the fact that $\kappa,\lambda,\mu$ have the same length, the condition $c_{-\infty}=0$ determines uniquely the sequences $(a_i')_{i\in \mathbb{Z}}$, $(b_i')_{i\in \mathbb{Z}}$, $(c_i)_{i\in \mathbb{Z}}$ and hence the operator $\Fomin$ is well defined. See \Cref{fig:Fomin example} for a graphical representation of the Fomin growth operator.

\begin{figure}
    \centering
    \includegraphics[width=0.85\linewidth]{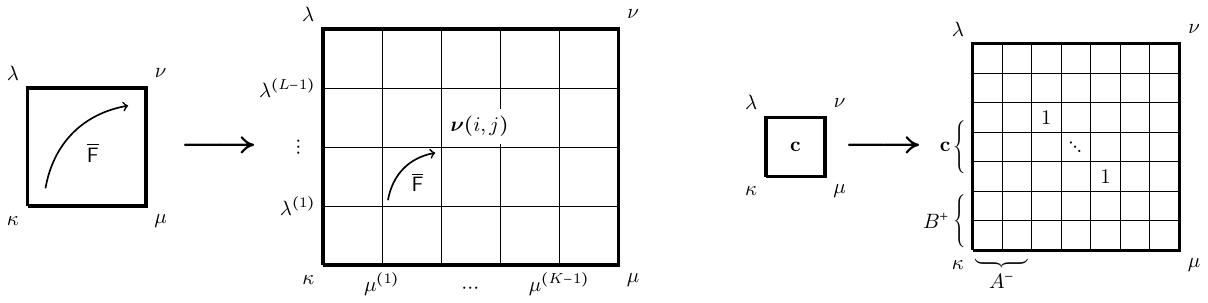}
    \caption{The refinement of a Fomin cell. On the right, the values of the environment following the refinement of a cell}
    \label{fig:refinement_Fomin_cell}
\end{figure}

The graphical construction of the Fomin growth operator, provides a way to refine its action into simpler operations, as depicted in \Cref{fig:refinement_Fomin_cell}. Namely given a triple $\mu \succeq \kappa \preceq \lambda$, we can produce sequences
\begin{equation}
    \mu = \mu^{(K)} \succeq  \cdots \succeq \mu^{(1)} \succeq \mu^{(0)} = \kappa = \lambda^{(0)} \preceq \lambda^{(1)} \preceq \cdots \preceq \lambda^{(L)} = \lambda,
\end{equation}
where $|\mu^{(i)}/ \mu^{(i-1)}|,|\lambda^{(j)}/ \lambda^{(j-1)}| \in \{0,1\}$ for all $i,j$ and for $i_1<\cdots <i_{|\mu / \kappa|}$, $j_1<\cdots < j_{|\lambda / \kappa|}$ we have
\begin{equation}
    \left(\mu^{(i_k)}\right)'- \left(\mu^{(i_k-1)}\right)' = \mathbf{e}_{m_k},
    \qquad 
    \left(\lambda^{(j_r)}\right)'- \left(\mu^{(j_r-1)}\right)' = \mathbf{e}_{\ell_r},
\end{equation}
with
\begin{equation}
    m_1 < \cdots <m_{|\mu/\kappa|}
    \qquad
    \text{and}
    \qquad
    \ell_1 < \cdots < \ell_{|\ell/\kappa|}.
\end{equation}
In other words, partitions $\mu^{(i)}$ (resp. $\lambda^{(j)}$) are constructed from $\kappa$ by adding, step-by-step either no cells or cells of $\mu/\kappa$ (resp $\lambda/\kappa$) from the leftmost to the right. The $\nu = \Fomin (\kappa,\mu,\lambda)$ can be constructed by evaluating the $(M+1) \times (L+1)$ array of interlacing partitions $\{ \boldsymbol{\nu} (i,j) \}_{0 \le i \le M, \, 0 \le j \le L }$ with boundary conditions $\boldsymbol{\nu} (i,0)=\mu^{(i)}$ and $\boldsymbol{\nu} (0,j)=\lambda^{(j)}$ and such that 
$$
    \boldsymbol{\nu} (i+1,j+1) = \Fomin ( \boldsymbol{\nu} (i,j), \boldsymbol{\nu} (i+1,j), \boldsymbol{\nu} (i,j+1) ).
$$
Then $\nu = \boldsymbol{\nu} (M,L)$. 


In the next proposition we examine the values $c_0(p')$ produced by the evaluation of the Fomin growth operator $\Fomin$ at each cell $p'$ the $(M+1) \times (L+1)$ lattice.
\begin{proposition} \label{prop:refinement_environment}
    In the notation established above, let $\Lambda_{M,N} = \{ 0 , \dots, M \} \times \{ 0 , \dots, L \}$ and let $\Lambda'_{M,N} = \{ \frac{1}{2} , \dots, M-\frac{1}{2} \} \times \{ \frac{1}{2} , \dots, L-\frac{1}{2} \}$ be the dual lattice.
    Call $\mathbf{c}$ the value of $c_0$ in the evaluation of $\Fomin (\kappa,\mu,\lambda)$ and for $p' = p+\frac{\mathbf{e}_1 + \mathbf{e}_2}{2}$ call $\widetilde{\mathbf{c}}(p')$ be the value of $c_0$ in the evaluation of $\Fomin (\boldsymbol{\nu}(p), \boldsymbol{\nu}(p+\mathbf{e}_1), \boldsymbol{\nu}(p+\mathbf{e}_2) )$. Then, we have
    \begin{equation} \label{eq:value_c_0}
        \mathbf{c} = \sum_{i \ge 1} \left( \nu_i'-\lambda_i' - \mu_i' +\kappa_i' \right)
    \end{equation}
    and if $\mathbf{c}>0$
    \[
        \widetilde{\mathbf{c}}( p' ) = \begin{cases} 1 \qquad & \text{if } p'=(A^-+k,B^+ + \mathbf{c} + 1-k) \text{ for } k=1,\dots,\mathbf{c},
        \\
        0 \qquad & \text{else},
        \end{cases}
    \]
    where
    \[
        A^- =  \sum_{i\le 0} \nu'_i-\lambda'_i
        \qquad
        B^+ = \sum_{i\ge 1} \lambda'_i-\kappa'_i.
    \]
    If $\mathbf{c}=0$, then $\widetilde{\mathbf{c}}( p' ) = 0$ for all $p'\in \Lambda_{M,N}'$.
    See \Cref{fig:refinement_Fomin_cell}, right panel.
\end{proposition}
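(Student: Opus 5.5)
The plan has two parts: first the formula \eqref{eq:value_c_0}, by a telescoping argument, and then the location of the nonzero entries of $\widetilde{\mathbf{c}}$, which is the harder part.

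\textbf{Formula for $\mathbf{c}$.} I would use the notation of \eqref{eq:explicit_fomin}. Then
\[
\nu'-\lambda'-\mu'+\kappa' = \sum_i (a_i'-a_i)\mathbf{e}_i = \sum_i (b_i'-b_i)\mathbf{e}_i .
\]
From the definition \eqref{eq:f Fomin_1} of $\mathsf{f}$ we have $a_i'+b_i' = a_i+b_i+2c_{i-1}-2c_i$. Combining this with $a_i'-a_i=b_i'-b_i$ gives
\[
a_i'-a_i = c_{i-1}-c_i \qquad \text{for every } i.
\]
Summing over $i\ge 1$ telescopes to $c_0-c_{+\infty}$. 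Since all sequences are finitely supported, $c_{+\infty}=0$, and this gives \eqref{eq:value_c_0}.

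\textbf{Additivity over the refinement.} I would apply \eqref{eq:value_c_0} to each refined cell $p'$ of $\Lambda'_{M,N}$. This writes $\widetilde{\mathbf{c}}(p')$ as $\sum_{i\ge1}$ of the second difference of $\boldsymbol{\nu}'$ around the square at $p'$. These second differences telescope over the rectangle, so
\[
\sum_{p'}\widetilde{\mathbf{c}}(p') = \sum_{i\ge1}\bigl(\nu_i'-\lambda_i'-\mu_i'+\kappa_i'\bigr) = \mathbf{c} .
\]
This identity is only a global check. In each refined cell at most one cell is added along each side, so the carrier in that cell carries at most one pair, and $\widetilde{\mathbf{c}}(p')\in\{0,1\}$. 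Moreover $\widetilde{\mathbf{c}}(p')=1$ exactly when the two single added cells sit in the same column $m\le 0$ and the pair they form is deposited in a column $\ge 1$. If $\mathbf{c}=0$, nonnegativity together with this identity forces $\widetilde{\mathbf{c}}\equiv 0$.

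\textbf{Locating the nonzero entries.} For $\mathbf{c}>0$ I would use the shadow-line (Viennot) picture of \Cref{subs:fomin operator}, viewing the refined grid as a resolution of the single Fomin cell. Each of the $\mathbf{c}$ pairs crossing the origin is created at some column $m\le 0$ where both $a_m=b_m=1$. In the refinement this happens at the cell whose horizontal index is the rank of that $\mu/\kappa$-cell among the added cells of $\mu$, and whose vertical index is the rank of the corresponding $\lambda/\kappa$-cell. Since the carrier deposits pairs last-in-first-out in the diagonal arrangement of points, the $\mathbf{c}$ pairs still alive at the origin are those created last. Counting ranks should then put them on a single antidiagonal, at $(A^-+k,\,B^++\mathbf{c}+1-k)$ for $k=1,\dots,\mathbf{c}$. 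Here $A^-$ counts the $\mu$-type additions to the left of the relevant ones, and $B^+$ together with $\mathbf{c}$ accounts for the $\lambda$-type additions.

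\textbf{Main obstacle.} I expect the hardest step to be this index bookkeeping: showing that the horizontal offset is exactly $A^-=\sum_{i\le0}(\nu_i'-\lambda_i')$ and the vertical one is exactly $B^++\mathbf{c}+1-k$. I would attempt it by induction on the number of added cells, removing the rightmost added cell and checking how $A^-$, $B^+$ and $\mathbf{c}$ change.
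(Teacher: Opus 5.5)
Your derivation of \eqref{eq:value_c_0} is fine (working with $a_i'+b_i'$ neatly sidesteps the $a'/b'$ labelling). Your second step is also fine: $\widetilde{\mathbf c}(p')\in\{0,1\}$, it equals $1$ exactly when both incoming refined edges add a cell in column $0$, and telescoping gives $\sum_{p'}\widetilde{\mathbf c}(p')=\mathbf c$. That settles the case $\mathbf c=0$ and the number of nonzero cells more cleanly than the paper does.

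The gap is the actual content of the claim. You never show that each refined row and column contains at most one nonzero cell, that these cells form an antidiagonal, or where it sits; your last paragraph only names this as the obstacle. Moreover, the heuristic you give is wrong. Take $\kappa=(-1,-2)$, $\mu=\lambda=(0,-1)$. Both pairs, created in columns $-1$ and $0$, survive to the origin, so $\mathbf c=2$, and your rank rule predicts the refined cells $(1,1)$ and $(2,2)$. In the refined grid, however, the two rank-one lines meet in column $-1$, so $\widetilde{\mathbf c}(1,1)=0$. Both are pushed to column $0$, where each meets the rank-two line of the other family, and the support is $\{(2,1),(1,2)\}$. (Also, a last-in-first-out carrier would not keep the last-created pairs, and which pair ``survives'' is a labelling convention that does not locate refined cells anyway.)

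The missing idea, which is how the paper argues, is to follow each horizontal edge up through the refined grid. Let $m_k(\ell)$ be the column of the cell added by the $k$-th horizontal edge at height $\ell$. A refined cell whose bottom and left edges add columns $x$ and $y$ passes them through unchanged if $x\ne y$, and outputs $x+1,x+1$ if $x=y$. Hence:
\begin{itemize}
\item $m_k(\ell+1)-m_k(\ell)\in\{0,1\}$;
\item $m_1(\ell)<\dots<m_M(\ell)$ at every height;
\item $\widetilde{\mathbf c}=1$ exactly where $m_k$ steps from $0$ to $1$.
\end{itemize}
Consequently each refined column has at most one nonzero cell. The nonzero columns are the $k$ with $m_k(0)\le 0<m_k(L)$, an interval beginning right after $\#\{k: m_k(L)\le 0\}=\sum_{i\le0}(\nu_i'-\lambda_i')=A^-$. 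Transposition gives the same for rows. If $k<k'$ are nonzero at heights $\ell,\ell'$, then $m_k(\ell')<m_{k'}(\ell')=0=m_k(\ell)$ forces $\ell'<\ell$, which is the antidiagonal orientation. Your telescoping identity then fixes its length to be $\mathbf c$.

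One caution for the bookkeeping. Transposing the column computation gives the row offset $\sum_{i\le0}(\nu_i'-\mu_i')$, i.e.\ rows $\sum_{i\le0}(\lambda_i'-\kappa_i')+1-k$. This agrees with $B^++\mathbf c+1-k$ only in special cases. For $\kappa=(0,-1)$, $\mu=(0,0)$, $\lambda=(1,0)$ one has $\mathbf c=1$, $A^-=0$, $B^+=1$, and the nonzero refined cell is the lower of the two, whereas $B^++\mathbf c=2$ selects the upper one. So do not try to force the offset $B^+$ literally.
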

\begin{proof}
    First notice that, if $a_i = \mu'_i - \kappa'_i$ and $a_i' = \nu'_i - \lambda'_i$, borrowing the notation of \eqref{eq:explicit_fomin}, \eqref{eq:f_recursive} we have, from \eqref{eq:f Fomin_1}
    \[  
        c_{i-1} = a_i ' - a_i + c_i = \sum_{j \ge i} a_j ' - a_j,
    \]
    which implies \eqref{eq:value_c_0} setting $i=1$.
    
    If $|\lambda/\kappa| = 0$ or $|\mu/ \kappa|=0$, then every coefficient $c_k$ in the evaluation of $\nu = \Fomin (\kappa,\mu,\lambda)$ and in every evaluation $\boldsymbol{\nu} (i+1,j+1) = \Fomin ( \boldsymbol{\nu} (i,j), \boldsymbol{\nu} (i+1,j), \boldsymbol{\nu} (i,j+1) )$ is equal to 0 and the proposition holds.
\\
    Assume that $|\lambda / \kappa|,|\mu/\kappa|>0$ and define sequences $(m_k(
    \ell
    ):k \in \{1,\dots,K \} ,\ell \in \{1,\dots,L \})$ with $m_1(\ell) < \cdots < m_K(\ell)$ by
    \[
        \boldsymbol{\nu}(k,\ell)' -\boldsymbol{\nu}(k,\ell-1)' = \mathbf{e}_{m_k(\ell)}. 
    \]
    Then, by direct inspection of the Fomin growth operator we have $m_{k}(\ell+1) - m_{k}(\ell) \in \{ 0,1\}$ and $\widetilde{\mathbf{c}}(k-\frac{1}{2}, \ell + \frac{1}{2}) = 1$ if and only if $m_k(\ell)=0$ and $m_k(\ell+1)=1$. Call $(k_1,\ell_1),\dots, (k_r,\ell_r)$ all coordinates such that $\widetilde{\mathbf{c}}(k_i-\frac{1}{2}, \ell_i + \frac{1}{2}) = 1$. Then by monotonicity of the sequences $m_k(\ell)$, the columns $k_i$ all columns such that
    \begin{equation} \label{eq:characterization_columns}
        m_{k_i}(0) \le 0 \qquad \text{and} \qquad m_{k_i}(L)>0
    \end{equation}
    and it must necessarily be
    \[
        k_i = k_0 + i \qquad \text{for } i=1,\dots,r.
    \]
    Arguing by symmetry with respect to the transposition of the lattice $\Lambda_{K,L}$ we must also have
    \[
        j_i = j_0 - i \qquad \text{for } i=1,\dots,r.
    \]
    It remains to show that $r=\mathbf{c}$ and that $k_0 = A^-$, $j_0 = B^+ + \mathbf{c} +1$. For the first equality notice that
    \begin{equation*}
        \begin{split}
            r &= \# \{ k \mid m_k(0) \le 0 \text{ and } m_k(L)>0  \} 
            \\
            &= \sum_{i \le 0} \left( \boldsymbol{\nu}_i(K,0)' - \boldsymbol{\nu}_i(0,0)' \right) - \sum_{i \le 0} \left( \boldsymbol{\nu}_i(K,L)' - \boldsymbol{\nu}_i(0,L )' \right) 
            \\
            & = \sum_{i \ge 1} (\nu_i' - \lambda_i') - \sum_{i \ge 0} (\mu_i' - \kappa_i') = \mathbf{c}.
        \end{split}
    \end{equation*}
    By the characterization \eqref{eq:characterization_columns} we have $k_0 = \max \{ k \ge 0 \mid m_k(L) \le 0  \} = \sum_{i \le 0} \nu_i' - \lambda_i' = A^-$. Finally, the equality $j_0 = B^+ + \mathbf{c} +1$ can be obtained arguing again by symmetry with respect to transposition of the lattice $\Lambda_{K,L}$, completing the proof.  
\end{proof}

An advantage of this factorization is that the evaluation of the array $\boldsymbol{\nu} (i,j)$ only involves the action of the Fomin growth operator $\Fomin(\kappa, \mu, \lambda)$ on triples of partitions $\mu \succeq \kappa \preceq \lambda$ such that $|\mu / \kappa|, |\lambda/ \kappa| \in \{ 0,1\}$; see \Cref{fig:Fomin example} right panel.

\begin{figure}
    \centering
    \includegraphics[height=5cm]{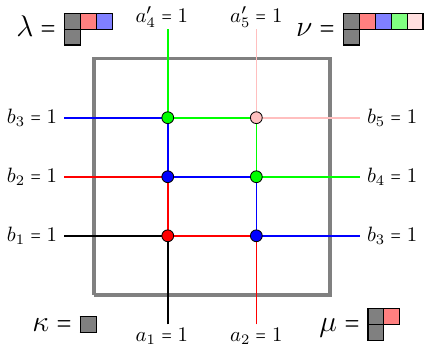}
    \qquad \qquad
    \includegraphics[height=5cm]{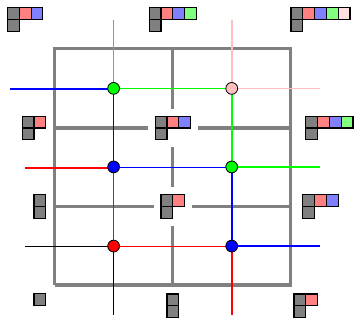}
    \caption{In the left panel a graphical representation of evaluation of the Fomin operator. In the right panel its partial standardization.}
    \label{fig:Fomin example}
\end{figure}

Let us now report two basic properties of the map $\Fomin$. The first is a conservation law.

\begin{proposition} \label{prop:conservation_Fomin}
    If $\nu = \Fomin(\kappa,\mu,\lambda)$, then $|\nu/\lambda| = |\mu/\kappa|$ and $|\nu/\mu| = |\lambda/\kappa|$.    
\end{proposition}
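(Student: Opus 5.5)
The plan is to reduce the statement to a ball-counting identity for the local map $\mathsf{f}$ of \eqref{eq:f Fomin}, summed over all sites, using the carrier boundary conditions at $\pm\infty$. By definition \eqref{eq:fomin-operator}, and in the notation of \eqref{eq:explicit_fomin}, we have
\[
|\mu/\kappa| = \sum_{i\in\Z} a_i, \qquad |\lambda/\kappa| = \sum_{i\in\Z} b_i, \qquad |\nu/\lambda| = \sum_{i\in\Z} a_i', \qquad |\nu/\mu| = \sum_{i\in\Z} b_i'.
\]
This holds because the size of a skew shape $\alpha/\beta$ between signatures of equal length equals $\sum_i(\alpha_i'-\beta_i')$, with only finitely many nonzero terms. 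So it suffices to show $\sum_i a_i' = \sum_i a_i$ and $\sum_i b_i' = \sum_i b_i$.

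The local step comes directly from \eqref{eq:f Fomin_1}. Since $a' = b+c-c'$ and $b' = a+c-c'$, we get the two telescoping identities
\[
a_i' - b_i = c_{i-1} - c_i, \qquad b_i' - a_i = c_{i-1} - c_i .
\]
Summing over $i$ then gives $\sum_i a_i' - \sum_i b_i = c_{-\infty} - c_{+\infty}$, and likewise for the other pair.

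The boundary terms vanish for the following reasons. We have $c_{-\infty}=0$ by convention. For $c_{+\infty}$, observe that once $i$ is beyond the supports of $a$ and $b$, we have $a_i=b_i=0$, so $c_i = \max\{0,c_{i-1}-1\}$. The carrier therefore empties after finitely many steps, and $c_i=0$ for all large $i$. All the sums involved are finite, because $a$ and $b$ are finitely supported and so $a'$ and $b'$ are supported in a bounded window.

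These identities give $|\nu/\lambda| = |\lambda/\kappa|$ and $|\nu/\mu| = |\mu/\kappa|$. This has the two equalities of \Cref{prop:conservation_Fomin} interchanged, so I expect the one real obstacle to be this bookkeeping rather than any estimate. Both versions hold as soon as $|\mu/\kappa| = |\lambda/\kappa|$, but that is not automatic, so the proof must match the paper's lane convention exactly. In \eqref{eq:fomin-operator} the first output lane is $\nu'-\mu'$, paired with the input $\mu'-\kappa'$. Under that labeling, the telescoping gives output lane $1$ $=$ input lane $2$ and output lane $2$ $=$ input lane $1$, that is, $|\nu/\mu| = |\lambda/\kappa|$ and $|\nu/\lambda| = |\mu/\kappa|$. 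This is the proposition as stated. When writing the proof, I will use the labeling of \eqref{eq:fomin-operator} throughout, since \eqref{eq:explicit_fomin} labels the outputs the other way.
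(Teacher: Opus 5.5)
Your proposal is correct and follows the paper's proof: sum the local conservation law from \eqref{eq:f Fomin_1} over all sites and let the carrier terms telescope, using $c_{-\infty}=0$ and $c_i=0$ for all large $i$ (you check the latter explicitly; the paper leaves it implicit). Your resolution of the labeling clash is also right, since \eqref{eq:fomin-operator} is the actual definition of $\Fomin$. It pairs the top edge $\nu'-\lambda'$ with the bottom edge $\mu'-\kappa'$, and the right edge $\nu'-\mu'$ with the left edge $\lambda'-\kappa'$, which is the pairing the paper's one-line computation uses.
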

\begin{proof}
    From \eqref{eq:explicit_fomin} we see that $|\nu/\lambda|=\sum_{i\in \mathbb{Z}} a_i' = \sum_{i\in \mathbb{Z}} \left(a_i + c_i -c_{i-1}\right) = \sum_{i\in \mathbb{Z}} a_i = |\mu/\kappa|$, where we used \eqref{eq:f_recursive} and the definition \eqref{eq:f Fomin} of the map $\mathsf{f}$. 
\end{proof}

Another basic property of the Fomin operator $\Fomin$ is the commutation with the global column-wise addition of a signature.

\begin{proposition} \label{prop:symmetry Fomin}
    For signatures $\mu \succeq \kappa \preceq \lambda$, if $\nu = \Fomin(\kappa,\mu, \lambda)$ and we have
    \begin{equation}
        \Fomin\bigl( (\kappa' + \rho')', (\mu' + \rho')' , (\lambda'+\rho')' \bigr) =  (\nu' + \rho')'.
    \end{equation}
    In other words the action of the Fomin growth operator commutes with the column-wise addition of a signature $\rho$. 
\end{proposition}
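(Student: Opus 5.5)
The plan is to reduce the statement to the observation that $\Fomin$ depends on its arguments only through the difference sequences of their conjugates. Write $\tilde\kappa = (\kappa'+\rho')'$, and similarly $\tilde\mu$, $\tilde\lambda$, $\tilde\nu$. Since $(\lambda')'=\lambda$ is a convention, we should read $(\kappa'+\rho')'$ as the signature whose conjugate is $\kappa'+\rho'$. So the first step is to check that this makes sense: for signatures $\kappa,\rho$, the sequence $\kappa'+\rho'$ is weakly decreasing in $i$, equals $\ell(\kappa)+\ell(\rho)$ for $i \ll 0$, and vanishes for $i\gg 0$. Hence it is the conjugate of a unique signature of length $\ell(\kappa)+\ell(\rho)$, namely the signature obtained by merging (sorting together) the entries of $\kappa$ and $\rho$. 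All four tilde signatures then have the same length $\ell+\ell(\rho)$.

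The second step is to verify the hypotheses. We have
\[
\tilde\mu'-\tilde\kappa' = \mu'-\kappa' \in \BBS,
\]
so $\tilde\kappa\preceq\tilde\mu$ by the characterization of interlacing via $\mu'-\kappa'\in\BBS$. Likewise $\tilde\kappa\preceq\tilde\lambda$. Thus $\Fomin(\tilde\kappa,\tilde\mu,\tilde\lambda)$ is defined.

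The third step is the computation itself. By \eqref{eq:explicit_fomin}, the inputs to the recursion \eqref{eq:f_recursive} are
\[
a_i = \tilde\mu_i'-\tilde\kappa_i' = \mu_i'-\kappa_i', \qquad b_i = \tilde\lambda_i'-\tilde\kappa_i' = \lambda_i'-\kappa_i',
\]
which are exactly the same as for the untilded triple, with the same boundary condition $c_{-\infty}=0$. Therefore the outputs $a_i',b_i'$ coincide with those of the original evaluation. The result $\hat\nu=\Fomin(\tilde\kappa,\tilde\mu,\tilde\lambda)$ then satisfies
\[
\hat\nu' = \tilde\lambda' + \sum_i a_i'\mathbf e_i = \lambda'+\rho' + (\nu'-\lambda') = \nu'+\rho',
\]
so $\hat\nu = (\nu'+\rho')'$. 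As a consistency check, we can also confirm $\hat\nu' = \tilde\mu'+\sum_i b_i'\mathbf e_i$, which holds for the same reason.

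The only mild obstacle is the first step, namely making precise what $(\cdot)'$ applied to $\kappa'+\rho'$ means and showing that the result is a genuine signature of the right length. Once that is settled, the rest is immediate, because $\mathsf F$ only sees differences of conjugates.
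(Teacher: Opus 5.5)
Your proof is correct and uses the same idea as the paper: $\Fomin(\kappa,\mu,\lambda)$ depends only on the differences $\mu'-\kappa'$ and $\lambda'-\kappa'$, and these are unchanged by adding $\rho'$. You also check two things the paper leaves implicit: that $(\kappa'+\rho')'$ is a genuine signature (the merge of $\kappa$ and $\rho$), and that the shifted triple still satisfies the interlacing hypotheses.
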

\begin{proof}
    This is straightforward since the evaluation of the Fomin growth operator $\Fomin(\kappa,\mu,\lambda)$ only depends on the differences $\lambda'-\kappa'$ and $\mu'-\kappa'$.
\end{proof}

\subsection{Horizontally weak tableaux}

\begin{definition}[Horizontally weak tableau]
A \defn{horizontally weak tableau} (HWT) is a filling of Young diagram with positive integers which is nondecreasing in each row while there are no conditions on the columns. We call the associated Young diagram $\mu$ the \defn{shape of
the HWT}. Let \defn{$\HWT(\mu, n)$} denote the set of HWT of shape $\mu$ with entries in the alphabet $\{1, \dotsc, n\}$. 

For later use we also extend the above definition in the case where the shape $\mu$ is not necessarily the Young diagram but $\mu\in \Z_+^{\ell}$, $\ell=1,2,\cdots$ in a similar way.
\end{definition}

\begin{example}\label{ex:HWTs}
The following are examples of the HWTs:
\begin{align}
\begin{ytableau}
    1 & 1 & 2 & 2\\
    2 & 2 & 3\\
    3 & 3
\end{ytableau}
\,
,
\qquad
\begin{ytableau}
    1 & 1 & 2 & 2\\
    2 & 2 & 3\\
    1 & 2
\end{ytableau}
\,
,
\qquad
\begin{ytableau}
    2 & 2 & 3\\
    1 & 1 & 2 & 2\\
    1 & 2
\end{ytableau}
\,
,
\qquad
\begin{ytableau}
    2 & 3 \\
    1 & 1 & 2 & 2\\
    1 & 2 & 2 & 3 & 3
\end{ytableau}
\,
.
\end{align}
Note that the first example is a semistandard Young tableau, i.e., entries are additionally strictly increasing vertically from top to bottom while the second one is not. On the other hand, the rightmost two tableaux have shape respectively $\mu = (3,4,2)$ and $\mu=(2,4,5)$, which are not partitions. 
\end{example}

\subsection{Weight and row reading word}
We define two functions, the weight and the row reading word, which we mainly use to introduce the crystal structures for the set of tableaux in \cref{sec:affine_leading}.

\begin{definition}[Weight]
    \label{def:weight}
    For a combinatorial object $P$ with entries $\{1,2,\dotsc,n\}$ (such as a semistandard tableau, HWT, a word, etc.),
 the \defn{weight} of $P$ is defined by
 \begin{align}
     \label{eq:weight_def}
    \wt(P) := (\wt_1(P), \dotsc, \wt_n(P)) \in \Z^n,
 \end{align}
 where, for $i=1,\dotsc,n$, $\wt_i(P)$ is the number of `$i$' entries in $P$.
\end{definition}

\begin{definition}[Row reading word]
    \label{def:rowreadingword}
    For an HWT or semistandard tableau, the \defn{row reading word} is defined by concatenating the parts with positive elements in all rows starting from the bottom.
\end{definition}

\begin{example}
    \label{ex:rowreadingword}
    For the semistandard and horizontally weak tableaux
\begin{align}
    \label{eq:exrowreadingword}
    \begin{ytableau}
        \none[\scriptstyle \ao{\cdots}] & \none[\scriptstyle \ao{ \overline{2}}] & \none[\scriptstyle \ao{\overline{1}}] & \none[\scriptstyle \ao{0}] & \none[\scriptstyle \ao{1}] & \none[\scriptstyle \ao{2}] & \none[\scriptstyle \ao{3}] & \none[\scriptstyle \ao{4}] & \none[\scriptstyle \ao{5}]
        \\
        \none[\scriptstyle \cdots] & *(gray) & *(gray) & *(gray) & *(gray) & *(gray) & 1 & 1 & 1
        \\
        \none[ \scriptstyle \cdots] & *(gray) & *(gray) & *(gray) & *(gray) & 1 & 2 & 2
        \\
        \none[\scriptstyle \cdots] & *(gray) & *(gray) & *(gray) & *(gray)
        \\
        \none[\scriptstyle \cdots] & *(gray) & *(gray) & 1
    \end{ytableau}
    \qquad
    \text{and}
    \qquad
    \begin{ytableau}
        1 & 2 & 2 & 3
        \\
        1 & 2 & 2
        \\
        1 & 1
        \\
        2
    \end{ytableau}
\end{align}
the row reading words are $1122111$ and $2111221223$, respectively.
\end{example}

\section{Column RSK dynamics}
\label{sec:cRSK}

\subsection{Refinements of fields of interlacing signatures on the cylinder}

Fomin fields of interlacing signatures on the cylindrical lattice $\mathscr{C}_n$ were defined in \Cref{def:fomin_fields}. To each Fomin field $\boldsymbol{\lambda}$ we can associate a pair of semistandard tableaux $(P,Q)$ by reading the signatures along the simple loop 
\[
    (0,0) , (1,0), \dots, (n,0) \sim (0,n), (0,n-1) , \dots , (0,1), (0,0) 
\]
as done in \eqref{eq:tableauxPQ} and \eqref{eq:tableaux_from_loop}. In this case we write $\boldsymbol{\lambda} = \boldsymbol{\lambda}[P,Q]$, since the field is fully determined by the tableaux $(P,Q)$. More in general the Fomin field $\boldsymbol{\lambda}$ would be fully determined once one assigns its values along any simple loop $\mathbf{p} \subset \mathscr{C}_n$.


We say that a field of interlacing signatures $\boldsymbol{\lambda}$ is \defn{fully refined} if
\[
    |\boldsymbol{\lambda}(p+\mathbf{e}_\varepsilon) / \boldsymbol{\lambda}(p)| \in \{ 0 ,1 \} \qquad \text{for } \varepsilon \in \{0,1\}, \qquad \text{for all } p \in \mathscr{C}_{m,n}.
\]
It is clear that if a Fomin field $\boldsymbol{\lambda}$ is fully refined the associated tableaux $(P,Q)$ (i.e. $\boldsymbol{\lambda}=\boldsymbol{\lambda}[P,Q]$) are partially standard. The converse is also true. This is because if $\lambda \succeq \kappa \preceq \mu$ are interlacing signatures with $|\lambda / \kappa|, |\mu/ \kappa| \in \{0,1\}$, then $\nu = \Fomin (\kappa,\mu,\lambda)$ also has the property that $|\nu / \lambda |, |\nu/\mu| \in \{0,1\}$. Such property follows directly from the definition of the operator $\Fomin$. It is also evident from the graphical construction presented in \Cref{subs:fomin operator}, where the sizes $|\lambda / \kappa|, |\mu/ \kappa|,|\nu / \lambda |,|\nu/\mu|$ are respectively the number of colored rays crossing the left, bottom, top and right side of a cell used to evaluate $\Fomin$ as in \Cref{fig:Fomin example}.


The partial standardization procedure of semistandard tableaux described in \Cref{subs:standard_tab} provides a natural way to refine Fomin fields of signatures.

\begin{proposition} \label{prop:standardization_field}
    Let $\boldsymbol{\lambda}$ be a Fomin field of interlacing signatures on $\mathscr{C}_n$, and let $N = \abs{\boldsymbol{\lambda}(n,0)/\boldsymbol{\lambda}(0,0)}$. There exists a fully refined Fomin field $\widetilde{\boldsymbol{\lambda}}$ on $\mathscr{C}_N$ and an explicit embedding $\phi \colon \mathscr{C}_n \hookrightarrow \mathscr{C}_N$ such that $\boldsymbol{\lambda}(p) = \widetilde{\boldsymbol{\lambda}} (\phi(p))$. Moreover, let $T$ be any semistandard tableau of the form
    \begin{equation} \label{eq:generic_tableau}
        T = \bigl( \boldsymbol{\lambda}(p) \preceq \boldsymbol{\lambda}(p+\mathbf{e}_\varepsilon) \preceq \cdots \preceq \boldsymbol{\lambda}(p+ r \mathbf{e}_\varepsilon) \big)
    \end{equation}
    for some $p \in \mathscr{C}_n, r \in \{0,\dots,n\}, \varepsilon \in \{1,2\}$, then the tableau
    \begin{equation} \label{eq:generic_tableau_standard}
        \widetilde{T} = \bigl( \widetilde{\boldsymbol{\lambda}}(\phi(p)) \preceq \widetilde{\boldsymbol{\lambda}}(\phi(p)+\mathbf{e}_\varepsilon) \preceq \cdots \preceq \widetilde{\boldsymbol{\lambda}}(\phi(p+ r \mathbf{e}_\varepsilon) \big)
    \end{equation}
    is the partial standardization of $T$.
\end{proposition}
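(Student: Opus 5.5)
The plan is to build $\widetilde{\boldsymbol{\lambda}}$ by partially standardizing the two boundary tableaux and then applying the refinement of each Fomin cell from \Cref{subs:fomin operator} to every cell of the cylinder at once. First the bookkeeping. By \Cref{prop:conservation_Fomin}, along any horizontal segment $(p, p+\mathbf{e}_1,\dots,p+n\mathbf{e}_1)$ and any vertical one, the total number of added cells equals $N$. Indeed, $|\boldsymbol{\lambda}(p+\mathbf{e}_1)/\boldsymbol{\lambda}(p)|$ is unchanged under translation by $\mathbf{e}_2$. Moreover the loop $(0,0),\dots,(n,0)\sim(0,n),\dots,(0,0)$ forces the vertical total to equal the horizontal total, both being $|\lambda/\rho|$.

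For each column index $x\in\Z$, set $w_x=\max(1,|\boldsymbol{\lambda}(x+1,y)/\boldsymbol{\lambda}(x,y)|)$, which is independent of $y$; define $h_y$ analogously for rows. Note that $\sum_{x=0}^{n-1} w_x$ is the quantity $\mathsf M_n$ of \eqref{eq:multiplicities_m}, and it is this that plays the role of $N$ (with zero weights counted as one step, matching the partial standardization convention). Define $\phi(x,y)=(\sum_{0\le x'<x} w_{x'},\ \sum_{0\le y'<y}h_{y'})$, extended to negative arguments by the usual convention. The periodicity $w_{x+n}=w_x$ comes from the identification $(x,y)\sim(x+n,y-n)$ together with the fact that the increments only depend on $x$. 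It follows that $\phi$ respects $\sim$, so $\phi$ is a well-defined embedding $\mathscr C_n\hookrightarrow\mathscr C_N$.

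Next I define $\widetilde{\boldsymbol\lambda}$ along the image of the standard loop. On the horizontal segment between $\phi(i-1,0)$ and $\phi(i,0)$, I insert the intermediate signatures $\lambda^{(i,j)}$ of the partial standardization, adding cells left to right; I do the same on the vertical segment for $Q$. This gives a fully refined interlacing sequence on a simple loop of $\mathscr C_N$, and by invertibility of $\Fomin$ it extends uniquely to a Fomin field $\widetilde{\boldsymbol\lambda}$ on $\mathscr C_N$. This field is fully refined by the remark preceding the proposition.

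It remains to show $\widetilde{\boldsymbol\lambda}\circ\phi=\boldsymbol\lambda$, and that along any horizontal or vertical segment of $\widetilde{\boldsymbol\lambda}$ the intermediate signatures are exactly the left-to-right partial standardization. I prove both together by induction on cells, propagating forward and backward from the loop. Fix a cell of $\mathscr C_n$ with corners $\kappa,\mu,\lambda,\nu$, and assume its bottom and left sides in $\widetilde{\boldsymbol\lambda}$ are the left-to-right refinements. The $w_x\times h_y$ block of $\widetilde{\boldsymbol\lambda}$ above it is then exactly the array $\boldsymbol\nu(i,j)$ of \Cref{subs:fomin operator}, so its corner is $\nu$. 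The key claim, which I expect to be the main obstacle, is that the top and right sides of this block are again left-to-right refinements of $\nu/\lambda$ and $\nu/\mu$. This is what makes the induction close and what gives the statement about $\widetilde T$.

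To prove the key claim I would use the ray picture. The sequences $m_k(\ell)$ from the proof of \Cref{prop:refinement_environment} satisfy $m_1(\ell)<\dots<m_K(\ell)$ for every $\ell$, in particular for $\ell=L$, and this says precisely that the cells on the far side are added in increasing column order. The symmetric argument under transposition handles the other side. For backward propagation I use the inverse rule, applying the same reasoning with rays reflected. Once the induction is complete, every segment $T$ as in \eqref{eq:generic_tableau} is read off in $\widetilde{\boldsymbol\lambda}$ as its left-to-right refinement, which is $\widetilde T$.
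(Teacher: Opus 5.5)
Your route is the paper's: refine every Fomin cell as in \Cref{subs:fomin operator}, let $\phi$ record cumulative step sizes, and glue the blocks. Your ``key claim'' is exactly what the paper compresses into ``by construction''; it is also what makes neighbouring blocks agree on shared edges.

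\textbf{The padded step sizes break $\phi$.} The step that fails is replacing the step sizes by $w_x=\max(1,\cdot)$ and $h_y=\max(1,\cdot)$. With these, $\phi(x+n,y-n)-\phi(x,y)=(\mathsf M_n(P),-\mathsf M_n(Q))$. Your conservation argument equates only the raw totals $|\lambda/\rho|$, not the padded ones, and $\mathsf M_n(P)\neq\mathsf M_n(Q)$ whenever $P$ and $Q$ omit different numbers of letters. For example, take $n=2$, shape $(2)/(0)$, $P$ the one-row tableau $11$ and $Q$ the one-row tableau $12$. Then $\mathsf M_2(P)=3$ and $\mathsf M_2(Q)=2$, so $\phi(2,-2)=(3,-2)$ is not equivalent to $\phi(0,0)$ in any $\mathscr C_{N'}$. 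Hence $\phi$ is not well defined on $\mathscr C_2$, and the image of the standard loop is not a closed simple loop.

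The paper uses the raw sizes $M_i$, so both periods equal $N$ and $\phi$ respects $\sim$. The price, left implicit there, is that when some $\wt_i=0$ the map $\phi$ collapses that empty step. Then $\phi$ is not injective, and $\widetilde T$ is the partial standardization with empty steps deleted. You have two consistent options:
\begin{itemize}
\item follow the paper with raw sizes and state this caveat, or restrict to all weights positive (where $\mathsf M_n=N$ and your construction coincides with the paper's);
\item keep the empty steps and work on the twisted cylinder $\Z^2/\langle(\mathsf M_n(P),-\mathsf M_n(Q))\rangle$, where the Fomin-field formalism goes through verbatim but which is not the $\mathscr C_N$ of the statement.
\end{itemize}

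\textbf{The key claim needs its own proof.} You cite the strict increase $m_1(\ell)<\dots<m_K(\ell)$ from the proof of \Cref{prop:refinement_environment}. That is only posited there, not proved, so you must supply it. It is a one-line check on a unit square whose bottom and left steps add one cell each, in columns $x$ and $y$:
\begin{itemize}
\item if $x\neq y$, the top and right steps add columns $x$ and $y$ respectively;
\item if $x=y$, both add column $x+1$.
\end{itemize}
Suppose $m_{k+1}(\ell)=m_k(\ell)+1$ and $m_k$ increases. Then the horizontal ray leaves square $k$ with label $m_{k+1}(\ell)$, which forces $m_{k+1}$ to increase too. So strict increase persists row by row, and by the transposition symmetry of the rule also column by column.

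Backward propagation needs no reflected argument. Each unit step is invertible, so a block is determined by its top and right sides. By the forward claim, these coincide with the sides of the block grown from the left-to-right refinements of $\mu/\kappa$ and $\lambda/\kappa$, whose corner is $\kappa$.
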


\begin{proof}
    We can refine each evaluation
    \[
        \boldsymbol{\lambda}(p+\mathbf{e}_1+\mathbf{e}_2) = \Fomin \bigl( \boldsymbol{\lambda}(p), \boldsymbol{\lambda}(p + \mathbf{e}_1) , \boldsymbol{\lambda}(p + \mathbf{e}_2)  \bigr),
    \]
    as described in \Cref{subs:fomin operator}. Such refinement produces a field $\widetilde{\boldsymbol{\lambda}}$ on $\mathscr{C}_N$, obtained replacing each cell $p'=(i+kn + \frac{1}{2} ,j+kn+ \frac{1}{2}) \in \mathscr{C}_n'$ for $i,j\in\{0,\dots,n-1\}, k,l\in \mathbb{Z}$ with a matrix of cells
    \[
        \left\{ \bigl( \alpha +k N + \frac{1}{2}, \beta +k N + \frac{1}{2} \bigr) \midspan M_i \le \alpha \le M_{i+1}-1, ~ M_j' \le \beta \le M_{j+1}'-1 \right\} \subset \mathscr{C}_N',
    \]
    with $M_i = | \boldsymbol{\lambda}(i,j) / \boldsymbol{\lambda}(0,j) | = | \boldsymbol{\lambda}(i,0) / \boldsymbol{\lambda}(0,0) |$ and $M_j' = | \boldsymbol{\lambda}(i,j) / \boldsymbol{\lambda}(i,0) | = | \boldsymbol{\lambda}(0,j) / \boldsymbol{\lambda}(0,0) |$, by \Cref{prop:conservation_Fomin}. Then, by construction, we have $\boldsymbol{\lambda}(p) = \widetilde{\boldsymbol{\lambda}} (\phi(p))$, where the embedding  $\phi$ is defined as
    \[
        \phi( i+ k n, j+\ell n) = ( M_i+k N, M_j'+ \ell N),
    \]
    for $i,j\in\{0,\dots,n-1\}, k,l\in \mathbb{Z}$. The fact that any tableau $\widetilde{T}$ of the form \eqref{eq:generic_tableau_standard} is the standardization of its pre-image $T$ under $\phi$, as in \eqref{eq:generic_tableau}, is evident by construction.
\end{proof}

\begin{figure}
    \centering
    \includegraphics[width=\linewidth]{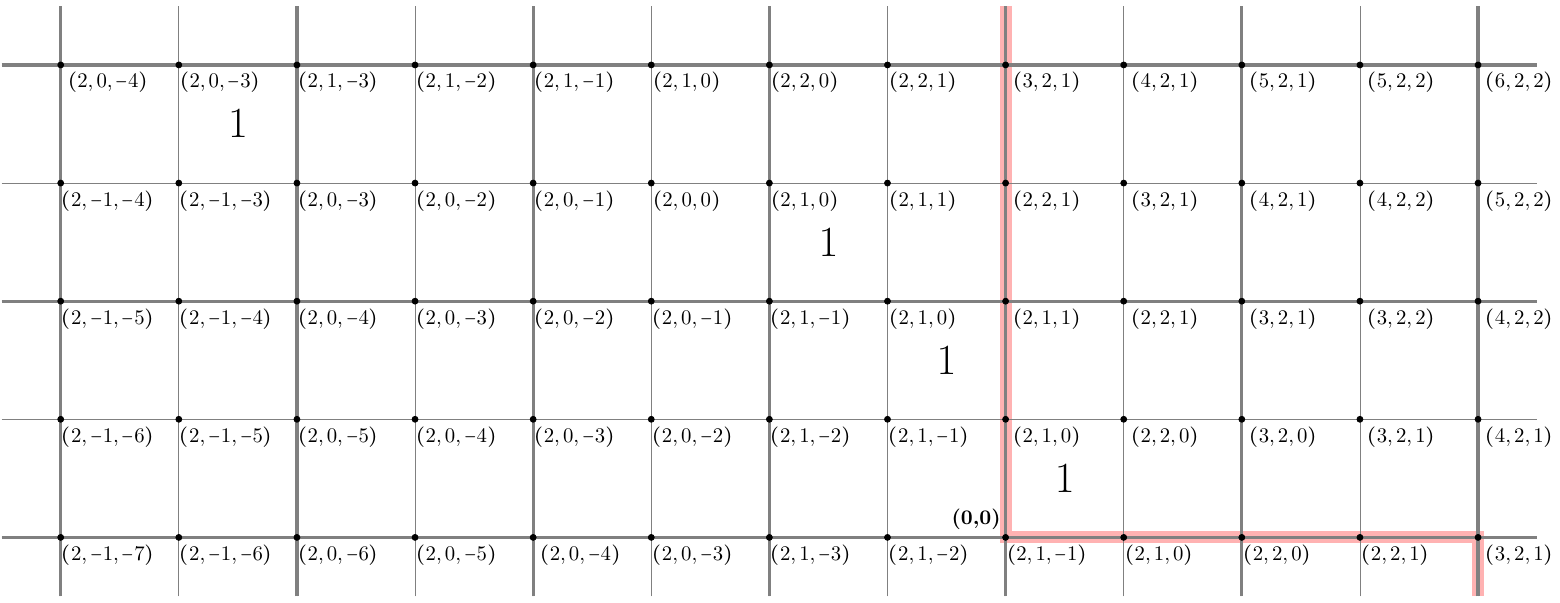}
    \caption{The refinement of the field $\boldsymbol{\lambda}$ of \Cref{fig:cylinder} as described in \Cref{prop:standardization_field}.}
    \label{fig:refinement_field}
\end{figure}

\Cref{prop:standardization_field} will allow us to prove statements for Fomin fields of signatures by analyzing only fully refined fields, which are simpler to treat.

\begin{remark}
    The standardization is a common technique in algebraic combinatorics; see e.g. \cite{Stanley1999}.
\end{remark}

\subsection{Basic properties of the skew column RSK dynamics} \label{subs:definition dynamics Fomin}

Let us give some basic definitions.

\begin{definition}[Skew column RSK map and shifts $\iota_1,\iota_2$] \label{def:cRSK_i1_i2}
    Fix  $(P,Q)\in\bigsqcup_{\lambda,\rho}\SST(\lambda/\rho,n)^2$ and construct the field of interlacing partitions $\boldsymbol{\lambda} = \boldsymbol{\lambda}[P,Q]$.
    The \defn{skew column RSK map} $\cRSK$ is the map $\cRSK(P,Q) = (P',Q')$, where
    \begin{equation}
        \cRSK(P,Q) = (P',Q') ~: ~P'= \bigl( \boldsymbol{\lambda} (n,0) \preceq \cdots \preceq \boldsymbol{\lambda} (2n,0) \bigr) ,\quad Q' = \bigl( \boldsymbol{\lambda} (n,0) \preceq \cdots \preceq \boldsymbol{\lambda} (n,n) \bigr).
    \end{equation}
    We also define the operations $\iota_1 ,\iota_2$ as
    \begin{align*}
       \iota_1(P,Q) & = (\widetilde{P},\widetilde{Q}) ~ : ~  \widetilde{P} = \bigl( \boldsymbol{\lambda} (1,0) \preceq \cdots \preceq \boldsymbol{\lambda} (n+1,0) \bigr) ,~~ \widetilde{Q} = \bigl( \boldsymbol{\lambda} (1,0) \preceq \cdots \preceq \boldsymbol{\lambda} (1,n) \bigr),
    \\
       \iota_2(P,Q) & = (\overline{P},\overline{Q}) ~ : ~  \overline{P} = \bigl( \boldsymbol{\lambda} (0,1) \preceq \cdots \preceq \boldsymbol{\lambda} (n,1) \bigr) ,~~ \overline{Q} = \bigr( \boldsymbol{\lambda} (0,1) \preceq \cdots \preceq \boldsymbol{\lambda} (0,n+1) \bigr).
    \end{align*}
\end{definition}

It is clear that $\cRSK, \iota_1,\iota_2$ are invertible maps.
For examples, see \Cref{fig:iota_1_iota_2}.

\begin{figure}
    \centering
    \includegraphics[width=1\linewidth]{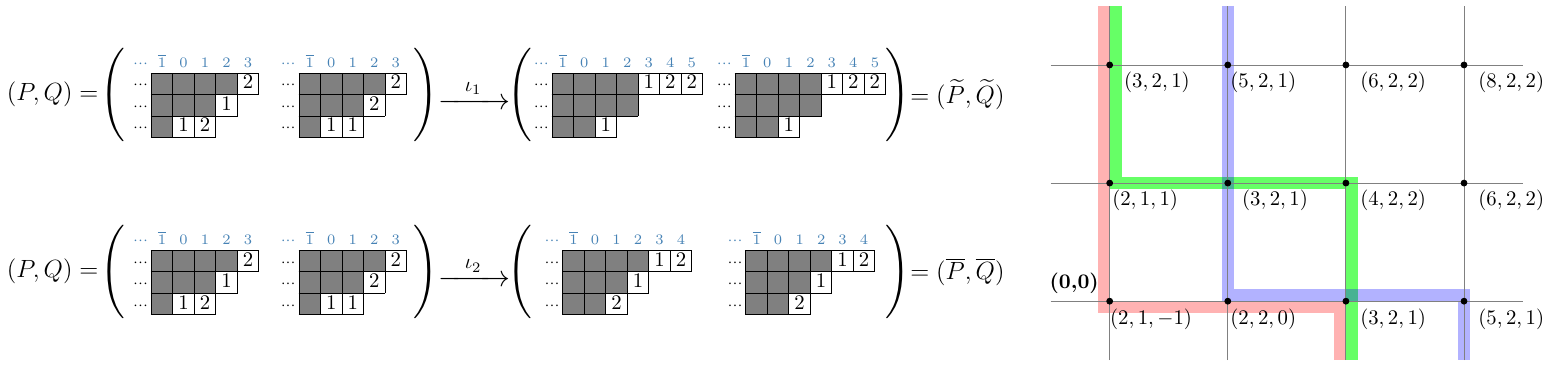}
    \caption{In the left panel, examples of evaluation of $\iota_1, \iota_2$. They correspond to sequences of interlacing signatures displayed on the lattice in the right panel. Signatures along the red down-right loop correspond to tableaux $(P,Q)$, those along the blue loop to $\iota_2(P,Q)=(\widetilde{P},\widetilde{Q})$, while those on along the green loop to $\iota_1(P,Q)=(\overline{P},\overline{Q})$}
    \label{fig:iota_1_iota_2}
\end{figure}

The cRSK dynamics of \Cref{def:cRSK} with initial conditions $(P,Q)$ could be equivalently defined as the sequence $(P_t,Q_t)$ with $(P_t,Q_t) = \cRSK^t(P,Q)$, for all $t\in \mathbb{Z}$; this is evident from the definition of the map $\cRSK$.

We now enumerate several basic properties of Fomin fields of signatures.

\begin{proposition} \label{prop:properties of iota and cRSK}
    Let $(P,Q)\in\bigsqcup_{\lambda,\rho}\SST(\lambda/\rho,n)^2 $ and let $\boldsymbol{\lambda} = \boldsymbol{\lambda}[P,Q]$ be the corresponding field of interlacing partitions on $\mathscr{C}_{n}$. The following properties hold.
    \begin{enumerate}[label = {\rm (\Roman*)}]
        \item \label{item:iota2 field} if $(\widetilde{P},\widetilde{Q})=\iota_2(P,Q)$ and $\widetilde{\boldsymbol{\lambda}} = \boldsymbol{\lambda}[\widetilde{P},\widetilde{Q}]$, then $\widetilde{\boldsymbol{\lambda}}(x,y) = \boldsymbol{\lambda}(x,y+1)$ for all $(x,y) \in \mathscr{C}_{n}$;
        \item \label{item:iota1 field} if $(\widetilde{P},\widetilde{Q})=\iota_1(P,Q)$ and $\widetilde{\boldsymbol{\lambda}} = \boldsymbol{\lambda}[\widetilde{P},\widetilde{Q}]$, then, $\widetilde{\boldsymbol{\lambda}}(x,y) = \boldsymbol{\lambda}(x+1,y)$ for all $(x,y) \in \mathscr{C}_{n}$;
        \item \label{item:cRSK field} if $(\widetilde{P},\widetilde{Q})=\cRSK(P,Q)$ and $\widetilde{\boldsymbol{\lambda}} = \boldsymbol{\lambda}[\widetilde{P},\widetilde{Q}]$, then $\widetilde{\boldsymbol{\lambda}}(x,y) = \boldsymbol{\lambda}(x+n,y) = \boldsymbol{\lambda}(x,y+n)$ for all $(x,y) \in \mathscr{C}_{n}$;
        \item \label{item:iota_swap} we have $\iota_2(P,Q) = \iota_1(Q,P)$.
        \item \label{item:cRSK iota} we have $\cRSK = \iota_2^n = \iota_1^n$;
        \item \label{item:commutation iota} we have $\iota_1 \circ \iota_2 = \iota_2 \circ \iota_1$;
        \item \label{item:content_i2} Let $(\overline{P},\overline{Q}) = \iota_2(P,Q)$. Then $\wt_i(\overline{P}) = \wt_i(P)$ and $\wt_1(\overline{Q}) = \wt_{n}(Q)$, while $\wt_i(\overline{Q}) = \wt_{i-1}(Q)$ for $i=2,\dots,n$
        \item \label{item:iota_standardization} Call $(R,T)$ and $(\overline{R},\overline{T})$ the pairs of tableaux obtained by standardization of the pairs $(P,Q)$ and $(\overline{P},\overline{Q}) = \iota_2(P,Q)$. Then $(\overline{R},\overline{T}) = \iota_2^{\wt_1(Q)}(R,T)$.
        \item \label{item:sum environment} Let $\mathbf{c}$ be the environment associated to the field $\boldsymbol{\lambda}$, as in \Cref{def:fomin_fields}. Then, for all $(x,y)\in\mathscr{C}_n$, we have 
        \begin{equation} \label{eq:sum environment}
            |\boldsymbol{\lambda}(x+1,y)/\boldsymbol{\lambda}(x,y)| = \sum_{j \in \mathbb{Z}'} \mathbf{c} \left(x+\frac{1}{2},j \right)
            \qquad
            \text{and}
            \qquad
            |\boldsymbol{\lambda}(x,y+1)/\boldsymbol{\lambda}(x,y)| = \sum_{j \in \mathbb{Z}'} \mathbf{c} \left(j,y+\frac{1}{2} \right).
        \end{equation}
    \end{enumerate}
\end{proposition}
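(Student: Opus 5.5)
Most items follow from the uniqueness of a Fomin field given its values on one simple loop, so I would treat (I)--(VI) together. For \ref{item:iota2 field}, the translate $\boldsymbol{\mu}(x,y):=\boldsymbol{\lambda}(x,y+1)$ is again a Fomin field on $\mathscr{C}_n$, because the defining relations are translation invariant and translation by $\mathbf{e}_2$ respects $\sim$. By the definition of $\iota_2$, $\boldsymbol{\mu}$ agrees with $\boldsymbol{\lambda}[\overline{P},\overline{Q}]$ on the loop $(0,0),\dots,(n,0)\sim(0,n),\dots,(0,0)$. Uniqueness of the extension then gives equality everywhere. Item \ref{item:iota1 field} is the same argument with $\mathbf{e}_1$. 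Iterating, $\iota_2^n$ and $\iota_1^n$ both correspond to the translations by $n\mathbf{e}_2$ and $n\mathbf{e}_1$. These agree on $\mathscr{C}_n$, because $(x+n,y)\sim(x,y+n)$ by the identification $(x,y)\sim(x+n,y-n)$. Hence both equal $\cRSK$, which gives \ref{item:cRSK field} and \ref{item:cRSK iota}. Commutation \ref{item:commutation iota} holds because translations commute.

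For \ref{item:iota_swap} I would use the symmetry of the Fomin operator, $\Fomin(\kappa,\mu,\lambda)=\Fomin(\kappa,\lambda,\mu)$. This symmetry is visible from \eqref{eq:f Fomin_1}, which is symmetric under $(a,a')\leftrightarrow(b,b')$ with $c$ unchanged. It implies that the transposed field $\boldsymbol{\lambda}^{T}(x,y)=\boldsymbol{\lambda}(y,x)$ is a Fomin field, and that $\boldsymbol{\lambda}[Q,P]=\boldsymbol{\lambda}[P,Q]^T$. Shifting in $y$ on one side is then shifting in $x$ on the other, so $\iota_2(P,Q)=\iota_1(Q,P)$, after checking that the roles of $P$ and $Q$ in the definition are swapped correctly.

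For \ref{item:content_i2}, the weights are sizes $|\boldsymbol{\lambda}(p+\mathbf{e}_\varepsilon)/\boldsymbol{\lambda}(p)|$ of edges, and I would apply \Cref{prop:conservation_Fomin}. Across each cell, opposite edges have equal sizes. So $\wt_i(\overline{P})=|\boldsymbol{\lambda}(i,1)/\boldsymbol{\lambda}(i-1,1)|=|\boldsymbol{\lambda}(i,0)/\boldsymbol{\lambda}(i-1,0)|=\wt_i(P)$. The entries of $\overline{Q}$ are the edges $(0,j)\to(0,j+1)$ for $j=1,\dots,n$. The last of these, $(0,n)\to(0,n+1)$, is identified via $\sim$ with $(n,0)\to(n,1)$; that edge is the image of $(0,0)\to(0,1)$ under an $n$-fold horizontal transport through cells, so its size is $\wt_1(Q)$. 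Re-indexing then gives the stated cyclic shift. I should double check the orientation, because the statement as printed says $\wt_1(\overline Q)=\wt_n(Q)$, so my indexing must be adjusted to match it.

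Item \ref{item:iota_standardization} follows from \Cref{prop:standardization_field}. The refined field $\widetilde{\boldsymbol{\lambda}}$ on $\mathscr{C}_N$ contains $\phi(0,1)=(0,M'_1)$ with $M'_1=\wt_1(Q)$. Restricting to the loop through $\phi(0,1)$ gives the standardization of $\iota_2(P,Q)$ by that proposition, and by \ref{item:iota2 field} applied $\wt_1(Q)$ times in the refined field it equals $\iota_2^{\wt_1(Q)}(R,T)$.

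The main obstacle is \ref{item:sum environment}. My plan is to reduce to fully refined fields via \Cref{prop:standardization_field}, using \Cref{prop:refinement_environment}: each coarse cell's $\mathbf{c}$ is spread as $\mathbf{c}$ unit entries, one in each distinct refined column and row, so column sums are preserved under refinement. In a fully refined field I would track the single box added along the edge $(x,y)\to(x+1,y)$ while moving up in $y$. By \eqref{eq:value_c_0}, $\mathbf{c}$ at a cell measures the change across that cell of the number of cells in positive columns added between $x$ and $x+1$. Summing over a whole column of cells, from $y\to-\infty$ to $y\to+\infty$ — equivalently, once around the cylinder, which shifts columns by $n$ — this count telescopes to the total size $|\boldsymbol{\lambda}(x+1,y)/\boldsymbol{\lambda}(x,y)|$. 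The delicate point is identifying the boundary terms of this telescoping on the cylinder. The second identity follows by transposition, as in \ref{item:iota_swap}.
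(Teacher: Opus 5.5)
Your treatment of (I)--(VIII) is the paper's argument written out; the paper only says these items follow from the definitions, \Cref{prop:conservation_Fomin} and \Cref{prop:standardization_field}. Two side remarks. In (VII) your computation gives $\wt_n(\overline{Q})=\wt_1(Q)$ and $\wt_i(\overline{Q})=\wt_{i+1}(Q)$ for $i<n$. That is the correct relation: it agrees with part (II) of \Cref{prop:iota2 by column insertion} (vacate the $1$-cells, lower the other labels by one, label the new cells $n$). So do not re-index to match the printed statement, whose shift runs the other way. In (IV), the transposition argument actually yields $\iota_1(Q,P)=(\overline{Q},\overline{P})$ when $\iota_2(P,Q)=(\overline{P},\overline{Q})$, i.e.\ the identity holds only after exchanging the two output components.

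The gap is in (IX), exactly at the step you flag as delicate, and it is the substance of the item. Let $N^+(y)$ be the number of cells of $\boldsymbol{\lambda}(x+1,y)/\boldsymbol{\lambda}(x,y)$ lying in positive columns. By \eqref{eq:value_c_0}, $\mathbf{c}(x+\tfrac12,y+\tfrac12)=N^+(y+1)-N^+(y)$. To conclude, you must know two things:
\begin{itemize}
\item $N^+(y)$ equals the full size $|\boldsymbol{\lambda}(x+1,y)/\boldsymbol{\lambda}(x,y)|$ for all $y\gg0$ (this size is independent of $y$ by \Cref{prop:conservation_Fomin});
\item $N^+(y)=0$ for all $y\ll0$.
\end{itemize}
Your proposal establishes neither. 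The remark ``equivalently, once around the cylinder'' is also misleading. No two faces $(x+\tfrac12,j)$, $j\in\Z'$, are identified in $\mathscr{C}_n'$, so you are summing along an infinite line whose two ends behave differently, not around a loop; around a loop the telescoped count would simply return to its initial value.

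The paper builds these boundary values into its choice of $m$; they follow from the asymptotics of the field. Since $(x,y+tn)\sim(x+tn,y)$, moving up is running $\cRSK$ forward. By \Cref{prop:stable field} (whose proof does not use (IX), so there is no circularity), the rows $i\le\ell(\mu)$ of $\boldsymbol{\lambda}(x+tn,y)$ advance by $\mu_i\ge1$ per period. The rows $i>\ell(\mu)$ become constant in $t$, so the squeeze $\boldsymbol{\lambda}_i(x+tn,y)\le\boldsymbol{\lambda}_i(x+1+tn,y)\le\boldsymbol{\lambda}_i(x+(t+1)n,y)$ shows they receive no cells of the increment. Hence for $t$ large all added cells lie in positive columns. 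The backward stabilization gives $N^+(y)=0$ for $y\ll0$ in the same way, since the moving rows drift to $-\infty$. With these boundary values, the finite telescoping from $-m$ to $m$ yields the identity, and it also shows that $\mathbf{c}$ vanishes on that column outside a finite window. Your preliminary reduction to fully refined fields is harmless but unnecessary, since \eqref{eq:value_c_0} holds cell by cell for an arbitrary field.
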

\begin{proof}
    Statements \ref{item:iota2 field}, \ref{item:iota1 field}, \ref{item:cRSK field} follow straightforwardly from \Cref{def:cRSK_i1_i2}. They further imply \ref{item:iota_swap}, \ref{item:cRSK iota}, \ref{item:commutation iota}. The property \ref{item:content_i2} is straightforward from \Cref{def:cRSK_i1_i2} and \Cref{prop:conservation_Fomin}. The property  \ref{item:iota_standardization} is an immediate consequence of \Cref{prop:standardization_field} and \ref{item:iota2 field}. 
    
    Finally, we prove \ref{item:sum environment}. Since the environment $\mathbf{c}$ is finitely supported, let $m\in \mathbb{Z}'$ be large enough so that $\mathbf{c}(x+1/2,-m-i) = \mathbf{c}(x+1/2,m+i)=0$ for all $i\ge 0$ and 
    \[
        \boldsymbol{\lambda}_{-j}'(x+1,m+i) - \boldsymbol{\lambda}_{-j}'(x,m+i) = 0 = \boldsymbol{\lambda}_j'(x+1,-m-i) - \boldsymbol{\lambda}_j'(x,-m-i) \qquad \text{for all } i,j\ge 0.
    \]
    In other words the signatures $\boldsymbol{\lambda}(x+1,m+i)$ and $\boldsymbol{\lambda}(x,m+i)$ (resp. $\boldsymbol{\lambda}(x+1,-m-i)$ and $\boldsymbol{\lambda}(x,-m-i)$) only differ by cells at some positive (resp. negative) column. Then, by \Cref{prop:conservation_Fomin} and \eqref{eq:value_c_0}, we have
    \begin{equation*}
        \begin{aligned}
            | \boldsymbol{\lambda}(x+1,y)  / \boldsymbol{\lambda}(x,y) | & = | \boldsymbol{\lambda}(x+1,m)  / \boldsymbol{\lambda}(x,m) | = \sum_{j \ge 1}  \boldsymbol{\lambda}_j'(x+1,m)  - \boldsymbol{\lambda}_j'(x,m)
            \\
            &
            = \sum_{j \ge 1}  \boldsymbol{\lambda}_j'(x+1,-m)  - \boldsymbol{\lambda}_j'(x,-m) + \sum_{|j| < m} \mathbf{c}\left(x+\frac{1}{2},j\right) = \sum_{j \in \mathbb{Z}'} \mathbf{c}\left(x+\frac{1}{2},j\right).
        \end{aligned}
    \end{equation*}
    This proves the first relation in \eqref{eq:sum environment}. The second relation can be shown analogously.
\end{proof}

\subsection{Alternative description through Schensted column insertion} \label{subs:alternative_description_cRSK}

In \Cref{subs:definition dynamics Fomin}, we defined the maps $\cRSK, \iota_1, \iota_2$ in terms of the Fomin growth operator $\Fomin$. Here we give an alternative, more combinatorial, description in terms of the Schensted column insertion.

\begin{definition}[Column insertion] \label{def:c_insertion}
    Let $P\in \bigsqcup_{\lambda,\rho\in\signatures}\SST(\lambda/\rho,n)$ be a tableau, let $x \in \{1,\dots,n\}$ and let $c\in \mathbb{Z}$ be a column label. The column insertion of $x$ in $P$ at column $c$ is the following procedure
    \begin{enumerate}
        \item Scan the column $c$ of the tableau $P$ and call $x'$ the smallest entry larger or equal to $x$. If such an entry exists, replace it with $x$ and insert $x'$ into column $c+1$ of $P$.
        \item If all entries of the $c$-th column of $P$ are strictly smaller than $x$ (which includes the case where the column is empty) create a new cell at the bottom of column $c$ and assign to it the value $x$.
    \end{enumerate}
\end{definition}

\begin{definition}[Column internal insertion $\mathcal{C}_{[c]}$]
    Let $P\in \bigsqcup_{\lambda,\rho\in\signatures}\SST(\lambda/\rho,n)$ be a tableau and let $c$ be a column label such that the $c$-th column of $P$ possesses a \emph{corner cell}, namely a labeled cell $(r,c)$ such that both cells $(r-1,c),(r,c-1)$ are not labeled cells of $P$. Let $x$ be the entry of the corner cell of $c$-th column of $P$. We define the \defn{column internal insertion}  $\mathcal{C}_{[c]}(P) \in \bigsqcup_{\lambda,\rho\in\signatures}\SST(\lambda/\rho,n)$ as the tableau obtained by removing the corner at column $c$ of $P$ and performing column insertion of $x$ into $P$ at column $c+1$.
\end{definition}

The operation of column (internal) insertion can be realized using the Fomin operator, as described below.

\begin{proposition} \label{prop:Fomin and column insertion}
    Let $P \in \SST(\lambda/\rho,n)$ be a tableau and let $\widetilde{P} = \mathcal{C}_{[k]} (P)$ be the tableau obtained performing a column internal insertion at column $k$. If 
    \begin{equation}
        P = \left(\rho = \lambda^{(0)} \preceq \lambda^{(1)} \preceq \cdots \preceq \lambda^{(n)} = \lambda \right)
        \quad
        \text{and}
        \quad
        \widetilde{P} = \left( \widetilde{\rho} = \nu^{(0)} \preceq \nu^{(1)} \preceq \cdots \preceq \nu^{(n)} \right),
    \end{equation}
    then,
    \begin{equation}
        \widetilde{\rho} = \left( \rho' + \mathbf{e}_k \right)' \quad
        \text{and}
        \quad
        \nu^{(i+1)} = \Fomin (\lambda^{(i)}, \lambda^{(i+1)}, \nu^{(i)}) \quad \text{for all } i=1,\dots,n.
    \end{equation}
    In other words, $\widetilde{\rho}$ is obtained from $\rho$ by adding an empty cell at the $k$-th column, while all other partitions $\nu^{(i)}$ are determined iteratively using the Fomin growth operator $\Fomin$.
\end{proposition}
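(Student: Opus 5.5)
The plan is to prove the statement by induction on $i$, after reducing it to a comparison of one row of the two-lane box-ball system with one letter-level of the bumping path. The index range should be read as $i=0,\dots,n-1$, with $\nu^{(0)}=\widetilde\rho$.

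\textbf{Step 1 (restriction).} First I would show that column internal insertion commutes with restriction to the letters $\le i$. Let $P_{\le i}=(\rho\preceq\cdots\preceq\lambda^{(i)})$ and define $\widetilde P_{\le i}$ analogously. I claim $\widetilde P_{\le i}=\mathcal{C}_{[k]}(P_{\le i})$, with the convention that if the corner entry $x$ exceeds $i$ the insertion only adds an empty cell to the internal shape.
\begin{itemize}
\item A bumped entry $x'$ is always at least the inserting entry $x$, so letters $\le i$ are never displaced by letters $>i$.
\item Consequently, the portion of the bumping path carrying letters $\le i$ is an initial segment of the full bumping path.
\end{itemize}
It follows that $\nu^{(i)}$ and $\lambda^{(i)}$ differ by exactly one cell: $(\nu^{(i)})'=(\lambda^{(i)})'+\mathbf e_{d_i}$. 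Here $d_i$ is the column where the bumping path sits once the letters $\le i$ have been processed, and $d_0=k$.

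\textbf{Step 2 (inductive step).} I would next compute $\Fomin(\lambda^{(i)},\lambda^{(i+1)},\nu^{(i)})$ directly from \eqref{eq:explicit_fomin}. The two inputs are:
\begin{itemize}
\item $A=(\lambda^{(i+1)})'-(\lambda^{(i)})'$, the horizontal strip of $(i+1)$-cells of $P$;
\item $B=(\nu^{(i)})'-(\lambda^{(i)})'=\mathbf e_{d_i}$, a single ball.
\end{itemize}
Because $B$ carries only one ball, the carrier load in \eqref{eq:f Fomin_1} always satisfies $c\le 1$. I would trace it column by column:
\begin{itemize}
\item Left of $d_i$ the carrier is empty, so the outputs simply copy the inputs across lanes. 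This means the $(i+1)$-cells left of column $d_i$ are unchanged.
\item If column $d_i$ contains no $(i+1)$-cell, we are in the case $(a,b,c)=(0,1,0)$. The single ball then passes through, so $\nu^{(i+1)}$ gains the cell at $d_i$, and the bumping path has terminated among letters $\le i+1$.
\item If column $d_i$ does contain an $(i+1)$-cell, then $(a,b,c)=(1,1,0)$ gives $c=1$. This corresponds to the $i+1$ being bumped out of column $d_i$ by the arriving smaller letter.
\item While $c=1$, the carrier meets columns $d_i+1,d_i+2,\dots$. A column with an $(i+1)$-cell gives $(1,0,1)\mapsto(1,0,1)$, i.e. that $i+1$ is bumped onward. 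The first column without one gives $(0,0,1)\mapsto(1,1,0)$, where the carried $i+1$ is deposited at the bottom.
\end{itemize}

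\textbf{Step 3 (matching with column insertion).} I would then check that this trace reproduces exactly the column insertion of the letter $i+1$ along the strip. Since the $(i+1)$-cells form a horizontal strip, the letter $i+1$ is bumped successively into consecutive columns until the first column with no $i+1$. This identifies $\nu^{(i+1)}$ with the letters $\le i+1$ of $\widetilde P$, and also determines $d_{i+1}$.

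The base case is $\nu^{(0)}=(\rho'+\mathbf e_k)'$, which holds by definition because the internal shape gains an empty cell at column $k$.

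\textbf{Main obstacle.} I expect the hardest part to be the careful identification of which output lane of $\mathsf F$ corresponds to $\nu'-\mu'$ and which to $\nu'-\lambda'$, according to the convention in \eqref{eq:fomin-operator}, together with the requirement that column positions match the skew, signature-valued shapes. A lesser difficulty is justifying Step 1 when the corner entry $x>i$: there the path among letters $\le i$ is empty, so $d_i=k$. In that situation the Fomin computation simply transports the empty cell at column $k$ unchanged until level $x-1$, which is consistent with the case analysis of Step 2.
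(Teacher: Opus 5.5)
Your argument is correct, but it takes a different route from the paper.

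\textbf{Comparison with the paper.} The paper first reduces to partially standard $P$, using the factorization of $\Fomin$ from \Cref{subs:fomin operator} together with the iterative structure of column insertion. It then leaves the resulting single-cell local checks, by induction on $i=0,\dots,n-1$, to the reader. You skip standardization entirely. Your Step~1 shows that $(\nu^{(i)})'-(\lambda^{(i)})'=\mathbf e_{d_i}$ is a single ball, which forces the carrier load into $\{0,1\}$. A whole horizontal strip of $(i+1)$-cells can therefore be handled in one pass, and the carrier's deposit site is exactly the column where the bumped $i+1$ comes to rest, which gives $d_{i+1}$.

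This makes your proof more self-contained: it needs no refinement of $\Fomin$ and no compatibility of insertion with standardization, and it actually carries out the checks the paper omits. The paper's route buys a smaller list of local cases at the cost of the reduction. You are also right that the index range should be $i=0,\dots,n-1$.

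\textbf{Your ``main obstacle''.} The self-consistent convention is \eqref{eq:fomin-operator} together with \eqref{eq:f Fomin_1}; you can confirm this on the worked example $\kappa=(2,1,-1)$, $\nu=(3,2,1)$. By contrast, the labels in \eqref{eq:explicit_fomin} and the displayed value $\mathsf f(1,0,3)=(1,0,3)$ in \Cref{subs:fomin operator} swap the two outputs. Under the consistent convention the second output lane is $(\nu^{(i+1)})'-(\nu^{(i)})'$, the $(i+1)$-strip of $\widetilde P$. The bumping-onward case then reads $\mathsf f(1,0,1)=(0,1,1)$, not $(1,0,1)$ as you wrote. Your interpretation of that case is correct ($\widetilde P$ keeps an $(i+1)$-cell in that column, and $\nu^{(i+1)}$, $\lambda^{(i+1)}$ agree there), so only the triple needs fixing. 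Your other cases, $(1,0,0)\mapsto(0,1,0)$, $(0,1,0)\mapsto(1,0,0)$, $(1,1,0)\mapsto(0,0,1)$ and $(0,0,1)\mapsto(1,1,0)$, already match \eqref{eq:f Fomin_1}.

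\textbf{Step~1.} When writing it out, say explicitly why the difference is a single cell. The last letter $\le i$ in the bumping chain lands directly below the letters $\le i$ of its column: it either replaces the smallest entry $>i$ there, or it goes to the bottom of the column.
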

\begin{proof}
    The relation between the Fomin growth rules and the Schensted insertion were discovered in \cite{Fomin1986,fomin1995schensted} (see also \cite{krattenthaler2006growth}) and the argument goes as follows. By the factorization properties of the Fomin growth operator and the iterative definiton of the column insertion, it suffices to show that the proposition holds for tableaux $P$ such that $|\lambda^{(i)}/\lambda^{(i-1)}|\in \{0,1\}$. Under these assumptions one can proceed by induction on the index $i=0,\dots,n-1$, matching the partitions $\nu^{(i+1)}$ produced by the column inserition procedure with the output of $ \Fomin(\lambda^{(i)}, \lambda^{(i+1)}, \nu^{(i)})$. We leave these checks to the reader; see \Cref{ex:c_insertion}.
\end{proof}

\begin{example}\label{ex:c_insertion}
Below we compute the internal insertion $\mathcal{C}_{[1]}(P)$ step-by-step as
\[
    \raisebox{-.5\height}{
        \includegraphics[width=.95\linewidth]{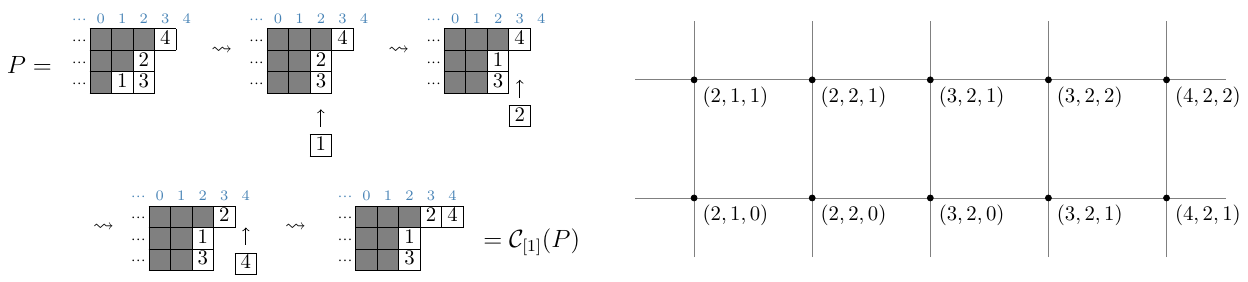}
        }
\]
On the right panel we observe the representation of the column internal insertion as described in \Cref{prop:Fomin and column insertion}: the bottom row represents the tableau $P$, while the row above is $\mathcal{C}_{[1]}(P)$.
\end{example}

Using \Cref{prop:Fomin and column insertion} we can translate the action of the operations $\iota_1$, $\iota_2$ and $\cRSK$ on pairs of semistandard tableaux using the column internal insertion as a building block.

\begin{proposition}[$\iota_2$ through column insertion] 
\label{prop:iota2 by column insertion}

    Let $(P,Q)\in \SST(\lambda/\rho,n)^2$. Then $(\overline{P},\overline{Q}) = \iota_2(P,Q) \in \SST(\overline{\lambda}/ \overline{\rho},n)^2$ can be computed in the following way:
    \begin{enumerate}[label = {\rm (\Roman*)}]
        \item  Let $k = \wt_1(Q)$ and $i_1< i_2<\cdots <i_k$ be the column labels of the 1-cells (the cells with label 1) of $Q$. Then
        \begin{align}\label{eq:iota2P'}
            \overline{P}=\mathcal{C}_{[i_k]}\cdots \mathcal{C}_{[i_1]}(P).
        \end{align}
        \item \label{item:construction_Q} $\overline{Q}$ is obtained by vacating all $1$-cells, decreasing by 1 the labels of all remaining cells and assigning label $n$ to all cells of the skew Young diagram $\widetilde{\lambda}/\lambda$ (i.e. assigning label $n$ to all new cells added to $P$ after the action of $\mathcal{C}_{[i_j]},~j=1,\dotsc,k$).
    \end{enumerate}

    By the symmetry of \Cref{prop:properties of iota and cRSK}, \Cref{item:iota_swap}, the operation $\iota_1$ can be realized as $\iota_1 (P,Q) = \iota_2(Q,P)$. Finally, by \Cref{prop:properties of iota and cRSK}, \Cref{item:cRSK iota}, we can realize the $\cRSK$ map iterating either $\iota_2$ or $\iota_1$ as $\cRSK = \iota_2^n = \iota_1^n$. 
\end{proposition}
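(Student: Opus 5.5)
The plan is to read $\iota_2$ directly off the Fomin field and match it, strip by strip, with column internal insertions. By \Cref{prop:properties of iota and cRSK}, \Cref{item:iota2 field}, the pair $\iota_2(P,Q)$ consists of the signatures along the simple loop through $(0,1),\dots,(n,1)\sim(0,n+1),\dots,(0,1)$. So $\overline{P}$ is the row $\boldsymbol{\lambda}(0,1)\preceq\cdots\preceq\boldsymbol{\lambda}(n,1)$. It is obtained from the row $P=(\boldsymbol{\lambda}(i,0))_i$ by applying the Fomin operator across the bottom strip of cells $(i+\tfrac12,\tfrac12)$, $i=0,\dots,n-1$. The left boundary of this strip is $\boldsymbol{\lambda}(0,0)=\rho \preceq \boldsymbol{\lambda}(0,1)=\mu^{(1)}$, whose difference is exactly the set of $1$-cells of $Q$.

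First I will reduce to the fully refined case using \Cref{prop:standardization_field} and \Cref{item:iota_standardization}. After partial standardization, the $k=\wt_1(Q)$ one-cells of $Q$, at columns $i_1<\dots<i_k$, become $k$ consecutive single-cell steps, added left to right. The strip of height one therefore splits into $k$ strips of height one in the refined field. Each of these adds a single cell to the internal shape at column $i_j$, i.e. $\widetilde\rho=(\rho'+\mathbf e_{i_j})'$. Propagating along the row by $\nu^{(i+1)}=\Fomin(\lambda^{(i)},\lambda^{(i+1)},\nu^{(i)})$ is then exactly \Cref{prop:Fomin and column insertion}, which identifies the result with $\mathcal{C}_{[i_j]}$. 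Composing the $k$ strips in order $j=1,\dots,k$ gives \eqref{eq:iota2P'}. The left-to-right order matches the standardization convention, so the sequence of insertions is $\mathcal{C}_{[i_1]}$ first.

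For $\overline Q$, the signatures $\boldsymbol{\lambda}(0,2),\dots,\boldsymbol{\lambda}(0,n)$ are unchanged. Only the bottom step is removed, so the old labels $2,\dots,n$ become $1,\dots,n-1$ and the $1$-cells are vacated. The new last step is $\boldsymbol{\lambda}(0,n)\preceq\boldsymbol{\lambda}(0,n+1)\sim\boldsymbol{\lambda}(n,1)$, i.e. from $\lambda$ to the new outer shape of $\overline P$. Its cells, namely those added by the insertions, receive label $n$. The final statements about $\iota_1$ and $\cRSK$ are immediate from \Cref{item:iota_swap} and \Cref{item:cRSK iota}.

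The main obstacle is the first step: checking that the refinement of the height-one strip really decomposes as $k$ successive internal insertions in the stated order. In particular, the intermediate signatures must agree with those produced after each $\mathcal{C}_{[i_j]}$, including when later $1$-cells lie to the right of cells bumped by earlier insertions. I would verify this using the commutativity of Fomin squares within the refined grid (the array construction in \Cref{subs:fomin operator}). The Fomin field is determined uniquely by the boundary values, so computing the grid row-by-row or column-by-column gives the same result, and this justifies the iteration.
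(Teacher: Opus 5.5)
Your proposal is correct and follows essentially the same route as the paper. The paper passes to partial standardizations so that, by \Cref{prop:properties of iota and cRSK}~\ref{item:iota_standardization}, $\iota_2$ becomes $\iota_2^{\wt_1(Q)}$ on the refined pair; it then realizes each single-cell step as one internal insertion $\mathcal{C}_{[i_j]}$ via \Cref{prop:Fomin and column insertion}, and reads off $\overline{Q}$ directly from \Cref{def:cRSK_i1_i2}. The translation back from the standardized tableaux to $P$ is left implicit in your write-up, just as in the paper. It can be justified by the compatibility of column insertion with standardization, or by refining only the vertical edge, so that \Cref{prop:Fomin and column insertion} applies to the semistandard $P$ directly.
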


\begin{proof}
    Let $R,T,\overline{R}, \overline{T}$ be respectively the partial standardizations of tableaux $P,Q,\overline{P}, \overline{Q}$. Since we can recover $\overline{P}, \overline{Q}$ by $\overline{R}, \overline{T}$ and the weights $\wt_i(\overline{P}),\wt_i(\overline{Q})$ for $i=1,\dots,n$ and the latter are prescribed by \Cref{prop:properties of iota and cRSK} \ref{item:content_i2}, we focus on the description of $\overline{R}, \overline{T}$. By \Cref{prop:properties of iota and cRSK} \ref{item:iota_standardization} we have $(\overline{R}, \overline{T})=\iota_2^{\wt_1(Q)}(R,T)$ and by \Cref{prop:iota2 by column insertion} each of the $\wt_1(Q)$ iterations of the operation $\iota_2$ can be realized through a column insertion. More precisely letting $(\overline{R}_\ell,\overline{T}_\ell) = \iota_2^\ell(R,T)$ we have that $\overline{R}_\ell = \mathcal{C}_{[i_\ell]}(\overline{R}_{\ell-1})$ where $i_\ell$ is the column label of the 1-cell in $\overline{T}_{\ell-1}$, which is also equal to the column label of the $\ell$-th 1-cell in $Q$ from the left. Translating this procedure in terms of the tableau $P$, proves \eqref{eq:iota2P'}. On the other hand statement \ref{item:construction_Q} is immediate from \Cref{def:cRSK_i1_i2}, completing the proof.
\end{proof}

\begin{example}\label{ex:iota2}
In \Cref{fig:iota_1_iota_2} we showed an example of the evaluation of $\iota_2$. Below we evaluate $\iota_2(P,Q)$ using the alternative algorithm described in \Cref{prop:iota2 by column insertion}: 
\[
    \includegraphics[width=.7\linewidth]{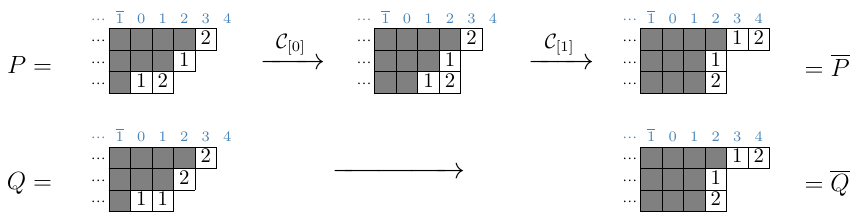} . 
\]
\end{example}

\subsection{Asymptotic stabilization} Here we discuss the stabilization phenomenon, produced after iterating the cRSK map $\cRSK^n(P,Q)$, or its inverse for a sufficiently large number of times.

\begin{proposition} \label{prop:stable field}
    Fix $(P,Q)\in\bigsqcup_{\lambda,\rho}\SST(\lambda/\rho,n)^2$ and let $\boldsymbol{\lambda} = \boldsymbol{\lambda}[P,Q]$ be the corresponding field of interlacing signatures on $\mathscr{C}_{n}$. Then, there exist $\mu\in\mathbb{Y}$, $t_+\in\N$ and $\mu^-\in\mathbb{Y}^-$ and $t_-\in\Z_{\le 0}$ such that
    \begin{subequations}
    \begin{align} \label{eq:forward stabilization field}
        \boldsymbol{\lambda} ((t+1)n+x,y) & = \boldsymbol{\lambda} (tn+x,y) + \mu, && \text{ for all } t > t_+, ~~ x,y \in \{0,\dots,n\},
    \\ \label{eq:backward stabilization field}
        \boldsymbol{\lambda} ((t-1)n+x,y) & = \boldsymbol{\lambda} (t n+x,y) - \mu^-, && \text{ for all } t < t_-, ~~ x,y \in \{0,\dots,m\}.
    \end{align}
    \end{subequations}
    Here $\mathbb{Y}^-:=\bigsqcup_{n\ge 1}\mathbb{Y}^-_n$ with $\mathbb{Y}^-_n:=\{(\lambda^{-}_1,\lambda^{-}_2,\dotsc,\lambda^-_n)\in \Z_{\le 0}^n \mid \lambda^-_1\le\cdots\le\lambda^-_n\}$.
\end{proposition}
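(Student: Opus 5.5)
We plan to reduce to the fully refined case and then argue with the column insertion description of \Cref{prop:iota2 by column insertion}. By \Cref{prop:standardization_field} and \Cref{prop:properties of iota and cRSK} \ref{item:iota_standardization}, the field $\boldsymbol{\lambda}$ embeds into a fully refined field $\widetilde{\boldsymbol{\lambda}}$ on $\mathscr{C}_N$, with $N=|\lambda/\rho|$. Stabilization of $\widetilde{\boldsymbol{\lambda}}$ with period $N$ implies \eqref{eq:forward stabilization field} for $\boldsymbol{\lambda}$, since $\phi$ intertwines $\cRSK$ on both cylinders. So we may assume $(P,Q)$ partially standard. Then each application of $\iota_2$ is a single column internal insertion $\mathcal{C}_{[c]}$ on $P$, and $\cRSK=\iota_2^n$.

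Next we will use the rigidity given by \Cref{prop:symmetry Fomin}: once the loop of signatures at time $t+1$ equals the loop at time $t$ shifted row-wise by a fixed vector $\mu$, the whole field is shifted by $\mu$ for all later times. Column-wise addition of $\mu'$ equals row-wise addition of $\mu$, and the field is determined by one loop. It therefore suffices to show that, for large $t$, $(P_{t+1},Q_{t+1})$ is $(P_t,Q_t)$ with row $j$ rigidly translated by a fixed amount $\mu_j\ge 0$.

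To this end we introduce monotone quantities. For $P_t$, let $r_j(t)$ be the number of labeled cells in rows $1,\dots,j$.

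\emph{Step 1.} In a column internal insertion the bumping path moves one column to the right and weakly upward in row index. Hence $r_j(t)$ is nondecreasing in $t$, and it is bounded by $N$. It is therefore eventually constant, so the row contents $\mu_j$ (the numbers of cells per row of $P_t$) stabilize. The same holds for $Q_t$ by the symmetry \Cref{prop:properties of iota and cRSK} \ref{item:iota_swap}.

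\emph{Step 2.} Once no cell changes row, every insertion with its corner in row $j$ ends by appending a cell to row $j$ itself. So during one period row $j$ loses its $\mu_j$ cells on the left and regains $\mu_j$ cells on the right. This gives $\rho_j(t+1)=\rho_j(t)+\mu_j$, i.e.\ row $j$ shifts by $\mu_j$. Since the skew shape must stay valid, with $\rho_j\ge\rho_{j+1}$ for all $t$, we get $\mu_j\ge\mu_{j+1}$, so $\mu\in\mathbb{Y}$.

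\emph{Step 3 (main obstacle).} We must show that the fillings themselves become rigid translates, and not merely the row lengths. Our first attempt is to show that after stabilization the gap $\rho_j-\lambda_{j+1}$ becomes nonnegative whenever $\mu_j>\mu_{j+1}$. In that case the rows occupy disjoint column ranges, insertions in different blocks do not interact, and each row evolves as in $n=1$ / BBS-like single-row dynamics, which is a rigid shift. For equal $\mu_j=\mu_{j+1}$ the two rows move together, and we must show that their joint filling becomes periodic. The state space modulo translation is finite, since the entries and the number of cells are fixed and the relative offsets are bounded once no separation occurs. Hence the sequence modulo translation is eventually periodic. We will then upgrade this to period one using the weight bookkeeping of \Cref{prop:properties of iota and cRSK} \ref{item:content_i2} and the injectivity of $\cRSK$. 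This is where we expect difficulty, and a potential-function argument may be needed instead.

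The backward statement \eqref{eq:backward stabilization field} follows by the same argument applied to $\cRSK^{-1}$, realized by reverse column insertion. Its bumping paths move weakly downward, so cells migrate to lower rows and the shift vector has nonpositive, weakly increasing entries, i.e.\ $\mu^-\in\mathbb{Y}^-$.
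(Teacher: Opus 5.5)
Your Steps 1--2 already contain the paper's key mechanism: cells migrate weakly up-right under column internal insertion, \Cref{prop:cRSK_upright}. You then leave Step~3 open, so as written the proof is incomplete there. The claims about disjoint column ranges and bounded relative offsets are unproven, and eventual periodicity modulo translation with period larger than one would not give the statement. The missing observation is that Step~3 is unnecessary.

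You have reduced to the partially standard case, so each label occurs at most once in $P_t$. Suppose $r_j(t)=r_j(t+1)$ for all $j$.
\begin{itemize}
\item Each single insertion weakly increases every $r_j$. Hence every insertion between times $t$ and $t+1$ ends in the row of its corner.
\item The bumping path is weakly upward, so every bumped entry also stays in its row.
\item Therefore the \emph{set of labels} in each row is frozen for $t\ge t_1$, not merely the row lengths. This is exactly the paper's argument, which tracks the row index of each individual $i$-cell as a weakly decreasing sequence bounded below by $1$.
\item A row of a partially standard tableau is the sorted list of its labels, so the filling is determined by the shape.
\item Once row lengths are constant, $\rho(t+1)=\lambda(t)=\rho(t)+\mu$ holds directly from the definition of $\cRSK$.
\end{itemize}
Thus $P_{t+1}=P_t+\mu$, and likewise $Q_{t+1}=Q_t+\mu$ by running the same argument through $\iota_1$ and \Cref{prop:properties of iota and cRSK}\ref{item:iota_swap}. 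No finite-state, periodicity or potential-function argument is needed.

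Second, your reduction from the loop to the whole field rests on a false identity. In \Cref{prop:symmetry Fomin}, $(\kappa'+\nu')'$ is the union of the rows of $\kappa$ and $\nu$. It increases the length of the signature; this is how kernels are stripped off. It is not the row-wise sum $\kappa+\mu$. In general $\Fomin$ does not commute with row-wise translation, because rows moving at different speeds can start or stop interacting.

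You still need the interior points $(tn+x,y)$ with $0<x,y<n$, which do not lie on the loop of $(P_t,Q_t)$. After refinement, these are the signatures of the $P$-tableau of $\iota_2^{y}(P_t,Q_t)$. The observation above gives what you need:
\begin{itemize}
\item all intermediate insertions between $t$ and $t+1$ (and between $t+1$ and $t+2$) are horizontal;
\item the number of $Q$-labels $\le y$ in each row is unchanged by the translation.
\end{itemize}
Together these give $\iota_2^{y}(P_{t+1},Q_{t+1})=\iota_2^{y}(P_t,Q_t)+\mu$ for every $y$. The paper achieves the same thing by re-basing the field via $\boldsymbol{\lambda}(x,y+\tau)=\boldsymbol{\lambda}[\iota_2^{\tau}(\widetilde P,\widetilde Q)](n,0)$. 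With these two repairs, your argument coincides with the paper's.
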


The proof of \Cref{prop:stable field} is based on a simple monotonicity property of the column internal insertion, explained next.

\begin{lemma} \label{prop:cRSK_upright}
    Let $P \in \mathrm{SST}(\lambda/\rho,n)$ and $\widetilde{P}=\mathcal{C}_{[k]}(P)\in \mathrm{SST}(\widetilde{\lambda}/\widetilde{\rho},n)$ for some admissible column $k$. Fix $i\in \{1,\dots,n\}$ and let $(r,c) \in \lambda/\rho$, $(\widetilde{r},\widetilde{c}) \in \widetilde{\lambda}/ \widetilde{\rho}$ be the cell containing the $j$-th leftmost $i$ label respectively in $P$, $\widetilde{P}$ (assuming it exists). Then
    \begin{equation}
        r \ge \widetilde{r} \qquad \text{and} \qquad c \le \widetilde{c}.
    \end{equation}
    In words, any tagged cell of $P$ moves weakly in up-right direction under column internal insertion.
\end{lemma}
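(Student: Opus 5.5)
We argue directly from the combinatorics of column internal insertion, tracking $i$-labeled cells one label at a time. First we fix notation. $\mathcal{C}_{[k]}$ removes the corner cell of column $k$, say with entry $x_0$ in row $r_0$, and adds an empty cell to $\rho$ at that position. It then inserts $x_0$ into column $k+1$. This produces a bumping sequence $x_0 \le x_1 \le x_2 \le \cdots$. Here $x_s$ is the smallest entry $\ge x_{s-1}$ in column $k+s$; it is replaced by $x_{s-1}$ and bumped into column $k+s+1$. The process ends by appending a new cell at the bottom of some column $k+S$.

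The key observation is that each step of this process moves exactly one label (as a multiset) from column $k+s-1$ to column $k+s$. The removed corner moves $x_0$ from column $k$ to column $k+1$. The step in column $k+s$ places $x_{s-1}$ there and removes $x_s$, which then goes to column $k+s+1$. So for a fixed label $i$, the multiset of column positions of $i$-cells changes only by shifting some occurrences one column to the right. There is one such shift for each $s$ with $x_{s-1}=i$ moving from column $k+s-1$ to column $k+s$. Since the columns involved increase, each shift moves an $i$ rightward.

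\textbf{Columns.} The $i$-labels in a semistandard tableau lie in distinct columns, because columns are strictly increasing. Ordering the $i$-cells from left to right therefore orders their columns strictly. Replacing some column positions $c$ by $c+1$ yields a new strictly increasing sequence, and sorting shows that the $j$-th smallest entry can only weakly increase. Hence $c\le\widetilde c$.

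\textbf{Rows.} For the rows we use the tableau shape. Within column $k+s$, the bumped entry $x_s$ is replaced in the same row by $x_{s-1}\le x_s$. The incoming $x_{s-1}$ therefore occupies the row where $x_s$ sat, which is the highest row in column $k+s$ whose entry is $\ge x_{s-1}$. We must compare this with the row $x_{s-1}$ occupied in column $k+s-1$. In skew semistandard tableaux the $i$-cells form a horizontal strip, and the row of the $j$-th leftmost $i$ is weakly decreasing in $j$. Since column-wise positions can only move right, the $j$-th $i$ in $\widetilde P$ sits in a column weakly to the right. It suffices to show that after insertion the $i$-cells still form a horizontal strip, and that the moved $i$ lands in a row weakly above its old row.

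For the latter claim we use semistandardness. Suppose $x_{s-1}=i$ sat in row $r$ of column $k+s-1$. Consider the cell $(r,k+s)$, if it exists, lying to its right. Its entry is $\ge i$ since rows weakly increase. Thus the smallest entry $\ge i$ in column $k+s$ lies weakly above row $r$. Columns strictly increase, so "smallest entry $\ge i$" means "highest cell with entry $\ge i$". If that cell does not exist, the new cell is created at the bottom of column $k+s$. In a skew shape this row is $\le r$, because $(r,k+s)\notin\lambda$ forces column $k+s$ to end above row $r$. For the initial corner move $x_0$ the same argument applies.

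The main obstacle is matching the "$j$-th leftmost" indexing before and after insertion, since one moved $i$ could overtake another. It cannot, because $i$-cells occupy distinct columns and a moved $i$ advances by exactly one column into column $k+s$. That column contained no $i$ other than possibly the bumped $x_s=i$, which itself moves on. So the left-to-right order of the $i$'s is preserved. Each individual $i$-cell then moves weakly up and right, giving $r\ge\widetilde r$ and $c\le\widetilde c$.
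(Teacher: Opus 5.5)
Your proof is correct, and it is the bumping-path argument that the paper compresses into ``a straightforward property of the column insertion''. Each bumped entry $x_{s-1}$ moves one column to the right into a weakly higher row: either the original entry at $(r,k+s)$ is $\ge x_{s-1}$, or column $k+s$ of $\lambda$ ends above row $r$. Equal labels on the path occupy consecutive columns, so their left-to-right order is preserved. One phrase is imprecise: ``if that cell does not exist, the new cell is created at the bottom''. When $(r,k+s)\notin\lambda$, an entry higher up in column $k+s$ may still be bumped. The landing row is $\le r$ in either case, so the argument is unaffected.
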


\begin{proof}
    This is a straightforward property of the column insertion.
\end{proof}

\begin{proof}[Proof of \Cref{prop:stable field}]
    Without loss of generality we can assume that the field $\boldsymbol{\lambda}$ is fully refined and as a result the tableaux $P,Q$ are partially standard. By \Cref{prop:properties of iota and cRSK}, we have, for all $\tau \in \mathbb{Z}$,
    \begin{equation}\label{eq:shift field}
        \boldsymbol{\lambda}(x,y+\tau) = \boldsymbol{\lambda}[\iota_2^{\tau}(\widetilde{P},\widetilde{Q})](n,0),
    \end{equation}
    where $(\widetilde{P},\widetilde{Q})= \iota_1^{x-n} \circ \iota_2^y (P,Q)$. By \Cref{prop:iota2 by column insertion} the operator $\iota_2(\widetilde{P},\widetilde{Q})$ can be realized as a column internal insertion in $\widetilde{P}$ at a column determined from $\widetilde{Q}$. Fix $i \in \{1,\dots,n\}$, so that $\widetilde{P}$ has a unique $i$-cell and let $(r_0,c_0)$ be its coordinate. Similarly define $(r_\tau,c_\tau)$ be the coordinate of the $i$-cell of the $P$-tableau of $\iota_2^\tau(\widetilde{P},\widetilde{Q})$. By \Cref{prop:cRSK_upright}, the sequence $r_0,r_1,\dots$ is weakly decreasing and since it is also bounded from below by 1, there exists $\tau'$ such that $r_{\tau'} = r_{\tau'+1}=\cdots$. Since the tagged cell was arbitrary, there must exists $\tau_*$ such that the content of each row of the $P$-tableau of $\iota_2^\tau(\widetilde{P},\widetilde{Q})$ remains constant for $\tau \ge \tau_*$. Setting $\tau = t n$ and using the identity $\iota_2^n = \iota_1^n$ we have
    \[
        \iota_2^{t n}(\widetilde{P},\widetilde{Q}) =  \iota_1^{ t n} (\widetilde{P},\widetilde{Q}),
    \]
    and the above argument applied to the $Q$-tableau of $\iota_1^{tn}(\widetilde{P},\widetilde{Q})$ proves that there exists $t_+$ large enough such that, for all $t \ge t_+$, the row content of each of the tableau of the pair $\iota_1^{tn}(\widetilde{P},\widetilde{Q})$ does not change. Such stabilization of row contents implies, following the internal insertions in the computation of the transition
    \begin{equation}
        \iota_2^{t n}(\widetilde{P},\widetilde{Q}) \to \iota_2^{t n + n}(\widetilde{P},\widetilde{Q}) = \cRSK ~ \iota_2^{t n}(\widetilde{P},\widetilde{Q}), 
    \end{equation}
    that there exists an array $\mu$ such that
    \begin{equation} \label{eq:stabilization external shape}
        \boldsymbol{\lambda}[\iota_2^{t n}(\widetilde{P},\widetilde{Q})](2n,0) = \boldsymbol{\lambda}[\iota_2^{t n}(\widetilde{P},\widetilde{Q})](n,0) + \mu.
    \end{equation}
    Here we used the fact that $\boldsymbol{\lambda}[\iota_2^{t n}(\widetilde{P},\widetilde{Q})](n,0)$ is the external shape of the tableaux $\iota_2^{t n}(\widetilde{P},\widetilde{Q})$. Moreover the array $\mu$ must necessarily be a partition for the relation \eqref{eq:stabilization external shape} to hold for arbitrarily large $t$. Combining relations \eqref{eq:shift field} and \eqref{eq:stabilization external shape} completes the proof of \eqref{eq:forward stabilization field}. The proof of \eqref{eq:backward stabilization field}  follows an analogous argument and therefore we omit it.
\end{proof}

\begin{corollary} \label{prop:stable}
    Consider $(P_t,Q_t)_{t\in \mathbb{Z}}$ the cRSK dynamics with initial data $(P,Q)$. Let $\lambda/\rho$ be the common shape of $P,Q$ with $\lambda,\rho \in \mathbb{S}_\ell$. Then, there exist $\mu = (\mu_1 \ge \cdots \ge \mu_{\ell'}>0) \in\mathbb{Y}$, $t_+\in\N$ and $\mu^-= (0<\mu_{k}^- \le \cdots \le \mu_1^-) \in\mathbb{Y}^-$ and $t_-\in\Z_{\le 0}$ such that, for all $t \ge t_+$
    \[
        P_t  = P_{t_+} + (t-t_+) \times (\mu_1,\dots,\mu_{\ell'},0,\dots,0),
        \qquad Q_t = Q_{t_+} + (t-t_+) \times (\mu_1,\dots,\mu_{\ell'},0,\dots,0), 
    \]
    and, for all $t \le t_-$
    \[
         P_t = P_{t_-} + (t-t_-) \times (0, \dots, 0 , \mu^-_{k} ,\dots , \mu^-_{1}),
         \qquad Q_t  = Q_{t_-} + (t-t_-) \times (0, \dots, 0 , \mu^-_{k} ,\dots , \mu^-_{1}).
    \]
    Here, if $T= (\lambda^{(0)} \preceq \cdots \preceq \lambda^{(n)})$ is a semistandard Young tableau, we denoted by $T + \nu= (\lambda^{(0)} + \nu \preceq \cdots \preceq \lambda^{(n)}+\nu)$.
\end{corollary}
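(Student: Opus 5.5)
This corollary should follow from \Cref{prop:stable field} by restating it in terms of the tableaux $(P_t,Q_t)=\cRSK^t(P,Q)$. By \Cref{prop:properties of iota and cRSK}, \ref{item:cRSK field}, the field attached to $(P_t,Q_t)$ is the translate $\boldsymbol{\lambda}(x+tn,y)$ of $\boldsymbol{\lambda}=\boldsymbol{\lambda}[P,Q]$. Hence
\[
P_t=\bigl(\boldsymbol{\lambda}(tn,0)\preceq\cdots\preceq\boldsymbol{\lambda}(tn+n,0)\bigr),\qquad Q_t=\bigl(\boldsymbol{\lambda}(tn,0)\preceq\cdots\preceq\boldsymbol{\lambda}(tn,n)\bigr).
\]
Every signature in $P_t$ and $Q_t$ is therefore a value $\boldsymbol{\lambda}(tn+x,y)$ with $x,y\in\{0,\dots,n\}$, which is exactly the range covered by \eqref{eq:forward stabilization field}.

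For $t\ge t_+$ (after enlarging $t_+$ by one if needed), \eqref{eq:forward stabilization field} gives $\boldsymbol{\lambda}((t+1)n+x,y)=\boldsymbol{\lambda}(tn+x,y)+\mu$ for all these $x,y$. Applied entrywise to the chains defining $P_t$ and $Q_t$, this says $P_{t+1}=P_t+\mu$ and $Q_{t+1}=Q_t+\mu$. Induction on $t$ then gives $P_t=P_{t_+}+(t-t_+)\mu$, and likewise for $Q_t$. One bookkeeping point remains: the proposition produces $\mu$ as an array indexed by the rows of signatures of length $\ell$. The proof shows $\mu$ is weakly decreasing, since otherwise interlacing would fail for large $t$, and it is nonnegative because rows only grow under internal insertion (\Cref{prop:cRSK_upright}). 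So we can write $\mu=(\mu_1\ge\cdots\ge\mu_{\ell'}>0,0,\dots,0)$ with $\ell'\le\ell$.

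The backward statement follows in the same way from \eqref{eq:backward stabilization field}; the index range written there as $\{0,\dots,m\}$ should be read as $\{0,\dots,n\}$. The only extra care is the shape of $\mu^-$. Its entries are nonpositive and weakly increasing in the row index, so the negative shift is concentrated on the bottom rows. Relabelling the nonzero entries of the increment $-\mu^-$ from the last row upward as $\mu^-_1\ge\cdots\ge\mu^-_k>0$, we obtain exactly the stated form $(0,\dots,0,\mu^-_k,\dots,\mu^-_1)$ added with factor $(t-t_-)$, which is negative for $t<t_-$. This sign and ordering convention is the only place I expect any friction, and it is purely notational. The substantive content, that the two increments coincide up to reversal, is not claimed here.
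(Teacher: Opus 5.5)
Your approach matches the paper's: the corollary has no separate proof there, being the tableau-level reading of \Cref{prop:stable field} through the shift property of the field under $\cRSK$. Your forward half is fine. For $\mu\ge 0$, the most direct reason is that $\boldsymbol{\lambda}(p)\preceq\boldsymbol{\lambda}(p+\mathbf{e}_1)$ forces $\boldsymbol{\lambda}_i(x,y)\le\boldsymbol{\lambda}_i(x+1,y)$.

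The backward step, however, reads the shape of $\mu^-$ off the conventions for $\mathbb{Y}^-$, and as written that does not work. It is not purely notational.
\begin{itemize}
\item A vector whose entries are nonpositive and weakly increasing in the row index has its nonzero entries on the \emph{top} rows. So ``the negative shift is concentrated on the bottom rows'' does not follow.
\item Reading the nonzero entries of $-\mu^-$ from the last row upward would then give a weakly increasing list. This contradicts your relabelling $\mu^-_1\ge\cdots\ge\mu^-_k$.
\item Nor can \eqref{eq:backward stabilization field} be taken literally with $\mu^-\le 0$. Subtracting such a vector would make rows grow as $t$ decreases, contradicting the monotonicity in $x$ above unless $\mu^-=0$.
\end{itemize}
The paper's sign and ordering conventions for $\mathbb{Y}^-$ are inconsistent; compare \Cref{prop:stable field}, \Cref{def:Phi_minus} and the corollary itself. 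So the shape of the backward increment has to be proved, not read off.

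The repair mirrors what you already did for $\mu$.
\begin{itemize}
\item For $t<t_-$ set $\Delta=\boldsymbol{\lambda}(tn+x,y)-\boldsymbol{\lambda}((t-1)n+x,y)$. By the proposition this is independent of $t$ and of $x,y\in\{0,\dots,n\}$.
\item Monotonicity in $x$ gives $\Delta_i\ge 0$.
\item You also need $\Delta_i\le\Delta_{i+1}$. Otherwise $\boldsymbol{\lambda}_i(tn,0)\ge\boldsymbol{\lambda}_{i+1}(tn,0)$ fails as $t\to-\infty$, and $\boldsymbol{\lambda}(tn,0)$ would stop being a signature.
\item Hence the zeros of $\Delta$ form an initial block: $\Delta=(0,\dots,0,\Delta_{\ell-k+1},\dots,\Delta_\ell)$ with $0<\Delta_{\ell-k+1}\le\cdots\le\Delta_\ell$.
\item Set $\mu^-_j=\Delta_{\ell-j+1}$, and lower $t_-$ by one if needed, as you did with $t_+$.
\end{itemize}
The same induction as in the forward case then gives $P_t=P_{t_-}+(t-t_-)\times(0,\dots,0,\mu^-_k,\dots,\mu^-_1)$ for all $t\le t_-$, and likewise for $Q_t$.
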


Based on \cref{prop:stable}, we define give the following two definitions.

\begin{definition}[Stability] \label{def:stability}
    Let $(P,Q) \in \SST(\lambda,\rho,n)$ and for all $t \in \mathbb{Z}$ let $(P_t,Q_t)=\cRSK^t(P,Q)$. We call the pair $(P,Q)$ \defn{stable} with \defn{asymptotic shape} $\mu = (\mu_1 \ge \cdots \ge \mu_{\ell'}>0)$ if 
    \[
        P_t = P + t \times (\mu_1 ,\dots, \mu_{\ell'},0,\dots,0), \qquad Q_t = Q + t \times (\mu_1 ,\dots, \mu_{\ell'},0,\dots,0)
        \qquad \text{for all } t\ge 0.
    \]
    Similarly, we call the pair $(P,Q)$ \defn{negatively stable} with \defn{negative asymptotic shape} $\mu^- = (0< \mu_{k}^-\le \cdots \le \mu_1^-)$ if
    \[
        P_t = P + t \times (0,\dots,0,\mu_k^-, \dots,\mu_1^-), \qquad Q_t = Q + t \times (0,\dots,0,\mu_k^-, \dots,\mu_1^-)
        \qquad \text{for all } t \le 0.
    \]
\end{definition}


The next definition should be paired with \Cref{def:Phi} given in the introduction.

\begin{definition} \label{def:Phi_minus}
    Given a pair of semistandard Young tableaux $(P,Q)$ of skew shape we define the map 
    \begin{equation}
        \Phi^- \colon (P,Q) \in \bigsqcup_{\lambda,\rho \in \mathbb{S}} \mathrm{SST} ( \lambda/\rho , n)^2 \longrightarrow (H_1^-, H_2^-) \in \bigsqcup_{\mu^- \in \partitions^-} \HWT(\mu^-,n)^2,
    \end{equation}
    where $\mathbb{Y}^-:=\bigsqcup_{k\ge 1}\{(\mu_k,\dotsc,\mu_1)\in\Z_+^k \mid \mu_k\le\cdots\le\mu_1\}$, and letting $\boldsymbol{\lambda} = \boldsymbol{\lambda}[P,Q]$, we set
    \begin{subequations}
    \begin{align}
        \#\{ i\text{-cells at row }j \text{ of }H_1^- \} & = \lim_{t\to -\infty} \boldsymbol{\lambda}_{L+j}(t,i)-\boldsymbol{\lambda}_{L+j}(t,i-1),
        \\
        \#\{ i\text{-cells at row } j \text{ of }H_2^- \} & = \lim_{t\to -\infty} \boldsymbol{\lambda}_{L+j}(t+i,0)-\boldsymbol{\lambda}_{L+j}(t+i-1,0). 
    \end{align}
    \end{subequations}
    Above $L = \max\{ i \ge 0 \mid \lim_{t \to -\infty} \boldsymbol{\lambda}_{i}(t+i,0) \neq - \infty \}$, where $L=0$ if all the limits are $-\infty$.
\end{definition}

In other words, the map $\Phi^-(P,Q)$ collects the rows of negatively stable pairs $(P_t,Q_t)$ into horizontally weak tableaux with shape given by a reverse partition $\mu^-$. We will prove in \Cref{thm:scattering_rules} that the negatively asymptotic shape $\mu^-$ is the sorting of the positively asymptotic shape $\mu$. We will also prove that $H_i^-$ can be computed from $H_i$ through the use of the combinatorial $R$-matrix. 

\begin{example} \label{ex:projection}
    Let $(P,Q) = (P_0,Q_0)$ from \Cref{fig:RSK dynamics example}. Then, we have
    \begin{align*}
        \ytableausetup{aligntableaux = center,smalltableaux}
        \Phi\left(            \begin{ytableau}
            \none[\scriptstyle \ao{\cdots}] & \none[\scriptstyle \ao{\overline{1}}] & \none[\scriptstyle \ao{0}] & \none[\scriptstyle \ao{1}] & \none[\scriptstyle \ao{2}]& \none[\scriptstyle \ao{3}]
            \\
            \none[\scriptstyle \cdots] & *(gray) & *(gray) & *(gray) & *(gray) & 2
            \\
            \none[ \scriptstyle \cdots] & *(gray) & *(gray) & *(gray) & 1 
                \\
                \none[\scriptstyle \cdots]  & *(gray) & 1 & 2
            \end{ytableau}
            \,
            ,
            \, \begin{ytableau}
            \none[\scriptstyle \ao{\cdots}] & \none[\scriptstyle \ao{\overline{1}}] & \none[\scriptstyle \ao{0}] & \none[\scriptstyle \ao{1}] & \none[\scriptstyle \ao{2}]& \none[\scriptstyle \ao{3}]
            \\
            \none[\scriptstyle \cdots] & *(gray) & *(gray) & *(gray) & *(gray) & 2
            \\
            \none[ \scriptstyle \cdots] & *(gray) & *(gray) & *(gray) & 2 
                \\
                \none[\scriptstyle \cdots]  & *(gray) & 1 & 1
            \end{ytableau}
        \right) & = 
        \left( 
            \begin{ytableau}
                1 & 1 & 2
                \\
                2
            \end{ytableau}
            \,
            ,
            \, 
            \begin{ytableau}
                1 & 2 & 2
                \\
                1
            \end{ytableau}
        \right),
    \allowdisplaybreaks \\
        \ytableausetup{aligntableaux = center,smalltableaux}
        \Phi^-\left(            \begin{ytableau}
            \none[\scriptstyle \ao{\cdots}] & \none[\scriptstyle \ao{\overline{1}}] & \none[\scriptstyle \ao{0}] & \none[\scriptstyle \ao{1}] & \none[\scriptstyle \ao{2}]& \none[\scriptstyle \ao{3}]
            \\
            \none[\scriptstyle \cdots] & *(gray) & *(gray) & *(gray) & *(gray) & 2
            \\
            \none[ \scriptstyle \cdots] & *(gray) & *(gray) & *(gray) & 1 
                \\
                \none[\scriptstyle \cdots]  & *(gray) & 1 & 2
            \end{ytableau}
            \,
            ,
            \, \begin{ytableau}
            \none[\scriptstyle \ao{\cdots}] & \none[\scriptstyle \ao{\overline{1}}] & \none[\scriptstyle \ao{0}] & \none[\scriptstyle \ao{1}] & \none[\scriptstyle \ao{2}]& \none[\scriptstyle \ao{3}]
            \\
            \none[\scriptstyle \cdots] & *(gray) & *(gray) & *(gray) & *(gray) & 2
            \\
            \none[ \scriptstyle \cdots] & *(gray) & *(gray) & *(gray) & 2 
                \\
                \none[\scriptstyle \cdots]  & *(gray) & 1 & 1
            \end{ytableau}
        \right) & = 
        \left( 
            \begin{ytableau}
                1
                \\
                1 & 2 & 2
            \end{ytableau}
            \,
            ,
            \, 
            \begin{ytableau}
                2
                \\
                1 & 1 &2
            \end{ytableau}
        \right).
    \end{align*}
\end{example}

\subsection{Kernels of tableaux}

By \Cref{prop:symmetry Fomin}, given a Fomin field of interlacing signatures $\boldsymbol{\lambda}$ on $\mathscr{C}_n$ and a signature $\nu \in \mathbb{S}$ we can define another Fomin field of signature $\overline{\boldsymbol{\lambda}}(p) = (\boldsymbol{\lambda}(p)'+\nu')'$ for all $p\in\mathscr{C}_n$ by globally adding the signature $\nu$ to $\boldsymbol{\lambda}$ column-by-column. In a similar fashion we can globally substract from the field $\boldsymbol{\lambda}$ a signature, provided such operation returns a signature at each point $p \in \mathscr{C}_n$.

\begin{definition}\label{def:kernel}
    Let $\boldsymbol{\lambda}$ be a Fomin field of signatures. We define $\nu = \mathrm{ker}(\boldsymbol{\lambda})$ to be the maximal signature $\nu$ such that $\boldsymbol{\lambda}(p)' -\nu'$ is a transposed signature for all $p \in \mathscr{C}_n$. Similarly, given $(P,Q)\in\bigsqcup_{\lambda,\rho}\SST(\lambda/\rho,n)^2$, we define $\ker(P,Q) = \ker(\boldsymbol{\lambda}[P,Q])$.
\end{definition}

Notice that if $\nu, \eta \in \mathbb{S}$ are signatures such that both $\boldsymbol{\lambda}(p)' - \nu'$ and $\boldsymbol{\lambda}(p)' - \eta'$ are transposed signatures, then also the signature $\max(\nu,\eta)$ has the same property. This implies that there exists a unique maximal signature $\nu$ such that $\boldsymbol{\lambda}(p)' - \nu'$ is a transposed signature for all $p\in \mathscr{C}_n$ justifying \Cref{def:kernel}. 

We now state some basic proposition of the kernel.

\begin{proposition} \label{prop:properties_kernel}
    Let $(P,Q)\in\bigsqcup_{\lambda,\rho}\SST(\lambda/\rho,n)^2 $ and let $\boldsymbol{\lambda} = \boldsymbol{\lambda}[P,Q]$ be the corresponding field of interlacing partitions on $\mathscr{C}_{n}$. Let also $\mu$ be the asymptotic shape of the pair $(P,Q)$ and $\nu = \ker(P,Q)$. The following properties hold.
    \begin{enumerate}[label = {\rm (\Roman*)}]
        \item \label{item:kernel_iota} We have $\nu = \ker \iota_1(P,Q)= \ker \iota_2(P,Q) = \ker \cRSK (P,Q)$.
        \item \label{item:kernel_loop} Fix a down right loop $\mathbf{p}$ of $\mathscr{C}_n$ and let $\widetilde{\nu}$ be the maximal signature such that $\boldsymbol{\lambda}(p)'-\widetilde{\nu}'$ is a transposed signature for all $p\in \mathbf{p}$. Then $\widetilde{\nu} = \nu$.
        \item \label{item:ker_length} If $\lambda$ is the shape of $P$, then $\ell(\lambda) = \ell(\nu) + \ell(\mu)$.
        \item \label{item:ker_asymptotic_pos} We have $\nu_i = \lim_{t \to + \infty} \boldsymbol{\lambda}_{\ell(\mu)+i}(p+t\,\mathbf{e}_1)$ for all $i=1,\dots \ell(\nu)$ and all $p \in \mathscr{C}_n$.
        \item \label{item:ker_asymptotic_neg} We have $\nu_i = \lim_{t \to - \infty} \boldsymbol{\lambda}_i(x-t,y)$ for all $i=1,\dots \ell(\nu)$ and all $x,y \in \{0,\dots,n\}$.
    \end{enumerate}
\end{proposition}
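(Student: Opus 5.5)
The five items are proved in the order (I), (II), (III)--(IV), (V), using \Cref{prop:symmetry Fomin}, the periodicity on $\mathscr{C}_n$, and the stabilization of \Cref{prop:stable field}.

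\emph{Items (I) and (II).} For (I), note that by \Cref{prop:properties of iota and cRSK}\ref{item:iota2 field}--\ref{item:cRSK field} the fields $\boldsymbol{\lambda}[\iota_1(P,Q)]$, $\boldsymbol{\lambda}[\iota_2(P,Q)]$ and $\boldsymbol{\lambda}[\cRSK(P,Q)]$ are translates of $\boldsymbol{\lambda}$. A translation does not change the set of values $\{\boldsymbol{\lambda}(p)\}$, and the kernel depends only on this set, so (I) follows.

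For (II), the inequality $\widetilde\nu\ge\nu$ is trivial. For the converse, set $\widetilde{\boldsymbol{\lambda}}(p)=(\boldsymbol{\lambda}(p)'-\widetilde\nu')'$ on the loop $\mathbf{p}$. These values are still interlacing signatures, since interlacing depends only on the differences $\boldsymbol{\lambda}(q)'-\boldsymbol{\lambda}(p)'$. They therefore extend to a Fomin field. By \Cref{prop:symmetry Fomin} and uniqueness of the extension from a loop, this field equals $(\boldsymbol{\lambda}'-\widetilde\nu')'$ everywhere. In particular $\boldsymbol{\lambda}(p)'-\widetilde\nu'$ is a transposed signature at every $p$, so $\widetilde\nu\le\nu$.

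The point to verify is that the inverse Fomin operator, applied to transposed signatures, again yields transposed signatures. This holds because the output is built from the differences, which lie in $\BBS$.

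\emph{Items (III) and (IV).} Reduce first to a fully refined field via \Cref{prop:standardization_field}, and pass to a stable pair $(P_t,Q_t)$ with $t\ge t_+$ using \Cref{prop:stable}. In the stable regime, rows $1,\dots,\ell(\mu)$ translate by $\mu_j>0$ per period, while the remaining rows are frozen. For $t\to+\infty$, subtracting $\nu$ column-wise can only affect the frozen rows, because the moving rows go to $+\infty$ and impose no constraint from below. Hence $\nu$ must be computed from the frozen bottom rows. These rows are constant along the whole loop in the stable regime: a row whose content stays fixed under $\cRSK$ and is not translated carries no cells. So the frozen rows $\ell(\mu)+1,\dots,\ell(\lambda)$ give exactly $\boldsymbol{\lambda}_{\ell(\mu)+i}(p+t\mathbf{e}_1)$ constant in $t$.

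Applying (II) to the stable loop, the kernel is the maximal signature fitting under all the loop signatures. For large $t$ this is exactly this frozen part, which gives (IV) and $\ell(\nu)=\ell(\lambda)-\ell(\mu)$, i.e.\ (III). The main obstacle is showing that the frozen rows in the stable regime carry no labeled cells, i.e.\ that every row containing a cell eventually moves. I would try to prove this with \Cref{prop:cRSK_upright}: a cell in a frozen row that never moves would have to be untouched by insertions forever. However, $\iota_2^n$ inserts all cells of $Q$, which include that row's cells, so the row would be pushed right, contradicting that it is frozen.

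\emph{Item (V).} I would argue symmetrically in the backward direction. By (I) the kernel is unchanged as $t\to-\infty$. In the negatively stable regime the bottom $k=\ell(\mu^-)$ rows move to $-\infty$. These rows force the kernel's length to be at most the number of top frozen rows, and those frozen rows bound $\nu$ from above, since $\nu_i\le\boldsymbol{\lambda}_i(p)$ for all $p$. The frozen rows again carry no cells, so maximality gives equality with the limits $\lim_{t\to-\infty}\boldsymbol{\lambda}_i(x-t,y)$, with the index convention as in the statement.
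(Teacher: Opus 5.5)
Your proposal is correct and follows the paper's proof: translation invariance for (I), Fomin symmetry plus unique extension from a loop for (II), and reading the kernel off the frozen rows after stabilization for (III)--(V). The reduction to a fully refined field is unnecessary, and the step you flag as the main obstacle is immediate rather than needing an insertion argument: $\boldsymbol\lambda((t+1)n,0)$ is the outer shape of $P_t$ and, by stability, equals $\boldsymbol\lambda(tn,0)+\mu$, so row $j$ of $P_t$ and $Q_t$ has exactly $\mu_j$ cells, and the rows $j>\ell(\mu)$ are empty and hence constant along the loop.
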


\begin{proof}
    Properties \ref{item:kernel_iota} are immediate consequences of \Cref{prop:properties of iota and cRSK} \ref{item:iota2 field},\ref{item:iota1 field},\ref{item:cRSK field}, which express the Fomin fields of $\iota_2(P,Q), \iota_1(P,Q), \cRSK(P,Q)$ as coordinate shifts of $\boldsymbol{\lambda}$. As a result the corresponding kernels are all identical. 
    To show \ref{item:kernel_loop}, notice that $\nu' \le \widetilde{\nu}'$, since the constraints for signature $\nu$ contain those for $\widetilde{\nu}$. On the other hand, define the field $\widetilde{\boldsymbol{\lambda}}$ to be the (unique) Fomin field such that $\widetilde{\boldsymbol{\lambda}}(p) ' = \boldsymbol{\lambda}(p)'-\widetilde{\nu}'$ for all $p\in\mathbf{p}$. Then, by \Cref{prop:symmetry Fomin} the relation $\widetilde{\boldsymbol{\lambda}}(p) ' = \boldsymbol{\lambda}(p)'-\widetilde{\nu}'$ holds for all $p \in \mathscr{C}_n$, which implies that $\widetilde{\nu}' \le \nu'$ and as a result that $\widetilde{\nu} = \nu$.
    
    To show \ref{item:ker_length} consider, for a fixed $p\in\mathscr{C}_n$ and a fixed $i$ the sequence $L_t(i)=\boldsymbol{\lambda}_i(p+t\,\mathbf{e}_1)$. Then, by monotonicity properties of the Fomin growth operator the sequence $L_t(i)$ is monotonic increasing and by \Cref{prop:stable field}, $L_t(i)$ is diverging in $t$ for $i \in \{1,\dots,\ell(\mu)\}$ and converging to a finite value $L_\infty(i)$ for $i\in \{ \ell(\mu) +1 ,\dots \ell(\lambda)\}$. Define the signature $\widetilde{\nu}_i = L_\infty(\ell(\mu)+i)$, for $i=1\dots,\ell(\lambda)-\ell(\mu)$ and let $t_*$ such that $\boldsymbol{\lambda}_{\ell(\mu)+i}(p+t\,\mathbf{e}_1) = \widetilde{\nu}_i$ for all $t \ge t_*$. Fix now $\mathbf{p}$ a down right loop such that $(0,0)\in \mathbf{p}$. Then it is clear that $\widetilde{\nu}$ is the maximal signature such that $\boldsymbol{\lambda}(p+(t_*+n)\,\mathbf{e}_1)'-\widetilde{\nu}'$ is a transposed signature for all $p \in \mathbf{p}$, which implies, by \ref{item:kernel_iota}, that $\widetilde{\nu}=\nu$. Notice that we also showed \ref{item:ker_length}. One can prove \ref{item:ker_asymptotic_neg} through analogous argument as for \ref{item:ker_asymptotic_pos}.
\end{proof}

\begin{example}
    Let $\boldsymbol{\lambda}$ be the Fomin of \Cref{fig:cylinder} and let $(P,Q) = (P_0,Q_0)$ from \Cref{fig:RSK dynamics example}, so that $\boldsymbol{\lambda} = \boldsymbol{\lambda}[P,Q]$. Then, we have
    \[
        \ytableausetup{aligntableaux = center,smalltableaux}
        \ker(\boldsymbol{\lambda}) =
        \ker \left(          
        \begin{ytableau}
            \none[\scriptstyle \ao{\cdots}] & \none[\scriptstyle \ao{\overline{1}}] & \none[\scriptstyle \ao{0}] & \none[\scriptstyle \ao{1}] & \none[\scriptstyle \ao{2}]& \none[\scriptstyle \ao{3}]
            \\
            \none[\scriptstyle \cdots] & *(gray) & *(gray) & *(gray) & *(gray) & 2
            \\
            \none[ \scriptstyle \cdots] & *(gray) & *(gray) & *(gray) & 1 
                \\
                \none[\scriptstyle \cdots]  & *(gray) & 1 & 2
            \end{ytableau}
            \,
            ,
            \, \begin{ytableau}
            \none[\scriptstyle \ao{\cdots}] & \none[\scriptstyle \ao{\overline{1}}] & \none[\scriptstyle \ao{0}] & \none[\scriptstyle \ao{1}] & \none[\scriptstyle \ao{2}]& \none[\scriptstyle \ao{3}]
            \\
            \none[\scriptstyle \cdots] & *(gray) & *(gray) & *(gray) & *(gray) & 2
            \\
            \none[ \scriptstyle \cdots] & *(gray) & *(gray) & *(gray) & 2 
                \\
                \none[\scriptstyle \cdots]  & *(gray) & 1 & 1
            \end{ytableau}
        \right) 
        = 
        \begin{ytableau}
            \none[\scriptstyle \ao{\cdots}] & \none[\scriptstyle \ao{\overline{1}}] & \none[\scriptstyle \ao{0}] & \none[\scriptstyle \ao{1}] & \none[\scriptstyle \ao{2}]
            \\
            \none[\scriptstyle \cdots] & *(gray) & *(gray) & *(gray) & *(gray) 
        \end{ytableau}
        =
        (2) \in \mathbb{S}_1.
    \]
    In line with \Cref{prop:properties_kernel} \ref{item:kernel_loop}, we see that the signature $\nu=(2)$, shaded in yellow below, is the maximal signature one can subtract column-wise simultaneously from $P,Q$ as
    \[
        \ytableausetup{aligntableaux = center,smalltableaux}
        \left(          
        \begin{ytableau}
            \none[\scriptstyle \ao{\cdots}] & \none[\scriptstyle \ao{\overline{1}}] & \none[\scriptstyle \ao{0}] & \none[\scriptstyle \ao{1}] & \none[\scriptstyle \ao{2}]& \none[\scriptstyle \ao{3}]
            \\
            \none[\scriptstyle \cdots] & *(yellow) & *(yellow) & *(yellow) & *(yellow) & 2
            \\
            \none[ \scriptstyle \cdots] & *(gray) & *(gray) & *(gray) & 1 
                \\
                \none[\scriptstyle \cdots]  & *(gray) & 1 & 2
            \end{ytableau}
            \,
            ,
            \, \begin{ytableau}
            \none[\scriptstyle \ao{\cdots}] & \none[\scriptstyle \ao{\overline{1}}] & \none[\scriptstyle \ao{0}] & \none[\scriptstyle \ao{1}] & \none[\scriptstyle \ao{2}]& \none[\scriptstyle \ao{3}]
            \\
            \none[\scriptstyle \cdots] & *(yellow) & *(yellow) & *(yellow) & *(yellow) & 2
            \\
            \none[ \scriptstyle \cdots] & *(gray) & *(gray) & *(gray) & 2 
                \\
                \none[\scriptstyle \cdots]  & *(gray) & 1 & 1
            \end{ytableau}
        \right) 
        \xrightarrow[]{\qquad}
        \left(          
        \begin{ytableau}
            \none[\scriptstyle \ao{\cdots}] & \none[\scriptstyle \ao{\overline{1}}] & \none[\scriptstyle \ao{0}] & \none[\scriptstyle \ao{1}] & \none[\scriptstyle \ao{2}]& \none[\scriptstyle \ao{3}]
            \\
            \none[ \scriptstyle \cdots] & *(gray) & *(gray) & *(gray) & 1 &2
                \\
                \none[\scriptstyle \cdots]  & *(gray) & 1 & 2
            \end{ytableau}
            \,
            ,
            \, \begin{ytableau}
            \none[\scriptstyle \ao{\cdots}] & \none[\scriptstyle \ao{\overline{1}}] & \none[\scriptstyle \ao{0}] & \none[\scriptstyle \ao{1}] & \none[\scriptstyle \ao{2}]& \none[\scriptstyle \ao{3}]
            \\
            \none[ \scriptstyle \cdots] & *(gray) & *(gray) & *(gray) & 2 & 2
                \\
                \none[\scriptstyle \cdots]  & *(gray) & 1 & 1
            \end{ytableau}
        \right) .
    \]
\end{example}

\section{Box-ball system and its linearization}

\subsection{Box-ball system dynamics}

In \Cref{subs:overview} we introduced the BBS dynamics with space of states $\BBS$ and transfer matrix $\sfT$. The support of a state $B \in \BBS$ is $\supp(B) := \{x \in \Z \mid b_x = 1\}$. In addition, we define 
\[
    \BBS_{>0}:= \left\{ B = (b_x)_{x\in \Z}\in\BBS \midspan b_x=0 \text{ for all } x\le0 \right\},
\] 
the set of binary sequences supported in $\Z_{>0}$. For any fixed $B_0\in\BBS$, the BBS dynamics is the sequence $(B_t)_{t\in\Z}$ defined by
\begin{align} \label{eq:Bt}
    B_t = \sfT^t(B_0),  
\end{align}
where for $t<0$ we interpret $\sfT^{t}=(\sfT^{-1})^{|t|}$.
    
\begin{definition} \label{def:BBS_asymptotic_shapes}
    Let $B\in\BBS$. A partition $\mu=(\mu_1\ge \cdots \ge \mu_\ell>0)$ is called the \defn{asymptotic shape} of $B$ if, for all sufficiently large $t$, the configuration $\sfT^t(B)$ contains exactly $\ell$ clusters of consecutive balls, of lengths $\mu_1,\dots,\mu_\ell,$ counted with multiplicity.
\end{definition}

Notice that it is in fact non-trivial that the BBS dynamics stabilizes and that eventually states consist of constant (in time) clusters of consecutive particles evolving freely. This a well established fact; see \cite{Takahashi_Satsuma}.

One could also define the \defn{backward asymptotic shape} of a BBS state $B$ to be a partition $\widetilde{\mu} = (\widetilde{\mu}_1\ge \cdots \ge \widetilde{\mu}_m>0)$ such that, for all sufficiently large $t$, the configuration $\sfT^{-t}(B)$ contains exactly $m$ clusters of consecutive balls, of lengths $\widetilde{\mu}_1,\dots,\widetilde{\mu}_m$ counted with multiplicity. Nevertheless it is well known, and illustrated by \Cref{fig:BBS}, that the backward asymptotic shape coincides with the asymptotic shape $\mu$. This result was proven first in \cite{Takahashi_Satsuma} and recalled in \Cref{thm:KKR linearization} below.

\subsection{Linearization of the BBS dynamics} \label{subs:KKR}

We now recall the linearization of the single-species BBS by the  \defn{Kerov--Kirillov--Reshetikhin (KKR) bijection} \cite{KKR86,KR86}; see also \cite{HKT00,HHIKTT01,HKOTY02,KOSTY2006RC}. We begin by introducing the target space of this bijection. 

\begin{definition}
    A \defn{rigged configuration} is a pair $(\mu,J)$, where $\mu$ is a partition and $J\in \mathcal{K}(\mu)$ is a collection of integers, called \defn{riggings}. We recall the set $\mathcal{K}(\mu)$ was introduced in \eqref{eq:tKappa} We write $\RC$ for the set of all rigged configurations. 
    Finally we define the set 
    \[
        \RC' = \{(\mu,J) \in \RC \mid J_i \ge - \mu_i \}.
    \]
\end{definition}

\begin{theorem}[KKR bijection and linearization, \cite{KOSTY2006RC}] \label{thm:KKR linearization}
    There exists a bijection $\KKR \colon \BBS \leftrightarrow \RC$ such that, if $\KKR(B)=(\mu,J)$, then $\mu$ is the asymptotic shape of $B$ (and also its backward asymptotic shape). The restriction to states $B\in \BBS_{>0}$ gives a bijection $\KKR \colon \BBS_{>0} \leftrightarrow \RC'$.
    Moreover, defining the map $\hat{T}(\mu,J) = (\mu, J + \mu)$, then $\KKR \circ \sfT = \hat{T} \circ \KKR$, as illustrated by the commutative diagram~\eqref{eq:BBS commutative diagram}.
\end{theorem}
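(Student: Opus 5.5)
The plan is to reduce the statement to the classical finite-lattice KKR theory for $\widehat{\fsl}_2$, and then handle the passage to $\Z$ by translation. For a configuration $B \in \BBS_{>0}$ supported in $\{1,\dots,L\}$ we regard $B$ as a path in $(B^{1,1})^{\otimes L}$, with $0 \leftrightarrow 1$ and $1 \leftrightarrow 2$. We take $\KKR$ to be the standard rigged configuration bijection: read the letters one at a time, and for each letter $2$ add a box to a longest singular string, that is, a string whose rigging equals its vacancy number. With the vacancy number $p_k = L - 2\sum_i \min(k,\mu_i)$, the quantity $J_i$ in the statement will be the \emph{corigging} shifted so that the condition $J_i \ge -\mu_i$ is exactly the admissibility condition $0 \le$ rigging $\le p_{\mu_i}$ in the limit $L \to \infty$.

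The first step is to check that appending trailing $0$'s (increasing $L$) changes vacancy numbers and riggings compatibly, so that the coriggings are stable. This gives a well-defined injective map $\BBS_{>0} \to \RC'$. Surjectivity comes from the inverse algorithm, which removes singular strings. The inverse is well defined because for $L$ large every $(\mu,J) \in \RC'$ is admissible.

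The second step extends the map to all of $\BBS$. The shift $\sigma\colon b_x \mapsto b_{x-1}$ sends $\BBS$ into $\BBS_{>0}$ after finitely many applications. Prepending a $0$ to the path should act on coriggings by $J_i \mapsto J_i + 1$ for every $i$. We therefore define $\KKR(B) = (\mu, J - k)$ where $\KKR(\sigma^k B) = (\mu,J)$, and this is independent of $k$ by the prepending rule. The prepending rule is a direct check on the algorithm: a leading $1$-letter only changes the vacancy numbers uniformly. Since $J - k$ ranges over all of $\mathcal{K}(\mu)$, this yields the bijection $\BBS \leftrightarrow \RC$.

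The third step, and the main one, is the intertwining $\KKR \circ \sfT = \hat T \circ \KKR$. Here I would follow the crystal route. With a carrier $B^{1,\ell}$ for $\ell \ge |\mu|$, $\sfT$ is the combinatorial $R$-matrix transfer $T_\ell$ acting on $B^{1,\ell} \otimes (B^{1,1})^{\otimes L}$. The known result, obtained via the energy function and the charge (cocharge) identity together with the piecewise-linear tau-function formula of \cite{KOSTY2006RC}, is that $T_\ell$ acts on rigged configurations by $J_i \mapsto J_i + \min(\ell,\mu_i) = J_i + \mu_i$. I expect this to be the main obstacle. If one wants to avoid invoking the tau-function formula, the fallback is induction on the number of letters: compare the effect on the rigged configuration of appending one letter before and after the carrier passes, and use that the carrier's local rule is the $R$-matrix $B^{1,\ell} \otimes B^{1,1} \to B^{1,1} \otimes B^{1,\ell}$.

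Finally, the asymptotic shape follows from the linearization. The riggings evolve as $J + t\mu$, so strings of distinct lengths separate linearly in $t$. For $t$ large the rigged configuration therefore lies in the regime where the inverse algorithm visibly produces isolated clusters of $\mu_i$ consecutive balls, separated by gaps that grow with $t$. One verifies directly that a single block $1^{m}$ far from the other blocks contributes exactly one string of length $m$. The backward case $t \to -\infty$ is identical, with the ordering of the strings reversed, so the backward asymptotic shape is also $\mu$.
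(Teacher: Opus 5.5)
Your outline agrees with the paper on the one substantive input. The paper gives no proof of this theorem and simply cites \cite{KOSTY2006RC}, which is also where your third step gets the action $J_i\mapsto J_i+\min(\ell,\mu_i)$ of $T_\ell$, $\ell\ge|\mu|$. Your reduction from the finite lattice (append $0$'s, then translate) goes beyond the paper, and it nicely explains why $\BBS_{>0}$ corresponds exactly to $J_i\ge-\mu_i$. Two steps, however, are not right as written.

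\textbf{Reading direction.} Suppose you read the word from the left. Then a state with the sea of empty sites on the right is not highest weight: a ball at site $1$ already gives vacancy number $1-2=-1$. Moreover, appending trailing $0$'s leaves riggings unchanged and raises coriggings, while prepending a $0$ raises riggings and leaves coriggings unchanged. That is the opposite of both claims your first two steps rest on. Your claims hold only if you run the standard algorithm from the right end, sea first, i.e.\ on the reversed word, which is a lattice word for $L$ large. State this explicitly. Then check that the cited action of $T_\ell$ is taken in that same normalisation: a single ball moving from site $1$ to site $2$ must have $J$ go from $-1$ to $0$. The paper's Definition~\ref{def:KKR} avoids the issue differently. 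It reads left to right but puts the absolute site $k$ into $p^{[k]}_i$, so riggings are translation-covariant with no limit in $L$.

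\textbf{Repeated parts in the asymptotic step.} The claim of ``gaps that grow with $t$'' fails when $\mu$ has repeated parts. If $\mu_i=\mu_{i+1}$, then $J\mapsto J+t\mu$ keeps $J_i-J_{i+1}$ fixed, so equal solitons stay at bounded distance. Your check for a block far from all others therefore does not cover them. You need an additional verification: two blocks $1^m$ separated by $g\ge m$ empty sites give two distinct strings of length $m$, whose riggings differ by $g-m$. This genuinely uses $g\ge m$; for $g<m$ it fails, e.g.\ $11011$ has $\mu=(3,1)$, not $(2,2)$. With this in hand, write down the separated-block configuration whose image is $(\mu,J+t\mu)$ and invoke injectivity of $\KKR$. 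That yields the forward asymptotic shape, and symmetrically the backward one.
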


Let us describe explicitly the bijection $\KKR$; see also~\cite{Kirillov-Schilling-Shimozono2002_RCbijection} for the most general version for $\asl_n$. An example of \Cref{thm:KKR linearization} is given in \Cref{ex:KKR}

\begin{definition}[The KKR bijection]\label{def:KKR}
    Let $B=(b_i)_{i\in\Z}\in\BBS$ and let $\ell= \min \{ \supp(B)\}$ and $r=\max \{ \supp(B) \}$ denote the positions of the leftmost and rightmost particles. The rigged configuration $\KKR(B)$ is constructed inductively by scanning the interval $[\ell,r]$ from left to right and producing a sequence or rigged configurations $(\mu^{[k]},J^{[k]})_{k \in \{\ell,\dots,r \} }$. To each rigged configuration $(\mu^{[k]},J^{[k]})$ we associate the \defn{vacancy numbers}
    \begin{equation} \label{eq:vacancy}
        p_i^{[k]} := k - 2 \sum_{j=1}^{\ell(\mu^{[k]})} \min(\mu_j^{[k]}, i).
    \end{equation}
    Given a rigged configuration $(\mu^{[k]},J^{[k]})$, we say that $i$ is a \defn{singular string} of length $\mu_i^{[k]}$ if
    \[
        J_i^{[k]} = p_{\mu_i^{[k]}}^{[k]}.
    \]
    Set $(\mu^{[\ell-1]},J^{[\ell-1]})=(\emptyset,\emptyset)$. For $k=\ell,\ell+1,\dots,r$, define $(\mu^{[k]},J^{[k]})$ from $(\mu^{[k-1]},J^{[k-1]})$ as follows:
    \begin{enumerate}
        \item If $b_k=0$, then $(\mu^{[k]},J^{[k]})=(\mu^{[k-1]},J^{[k-1]})$.

        \item If $b_k=1$, choose one of the longest singular strings in $(\mu^{[k-1]},J^{[k-1]})$ and replace it with a singular string whose length is larger by one (with vacancy numbers computed with respect to $k$). We assume, by convention, that there always exists a singular string of length $0$.
    \end{enumerate}
    After the step $k=r$, we yield $\KKR(B) = (\mu^{[r]},J^{[r]})$.
\end{definition}

The inverse of the map $\KKR$ is constructed next.

\begin{definition}[The inverse KKR bijection]\label{def:Phi-1}
    Let $(\mu,J)\in\RC$ be a rigged configuration. We reconstruct the BBS state $B=(b_i)_{i\in\Z}= \KKR^{-1}(\mu,J)$ recursively from right to left. First determine the position of the rightmost particle by
    \begin{align} \label{eq:rightmost}
        r=\max_{1\le s \le \ell(\mu)} p_{\mu_s}^{[J_{s}]}.
    \end{align}
    (Here $p_{\mu_s}^{[J_{s}]}$ is computed as in \eqref{eq:vacancy} replacing $\mu^{[k]}$ by $\mu$ and $k$ by $J_s$.)  
    Set $b_k=0$ for all $k>r$ and initialize $(\mu^{[r]},J^{[r]})=(\mu,J)$.

    For $k=r,r-1,r-2,r-3,\ldots$, if $(\mu^{[k]}, J^{[k]}) = (\emptyset, \emptyset)$, then terminate, and otherwise construct $(\mu^{[k-1]},J^{[k-1]})$ and $b_k$ as follows:
    \begin{enumerate}
        \item If $(\mu^{[k]},J^{[k]})$ contains no singular strings, then set
        \[
            b_k=0,
            \qquad\quad
            (\mu^{[k-1]},J^{[k-1]})=(\mu^{[k]},J^{[k]}).
        \]
        \item Otherwise set $b_k = 1$ and choose one of the shortest singular strings of $(\mu^{[k]},J^{[k]})$ and replace it with a singular string whose length is smaller by one (with vacancy numbers computed with respect to $\mu^{[k-1]}$).
    \end{enumerate}
    Suppose this terminates at $\ell$, then we set $b_i = 0$ for all $i < \ell$.
\end{definition}

\begin{example} \label{ex:KKR}
    Consider the BBS configuration $B$ depicted in \Cref{fig:BBS}
    \begin{equation} \label{eq:BBS config example}
        B = \,\, 
        \begin{matrix}
             \ao{\cdots} & \ao{\overline{1}} & \ao{0} & \ao{1} & \ao{2} & \ao{3} & \ao{4} & \ao{5} & \ao{\cdots} \\
             \grc{\cdots} & \grc{0} & 1 & 1 & \grc{0} & 1 & 1 & \grc{0}  &\grc{\cdots}
        \end{matrix}
        \,\,
        .
    \end{equation}
    Then, we can compute $\KKR(B) = (\mu,J)$, with $\mu=(3,1)$ and $J=(-4,0)$, as show in the right panel of \Cref{fig:KKR}. 
    Similar computations show that, letting $(B_t)_{t \in \mathbb{Z}}$ be the BBS dynamics with initial data $B_0=B$, we have
    \[
        \KKR (B_t) = (\mu, J + t \mu) \qquad \text{for all } t\in \mathbb{Z},
    \]
    in agreement with \Cref{thm:KKR linearization}.
\end{example}

\begin{figure}
    \centering
    \includegraphics[width=0.3\linewidth]{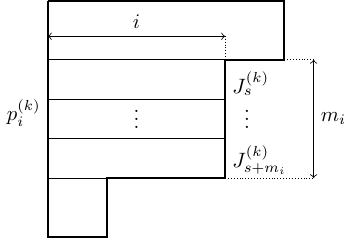}
    \qquad
    \includegraphics[width=0.5\linewidth, trim=2cm 0 2cm 0, clip]{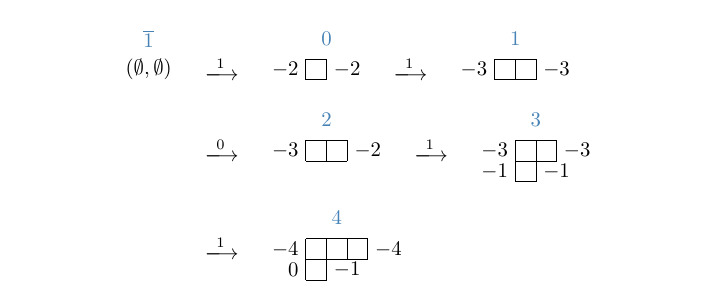}
    \caption{In the left panel, a representation of a rigged configurations $(\mu^{[k]},J^{[k]})$ as the Young diagrams $\mu^{[k]}$ with values of $J_i$ placed at the right of row $i$ and vacancy numbers $p^{[k]}_i$ placed at the left of rows of $\mu^{[k]}$ of length $i$ (see, e.g., \cite{Inoue-Kuniba-Takagi2012BBS}). In the right panel, the evaluation of the map $\KKR(B)$ with $B$ given in~\eqref{eq:BBS config example}.}
    \label{fig:KKR}
\end{figure}

\subsection{\textbf{cRSK}, BBS and KKR}
\label{subsec:cRSK_KKR}

Define the set
\begin{equation} \label{eq:SST1}
    \SST(1) = \bigsqcup_{\rho,\lambda \in \mathbb{S}} \SST(\lambda/\rho,1)
\end{equation}
of semi standard tableaux of skew shape with all labels equal to 1. (Notice that $\SST(\lambda/\rho,1)$ is either empty or it consists of a single element $T=(\rho \preceq \lambda)$.)
In this subsection, we build on the equivalence between BBS and cRSK dynamics on pairs of identical tableaux $(T,T)$ with $T \in \SST(1)$ stated in \Cref{rem:RSK to BBS} and formalized in \Cref{prop:cRSK_BBS}. The main result is \Cref{prop:volume_BBS_tableau}, which relates the shape of $T$ with the rigging associated to $B$.

\begin{definition}[The projection $\Pr$] \label{def:map_to_BBS}
Let $T = (\rho \preceq \lambda)$ be a tableau in $\SST(1)$. Then we can define the projection $\Pr\colon \SST(1) \rightarrow \BBS$ as
\[
    \Pr(T) = \lambda'-\rho'.
\]
\end{definition}

\begin{example}
    Consider the tableau $T \in \SST(1)$ as
    \[
        T =
        \begin{ytableau}
                \none[\scriptstyle \ao{\cdots}] & \none[\scriptstyle \ao{ \overline{1}}] & \none[\scriptstyle \ao{0}] & \none[\scriptstyle \ao{1}] & \none[\scriptstyle \ao{2}] & \none[\scriptstyle \ao{3}] & \none[\scriptstyle \ao{4}]
                \\
                \none[\scriptstyle \cdots] & *(gray) & *(gray) & *(gray) & *(gray) & 1 & 1
                \\
                \none[ \scriptstyle \cdots] & *(gray) & *(gray) & *(gray) & *(gray)
                \\
                \none[\scriptstyle \cdots] & *(gray) & 1 & 1
            \end{ytableau}
            \,.
    \]
    Then, we have $\Pr(T) = B$, where $B$ is given in \eqref{eq:BBS config example}.
\end{example}

\begin{lemma} 
\label{lem:pr_bij}
    The map
    \begin{equation} \label{eq:Xi}
        \Xi \colon T \in \SST(1) \longrightarrow (B,\nu) \in \BBS \times \mathbb{S}, \qquad \text{with} \qquad B=\Pr(T) \quad \text{and} \quad \nu = \ker(T,T)
    \end{equation}
    is a bijection.
    Given $B = (b_i)_{i \in \Z} \in \BBS$ and $\nu \in \mathbb{S}$, the inverse $T=\Xi^{-1}(B,\nu)$ is constructed as follows. Setting $\ell = \sum_{x \in \Z } \mathbf{1}_{b_x=0 } \mathbf{1}_{ b_{x+1} =1}$, we define the signature $\overline{\rho}$ as
    \begin{equation} \label{eq:pr_bij}
        \overline{\rho}_{i} = \sup \left\{ j < \overline{\rho}_{i-1} \midspan (b_j,b_{j+1}) = (0,1)  \right\}, \qquad \text{for } i=1,\dots, \ell,
    \end{equation}
    where we assume the convention $\overline{\rho}_{-1} = + \infty$. Then, we set
    \[
        \rho' = \overline{\rho}' + \nu'
        \qquad
        \text{and}
        \qquad
        \lambda' = \rho'+B.
    \]
\end{lemma}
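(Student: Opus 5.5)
The plan is to reduce everything to the case $n=1$ and describe the kernel explicitly. For $T=(\rho\preceq\lambda)\in\SST(1)$, the pair $(T,T)$ determines a Fomin field on $\mathscr{C}_1$. By \Cref{prop:properties_kernel} \ref{item:kernel_loop}, $\ker(T,T)$ can be computed along the single simple loop $(0,0),(1,0)\sim(0,1),(0,0)$. The signatures on this loop are only $\rho$ and $\lambda$. Hence $\nu=\ker(T,T)$ is the maximal signature such that both $\rho'-\nu'$ and $\lambda'-\nu'$ are transposed signatures, i.e. weakly decreasing $\N$-valued sequences on $\Z$ that vanish at $+\infty$ and are constant at $-\infty$. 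Writing $B=\Pr(T)=\lambda'-\rho'$, the second condition says that $(\rho'-\nu')+B$ is weakly decreasing.

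First I determine the smallest admissible ``residual''. Call a transposed signature $\sigma'$ \emph{$B$-admissible} if $\sigma'+B$ is also weakly decreasing. Since $b_x\in\{0,1\}$, this is equivalent to
\[
\sigma'_x-\sigma'_{x+1}\ \ge\ \max(0,\,b_{x+1}-b_x)=\mathbf 1_{(b_x,b_{x+1})=(0,1)}
\qquad\text{for all } x.
\]
So the pointwise minimal $B$-admissible sequence is $\overline\rho'_x=\#\{j\ge x \mid (b_j,b_{j+1})=(0,1)\}$. This is exactly the conjugate of the signature $\overline\rho$ of length $\ell$ defined in \eqref{eq:pr_bij}, whose parts are the positions $j$ of the $01$-patterns listed in decreasing order.

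The key observation is then the following. For any $B$-admissible $\sigma'$, the difference $\sigma'-\overline\rho'$ is itself weakly decreasing. This holds because its increments $(\sigma'_x-\sigma'_{x+1})-\mathbf 1_{(b_x,b_{x+1})=(0,1)}$ are nonnegative by the displayed inequality. It also stabilizes at both ends, so $\sigma'-\overline\rho'$ is a transposed signature.

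Now apply this with $\sigma'=\rho'-\eta'$ for any $\eta$ satisfying the kernel constraints. It gives $\rho'-\eta'\ge\overline\rho'$ pointwise, and it shows that $\rho'-\overline\rho'$ is the conjugate of a signature which itself satisfies the constraints. Hence $\ker(T,T)'=\rho'-\overline\rho'$, that is, $\rho'=\overline\rho'+\nu'$ and $\lambda'=\rho'+B$. This proves injectivity of $\Xi$ and identifies the formula for $\Xi^{-1}$.

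For surjectivity, take arbitrary $(B,\nu)\in\BBS\times\signatures$ and define $\rho,\lambda$ by the stated formulas. I check in turn that:
\begin{enumerate}
\item $\rho'=\overline\rho'+\nu'$ is the conjugate of a signature, of length $\ell+\ell(\nu)$, since it is a sum of two such conjugates;
\item $\lambda'=\rho'+B$ is weakly decreasing because $\overline\rho'$ is $B$-admissible;
\item $\lambda'$ has the same length as $\rho'$, because $B$ is finitely supported;
\item $\rho\preceq\lambda$ holds because $\lambda'-\rho'=B\in\BBS$.
\end{enumerate}
Thus $T=(\rho\preceq\lambda)\in\SST(1)$, with $\Pr(T)=B$. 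By the computation above, $\ker(T,T)'=\rho'-\overline\rho'=\nu'$, so $\Xi(T)=(B,\nu)$.

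The only delicate point is the maximality argument, namely that the pointwise-minimal residual $\overline\rho'$ yields a difference $\rho'-\overline\rho'$ that is again a transposed signature. I handle this through the increment inequality above. Beyond that I expect only bookkeeping, namely the conventions on lengths and on the behaviour of conjugates at $\pm\infty$.
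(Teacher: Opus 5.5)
Your proposal is correct and follows the same route as the paper. You build $\overline\rho$ from the $01$-patterns of $B$, compute $\ker(T,T)$ along a single loop via \Cref{prop:properties_kernel}\ref{item:kernel_loop}, and identify $\nu$ by maximality. The paper only asserts the maximality step ("$\nu$ is the maximal signature one can remove from $T$ without breaking the semistandard property"); you prove it explicitly through the increment inequality $\sigma'_x-\sigma'_{x+1}\ge \mathbf{1}_{(b_x,b_{x+1})=(0,1)}$.
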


\begin{proof}
    The map constructed in \eqref{eq:pr_bij} can be described in words as follows. Given a BBS configuration $B = (b_i)_{i\in \Z} \in \BBS$, we scan it from the right to the left and set $\overline{\rho}_i = r_i$ where $r_i$ is the location of the $i$-th hole immediately to the left of a particle. Then, we set $\rho'= \overline{\rho}'+\nu'$,  $\lambda'=\rho'+B$ and the tableau $T=(\rho \preceq \lambda)$ clearly has the property that $\Pr(T) = B$ and $\ker(T,T) = \nu$, by \Cref{prop:properties_kernel}, \ref{item:kernel_loop} since $\nu$ is the maximal signature one can remove from $T$ without breaking the semistandard property.
\end{proof}

The equivalence between the cRSK dynamics of the BBS dynamics is summarized as follows.
Recall that $\sfT$ denotes the one time step evolution of the BBS described in \Cref{subs:overview}.

\begin{proposition}\label{prop:cRSK_BBS}
For any $T \in \SST(1)$, denote $T'=\cRSK (T)$ if $(T',T')=\cRSK(T,T)$. We have $\Pr \circ \, \mathbf{cRSK}(T) = \sfT \circ \Pr (T)$.
\end{proposition}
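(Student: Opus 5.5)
With $n=1$, the cylinder $\mathscr{C}_1$ identifies $(x,y)\sim(x+1,y-1)$. The plan is to read the field $\boldsymbol{\lambda}=\boldsymbol{\lambda}[T,T]$ for $T=(\rho\preceq\lambda)\in\SST(1)$ directly off this identification. The loop $(0,0),(1,0)\sim(0,1)$ gives $\boldsymbol{\lambda}(0,0)=\rho$ and $\boldsymbol{\lambda}(1,0)=\boldsymbol{\lambda}(0,1)=\lambda$. The single Fomin step then determines
\[
\boldsymbol{\lambda}(1,1)=\Fomin(\rho,\lambda,\lambda)=:\nu .
\]
By \Cref{def:cRSK_i1_i2} with $n=1$, $\cRSK(T,T)=(P',Q')$ with $P'=(\boldsymbol{\lambda}(1,0)\preceq\boldsymbol{\lambda}(2,0))$ and $Q'=(\boldsymbol{\lambda}(1,0)\preceq\boldsymbol{\lambda}(1,1))$. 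Since $(2,0)\sim(1,1)$, both tableaux equal $(\lambda\preceq\nu)$. This shows that the output is again a diagonal pair, so $T'=\cRSK(T)$ is well defined, and $T'=(\lambda\preceq\nu)$.

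Next compute the projection. By \Cref{def:map_to_BBS}, $\Pr(T')=\nu'-\lambda'$ and $\Pr(T)=\lambda'-\rho'=:B$. It remains to show $\nu'-\lambda'=\sfT(B)$. By the definition \eqref{eq:fomin-operator}, $\Fomin(\rho,\lambda,\lambda)=\nu$ means
\[
(\nu'-\lambda',\nu'-\lambda')=\mathsf F(B,B),
\]
so \Cref{rem:from_two_lane_to_BBS} (equivalently \Cref{rem:RSK to BBS}) gives exactly $\nu'-\lambda'=\sfT(B)$.

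The only substantive point is the justification of \Cref{rem:from_two_lane_to_BBS}. I would verify it from the local rule \eqref{eq:f Fomin_1}. With $a=b$, we get $c'=\max\{0,2a+c-1\}$ and $a'=b'=a+c-c'$. If $a=1$, then $c'=c+1$ and $a'=0$: the ball is picked up. If $a=0$ and $c>0$, then $c'=c-1$ and $a'=1$: one ball is deposited. If $a=0$ and $c=0$, everything is $0$. This is precisely the BBS carrier with infinite capacity, with $c_x$ the number of balls carried. Since both lanes stay equal at every site, the two-lane map restricted to the diagonal is $\sfT$. No step is a real obstacle; the mild care needed is only in the cylinder identification $(2,0)\sim(1,1)$, which is what makes $P'=Q'$.
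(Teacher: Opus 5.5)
Your proof is correct and follows the same route as the paper, which likewise reduces the statement to \Cref{rem:RSK to BBS}: for $T=(\rho\preceq\lambda)$ and $T'=(\lambda\preceq\eta)$, the increments $\lambda'-\rho'$ and $\eta'-\lambda'$ are related by $\sfT$. You also make explicit two points the paper leaves implicit: the cylinder identifications $(1,0)\sim(0,1)$ and $(2,0)\sim(1,1)$, which force $P'=Q'$, and the local-rule check behind \Cref{rem:from_two_lane_to_BBS}.
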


\begin{proof}
    This is essentially \Cref{rem:RSK to BBS}. If $T=(\rho \preceq \lambda)$ and $T'=(\lambda \preceq \eta) = \cRSK(T)$, then the increments $B=\lambda'-\rho'$ and $B'= \eta' - \lambda'$ are related as $B'=\sfT(B)$, implying $\Pr \circ \, \mathbf{cRSK}(T) = \sfT \circ \Pr (T)$.
\end{proof}

Let $ T = (\rho \preceq \lambda) \in \SST(1)$. 
We will have an explicit relation between the inner shape $\rho$ and the rigged configuration $(\mu,J)= \KKR \circ\Pr (T)$; see \Cref{prop:volume_BBS_tableau} below. For this, a result from \cite{Kuniba-Sakamoto-Yamada2007_tau} will play a key role. For its statement we introduce two functions $\Gamma_B,\tau_{\mu,J} \colon \Z\rightarrow \Z$. 

\begin{definition}
    Let $B \in \BBS$ and let $B_t=(b^t_i)_{i\in \mathbb{Z}}$ be its evolution under the BBS dynamics with initial data $B_0=B$. We define the function $\Gamma_B \colon \Z \to \Z$
    \[
        \Gamma_B(i) = \sum_{t \ge 0} \sum_{j \le i} b_j^t.  
    \]
\end{definition}

The function $\Gamma_B(i)$ counts the number of balls weakly to the left of location $i$ in all configurations $B_0(=B),B_1,B_2,\ldots$.
Since the original configuration $B$ is finitely supported and in the BBS dynamics every particle has positive drift, then $\Gamma_B(i)$ is always a finite quantity. 

The following lemma provides a way to reconstruct the configuration $B$ from the function $\Gamma_B$. For any function $f\colon \Z \to \C$ we define its discrete first and second derivatives 
\begin{equation*}
    \nabla f(i) = f(i) - f(i-1),
    \qquad\qquad
    \Delta f(i) = f(i) - 2f(i-1) + f(i-2).
\end{equation*}

\begin{lemma} \label{lem:gamma_B}
    For any $B \in \BBS$, we have 
    \[
        \Delta \Gamma_B(i)= \mathbf{1}_{b_{i-1} = 0} \mathbf{1}_{b_i = 1}.
    \]
    In particular, $\Gamma_B$ is piecewise linear, and its slopes $\nabla \Gamma_B(i)$ have increments being either 0 or 1.
\end{lemma}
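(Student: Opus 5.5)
Since $\nabla\Gamma_B(i)=\sum_{t\ge 0} b_i^t$, the claim is equivalent to
\[
    \sum_{t\ge0} b_i^t-\sum_{t\ge0}b_{i-1}^t=\mathbf 1_{b_{i-1}=0}\mathbf 1_{b_i=1}.
\]
The plan is to isolate the $t=0$ term of the first sum and shift time in it: write $\sum_{t\ge0}b_i^t=b_i^0+\sum_{t\ge0}b_i^{t+1}$. All sums are finite, because site $i$ is eventually empty forever. Since $\mathbf 1_{b_{i-1}=0}\mathbf 1_{b_i=1}-b_i=-b_{i-1}b_i$ for the initial values $b_j=b_j^0$, the statement reduces to
\begin{equation*}
    \sum_{t\ge0}\bigl(b_{i-1}^t-b_i^{t+1}\bigr)=b_{i-1}^0\,b_i^0 .
\end{equation*}
I will prove this by a local case analysis of each summand, using the carrier description of $\sfT$. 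Let $c_j^t$ be the carrier load just after site $j$ during the update $B_t\to B_{t+1}$. The two facts I use are: $b_j^{t+1}=1$ if and only if $b_j^t=0$ and $c_{j-1}^t\ge1$; and $c_j^t=c_{j-1}^t+b_j^t-b_j^{t+1}$.

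The summand $b_{i-1}^t-b_i^{t+1}$ splits into three cases.
\begin{enumerate}
    \item If $(b_{i-1}^t,b_i^t)=(1,0)$, the carrier holds at least one ball after $i-1$, so it deposits at $i$; hence $b_i^{t+1}=1$ and the summand is $0$.
    \item If $(b_{i-1}^t,b_i^t)=(1,1)$, the ball at $i$ is picked up, so $b_i^{t+1}=0$ and the summand is $+1$.
    \item If $b_{i-1}^t=0$, the summand is $-1$ exactly when $b_i^t=0$ and $c_{i-1}^t\ge1$. Call such a $t$ a \emph{$(0,0,+)$-time}. Otherwise the summand is $0$.
\end{enumerate}
It therefore suffices to show
\[
    \#\{t\ge0: (b_{i-1}^t,b_i^t)=(1,1)\}-\#\{(0,0,+)\text{-times}\}=b_{i-1}^0b_i^0 .
\]

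The heart of the argument, and the step needing most care, is a bijection $t\mapsto t+1$ from the $(0,0,+)$-times $t\ge0$ onto the times $s\ge1$ at which $(b_{i-1}^s,b_i^s)=(1,1)$.
\begin{enumerate}
    \item Forward direction. Let $t$ be a $(0,0,+)$-time. Since $b_{i-1}^t=0$, we have $c_{i-1}^t=c_{i-2}^t-b_{i-1}^{t+1}$. If $c_{i-2}^t=0$, no deposit occurs at $i-1$, giving $c_{i-1}^t=0$, a contradiction. So $c_{i-2}^t\ge1$, which forces a deposit at $i-1$: $b_{i-1}^{t+1}=1$. Also $b_i^{t+1}=1$ because $b_i^t=0$ and $c_{i-1}^t\ge1$. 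Thus $(b_{i-1}^{t+1},b_i^{t+1})=(1,1)$.
    \item Inverse direction. Let $s\ge1$ with $(b_{i-1}^s,b_i^s)=(1,1)$. A site occupied at time $s$ must have been empty at time $s-1$ (an occupied site is always vacated). Hence $b_{i-1}^{s-1}=b_i^{s-1}=0$. Moreover, the deposit at $i$ requires $c_{i-1}^{s-1}\ge1$, so $s-1$ is a $(0,0,+)$-time.
\end{enumerate}
Hence the $(1,1)$-times with $s\ge1$ cancel exactly against the $(0,0,+)$-times. The only uncancelled contribution is a possible $(1,1)$ at $s=0$, which equals $b_{i-1}^0b_i^0$. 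This proves the displayed identity, and therefore $\Delta\Gamma_B(i)=\mathbf 1_{b_{i-1}=0}\mathbf 1_{b_i=1}$.

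The final assertions follow at once. $\Gamma_B$ is constant for $i$ far to the left: it vanishes there, since every $B_t$ with $t\ge0$ has its support bounded below by $\min\supp(B)$. Its second differences lie in $\{0,1\}$. Hence $\nabla\Gamma_B$ is a nondecreasing step function with increments $0$ or $1$, and $\Gamma_B$ is piecewise linear.
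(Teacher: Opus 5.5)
Your proof is correct and uses the same mechanism as the paper's: a local analysis of the carrier at sites $i-1,i$, in which nonzero contributions at consecutive times cancel in pairs and only the $t=0$ term survives. The difference is bookkeeping. By shifting time in one sum, you cancel a time $t$ with $(b^t_{i-1},b^t_i)=(0,0)$ and a loaded carrier against a $(1,1)$ time $t+1$, deriving the needed transitions directly from the carrier rules. The paper instead keeps $\sum_{t}(b_i^t-b_{i-1}^t)$ unshifted and pairs $(1,0)$ at time $s$ with $(0,1)$ at time $s+1$, using its table of admissible $2\times 2$ patterns.
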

\begin{proof}
Note that $\Delta\Gamma_B(i)=\sum_{t\ge0}(b^t_i-b^t_{i-1})$. Also observe that in the BBS dynamics, the possible configurations of four elements $b^t_{j-1},b^t_j,b^{t+1}_{j-1},b^{t+1}_j$ are, for any $t\in\N$ and $j\in\Z$
    \begin{align}\label{eq:4pts}
        \begin{pmatrix}
            b^t_{j-1} & b^t_j\\[5pt]
            b^{t+1}_{j-1}& b^{t+1}_j
        \end{pmatrix}
        =\begin{pmatrix}
            0&0\\0&0
        \end{pmatrix},
        \begin{pmatrix}
            0&0\\1&1
        \end{pmatrix},
        \begin{pmatrix}
            0&0\\1&0
        \end{pmatrix},
        \begin{pmatrix}
            0&1\\0&0
        \end{pmatrix},
        \begin{pmatrix}
            0&1\\1&0
        \end{pmatrix},
        \begin{pmatrix}
            1&0\\0&1
        \end{pmatrix},
        \begin{pmatrix}
            1&1\\0&0
        \end{pmatrix}.   
    \end{align} 

    Let us focus on the behavior of the partial sum $\sum_{t=s}^{k}(b^t_i-b^t_{i-1})$ for some $s \le k$. By checking all cases in \eqref{eq:4pts}, we see that if $(b^s_{i-1},b^s_{i}) \in \{ (0,0),(1,0), (1,1) \}$, then we can always find $k\ge s$ such that 
    \[
        \sum_{t=s}^{k}(b^t_i-b^t_{i-1})=0
        \qquad
        \text{and}
        \qquad
        (b^{k+1}_{i-1},b^{k+1}_{i}) \in \{ (0,0),(1,0), (1,1) \}.
    \]
    If $(b^s_{i-1},b^s_{i}) \in \{ (0,0), (1,1)\}$ one can take $k=s$. On the other hand if $(b^s_{i-1},b^s_{i}) = (1,0)$, then one can take $k=s+1$. This shows that $\Delta \Gamma_B(i) = 0$ whenever $(b^0_{i-1},b^0_{i}) \in \{ (0,0),(1,0), (1,1) \}$. Similarly, if $(b^0_{i-1},b^0_{i}) =  (0,1)$, we have $\Delta \Gamma_B(i) =  1 + \sum_{t\ge1}(b^t_i-b^t_{i-1}) = 1$, since $\sum_{t\ge1}(b^t_i-b^t_{i-1}) = 0$ by the previous argument.
\end{proof}

Next we introduce another function $\tau_{\mu,J} \colon \Z \rightarrow \Z$.
\begin{definition}
    For any $(\mu,J) \in \RC$ define the \defn{ultradiscrete tau function} $\tau_{\mu,J} \colon \Z \to \Z$ as
    \[
        \tau_{\mu,J}(i) = - \min_{n \in \{ 0,1 \}^\ell} \left\{ \sum_{ \alpha = 1 }^\ell \left( \mu_\alpha - i \right)n_\alpha + \sum_{\alpha,\beta = 1}^\ell \min(\mu_\alpha ,\mu_\beta) n_\alpha n_\beta \right\}.
    \]
\end{definition}

In \cite{Kuniba-Sakamoto-Yamada2007_tau}, the authors showed the two functions $\Gamma_B$ and $\tau_{\mu,J}$ coincide.

\begin{proposition}[{\cite[Theorem 7.4 with $n=1$]{Kuniba-Sakamoto-Yamada2007_tau}}] \label{prop:tau_function}
    Let $B\in \BBS$ and let $(\mu,J)=\KKR(B)$. Then, we have
    \[
        \Gamma_B(i) = \tau_{\mu,J}(i) \qquad \text{for all } i\in \Z.
    \]
\end{proposition}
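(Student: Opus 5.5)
Since this result is quoted from \cite{Kuniba-Sakamoto-Yamada2007_tau}, I would not re-derive their full machinery. Instead I would reduce it to a telescoping identity in time, driven by the linearization of \Cref{thm:KKR linearization}.

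First, a caveat on the formula. As printed, $\tau_{\mu,J}$ does not involve $J$. The intended ultradiscrete tau function must carry the riggings in the linear term, i.e.\ coefficients of the form $(J_\alpha+\mu_\alpha-i)n_\alpha$ up to the paper's normalization. I would fix this form by matching $\Gamma_B$ against a one-soliton configuration.

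\textbf{Telescoping in time.} Write $N_B(i)=\sum_{j\le i}b_j$, so that
\[
\Gamma_B(i)=\sum_{t\ge0}N_{B_t}(i)\qquad\text{and}\qquad \Gamma_B(i)-\Gamma_{\sfT B}(i)=N_B(i).
\]
Every ball has positive drift, so $\Gamma_{B_t}(i)=0$ once all balls of $B_t$ lie to the right of $i$. By \Cref{thm:KKR linearization} the data of $B_t$ is $(\mu,J+t\mu)$. On the tau side, for $t\gg0$ every linear coefficient $J_\alpha+t\mu_\alpha+\mu_\alpha-i$ is positive, so the minimum is attained at $n=0$ and $\tau_{\mu,J+t\mu}(i)=0$. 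Hence both functions vanish as $t\to\infty$. It therefore suffices to prove the one-step identity
\[
\tau_{\mu,J}(i)-\tau_{\mu,J+\mu}(i)=N_{\KKR^{-1}(\mu,J)}(i)\qquad\text{for all }(\mu,J)\in\RC,\ i\in\Z.
\]

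\textbf{Proving the one-step identity.} This is the main obstacle, and I would attack it by induction along the KKR algorithm of \Cref{def:KKR}. Scanning $B$ left to right, the ball count $N_B(i)$ is the size of the intermediate configuration $\mu^{[i]}$. So I would show that the difference $\tau(J)-\tau(J+\mu)$, evaluated with the full data, equals $\abs{\mu^{[i]}}$. Concretely:
\begin{itemize}
\item Interpret the minimizing $n\in\{0,1\}^\ell$ in $\tau_{\mu,J}(i)$ as selecting the strings whose rigging lies "to the left of $i$".
\item Use the vacancy numbers $p^{[k]}$ and the singular-string rule to check that adding a ball at site $k$ increases the difference by exactly one.
\item Check that an empty site leaves the difference unchanged.
\end{itemize}

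\textbf{Fallback route.} If this bookkeeping becomes unwieldy, I would instead show that both $\Gamma_{B_t}(i)$ and $\tau_{\mu,J+t\mu}(i)$, viewed as functions of $(i,t)$, satisfy the same ultradiscrete bilinear (Hirota-type) equation. For $\Gamma$ this follows from the local carrier rule together with \Cref{lem:gamma_B}. For $\tau$ it is a standard tropical Hirota identity, checked by comparing minimizers. The two functions also agree as $t\to\infty$ and as $i\to-\infty$, where both are $0$. Uniqueness of solutions with these boundary data then gives the equality.
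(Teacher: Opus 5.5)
You are right that the printed $\tau_{\mu,J}$ contains no $J$. Replacing $\mu_\alpha-i$ by $J_\alpha-i$ in the linear term gives the version consistent with $\sigma_{\mu,J}$ in the proof of \Cref{prop:volume_BBS_tableau}, and your telescoping argument is correct. The problem is that it reduces nothing. Given \Cref{thm:KKR linearization}, the one-step identity for all $(\mu,J)\in\RC$ is \emph{equivalent} to the proposition: apply the proposition to $B$ and to $\sfT B$, whose data is $(\mu,J+\mu)$. After differencing in $i$ it reads $b_i=\nabla\tau_{\mu,J}(i)-\nabla\tau_{\mu,J+\mu}(i)$, a tropical tau-function formula for $\KKR^{-1}$. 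That is exactly the nontrivial content the paper takes from the cited result rather than proving. So your proof stands or falls with the step you call the main obstacle, and neither of your routes establishes it.

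The induction along \Cref{def:KKR} conflates two different variations. $\tau_{\mu,J}(i)$ is evaluated at the \emph{final} data $(\mu,J)=(\mu^{[r]},J^{[r]})$ while $i$ moves, but the KKR steps change the intermediate data $(\mu^{[k]},J^{[k]})$. To connect them you would need a locality statement such as $\tau_{\mu,J}(i)=\tau_{\mu^{[i]},J^{[i]}}(i)$: strings created or lengthened at sites $>i$, and the resulting riggings, must not affect the tau function at $i$. On the $\Gamma$ side this is just causality of the carrier. On the $\tau$ side it is the heart of the theorem, and your bullets do not address it.

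Your guiding heuristic is also false. Take $\mu=(1,1)$, $J_1=J_2=j$ and $i=j+2$. The objective is $-n_1-n_2+2n_1n_2$, minimized exactly at $(1,0)$ and $(0,1)$. So one of two indistinguishable strings is selected, and no per-string criterion like ``rigging to the left of $i$'' can describe the minimizer.

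The fallback fails at the uniqueness step. Vanishing as $t\to\infty$ and as $i\to-\infty$ holds for $\Gamma_{B_t}(i)$ for \emph{every} $B$, so these boundary data cannot determine the solution. Otherwise all such fields would coincide, yet $\Gamma_B-\Gamma_{\sfT B}=N_B$ recovers $B$. The bilinear relation also cannot be run backward in $t$ from data on the left, since $\sfT^{-1}$ scans from the right. The data that do distinguish configurations are the soliton phases at $i\to+\infty$ or $t\to-\infty$. Matching them would require an independent computation of asymptotic positions and phase shifts in terms of $J$. Finally, the claim that $\tau_{\mu,J+t\mu}$ satisfies the ultradiscrete Hirota equation for arbitrary riggings, including equal string lengths, is asserted rather than checked.
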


Combining \cref{lem:gamma_B} with \cref{prop:tau_function}, we obtain the following result. 

\begin{proposition} \label{prop:volume_BBS_tableau}
    Let $ T = (\rho \preceq \lambda) \in \SST(1)$ and let $(B,\nu) = \Xi(T)$, where the map $\Xi$ is defined in \eqref{eq:Xi}. Let $(\mu,J) = \KKR (B)$. Then, we have
    \[
        \abs{\rho} = |\nu| + \abs{J} + \abs{\mu} + \sum_{j=1}^{\ell(\mu)} 2(j - 1) \mu_j.
    \]
\end{proposition}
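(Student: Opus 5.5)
The plan is to separate off the kernel $\nu$ first, and then identify $\abs{\overline{\rho}}$ with the constant term in the large-$i$ asymptotics of $\Gamma_B$. That constant term can be read off from the ultradiscrete tau function through \Cref{prop:tau_function}.

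\emph{Removing $\nu$.} By \Cref{lem:pr_bij}, $\rho' = \overline{\rho}' + \nu'$, where $\overline{\rho}$ is built from $B$ alone via \eqref{eq:pr_bij}. Column-wise addition of signatures concatenates the multisets of row lengths: the rows of $(\overline{\rho}'+\nu')'$ are those of $\overline{\rho}$ and of $\nu$, merged and sorted, with lengths adding as $\ell(\rho)=\ell(\overline\rho)+\ell(\nu)$. Hence $\abs{\rho} = \abs{\overline{\rho}} + \abs{\nu}$, and it remains to prove
\[
\abs{\overline{\rho}} = \abs{J} + \sum_{j}(2j-1)\mu_j ,
\]
since $\sum_j (2j-1)\mu_j = \abs{\mu} + \sum_j 2(j-1)\mu_j$.

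\emph{Rewriting $\abs{\overline{\rho}}$ via $\Gamma_B$.} By construction, the entries of $\overline{\rho}$ are exactly the positions $x$ with $(b_x,b_{x+1})=(0,1)$. By \Cref{lem:gamma_B} these are the $x$ with $\Delta\Gamma_B(x+1)=1$, so
\[
\abs{\overline{\rho}}=\sum_{x\in\Z} x\,\Delta\Gamma_B(x+1).
\]
$\Gamma_B$ vanishes for $i$ far to the left. Since $\Delta\Gamma_B$ is finitely supported with total mass $\ell(\overline\rho)$, $\Gamma_B$ is eventually affine on the right, say $\Gamma_B(i)=\ell(\overline\rho)\,i - C$ for $i\gg 0$. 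A discrete summation by parts of $\sum_{x\le N} x\,\Delta\Gamma_B(x+1)$, with $N$ large, then gives $\abs{\overline\rho}$ as an explicit expression in $C$ and $\ell(\overline\rho)$. I expect this to be $C$ up to a shift by $\ell(\overline\rho)$ coming from the $x$ versus $x+1$ indexing.

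\emph{Reading off $C$ from the tau function.} By \Cref{prop:tau_function}, $\Gamma_B=\tau_{\mu,J}$. For $i\gg0$ the linear coefficients $(\mu_\alpha + J_\alpha - i)$ in the minimization are very negative, so the minimum is attained at $n_\alpha=1$ for all $\alpha$. This gives
\[
\tau_{\mu,J}(i)=\ell(\mu)\, i - \abs{J} - \abs{\mu}\cdot(\text{shift}) - \sum_{\alpha,\beta}\min(\mu_\alpha,\mu_\beta),
\qquad
\sum_{\alpha,\beta}\min(\mu_\alpha,\mu_\beta)=\sum_j (2j-1)\mu_j .
\]
The rigging enters through the linear term of the precise Kuniba--Sakamoto--Yamada tau function; the displayed definition should carry $J_\alpha$ there. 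Comparing slopes also yields $\ell(\mu)=\ell(\overline\rho)$, i.e.\ the number of $(0,1)$ patterns equals the number of solitons.

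\emph{Main obstacle.} The delicate step is exact bookkeeping of the additive constants: the precise normalization of $\tau_{\mu,J}$ in \cite{Kuniba-Sakamoto-Yamada2007_tau} (how $J$ and $\mu$ enter the linear term), and the index offset $x$ versus $x+1$ in the summation by parts. Together these must produce exactly $\abs{J}+\sum_j(2j-1)\mu_j$ with no leftover multiple of $\ell(\mu)$ or $\abs{\mu}$. If the constants do not match immediately, I would calibrate against \Cref{ex:KKR}. There $B$ has balls at $0,1,3,4$, so $\overline\rho=(2,-1)$ and $\abs{\overline\rho}=1$, while $\mu=(3,1)$ and $J=(-4,0)$ give $\abs{J}+3+3=2$. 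This exposes an offset of $-1$, to be absorbed by fixing the $x\leftrightarrow x+1$ convention and the exact form of $\tau$ accordingly.
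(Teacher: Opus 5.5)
Your strategy works and is a streamlined version of the paper's argument. The paper writes $\tau_{\mu,J}(i)=-\min_{\gamma}\{\sigma_{\mu,J}(\gamma)-i\gamma\}$ and uses \Cref{lem:gamma_B} to see that every slope $0,\dots,\ell$ occurs. It then identifies each breakpoint $\theta(\gamma)=\nabla\sigma_{\mu,J}(\gamma)$ with an entry of $\rho$ and telescopes to $\abs{\rho}=\sigma_{\mu,J}(\ell)$. Your summation by parts does the same telescoping in one step and only needs the regime $i\gg 0$, where the minimizer is $n=(1,\dots,1)$. It also gives $\ell(\overline\rho)=\ell(\mu)$ for free. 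The reduction to $\nu=\varnothing$ is the same as in the paper.

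However, the proposal stops exactly at the step that carries the content, and your plan for finishing it would go wrong. Three points fix it.

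(i) There is no index shift. Set $g=\nabla\Gamma_B$, so $g(a)=0$ for $a\ll 0$ and $\Gamma_B(b)=Lb-C$ for $b\gg 0$. Then $\sum_{x=a}^{b}x\,\Delta\Gamma_B(x+1)=b\,g(b+1)-a\,g(a)-\sum_{y=a+1}^{b}g(y)=bL-\Gamma_B(b)=C$, so $\abs{\overline\rho}=C$ exactly.

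(ii) The normalization of $\tau_{\mu,J}$ is not yours to tune. The linear term must be $\sum_\alpha (J_\alpha-i)n_\alpha$, together with the full quadratic term $\sum_{\alpha,\beta}\min(\mu_\alpha,\mu_\beta)n_\alpha n_\beta$; this is the form equivalent to the $\sigma_{\mu,J}$ in the paper's proof. You can confirm it on a single soliton on sites $x+1,\dots,x+m$: \Cref{def:KKR} gives $J=(x-m)$, $\overline\rho=(x)$, and $\Gamma_B(i)=\max(0,i-x)=\tau_{\mu,J}(i)$. With this normalization, $C=\abs{J}+\sum_{\alpha,\beta}\min(\mu_\alpha,\mu_\beta)=\abs{J}+\sum_j(2j-1)\mu_j$, which is the claim. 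The variant with $J_\alpha+\mu_\alpha-i$ would be off by $\abs{\mu}$.

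(iii) Your calibration rests on a misprint in \Cref{ex:KKR}. Running \Cref{def:KKR} on balls at $0,1,3,4$ gives $J=(-4,-1)$, not $(-4,0)$, consistent with $J=(-4+3t,-1+t)$ in the introduction. Then $\abs{J}+\abs{\mu}+2\mu_2=-5+4+2=1=\abs{\overline\rho}$, so there is no offset of $-1$ to absorb. Adjusting conventions to absorb one would break the identity in general, for instance in the single-soliton case above.
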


\begin{proof}
    By \Cref{lem:pr_bij}, it suffices to prove the case $\nu = \varnothing$. Otherwise, one can apply the proposition to the tableau $\overline{T} = (\overline{\rho} \preceq \overline{\lambda})$ with $\overline{\rho}' = \rho' - \nu'$ and $\overline{\lambda}' = \lambda' - \nu'$, for which one has $|\overline{\rho}| = |\rho| - |\nu|$.
    
    Assume $\nu = \varnothing$ and denote by $\ell= \ell(\mu) = \ell(\rho) = \ell(\lambda)$. Basic algebraic manipulations show that the tau function $\tau_{\mu,J}$ can be written as
    \begin{equation} \label{eq:tau_Legendre}
        \tau_{\mu,J}(i) = -\min_{\gamma \in \{0,\dots ,\ell \} } \left\{ \sigma_{\mu,J}(\gamma) - i \gamma \right\},
    \end{equation}
    where the function $\sigma_{\mu,J}$ is defined as
    \begin{equation} \label{eq:sigma}
        \sigma_{\mu,J}(\gamma) = \min_{ \substack{n \in \{0,1\}^\ell \\ |n|=\gamma} } \left\{ \sum_{\alpha = 1}^\ell (J_\alpha + \mu_\alpha)n_\alpha + 2 \sum_{1 \le \alpha < \beta \le \ell} \mu_{\alpha} n_\alpha n_\beta \right\}.
    \end{equation}
    Moreover, an immediate consequence of \Cref{prop:tau_function} and \Cref{lem:gamma_B} is that $\tau_{\mu,J}$ is piecewise linear with slopes $\nabla \tau_{\mu,J}(i)$ having increments being $0$ or $1$. Then, by expression \eqref{eq:tau_Legendre}, we have
    \[
        \nabla \tau_{\mu,J} \colon \Z  \to  \{ 0,\dots , \ell \} \qquad \text{surjectively}
    \]
    and there exists a strictly increasing function $\theta\colon \{ 0, \dots,\ell +1 \} \to \Z \cup \{\pm \infty\}$ such that
    \[
        \tau_{\mu,J}(i) = \gamma i - \sigma_{\mu,J}(\gamma), \qquad \text{for all } i\in \{ \theta(\gamma), \dots, \theta(\gamma+1) \},
    \]
    for $\gamma \in \{0,\dots,\ell \}$, with $\theta(0) = -\infty$ and $\theta (\ell +1)=+\infty$.
    \Cref{lem:gamma_B} also indicates that the locations $\theta(\gamma)$ where $\tau_{\mu,J}$ changes slope are 
    \[
        \theta(\gamma) = \inf\{ j > \theta(\gamma-1) \mid (b_j,b_{j+1})=(0,1) \}.
    \]
    In particular, comparing the above expression with \eqref{eq:pr_bij}, we have
    \begin{equation} \label{eq:theta_rho}
        \theta (\ell-i+1) = \rho_i, \qquad \text{for } i\in \{ 1,\dots, \ell \}.
    \end{equation}
    On the other hand, the equality
    \[
        \tau_{\mu,J}\bigl(\theta(\gamma)\bigr) = \tau_{\mu,J}\bigl(\theta(\gamma-1)\bigr) + (\gamma -1) \Bigl( \theta(\gamma) - \theta(\gamma-1) \Bigr),
    \]
    implies that the coefficients $\theta(\gamma)$ can be written as
    \[
        \theta(\gamma) = \nabla \sigma_{\mu,J}( \gamma ), \qquad \text{for } \gamma \in \{ 1,\dots, \ell\}.  
    \]
    The above analysis produces the following chain of identities
    \begin{equation}
        \abs{J}+\abs{\mu} + \sum_{j=1}^\ell 2(j-1) \mu_j = \sigma_{\mu,J}(\ell) = \sum_{\gamma=1}^{\ell} \nabla \sigma_{\mu,J}( \gamma ) = \sum_{\gamma=1}^\ell \theta(\gamma)
        = \abs{\rho},
    \end{equation}
    where the first equality is a simple evaluation of \eqref{eq:sigma} and the fourth equality comes from \eqref{eq:theta_rho}. This completes the proof.
\end{proof}

\section{Affine crystals and leading paths}\label{sec:affine_leading}

\subsection{Affine crystal structure on horizontally weak tableaux} \label{subsec:crystal_HWT}

In this subsection, we equip the set of horizontally weak tableaux $\HWT(\mu,n)$ with an affine crystal structure.
This is achieved by identifying $\HWT(\mu,n)$ with a tensor product of certain Kirillov--Reshetikhin (KR) crystals that are modeled by single row semistandard Young tableauax; see, e.g.,~\cite{HKOTY02,KKM94,KKMMNN92} and~\cite{shimozono_affine}.
Below we will only recall the strictly necessary aspects of Kashiwara's crystal basis theory and refer the reader to \cite{BumpSchilling_crystal_book} for a complete treatment.

An \defn{affine crystal} or \defn{$\asl_n$-crystal} (associated to the Drinfel'd--Jimbo affine quantum group $U_{\upsilon}'(\asl_n)$ without derivation; see, \textit{e.g.},~\cite{CP95}) is a set with Kashiwara operators and weight function that give it the structure of a special edge-colored weighted directed graph.
Our starting point is the following crystal.

\begin{definition}
    Fix $n\in \Z_+$. For $s\ge 0$, the \defn{Kirillov--Reshetikhin crystal} $\KR{1,s}$ is the set of semistandard tableaux with labels in $\{1,\dots,n\}$ of a single row of length $s$, or equivalently nondecreasing words over $\{1,\dotsc,n\}$ of length $s$.
    Additionally, we will sometimes consider the elements $h \in \KR{1,s}$ as multisets, where we are sorting the elements as necessary.
    The Kashiwara operators will be described as a special case of the following definition.
\end{definition}

\begin{definition} \label{def:kashiwara_HWT}
Fix some $\ell > 0$ and  $\mu\in\N^\ell$.
Define the $\asl_n$-crystal
\begin{equation}
    \label{eq:B_mu}
    \KR{\mu}
    \coloneqq
    \KR{1,\mu_\ell} \otimes \KR{1,\mu_{\ell-1}} \otimes \cdots \otimes \KR{1,\mu_1},
\end{equation}
as the Cartesian product $\KR{1,\mu_\ell} \times \KR{1,\mu_{\ell-1}} \times \cdots \times \KR{1,\mu_1}$ with \defn{Kashiwara operators} $e_i, f_i \colon \KR{\mu} \to \KR{\mu} \sqcup \{\zero\}$ for $i=0,1,2,\dotsc,n-1$ defined by the \defn{signature rule} as follows.
Consider some
\[
H := h_{\ell} \otimes h_{\ell-1} \otimes \cdots \otimes h_1 = (h_{\ell}, h_{\ell-1}, \dotsc, h_1) \in \KR{\mu}.
\]
Fix some $i=0,1,2,\dotsc,n-1$ and let $j := i + 1 \pmod{n}$.
\begin{itemize}
\item[(1)] Make a single word $w$ from $H$ in the following way. For $e_i$ or $f_i$ with $i=1,\ldots,n-1$, $w=h_{\ell} \cdots h_2 h_1$ obtained by concatenating all elements in $H$, while for $i=0$, 
we form $w=\tilde{h}_\ell \tilde{h}_{\ell-1}\cdots \tilde{h}_1$ with $\tilde{h}_k$ being the reverse of $h_k$.
\item[(2)] In $w$, replace all $i$'s\footnote{When $i = 0$, we are replacing all $n$'s with ``$)$''s.} with ``$)$''s and all $j$'s with ``$($''s and ignore all other letters.
\item[(3)] Erase each consecutive symbols ``$($'' ``$)$'' until no such consecutive symbols appear; i.e., the remaining pattern of the symbols becomes $)\cdots))((\cdots($.
\item[(4e)] For the remaining symbols, if there is no ``$($'' then $e_i (H) = \zero$, and otherwise $e_i (H)$ is given by replacing the $j$ corresponding to the leftmost ``$($'' with $i$ in $H$.
\item[(4f)] For the remaining symbols, if there is no ``$)$'' then $f_i (H) = \zero$ and otherwise $f_i (H)$ is given by replacing the $i$ corresponding to the rightmost ``$)$'' with $j$ in $H$.
\end{itemize} 
We use the weight function $\wt$ given in \cref{def:weight}; note $\wt(H) = \sum_{j=1}^{\ell} \wt(h_j)$.
The \defn{crystal graph} of $\KR{\mu}$ is the (weighted) directed graph with vertices $\KR{\mu}$ and colored directed edges $H \xrightarrow{i} H'$, for $H,H'\in \KR{\mu}$ and $i=0,1,\dotsc,n-1$, if and only if $H'=f_i(H)$, or equivalently $H=e_i(H')$. See \Cref{fig:crystal graphs} for an example.
\end{definition}

We identify elements of $\HWT(\mu, n)$ with $\KR{\mu}$ by having an element $H = h_{\ell} \otimes \cdots \otimes h_1 \in \KR{\mu}$ correspond to the tableau in $\HWT(\mu,n)$ whose $j$th row is $h_j$.

\begin{example}\label{ex:kashiwara_word}
Let $H=24\otimes233\otimes2\otimes12\otimes12\otimes133\otimes23\in \HWT(\mu,n)$ with $\mu=(2,3,2,2,1,3,2)$ and $n=4$. Applying the signature rule for $i=2$ yields:
\begin{equation}
    \begin{matrix}
        (1) & & w =  & 2 & 4 & 2 & 3 & 3 & 2 & 1 & 2 & 1 & 2 & 1 & 3 & 3 & 2 & 3
        \\
        (2) & &   & ) &  & ) & ( & ( & ) &  & ) &  & ) &  & ( & ( & ) & (
        \\
        (3) & &   & ) &  & ) &  &  &  &  &  &  & ) &  & ( &  &  & (
        \\
        (4\mathrm{e}) & &   & ) &  & ) &  &  &  &  &  &  & ) &  & \re{)} &  &  & (
        \\
        (4\mathrm{f}) & &   & ) &  & ) &  &  &  &  &  &  & \re{(} &  & ( &  &  & (
    \end{matrix}
\end{equation}
so that
\[
    e_2(H)=24\otimes233\otimes2\otimes12\otimes12\otimes1\re{2}3\otimes23
    \qquad 
    \text{and}
    \qquad
    f_2(H)=24\otimes233\otimes2\otimes12\otimes1\re{3}\otimes133\otimes23.
\]
\end{example}

\begin{figure}
    \centering
    \includegraphics[width=\linewidth]{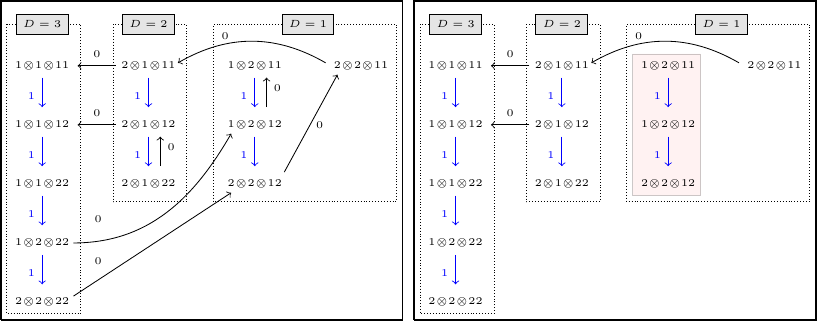}
    \caption{In the left panel the KR crystal graph $\KR{\mu}$ for $\mu=(2,1,1)$ and $n=2$. We have arranged vertically its classically connected components and framed in dotted contours elements with same energy function $D$. In the right panel we removed all $0$ arrows such that $D(f_0(b)) \neq D(b)+1$. The red shaded component is no longer connected to the rest of the graph.}
    \label{fig:crystal graphs}
\end{figure}

\begin{remark}\label{rem:crystal}
    In the crystal graph of $\KR{\mu}$, one can check the following three properties, which are an important aspects of the (affine) crystal structure.
    \begin{enumerate}
        \item The in and out degree of any color and any vertex is at most one.
        \item Let $\epsilon_i$, $i=1,\dotsc,n$, be the standard bases on $\Z^n$; for convenience, we identify $\epsilon_0 = \epsilon_n$.
        Let $\langle\cdot,\cdot\rangle$ be the standard dot product on $\Z^n$ defined by $\langle a,b\rangle=\sum_{i=1}^na_ib_i$ for $a,b\in\Z^n$.
        Then for $i=0,\dotsc,n-1$ and $b\in \KR{\mu}$,
        \begin{align}
                    \label{eq:crystal_2}
            \langle \epsilon_i-\epsilon_{i+1},\wt(H)\rangle=\varphi_i(H)-\varepsilon_i(H),
        \end{align}
        where
        \[
        \varepsilon_i(H)=\max\{m \mid e_i^m(H)\neq\zero\}, \qquad\qquad \varphi_i(H)=\max\{m \mid f_i^m(H)\neq\zero\}.
        \]
    \item For $H,H'\in \KR{\mu}$ satisfying $H'=f_i(H)$ for some $i\in\{0,1,\dotsc,n-1\}$, we have
    \begin{align}
        \label{eq:crystal_3}
        \wt(H')=\wt(H)-\epsilon_i+\epsilon_{i+1}.
    \end{align}
    \end{enumerate}
\end{remark}

\begin{remark}
    \label{rem:promotion}
    Let $\pr\colon \KR{\mu} \rightarrow \KR{\mu}$ be the \defn{promotion operator}~\cite{schutzenberger1963quelques,schutzenberger72} which acts by cyclically relabeling entries $i\to i+1$ for $i=1,\dots n-1$ and $n\to 1$ followed by a reordering within each tensor factor so that elements remain non-decreasing. The Kashiwara operators satisfy the relations
    \begin{equation} \label{eq:0 operators by promotion}
        e_{i-1} = \pr^{-1} \circ e_i \circ \pr
        \qquad \text{ and } \qquad
        f_{i-1} = \pr^{-1} \circ f_i \circ \pr
    \end{equation}
    for $i=1\dots ,n-1$ (see \cite{shimozono_affine}). In particular the operators $e_0,f_0$ can be defined through \eqref{eq:0 operators by promotion}; see also~\cite[Thm.~5.5.1]{lothaire_2002}.
\end{remark}

\begin{remark} \label{rem:KR_connected}
    It was shown in~\cite{Akasaka_Kashiwara} that the crystal graph $\KR{\mu}$ is a strongly connected digraph for arbitrary $\mu$ and $n$; that is, for any two elements $H, H' \in \KR{\mu}$, there exists a \emph{directed} path from $H$ to~$H'$.
\end{remark}

\subsection{Classically connected components} \label{subs:classic_connected_component}

The \defn{classically connected components} of $\KR{\mu}$, are the connected components of the crystal graphs obtained by removing all $0$-arrows; that is, not considering the Kashiwara operators $e_0, f_0$.
It is known~\cite{Kashiwara_crystalizing,Kashiwara_On_Crystal_bases,KKMMNN92} that each such component contains a unique element $H_\hw$ and $H_\lw$ satisfying, respectively,
\[
    e_i(H_\hw) = \zero \qquad \text{and} \qquad  f_i(H_\lw) = \zero \qquad \text{for all} \quad i=1,\dots,n-1.
\] 
We call them the \defn{classically highest} and \defn{classically lowest weight elements}, respectively, and denote their corresponding sets as \defn{$\KR{\mu}_\hw$} and \defn{$\KR{\mu}_\lw$}. 

\begin{example}
    In \Cref{fig:crystal graphs}, the graph in the left panel has four classically connected components. The set of classically highest weight elements is
    \[
        \KR{\mu}_\hw =  \left\{ 1 \otimes 1 \otimes 1 1, \, 2 \otimes 1 \otimes 1 1 , \, 1 \otimes 2 \otimes 1 1, \, 2 \otimes 2 \otimes 1 1 \right\},
    \]
    while that of classically lowest weight elements is
    \[
        \KR{\mu}_\lw =  \left\{ 2 \otimes 2 \otimes 2 2, \, 2 \otimes 1 \otimes 2 2 , \, 2 \otimes 2 \otimes 1 2, \, 2 \otimes 2 \otimes 1 1 \right\}.
    \]
\end{example}

Classically highest and lowest weight elements can be characterized via the \emph{Yamanouchi conditions} on their row reading words (see, \textit{e.g.},~\cite{Stanley1999}).

\begin{remark}
    \label{rem:b1_bl}
    Let $H = h_{\ell}\otimes\cdots\otimes h_2\otimes h_1\in \KR{\mu}$.
    Since each $h_j$ is a weakly increasing word, the signature rule (the Yamanouchi condition; see~\cite[Prop.~7.10.3(d)]{Stanley1999}) implies the following constrains.
    If $H$ is classically highest weight, then $h_1$ consists of only $1$'s.
    Similarly, if $H$ is classically lowest weight, then $h_\ell$ consists of only $n$'s.
    More generally, in classically highest weight elements $H$, the letter $i$ can only appear in $h_j$ for $j \ge i$.
    In classically lowest weight elements, the letter $n-i+1$ can only appear in $h_j$ for $1 \le j \le \ell - i + 1$.
\end{remark}

\subsection{Combinatorial \texorpdfstring{$R$}{R} matrices}

Let $\mu \in \N^\ell$, and let $\sigma \in S_\ell$ be a permutation. It turns out that, for every $n$ the crystal graphs $\KR{\mu}$ and $\KR{\sigma(\mu)}$ are isomorphic~\cite{KKMMNN92} (see also~\cite{Akasaka_Kashiwara,Okado13}). Such isomorphism is unique and it is constructed through the \defn{combinatorial $R$-matrix}, the unique crystal isomorphism $\mathcal{R}\colon \KR{(\mu_1,\mu_2)} \rightarrow \KR{(\mu_2,\mu_1)}$.

We depict the combinatorial $R$-matrix $\mathcal{R}(a \otimes b) = b' \otimes a'$ as the scattering diagram
\begin{equation}
\label{eq:scattering_diagram}
    \begin{tikzpicture}[baseline=0, scale=.5]
        \draw[-] (-1,0) node[anchor=east] {$b$} -- (1,0) node[anchor=west] {$b'$};
        \draw[-] (0,1) node[anchor=south] {$a$} -- (0,-1) node[anchor=north] {$a'$};
    \end{tikzpicture}.
\end{equation}
In \cite[Rule 3.11]{Nakayashiki_Yamada}, the authors obtained an explicit algorithm for $\mathcal{R}$.
As such, we refer to this as the \defn{Nakayashiki--Yamada rule} (for tensor products of single row Kirillov--Reshetikhin crystal), see also~\cite[Sec.~2.2.3]{Inoue-Kuniba-Takagi2012BBS}.
The following definition also includes the local energy that was introduced in \cite{Kang_Kashiwara_et_al}.

\begin{definition}[The Nakayashiki--Yamada rule~{\cite[Rule 3.11]{Nakayashiki_Yamada}}]
    \label{def:Nakayashiki_yamada}
    Let $K \le L$, and let $a\in \KR{K}$, $b\in \KR{L}$. The element $b' \times a' = \mathcal{R}(a \otimes b)$ is constructed as follows.
    \begin{enumerate}
        \item Prepare two columns of $n$ boxes. For each $i=1,\dots,n$, draw $\wt_i(a)$ dots in the $i$-th box (from the top) of the right column and $\wt_i(b)$ dots in the $i$-th box of the left column. 
        \item Pair each dot $x$ in the right column (processed in arbitrary order) to the lowest unpaired dot $y$ in the left column lying in a box strictly higher than that of $x$. If no such dot exists choose the lowest unpaired dot $y$. We refer to pairs $(x,y)$ with $y$ below $x$ as \defn{winding pairs}.
        \item Move all unpaired dots from their box in the left column to the same box in the right column. The resulting configuration of dots determines $a'$ and $b'$.
    \end{enumerate}
    We also define the \defn{local energy function}\footnote{This is typically denoted by $H$ in the literature, but we are using a different name to avoid confusion with HWT.}
    \begin{equation} \label{eq:winding number}
        W(a \otimes b) = \# \text{ of winding pairs in } a \otimes b.
    \end{equation}
    When $L<K$, $\mathcal{R}(a \otimes b)$ is computed exchanging $a$ (resp.\ $a'$) with $b$ (resp.\ $b'$) in (1) (resp.\ (3)) in the above algorithm.
\end{definition}

\begin{remark} \label{rem:pairing_order}
    The output $b' \otimes a'$ of the Nakayashiki--Yamada rule, as well as the winding number is independent of the processing order of dot-pairing in step (2); this was proven in \cite[Prop.~3.20]{Nakayashiki_Yamada}.
\end{remark}

\begin{example}
    \label{ex:NYrule}
    Applying \Cref{def:Nakayashiki_yamada} to $a=112$, $b=1223$, we get $a'=123$, $b'=1122$ as follows:
    \[
        \includegraphics[width=\linewidth]{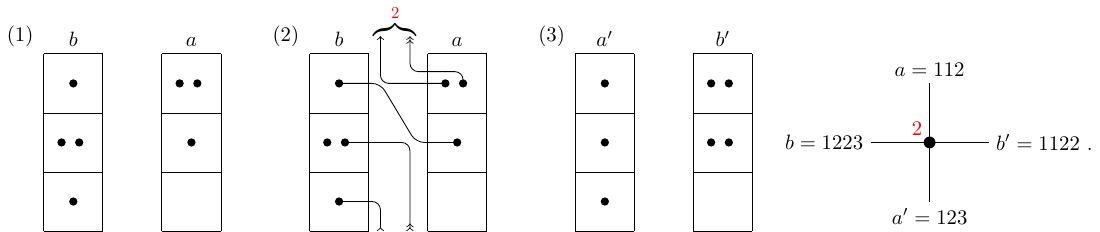}
    \]
    In this example, the local energy function is $2$.
    The corresponding scattering diagram is shown on the right, where the winding number has been written near the crossing.
\end{example}

The combinatorial $R$-matrix is an involution and satisfies the
\defn{Yang--Baxter equation}:
\begin{align}
\label{eq:YBE}
(\mathcal{R}\otimes \mathbf{1})(\mathbf{1}\otimes \mathcal{R})(\mathcal{R}\otimes \mathbf{1})
=
(\mathbf{1}\otimes \mathcal{R})(\mathcal{R}\otimes \mathbf{1})(\mathbf{1}\otimes \mathcal{R}).
\end{align}
Fix $\ell \in \N$. For $i=1,\dots,\ell-1$, define the operator
\begin{equation} \label{eq:sigma_i}
    \sigma_i
    =
    \mathbf{1}^{\otimes (i-1)} \otimes \mathcal{R} \otimes \mathbf{1}^{\otimes (\ell-i-1)},
\end{equation}
which acts on a tensor product $h_1\otimes \cdots \otimes h_\ell$ as $\mathcal{R}$ on the $i$-th and $(i+1)$-th tensor factors and as the identity on the remaining factors.
This defines an action of the symmetric group $S_{\ell}$ on $\bigsqcup_{\mu:\ell(\mu)=\ell} \HWT(\mu,n)$.
Note that the braid relation for $\sigma_i, \sigma_{i+1}$ is the Yang--Baxter equation~\eqref{eq:YBE}.
With a slight abuse of notation, we denote by $\sigma \in S_\ell$ both a permutation and the corresponding operator obtained by composing the $\sigma_i$.
Hence, we have
\begin{equation}\label{eq:sigma_HWT}
    \sigma \KR{\mu} = \sigma_{i_1} \cdots \sigma_{i_k} \KR{\mu} \longrightarrow \KR{s_{i_1} \cdots s_{i_k} \mu} = \KR{\sigma \mu}
\end{equation}
for any choice of reduced expression $\sigma = s_{i_1}\cdots s_{i_k}$ (where $s_i = (i \; i+1)$ denotes the $i$-th elementary transposition).
Since the combinatorial $R$-matrix is a crystal isomorphism, this action commutes with the Kashiwara operators.
In particular, for any $\sigma\in S_\ell$, the map \eqref{eq:sigma_HWT} is the unique crystal isomorphism between $\KR{\mu}$ and $\KR{\sigma(\mu)}$.

\begin{remark}
    \label{rem:reverse_iso}
    Denote by 
    \[
    \sigma^- := (\ell, \ell-1, \dots , 1) = (s_1 \cdots s_{\ell-1}) \cdots  (s_1 s_{2})(s_{1}),
    \]
    which is referred to as the long element as it requires the most number of simple transpositions (see, \textit{e,g.},~\cite{sagan2001symmetric}).
    For $\mu=(\mu_1,\mu_2,\dotsc,\mu_\ell)$ and its reverse $\mu^{-}:= \sigma^-(\mu) = (\mu_\ell,\dotsc,\mu_2,\mu_1)$, the crystal isomorphism $\sigma^- \colon \KR{\mu} \rightarrow \KR{\mu^-}$ can be represented by the scattering diagram of \Cref{fig:scattering diagrams} left panel, where the input is $H=h_{\ell} \otimes \cdots \otimes h_1$ and the output is $\sigma^-(H) = h'_1\otimes\cdots\otimes h'_{\ell}$.
\end{remark}

\subsection{The Lusztig involution} \label{subs:Lusztig}

In the next definition we introduce two involutions $\eta'$ and $\eta$. In particular $\eta'$ is a combinatorial realization of (the crystal limit of) the \defn{Lusztig involution}~\cite{BZ96,Lusztig94} (see also~\cite[Prop.~2.3]{Lenart07}).

\begin{definition}
    \label{def:Lusztig_involution}
    Let $\sigma^-\colon \KR{\mu} \rightarrow \KR{\mu}$ be the map defined in \cref{rem:reverse_iso}. Let $\mathrm{flip}\colon \KR{\mu}\rightarrow \KR{\mu^-}$
    be the map defined by 
    \[
        \mathrm{flip} \colon h_{\ell}\otimes\cdots\otimes h_2\otimes h_1\mapsto h_1\otimes h_2\otimes\cdots\otimes h_\ell. 
    \]
    Let $\mathrm{comp}\colon \KR{\mu} \rightarrow \KR{\mu}$ be the map defined by inverting all words $h_i$, $i=1,\dotsc,\ell$ then changing all $j$-entries to $n+1-j$-entries. Then we define the maps $\eta' \colon \KR{\mu} \rightarrow \KR{\mu^-}$ and $\eta \colon \KR{\mu} \rightarrow \KR{\mu}$ as 
    \begin{align}
        \label{eq:lusztig_involution}
        \eta'=\mathrm{comp}\circ\mathrm{flip} 
        \qquad \text{and}
        \qquad
        \eta=\eta'\circ \sigma^{-}.
    \end{align}
    The map $\eta'$ is called tne \defn{Lusztig involution}.
\end{definition}

\begin{lemma}[{\cite{Okado_Schilling_Schimozono_virtual,Lenart07}}]
    \label{lem:cf_lusztig}
    The map $\eta'$ is an involution such that for each $i \in \{0,\dots,n-1\}$
    \begin{align}
        \label{eq:etap_ef}
        e_{n-i}\circ \eta' = \eta'\circ f_{i},
    \end{align}
    where we set $e_n = e_0$.
\end{lemma}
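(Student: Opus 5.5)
The plan is to verify both claims directly from the definitions: $\eta'=\mathrm{comp}\circ\mathrm{flip}$ for the involution, and the signature rule of \Cref{def:kashiwara_HWT} for the intertwining relation \eqref{eq:etap_ef}.

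\emph{Involution.} The map $\mathrm{flip}$ reverses the order of the tensor factors. The map $\mathrm{comp}$ acts factor by factor, reversing each word and applying $j\mapsto n+1-j$. Hence $\mathrm{comp}$ commutes with $\mathrm{flip}$. Each of the two maps is an involution, since reversing a word twice and complementing twice both give the identity. Therefore
\[
\eta'\circ\eta'=\mathrm{comp}^2\circ\mathrm{flip}^2=\mathrm{id}.
\]
Note that $\mathrm{comp}(h)$ is again weakly increasing, because complementing reverses the order of letters. So $\eta'$ is a well-defined bijection $\KR{\mu}\to\KR{\mu^-}$, and it is its own inverse.

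\emph{Intertwining for $i\in\{1,\dots,n-1\}$.} Let $H=h_\ell\otimes\cdots\otimes h_1$ with reading word $w=h_\ell\cdots h_1$. The reading word of $\eta'(H)=\mathrm{comp}(h_1)\otimes\cdots\otimes\mathrm{comp}(h_\ell)$ is $\mathrm{comp}(h_1)\cdots\mathrm{comp}(h_\ell)$. This is exactly $w$ reversed, with every letter complemented.
\begin{itemize}
\item To compute $f_i$ on $H$, the letters $i$ become ``$)$'' and the letters $i+1$ become ``$($''.
\item To compute $e_{n-i}$ on $\eta'(H)$, the letters $n-i$ become ``$)$'' and the letters $n-i+1$ become ``$($''.
\item Under complementation, $i\mapsto n-i+1$, so it becomes ``$($''. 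Likewise $i+1\mapsto n-i$, so it becomes ``$)$''.
\end{itemize}
Thus the bracket word for $e_{n-i}$ on $\eta'(H)$ is the bracket word for $f_i$ on $H$, read backwards with the two bracket types swapped. This mirror operation preserves the cancellation of adjacent ``$()$'' pairs. It sends the rightmost unmatched ``$)$'' of the first word to the leftmost unmatched ``$($'' of the second. Two consequences follow.
\begin{itemize}
\item $f_i(H)=\zero$ if and only if $e_{n-i}(\eta'(H))=\zero$.
\item Otherwise, $f_i$ changes a letter $i$ into $i+1$, and $e_{n-i}$ changes the corresponding letter $n-i+1$ into $n-i$. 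The latter is precisely the complement of the former change.
\end{itemize}
Since each tensor factor is a multiset, the position of the changed letter inside a factor is irrelevant. This gives $e_{n-i}\circ\eta'=\eta'\circ f_i$.

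\emph{The case $i=0$.} This is where I expect the bookkeeping to be most delicate, because the $0$-signature rule reads each factor reversed. For $f_0$ on $H$ the word is $\tilde h_\ell\cdots\tilde h_1$, with $n\mapsto$ ``$)$'' and $1\mapsto$ ``$($''. For $e_n=e_0$ on $\eta'(H)$ the word is $\widetilde{\mathrm{comp}(h_1)}\cdots\widetilde{\mathrm{comp}(h_\ell)}$. Here $\widetilde{\mathrm{comp}(h_k)}$ is just $h_k$ with each letter complemented and the order kept. So this word is again the reversal of $\tilde h_\ell\cdots\tilde h_1$, with letters complemented. Complementation sends $n\mapsto1$, which becomes ``$($'', and $1\mapsto n$, which becomes ``$)$''. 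The same mirror argument as before then finishes the proof.

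As a cross-check on this case, I would also verify the identity $\pr\circ\eta'=\eta'\circ\pr^{-1}$. This holds because complementation conjugates cyclic relabelling $j\mapsto j+1$ into $j\mapsto j-1$. Combined with \eqref{eq:0 operators by promotion}, it deduces the $i=0$ case from the case $i=1$.
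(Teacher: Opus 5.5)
Your proof is correct and takes the same route as the paper, which only remarks that \eqref{eq:etap_ef} follows from the signature rule of \Cref{def:kashiwara_HWT} and the definitions of $\mathrm{flip}$ and $\mathrm{comp}$. Your argument supplies exactly that verification: the reading word of $\eta'(H)$ is the reversed, complemented reading word of $H$, with the separate reversed reading handled for $i=0$, and reversing while swapping bracket types preserves the cancellation and exchanges the rightmost unmatched ``$)$'' with the leftmost unmatched ``$($''. Your additional check of the involution property, which the paper leaves implicit, is also fine.
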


\begin{proof}
We provide a short proof for completeness.
Equation~\eqref{eq:etap_ef} follows from the signature rule in \cref{def:kashiwara_HWT} and definitions of $\mathrm{flip}$ and $\mathrm{comp}$ in \cref{def:Lusztig_involution}.
\end{proof}

\begin{lemma}
    \label{lem:eta_ef}
    The map $\eta$ is an involution such that for each $i \in \{0,\dots,n-1\}$ we have
    \begin{align}
        \label{eq:eta_ef}
        e_{n-i}\circ \eta=\eta\circ f_{i},
    \end{align}
    where we set $e_n = e_0$.
\end{lemma}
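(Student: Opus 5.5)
The plan is to derive both claims from \Cref{lem:cf_lusztig} together with the fact that $\sigma^-$ is a crystal isomorphism. Since $\eta = \eta'\circ\sigma^-$, the intertwining relation should follow by commuting $f_i$ past each factor in turn.

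For the relation \eqref{eq:eta_ef}, the map $\sigma^-\colon \KR{\mu}\to\KR{\mu^-}$ is a composition of combinatorial $R$-matrices. As recalled after \eqref{eq:sigma_HWT}, it commutes with every Kashiwara operator, including $e_0,f_0$, and sends $\zero$ to $\zero$. Hence $\sigma^-\circ f_i = f_i\circ\sigma^-$ as maps $\KR{\mu}\to\KR{\mu^-}\sqcup\{\zero\}$. Applying \Cref{lem:cf_lusztig} on $\KR{\mu^-}$ then gives
\[
\eta\circ f_i=\eta'\circ\sigma^-\circ f_i=\eta'\circ f_i\circ\sigma^-=e_{n-i}\circ\eta'\circ\sigma^-=e_{n-i}\circ\eta .
\]
The only care needed is that $\eta'$ maps $\KR{\mu^-}$ to $\KR{\mu}$, which holds because $(\mu^-)^-=\mu$, and that the convention $\eta'(\zero)=\zero$ is used consistently.

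For the involution property, I would show $\eta\circ\eta=\mathrm{id}$ via the identity $\eta'\circ\sigma^-=\sigma^-\circ\eta'$. Combined with $\eta'^2=\mathrm{id}$ and $(\sigma^-)^2=\mathrm{id}$, this gives $\eta^2=\eta'\sigma^-\eta'\sigma^-=\eta'\eta'\sigma^-\sigma^-=\mathrm{id}$. Here $(\sigma^-)^2=\mathrm{id}$ holds because $\sigma^-\colon\KR{\mu^-}\to\KR{\mu}$ composed with $\sigma^-\colon\KR{\mu}\to\KR{\mu^-}$ is a crystal automorphism of $\KR{\mu}$. By uniqueness of crystal isomorphisms between these connected crystals (\Cref{rem:KR_connected} and the uniqueness statement after \eqref{eq:sigma_HWT}), that automorphism must be the identity.

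To prove the commutation $\eta'\sigma^-=\sigma^-\eta'$, I consider $\phi:=\eta'\circ\sigma^-\circ\eta'\colon\KR{\mu^-}\to\KR{\mu}$. By \Cref{lem:cf_lusztig} applied twice, $\phi\circ f_i=\eta'\sigma^- e_{n-i}\eta'=\eta' e_{n-i}\sigma^-\eta'=f_i\circ\phi$, and similarly for $e_i$. So $\phi$ is a crystal isomorphism $\KR{\mu^-}\to\KR{\mu}$, and by uniqueness $\phi=\sigma^-$, which is the desired identity. The main obstacle is justifying this uniqueness on the nose. The paper asserts it, and it follows from strong connectivity together with the existence of a unique element of weight $(|\mu|,0,\dots,0)$, namely the all-$1$ tableau. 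Any isomorphism must fix that element, and connectivity then propagates agreement to every vertex. I would include this short argument explicitly.
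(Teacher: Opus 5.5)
Your proof is correct and follows the paper's argument. You obtain \eqref{eq:eta_ef} exactly as the paper does, from $\sigma^-\circ f_i=f_i\circ\sigma^-$ and \Cref{lem:cf_lusztig}, and involutivity rests on the same uniqueness of crystal isomorphisms; the only difference is that the paper applies uniqueness once, directly to $\eta^2$ (which commutes with all Kashiwara operators by using \eqref{eq:eta_ef} twice), whereas your detour through $\eta'\sigma^-=\sigma^-\eta'$ and $(\sigma^-)^2=\mathrm{id}$ uses it twice (note $\eta^2=\phi\circ\sigma^-$), and your explicit weight-plus-connectivity proof of uniqueness justifies a step the paper only asserts.
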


\begin{proof}
    Equation~\eqref{eq:eta_ef} follows from the relation $f_i \circ \sigma^- =\sigma^- \circ f_i$ (since $\sigma^-$ a crystal isomorphism) and~\cref{lem:cf_lusztig}. Relation \eqref{eq:eta_ef} also implies that $\eta^2 \colon \KR{\mu} \to \KR{\mu}$ is a crystal isomorphism and by uniqueness it must be the identity. 
\end{proof}

\begin{remark} \label{rem:eta_hw_lw}
    For any $\mu \in \N^\ell$ the involution $\eta$ restricts to a bijection $\eta \colon \KR{\mu}_\hw  \longleftrightarrow \KR{\mu}_\lw$.
\end{remark}

\subsection{Energy functions}

In this subsection, we describe two notions of energy functions $D,D'$ for $\KR{\mu}$. 

\begin{definition}[Energy functions]\label{def:energy_function}
Recall the winding number $W$ from \eqref{eq:winding number}. For any
$\mu \in \N^\ell$, define the \defn{global energy}
\[
    D(H) =
    \sum_{1 \le i < j \le \ell(\mu)}
    W_i\bigl(\sigma_{i+1}\sigma_{i+2}\cdots \sigma_{j-1}(H)\bigr),
\]
where $W_i(h_1 \otimes \cdots \otimes h_\ell) = W(h_i \otimes h_{i+1})$. The \defn{dual global energy} is
\[
    D'(H) = D(\eta(H)),
\]
where $\eta$ is the map in \eqref{eq:lusztig_involution}.
\end{definition}

The energy functions were introduced in~\cite{HKOTY99,HKOTT02}\footnote{More precisely, only $D$ was defined in~\cite{HKOTY99,HKOTT02}, while both $D$ and $D'$ have subsequently appeared in the literature. See \cite[Rem.~4.10]{LNSSSII} and \cite[Def.~2.4]{Lenart_Schilling_charge} for precise statements.}.  In what follows, rather than the algorithmic definition, we mainly use the structural properties of $D$ recorded below. Alternative explicit descriptions of the energy $D$ and $D'$ are given respectively in \Cref{rem:alternative_def_energy} and \Cref{rem:alternative_def_dual_energy}.

\begin{figure}
    \centering
    \includegraphics[width=.8\linewidth]{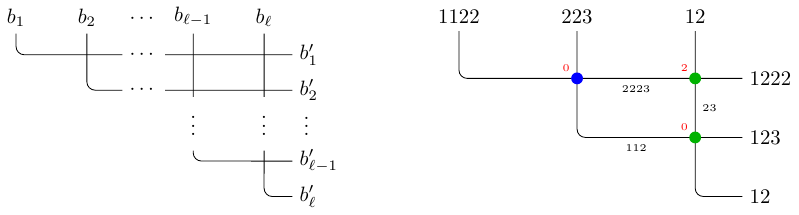}
    \caption{In the left panel the scattering diagrams representing the isomorphism $\overline{\sigma}^-\colon \KR{\mu} \to \KR{\mu^-}$. In the right panel an explicit example: values in red near the crossings are the winding numbers of the corresponding application of the $R$-matrix.}
    \label{fig:scattering diagrams}
\end{figure}

First, the energy functions are constant on classically connected components (see ~\cite{HKOTY99,HKOTT02,HKOTY02,shimozono_affine}).

\begin{lemma}
    \label{lem:energy invariance}
    Let $g_i$ be $e_i$, or $f_i$, for $i \in \{1,2,\dotsc,n-1\}$. If $g_i(H) \neq \zero$, then we have
    \begin{align} \label{eq:energy invariance}
        D\bigl( g_i(H) \bigr) = D(H) \qquad \text{and} \qquad D'\bigl( g_i(H) \bigr) = D'(H),
    \end{align}
    i.e., all elements in a classical connected component have the same energy and dual energy.
\end{lemma}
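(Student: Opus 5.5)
We would first prove the statement for the global energy $D$, and then transfer it to $D'$ through the involution $\eta$.

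\emph{Step 1: local energy on two factors.} Take a single pair $a\otimes b\in\KR{(K,L)}$ and show that $W(g_i(a\otimes b)) = W(a\otimes b)$ for $i\in\{1,\dots,n-1\}$ whenever $g_i(a\otimes b)\neq\zero$. We would argue from the Nakayashiki--Yamada rule of \Cref{def:Nakayashiki_yamada}. Acting with $f_i$ moves one dot from box $i$ to box $i+1$ in one of the two columns, and \Cref{rem:pairing_order} lets us choose the pairing order freely. If we first pair the dots away from boxes $i,i+1$, and then handle boxes $i,i+1$ consistently with the bracketing in the signature rule, the moved dot keeps a partner of the same type: a winding partner stays winding and a non-winding partner stays non-winding. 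The reason is that boxes $i$ and $i+1$ are adjacent, so no other box lies strictly between them. This gives the claim for $f_i$, and the claim for $e_i$ follows because $e_i$ is its inverse. This is the classical fact that $W$ is constant on classical components of $\KR{(K,L)}$.

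Alternatively, we can avoid the case check. $\KR{K}\otimes\KR{L}$ is classically multiplicity-free, so $\mathcal R$ is the unique classical isomorphism. The local energy is the unique function, up to a constant, satisfying the standard $e_0$ transition rule while being constant along $e_i$ for $i\neq 0$. The second route amounts to citing \cite{KKMMNN92}.

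\emph{Step 2: global energy.} The map $\sigma_{i+1}\cdots\sigma_{j-1}$ is a composition of combinatorial $R$-matrices acting on tensor factors. By \eqref{eq:sigma_HWT} it is a crystal isomorphism, so it commutes with $g_i$.

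Next we need the tensor product rule on the factors $(i,i+1)$. For a classical $g_k$ applied to a tensor product, the operator acts on exactly one tensor factor. Hence $g_k$ applied to the whole element either acts inside the pair of factors $(i,i+1)$ as $g_k$ on that pair, or it leaves that pair unchanged. In the first case Step 1 shows $W_i$ is unchanged; in the second case $W_i$ is trivially unchanged. So each summand $W_i(\sigma_{i+1}\cdots\sigma_{j-1}(H))$ is invariant under $g_k$, and therefore so is $D$.

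\emph{Step 3: dual energy.} By \Cref{lem:eta_ef} we have $\eta\circ f_i = e_{n-i}\circ\eta$, and $n-i\in\{1,\dots,n-1\}$ whenever $i\in\{1,\dots,n-1\}$. Therefore
\[
D'(f_i H) = D(e_{n-i}\eta H) = D(\eta H) = D'(H),
\]
and the case of $e_i$ is identical.

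The main obstacle is Step 1, namely verifying the invariance of the winding count under classical operators directly from the dot-pairing rule. We would try the pairing-order argument first. If it becomes messy, we would fall back on citing the standard result \cite{HKOTY99,shimozono_affine}.
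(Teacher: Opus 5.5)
Your argument is correct, but it differs from the paper, which gives no proof of this lemma at all: it simply quotes the classical invariance of the energy from \cite{HKOTY99,HKOTT02,HKOTY02,shimozono_affine}. You instead reduce the global statement to a two-factor one.
\begin{itemize}
\item Each summand of $D$ is $W$ evaluated on an adjacent pair of $\sigma_{i+1}\cdots\sigma_{j-1}(H)$.
\item The $\sigma$'s commute with the classical Kashiwara operators.
\item A classical operator on a tensor product either leaves a given contiguous pair unchanged or acts on it exactly as it would on the pair alone.
\item The $D'$ half then follows formally from $\eta\circ f_i=e_{n-i}\circ\eta$ (\Cref{lem:eta_ef}), since $n-i\in\{1,\dots,n-1\}$.
\end{itemize}
This buys something real. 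It isolates the only genuinely combinatorial input, namely classical invariance of the local energy on $\KR{1,K}\otimes\KR{1,L}$. It also supplies the statement for $D'$, which the cited works do not literally contain, since they define only $D$. The citation route is of course shorter.

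There are three points to tighten.

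In Step 2, the ``hence'' needs the block-compatibility of the signature rule. An unmatched bracket cannot be cancelled across, so if the extremal unmatched bracket of the whole word lies in a contiguous block, it is also the extremal unmatched bracket of that block. This is what makes $g_k$ act on the pair as $g_k$ of the pair.

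In Step 1, the dot-pairing sketch is not yet a proof. It does not address the interaction of the moved dot with the dots of boxes $i$ and $i+1$ in the other column, which is exactly where the bracketing must enter. The clean input is Nakayashiki--Yamada's theorem \cite{Nakayashiki_Yamada} that the winding count equals the local energy of \cite{KKMMNN92}. That local energy is classically invariant by construction.

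Finally, apply Step 1 to each pair in the same orientation in which $W_i$ reads it. The paper's notation for $W_i$ is loose here: elements are written $h_\ell\otimes\cdots\otimes h_1$, but $W_i$ is displayed on $h_1\otimes\cdots\otimes h_\ell$. The orientation matters. For $n=2$ and $K=L=1$, the winding count is constant on the classical component $\{1\otimes 1,\,1\otimes 2,\,2\otimes 2\}$ for only one of the two orderings of the factors.
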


The following key property of the energy function is well-known to experts in KR crystals, but has not explicitly appeared in the literature as far as the authors are aware.
\footnote{We thank Masato Okado for the proof. This proof is type-independent and holds for arbitrary tensor products of KR crystals.}

\begin{lemma}
    \label{lem:unique_highest}
    Let $\mu\in \N^{\ell}$.
    The classically connected component of $\KR{\mu}$ having the highest energy $D$ is unique, whose highest weight element and energy are
    \begin{align}\label{eq:b_1}
        H_{\max} := \underbrace{1\cdots1}_{\mu_\ell} \otimes \cdots \otimes \underbrace{1\cdots1}_{\mu_{1}}
        \qquad
        \text{and}
        \qquad
        D(H_{\max}) = \Abs{\mu^+} := \sum_{i=1}^{\ell(\mu)} (i-1) \mu_i^+,
    \end{align}
    respectively.
    Here $\mu^+=(\mu^+_1,\dotsc,\mu^+_\ell)\in\partitions_\ell$ is obtained by sorting the elements in $\mu$ in nonincreasing order. 
    Moreover, $D(H_{\max}) = D'(H_{\max})$.
\end{lemma}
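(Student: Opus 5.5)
The plan is to combine three ingredients: the invariance of $D$ under classical crystal operators (\Cref{lem:energy invariance}), the $R$-matrix invariance of $D$, and the fact that $D$ changes in a controlled way along $0$-arrows. First I reduce to $\mu$ a partition. The action $\sigma$ of \eqref{eq:sigma_HWT} is a crystal isomorphism $\KR{\mu}\to\KR{\mu^+}$, and $D$ is invariant under it, since this is the standard $R$-invariance of the energy. Moreover $\sigma(H_{\max})$ is the all-ones element of $\KR{\mu^+}$: an all-ones pair has no winding pairs and is fixed by the Nakayashiki--Yamada rule.

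Next I compute $D(H_{\max})$ directly from \Cref{def:energy_function}. For an all-ones tensor, each $\sigma_k$ just swaps lengths and keeps all entries equal to $1$. For $1\cdot\cdots1 \otimes 1\cdots1$ with lengths $K\le L$, the Nakayashiki--Yamada rule pairs every dot of the shorter factor with a dot in the same box. Since there is no strictly higher box, every such pair is winding, so $W=\min(K,L)$. The term indexed by $(i,j)$ is therefore $\min(\mu_i,\mu_j)$ after the transport, and summing over $i<j$ gives $\sum_{i<j}\min(\mu_i,\mu_j)=\sum_i (i-1)\mu_i^+$.

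For the upper bound and uniqueness, the cleanest route is locality: $D$ is a sum of local energies $W$ of pairs after $R$-transport. Each $W(a\otimes b)\le \min(|a|,|b|)$, because winding pairs are a subset of all pairs. Hence $D(H)\le \Abs{\mu^+}$ for every $H$, with equality if and only if every transported pair $(i,j)$ is fully winding. I then show that full winding of all adjacent pairs forces the classical component to be that of $H_{\max}$. Take the classical highest weight element of the component; by \Cref{lem:energy invariance} its energy is unchanged. By \Cref{rem:b1_bl} its rightmost factor $h_1$ is all $1$'s. If some other factor contains a letter $>1$, I pick the pair $(1,j)$ transported adjacent to $h_1$. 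A letter $x>1$ in the other factor has a strictly higher dot $1$ available in the partner, which gives a non-winding pairing. So $W<\min$, the bound is strict, and equality forces all entries to be $1$.

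The main obstacle is this last step. The transported factors are no longer the original $h_j$, so I must argue that the rightmost factor stays all-ones under the transport $\sigma_{2}\cdots\sigma_{j-1}$. Alternatively I would induct on $\ell$ using $D(H)=D(h_\ell\otimes\cdots\otimes h_2)+\sum_j W_1(\sigma_2\cdots\sigma_{j-1}H)$ and the highest-weight property. If this gets messy, I would instead use the known description $D$ = (affine grading, i.e.\ $-\#0$-arrows) on a connected affine crystal, where the maximum is uniquely attained on the component of the ground-state path $H_{\max}$. Finally, $D'(H_{\max})=D(\eta(H_{\max}))$. Here $\eta(H_{\max})$ is classically lowest weight in the same component, by \Cref{rem:eta_hw_lw}, since $\eta$ maps the unique top component to itself: it reverses the $e/f$ structure and preserves the set of extremal components by the same inequality argument. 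So $D'(H_{\max})=\Abs{\mu^+}$ by \Cref{lem:energy invariance}.
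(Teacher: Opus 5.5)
The upper bound is correct, and it is a more elementary route than the paper's. The transported factors keep their lengths and $W(a\otimes b)\le\min(|a|,|b|)$, so $D(H)\le\sum_{i<j}\min(\mu_i,\mu_j)=\Abs{\mu^+}$ for every $H$, with equality only if every transported pair is fully winding. Your evaluation of $D(H_{\max})$ is also right.

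\textbf{The gap is in the uniqueness step.} It is not true that a classically highest weight $H\neq H_{\max}$ must have some pair $(h_1,\tilde h_j)$ that fails to be fully winding. The transported partner $\tilde h_j$ can lose all of its letters larger than $1$.

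Take $\mu=(1,2,1)$ with $h_1=1$, $h_2=11$, $h_3=2$. The reading word is $2111$, so $H$ is classically highest weight and $H\neq H_{\max}$.
\begin{itemize}
\item The pair $(h_1,h_2)=(1,11)$ is all-ones, hence fully winding.
\item Since $\mathcal{R}(11\otimes 2)=1\otimes 12$, the image of $h_3$ transported next to $h_1$ is $1$, so $W(1\otimes 1)=1$ is fully winding too.
\end{itemize}
Thus $\sum_{j\ge 2}W_1(\sigma_2\cdots\sigma_{j-1}H)=2$ is already maximal. The deficit $D(H)=2<3=\Abs{\mu^+}$ sits entirely in the pair $(h_2,h_3)$, where $W(11\otimes 2)=0$.

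The obstacle you flag is not the real one. The operators $\sigma_2,\dots,\sigma_{j-1}$ never act on the first factor, so $h_1$ stays all-ones automatically; it is the partner that changes. Your local justification is also incomplete: ``a strictly higher dot $1$ in the partner'' only covers the case where the partner's dots are the ones being paired. When $h_1$ is strictly shorter, it is $h_1$'s dots that get paired, and in the standard Nakayashiki--Yamada rule they look for strictly lower partners. Your fallback (``$D$ is an affine grading with a unique maximum at the ground state'') restates the claim rather than proving it.

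\textbf{How to repair it.} The induction you mention does close the gap.
\begin{itemize}
\item Write $D(H)=D(h_\ell\otimes\cdots\otimes h_2)+\sum_{j\ge2}W_1(\sigma_2\cdots\sigma_{j-1}H)$. Both pieces are bounded by their maxima, so equality forces, by induction, $X=h_\ell\otimes\cdots\otimes h_2$ into the top classical component of its own tensor product.
\item That component consists exactly of the elements whose reading word $h_\ell\cdots h_2$ is weakly increasing. This set contains the all-ones element, and $e_i,f_i$ for $1\le i\le n-1$ preserve weak increase.
\item The highest weight condition on $H$, with $h_1=1^{\mu_1}$, then forces this word to be $1^{m-b}2^{b}$, where $m=|\mu|-\mu_1$ and $b\le\mu_1$.
\item If $b\ge1$, the \emph{untransported} neighbour $h_2$ contains a $2$. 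By the remark above, in either length regime this already gives $W_1(H)<\min(\mu_1,\mu_2)$.
\end{itemize}
In the example, $X=2\otimes 11$ has word $211$ and is not in the top component, which is exactly where the deficit appears.

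With this repair you get a self-contained proof. The paper instead gets the bound and uniqueness together by citing the rigged-configuration formula, which writes $\Abs{\mu^+}-D(H)$ as a nonnegative non-degenerate quadratic form in the string multiplicities plus the riggings.

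\textbf{The $D'$ part.} Your claim that $\eta$ maps the top component to itself ``by the same inequality argument'' is unsupported. Instead compute directly that $\eta(H_{\max})=n^{\mu_\ell}\otimes\cdots\otimes n^{\mu_1}$. This has weight $|\mu|\epsilon_n$, so it lies in the classical component of $H_{\max}$, which is what the paper does.
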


\begin{proof}
    By \cref{lem:energy invariance}, $D$ is constant on each classical connected component. Hence it suffices to consider classically highest weight elements.
    By \cite{KKR86,KR86}, the $\asl_n$-KKR bijection (a generalization of the one presented in \Cref{subs:KKR}) identifies highest weight elements $H \in \KR{\mu}$ with $\asl_n$ rigged configurations $(\boldsymbol{\nu},\boldsymbol{J}) = (\nu^{(a)},J^{(a)})_{a=1}^{n-1}$, where $\nu^{(a)}$ are partitions such that $|\nu^{(a)}| = \wt_{a+1}(H)$ and the riggings $J^{(a)}$ are lists of non-negative integers.
    Under this bijection, the quantity $c(H) = \Abs{\mu^+} - D(H)$,  known as the \emph{cocharge} of $b$, is given by the expression (see, \textit{e.g.},~\cite[Eq.~(4.43)]{Inoue-Kuniba-Takagi2012BBS})
    \[
        c(H) = \sum_{a,b=1}^{n-1} \sum_{i,j \ge 1} C_{a,b}\,\min(i,j)\, m_i(\nu^{(a)}) m_j(\nu^{(b)}) \;+\; \sum_{a=1}^{n-1} \sum_{k\ge 1} J^{(a)}_k,
    \]
    where $C_{a,b} = 2\delta_{a=b} - \delta_{|a-b|=1}$. The right hand side is a non-negative, non-degenerate quadratic form in the multiplicities $m_i(\nu^{(a)})$, plus a non-negative contribution from the riggings.
    It is therefore uniquely minimized when $\boldsymbol{\nu}=\boldsymbol{J}=\varnothing$, corresponding to $H=H_{\max}$, where $c(H_{\max})=0$, yielding the claims.

    To show that $D'(H_{\max}) = D(H_{\max})$, we compute
    \[
        e_1^{\abs{\mu}} \cdots e_n^{\abs{\mu}} \eta(H_{\max}) = e_1^{\abs{\mu}} \cdots e_n^{\abs{\mu}}( \underbrace{n \cdots n}_{\mu_{\ell}} \otimes \cdots \otimes \underbrace{n \cdots n}_{\mu_{1}}) = H_{\max},
    \]
    and hence the claim follows.
\end{proof}

\begin{remark}
    We have that $D' \neq D$ in general.
    Consider the case and $\mu = (1,1,1)$ and $n=3$. Below we report the classically highest weight elements of $\KR{\mu}$, their energy $D$, the image of $\eta$, and $D'$.
    \[
    \begin{array}{c|cccc}
    H_{\hw} & 1 \otimes 1 \otimes 1 & 2 \otimes 1 \otimes 1 & 1 \otimes 2 \otimes 1 & 3 \otimes 2 \otimes 1
    \\[.2em]
    D(H_\hw) & 3 & 2 & 1 & 0
    \\[.2em]
    \eta(H_{\hw}) & 3 \otimes 3 \otimes 3 & 3 \otimes 3 \otimes 2 & 3 \otimes 2 \otimes 3  & 3 \otimes 2 \otimes 1
    \\[.2em]
    D'(H_\hw) & 3 & 1 & 2 & 0
    \end{array}
    \]
\end{remark}

The following result was proven in \cite{HKKOTY99} (where the opposite tensor product convention is used).

\begin{proposition}[{\cite[Prop.~1.1]{HKKOTY99}}] \label{lem:HKKOTY99}
    Let $ H = h_{\ell} \otimes \cdots \otimes h_1$ be an element such that, for all $j \in \{ 2, 3, \dotsc, \ell \}$, if $e_0(h_j \otimes h_1) = e_0 (h_j) \otimes h_1$, then $e_0(\widetilde{h}_1 \otimes \widetilde{h}_j) = e_0 (\widetilde{h}_1) \otimes \widetilde{h}_j$, where $\mathcal{R}(h_j \otimes h_1) = \widetilde{h}_1 \otimes \widetilde{h}_j$.
    Then $D(e_0 (H)) = D(H) - 1$.
\end{proposition}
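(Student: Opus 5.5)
The plan is to reduce everything to a statement about two tensor factors, using that $\mathcal{R}$ is a crystal isomorphism and that $D$ is built from local energies. The first step is the classical \emph{local} rule for the winding number of \Cref{def:Nakayashiki_yamada}. For $a\otimes b\in\KR{(K,L)}$ with $\mathcal{R}(a\otimes b)=b'\otimes a'$ and $e_0(a\otimes b)\neq\zero$, I would show
\[
W\bigl(e_0(a\otimes b)\bigr)=W(a\otimes b)+\begin{cases}-1 & \text{if } e_0 \text{ acts on the left factor of both } a\otimes b \text{ and } b'\otimes a',\\ +1 & \text{if } e_0 \text{ acts on the right factor of both},\\ 0 & \text{otherwise.}\end{cases}
\]
The signs are to be fixed against \Cref{ex:NYrule}. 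I would verify this directly from the dot-pairing picture. By \Cref{rem:promotion}, $e_0=\pr^{-1}e_1\pr$, so it suffices to understand how winding pairs change when one dot moves between box $1$ and box $n$ in a single column. By \Cref{rem:pairing_order} the pairing may be processed in any convenient order, so one can process the moved dot last and read off the change in the number of winding pairs case by case.

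The second step is an induction on $\ell$, based on a decomposition of $D$ that isolates $h_1$. Regrouping the sum in \Cref{def:energy_function}, I would write
\[
D(H)=D(h_\ell\otimes\cdots\otimes h_2)+\sum_{j=2}^{\ell}W\bigl(\text{pair formed by } h_1 \text{ and the transported } h_j\bigr).
\]
By the Yang--Baxter equation \eqref{eq:YBE} and \Cref{rem:pairing_order}, each pair term equals the local energy of a two-fold product $h_j^{\ast}\otimes h_1$, where $h_j^{\ast}$ is obtained from $h_j$ by $R$-moves that do not involve $h_1$. Since every $\sigma_i$ commutes with $e_0$, applying $e_0$ to $H$ corresponds to applying $e_0$ to each intermediate tensor product. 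By the signature rule, $e_0$ acts on exactly one tensor factor at each stage.

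The third step does the bookkeeping. Using the signature rule for the full word, I would show that $e_0$ acting on $H$ either acts on $h_1$ or acts inside $h_\ell\otimes\cdots\otimes h_2$.
\begin{itemize}
\item In the second case, induction gives a change of $-1$ in $D(h_\ell\otimes\cdots\otimes h_2)$. I then need every pair term to be unchanged. The hypothesis of the proposition is exactly what excludes the case ``acts on the left in both $h_j\otimes h_1$ and $\widetilde h_1\otimes\widetilde h_j$''. Here I also need to check that the hypothesis is inherited by the shorter tensor product, which should follow from \eqref{eq:YBE}.
\item In the first case, the pair terms must contribute exactly $-1$ in total.
\end{itemize}

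I expect the main obstacle to be this last matching: relating where $e_0$ acts on the long word to where it acts on each two-fold product $h_j^{\ast}\otimes h_1$. Bracket cancellations in the signature rule are not local, so a factor can ``win'' the $e_0$ action globally without winning it pairwise. I would first try to prove a monotonicity statement: the position of the uncancelled ``('' tracked through successive $R$-moves only moves in one direction. This should show that exactly one pair changes its local energy, by $-1$.
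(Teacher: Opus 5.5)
You have the hypothesis backwards, and this breaks the bookkeeping in your third step; your local rule in the first step is fine, signs included. The hypothesis says that whenever $e_0$ acts on $h_j$ in $h_j\otimes h_1$, it also acts on the left factor of $\mathcal{R}(h_j\otimes h_1)=\widetilde h_1\otimes\widetilde h_j$. It therefore forbids only the mixed case (left, then right). It \emph{forces} the ``left in both'' case, which by your own rule lowers the local energy by one, so that case is the source of the $-1$ rather than something to exclude.

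Already for $\ell=2$ your second case gives the wrong total. There $D(h_2)\equiv 0$, so ``$-1$ from induction, pair term unchanged'' yields a change of $0$. The correct argument is simply that $e_0$ acts on $h_2$, the hypothesis forces left in both, and $W(h_2\otimes h_1)$ drops by one.

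Your first case cannot be proved at all. If $e_0$ acts on $h_1$, then $D(h_\ell\otimes\cdots\otimes h_2)$ is unchanged. The $R$-moves producing $h_j^{\ast}$ do not touch $h_1$, so $e_0$ acts on the right factor of every $h_j^{\ast}\otimes h_1$, and each pair term changes by $0$ or $+1$. Hence $D$ weakly increases. The statement as written in the paper is in fact false in this case: for $n=2$, $\mu=(1,1)$, $H=2\otimes 1$, the hypothesis is vacuous and $e_0(H)=2\otimes 2$, but $D(2\otimes 1)=W(2\otimes 1)=0$ while $D(2\otimes 2)=W(2\otimes 2)=1$. So the proposition can only hold under the extra assumption that $e_0$ acts on some $h_m$ with $m\ge 2$. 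This is satisfied where the paper uses it: in the proof of \Cref{th:leading_f0}, the factor changed by $e_0$ is never $h_1$.

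Even with $m\ge 2$, the induction through $H'=h_\ell\otimes\cdots\otimes h_2$ does not close. If $m=2$, $e_0$ acts on the rightmost factor of $H'$, so by the argument above $D(H')$ weakly \emph{increases}. Also, the hypothesis for $H'$ concerns the pairs $(h_j,h_2)$, and nothing about pairs with $h_1$ transfers to those, by Yang--Baxter or otherwise. Concretely, take $n=3$ and $H=2\otimes 1\otimes 1$: the hypothesis holds and $e_0(H)=2\otimes 3\otimes 1$. Here $D(H')$ goes from $W(2\otimes 1)=0$ to $W(2\otimes 3)=1$, while both pair terms drop from $W(1\otimes 1)=1$ to $W(3\otimes 1)=0$. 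So neither ``all pair terms unchanged'' nor ``exactly one pair changes'' is true. A further unaddressed mismatch: your pair terms involve the transported $h_j^{\ast}$, whereas the hypothesis is about the untransported $h_j\otimes h_1$.

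What works is a global count rather than an induction. The paper does not prove the proposition (it is quoted from \cite{HKKOTY99}), but this is how it argues in \Cref{sec:proof+1} for the special case it needs. In the scattering diagram of \Cref{lem:D_winding}, follow through all crossings the strand carrying the factor on which $e_0$ acts; this is well defined because each crossing is a crystal isomorphism. By the local rule, a crossing entered from the top and left to the right contributes $-1$. A crossing entered from the left and left downward contributes $+1$, and straight passages contribute $0$. The strand enters vertically at $h_m$ and exits horizontally, so $D(e_0H)-D(H)=(\#\text{bends of single lines traversed by the strand})-1$, where a bend is a corner at which no $R$-matrix acts. In particular $D(e_0H)\ge D(H)-1$ always. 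For $m=1$ the count of bends is at least one, since the line of $h_1$ bends before any crossing. For $m\ge 2$, the hypotheses must be used precisely to keep the strand away from all bends.
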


\subsection{Regular subgraphs and connectivity}
\label{subsec:leadingHWT}

In this subsection, we introduce a distinguished subgraph of $\KR{\mu}$ obtained by selecting the $f_0$ arrows that increase the energy by one unit, and study its structural properties.

\begin{definition}[Regular subgraph] \label{def:reg_subgraph}
    Let $H \in \KR{\mu}$ and assume that $f_0(H) \neq \zero$. We say that $H \to f_0(H)$ is a \defn{regular 0-arrow} if 
    \[
        D\bigl(f_0(H)\bigr) = D(H) + 1.
    \]
    The \defn{regular subgraph} of $\HWT(\mu,n)$ is the subgraph obtained by removing all 0-arrows that are not regular. More generally, $H \to g(H)$ is a \defn{regular arrow} if it is either a regular 0-arrow or if $g \in \{ e_1,f_1,\dots,e_{n-1},f_{n-1} \}$.
\end{definition}

\begin{remark}
    The regular subgraph of $\KR{\mu}$ is \emph{graded} by the energy function $D$. In particular along any path in the regular subgraph of $\KR{\mu}$
    \[
        H' = g_{1} \cdots g_{k}(H),
    \]
    we have
    \[
        D(H') = D(H) + \#\{ j \mid g_{j} = f_0 \} - \#\{ j \mid g_{j} = e_0 \}.
    \]
\end{remark}

We now state the main result of this section. 
For $\mu \in \N^{\ell}$, we define $\ell(\mu) := \abs{\{i = 1, \dotsc, \ell \mid \mu_i > 0\}}$ as the number of nonzero entries in $\mu$.

\begin{theorem} \label{thm:connectedness}
    Let $\mu \in \N^{\ell}$.
    If $n\ge \ell(\mu)$, the regular subgraph of the $\asl_n$-crystal $\KR{\mu}$ is connected.
\end{theorem}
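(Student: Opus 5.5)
Since the classical arrows $e_i,f_i$ ($i\neq 0$) are all regular, each classically connected component lies inside a single connected piece of the regular subgraph. By \Cref{lem:energy invariance} the energy $D$ is constant on each component. It therefore suffices to show that every classically connected component can be joined, through regular $0$-arrows, to the unique component of maximal energy $\Abs{\mu^+}$ containing $H_{\max}$ (\Cref{lem:unique_highest}). I will argue by descending induction on $D$. The claim is that every classically highest weight element $H\neq H_{\max}$ lies in a component that contains some element $H'$ with $f_0(H')\neq\zero$ and $D(f_0(H'))=D(H')+1$. Iterating then reaches the top component in $\Abs{\mu^+}-D(H)$ regular steps.

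To produce such an $H'$, I would move within the classical component to a convenient representative, typically the classically lowest weight element $H_\lw$. By \Cref{rem:b1_bl}, its leftmost factor $h_\ell$ consists only of $n$'s, so $f_0$ has letters to act on. I would then control the energy change via the dual form of \Cref{lem:HKKOTY99}. Conjugating by $\eta$ (\Cref{lem:eta_ef} interchanges $f_0$ and $e_0$ and maps lowest to highest weight elements), the question becomes: for $\eta(H_\lw)$, classically highest weight, find an element in its component where $e_0$ lowers the energy by exactly one. \Cref{lem:HKKOTY99} gives a sufficient criterion in terms of where $e_0$ acts on the pairs $h_j\otimes h_1$ and on their $R$-matrix images. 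Before using this I would need to verify that $\eta$ intertwines $D$ suitably up to the sign reversal; $D'$ is defined exactly so that this is the case. Alternatively, I can use $\sigma\in S_\ell$ via \eqref{eq:sigma_HWT}, which commutes with all $e_i,f_i$ and preserves $D$, to reorder the factors so that the factor where $e_0$ acts sits in the distinguished position $h_1$ required by \Cref{lem:HKKOTY99}.

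The key combinatorial step is to show that, whenever $H$ is classically highest weight and $H\neq H_{\max}$, one can choose a classical element $H'$ in its component and an ordering of the factors so that the hypothesis of \Cref{lem:HKKOTY99} holds. The hypothesis $n\ge \ell(\mu)$ enters here. Since a highest weight element has letter $i$ only in factors $h_j$ with $j\ge i$, having $n\ge\ell$ guarantees that some letter of $\{1,\dots,n\}$ is absent, or at least "free." Applying $f_{i}$'s along a chain, the element can be rotated so that the letter $n$ appears only in one chosen factor (first placed at $h_1$ by an $R$-matrix reordering). For that factor the Nakayashiki--Yamada pairing in \Cref{def:Nakayashiki_yamada} shows that $e_0$ acting on $h_j\otimes h_1$ and on $\widetilde h_1\otimes\widetilde h_j$ act on corresponding factors. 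Zero entries of $\mu$ are irrelevant, so $\ell$ can be replaced by $\ell(\mu)$.

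I expect this step to be the main obstacle: the explicit check, through the winding pairs of \Cref{def:Nakayashiki_yamada}, that the $e_0$ action is compatible with every $R$-matrix $h_j\otimes h_1$ simultaneously. As \Cref{fig:crystal graphs} shows ($n=2<\ell=3$), the statement fails without $n\ge\ell(\mu)$, so the counting of available letters must be used sharply. I would first try the case where $H$ is "almost maximal," meaning all factors are $1$'s except a single letter. I would then reduce the general case to this one by induction on $\ell$, restricting to the subtensor $h_\ell\otimes\cdots\otimes h_2$ and using that $D$ decomposes as the energy of the subtensor plus the winding numbers with $h_1$ transported through the $R$-matrices.
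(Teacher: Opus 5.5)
Your reduction matches the paper's. Classical arrows are regular, $D$ is constant on classical components (\Cref{lem:energy invariance}), and the top component is unique (\Cref{lem:unique_highest}). So it suffices to exhibit, in every other classical component, an element $H'$ with $f_0(H')\neq\zero$ and $D(f_0(H'))=D(H')+1$. That existence claim is the whole content of the theorem, however, and your proposal never establishes it.

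The one concrete candidate you name does not work. For $\mu=(1,1,1)$ and $n=3$, the classical component of $2\otimes1\otimes1$ has $D=2$ and lowest weight element $H_\lw=3\otimes2\otimes3$. The signature rule gives $f_0(3\otimes2\otimes3)=3\otimes2\otimes1$, which has $D=0$, so $f_0(H_\lw)$ is not a regular arrow.

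The missing idea is the paper's representative $\overline{H_\hw}=\pr^{\,n-\max(H_\hw)}(H_\hw)$ (\Cref{def:hwshift}), which lies in the component of $H_\hw$ by \Cref{lem:classical_bar_component}. Since $\max(h_j)\le j$ in a highest weight element, $n\ge\ell(\mu)$ leaves two cases.
\begin{itemize}
\item If $\max(H_\hw)<n$, then $\overline{H_\hw}$ has no letter $1$. The operator $f_0$ creates the unique $1$ of $H=f_0(\overline{H_\hw})$ in the rightmost factor $h_k$ containing an $n$, with $k\ge2$, and $h_1$ contains no $n$ (\Cref{lem:f0barb}).
\item Otherwise $\ell(\mu)=n=\max(H_\hw)$, so $\overline{H_\hw}=H_\hw$ and $h_1$ is a row of $1$'s.
\end{itemize}
In both cases the hypothesis of \Cref{lem:HKKOTY99} is checked pair by pair with the Nakayashiki--Yamada rule. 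For instance, in the first case: for $j\neq k$ no $1$ occurs on either side, so $e_0$ vanishes on both; for $j=k$ the unique $1$ lands in $\widetilde h_1$ while $\widetilde h_k$ has no $n$. This yields $D(e_0(H))=D(H)-1$, which is \Cref{th:leading_f0}. In the example above it gives $3\otimes2\otimes2\mapsto1\otimes2\otimes2$, which lies in the top component.

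Your auxiliary steps also misfire.
\begin{itemize}
\item Conjugation by $\eta$ does not turn ``$f_0$ raises $D$'' into ``$e_0$ lowers $D$''. By \Cref{lem:eta_ef} and $D'=D\circ\eta$, the condition $D(f_0(H'))=D(H')+1$ becomes $D'(e_0(\eta(H')))=D'(\eta(H'))+1$, a statement about the dual energy. No identity of the form $D\circ\eta=c-D$ holds: already $D'(H_{\max})=D(H_{\max})$, and $D'\neq D$ in general.
\item No conjugation is needed anyway. The condition $D(f_0(H'))=D(H')+1$ is literally $D(e_0(H''))=D(H'')-1$ for $H''=f_0(H')$, which is exactly the conclusion of \Cref{lem:HKKOTY99}.
\item That lemma does not require $e_0$ to act on $h_1$. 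There $h_1$ is a fixed reference factor, and in the application $e_0$ acts on $h_k$ with $k\ge2$. What makes the hypothesis checkable is that the whole element has a single letter $1$ and $h_1$ has no $n$.
\item Your plan to rotate the $n$'s into one factor moved to position $1$ is not carried out. Neither is the induction on $\ell$ via ``almost maximal'' elements, and neither obviously produces such a configuration.
\end{itemize}
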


\begin{remark}
    While the crystal graph $\KR{\mu}$ is strongly connected, as pointed out in \Cref{rem:KR_connected}, the same is not true in general for its regular subgraph, making the result of \Cref{thm:connectedness} nontrivial. For instance, in \Cref{fig:crystal graphs} right panel, we see that the regular subgraph of $\KR{\mu}$ with $\mu=(2,1,1)$ and $n=2$ is not connected.
\end{remark}

\begin{remark}
    A similar result to \Cref{thm:connectedness} was proved in~\cite[Thm.~6.1]{schilling_tingley} (see also~\cite{Fourier_Schilling_Shimozono_demazure}), where the connectedness was shown for the Demazure subgraph of tensor products of ``single column'' KR crystals (as opposed to single rows that we are considering in the present paper) using properties not present in our case. 
\end{remark}

\begin{remark}
    We have observed in \Cref{fig:crystal graphs} right panel, that the regular subgraph of $\KR{\mu}$ could be disconnected if $n< \ell(\mu)$. We do not expect this to be a general pathology and in fact conjecture that regular subgraph of $\KR{\mu}$ can only be disconnected for $n=2$ and $\ell(\mu)>2$. This should be related to $\asl_2$ not being simply-laced (see \Cref{conj:general_energy_change_one} below).
\end{remark}

To prove \Cref{thm:connectedness} we construct an explicit path in the regular subgraph from the arbitrary element $H\in \KR{\mu}$ to $H_{\max}$.
The main tools for its definition are given next.

\begin{definition}\label{def:hwshift}
    Define the map $\overline{\mathcal{P}}\colon \KR{\mu}_{\mathrm{hw}} \to \KR{\mu}$ such that for $H_{\hw}\in \KR{\mu}_{\mathrm{hw}}$
    \[
        \overline{\mathcal{P}} (H_{\hw}) = \pr^{M}(H_{\hw}) \qquad \text{ with } \qquad M = n-\max(H_\hw),
    \]
    where $\max(H)$ is the largest entry in $H$. For brevity we will denote
    \[
        \overline{H_{\hw}} =\overline{\mathcal{P}} (H_{\hw}).
    \]
    In words $\overline{H_{\hw}}$ is the element obtained by adding $M$ to each entry of $H_\hw$.
\end{definition}

Next, we note that the elements $H_\hw, \overline{H_{\hw}}$ belong to the same classically connected component.

\begin{lemma}\label{lem:classical_bar_component}
    Let $\mu \in \N^\ell$. For any $H_\hw \in \KR{\mu}_{\hw}$, let $M=\max(H_\hw)$. Then, we have 
    \begin{align}
        \label{eq:b_barb}
        \overline{H_{\hw}} = g_1g_2 \cdots g_{M}(H_{\hw}),
    \end{align}
    where for each $j = 1,\dotsc,M$, we set $g_{j} = f^{\wt_j(H_{\hw})}_{j+n-M-1}\cdots f_{j+1}^{\wt_j(H_{\hw})}f_j^{\wt_j(H_{\hw})}$.
\end{lemma}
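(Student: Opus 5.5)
We prove \eqref{eq:b_barb} by direct computation with the signature rule of \Cref{def:kashiwara_HWT}, applying the factors $g_M, g_{M-1}, \dots, g_1$ in this order, since $g_M$ acts first. Write $N = n-M$. Throughout we only use operators $f_i$ with $j \le i \le j+N-1$ and $1\le j\le M$, so $1 \le i \le n-1$; hence only classical operators appear. This already shows that $H_\hw$ and $\overline{H_\hw}$ lie in the same classically connected component. If $N=0$, every $g_j$ is empty and $\overline{H_\hw}=H_\hw$, so there is nothing to prove.

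We argue by downward induction on $j$, with the following hypothesis. Before $g_j$ is applied, the current element is obtained from $H_\hw$ by replacing every original entry $j'>j$ by $j'+N$ and leaving every entry $\le j$ unchanged. Each row stays weakly increasing, because the relabelling is order preserving on $\{1,\dots,M\}$. We now apply $g_j = f_{j+N-1}^{w}\cdots f_j^{w}$ with $w=\wt_j(H_\hw)$, one block $f_k^{w}$ at a time for $k=j, j+1,\dots,j+N-1$. The claim is that $f_k^{w}$ turns all current letters equal to $k$, which are exactly the $w$ letters originally equal to $j$, into $k+1$.

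To verify this claim, observe what is present when $f_k$ acts. The letters originally $<j$ are still $<j\le k$. The letters originally $j$ currently equal $k$. The letters originally $>j$ now have values $\ge j+1+N > k+1$, because $k\le j+N-1$. So the word contains no letter $k+1$: no ``$($'' occurs, and all $w$ symbols ``$)$'' coming from the letters $k$ remain unbracketed. Each application of $f_k$ changes the rightmost unbracketed $k$ into $k+1$. After this change there is still no other $k+1$ that could bracket a remaining $k$ lying to its left, since a ``$($'' cancels only a ``$)$'' to its right. Hence $f_k^{w}$ is defined, never returns $\zero$, and converts all $w$ of these letters. Sortedness inside rows is preserved, because no other letter has value in $\{k, k+1\}$.

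After $g_j$ the original $j$'s equal $j+N$, which re-establishes the induction hypothesis for $j-1$. At the end every entry has been increased by $N=n-M$, which is precisely $\pr^{N}(H_\hw)=\overline{H_\hw}$ from \Cref{def:hwshift}; cyclic wrap-around never occurs since all entries are $\le M$. The only delicate point is the bracketing check above, namely that larger letters have already been pushed strictly beyond $k+1$. This is exactly why the factors must be processed from $g_M$ down to $g_1$.
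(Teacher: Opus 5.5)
Your proof is correct and follows the same route as the paper: downward induction on $j$, where each block $f_k^{w}$ acts on an element containing exactly $w$ letters $k$ and no letter $k+1$. Your version additionally spells out why the successive applications of $f_k$ are never blocked by bracketing, why only classical operators appear, and why $\mathrm{pr}^{N}$ involves no wrap-around, all of which the paper leaves implicit.
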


\begin{proof}
A direct inspection of the map $g_M = f^{\wt_M(H_{\hw})}_{n-1} \cdots f_M^{\wt_M(H_{\hw})}$ shows that its action on $H_\hw$ raises every $M$ entry to an $n$ entry.
Each application of the Kashiwara operators $f^{\wt_M(H_{\hw})}_k$, for $k=M, \dotsc,n-1$ is non-vanishing since the element $f^{\wt_M(H_{\hw})}_{k-1} \cdots f^{\wt_M(H_{\hw})}_M(H_\hw)$ has exactly $\wt_M(H_{\hw})$ $k$ entries and no $k+1$ entry.
By downwards induction on $j = M, \ldots, 1$, the map $g_j$ turns every $j$ entry in the element $g_{j+1} \dots g_M(H_\hw)$ into a $j+n-M$ entry, proving \eqref{eq:b_barb}.
\end{proof}

\begin{example}\label{ex:barb}
For $\mu=(4,3,3,2,1)$ and $n=7$, we consider $H_{\hw}=2\otimes11\otimes123\otimes122\otimes 1111$, and so $\max(H_\hw) = 3$. 
Adding $4 = 7 - 3$ to each entry we get
\[
    \overline{b_{\hw}}=6\otimes 55\otimes 567\otimes 566 \otimes 5555 = (f_4^8 f_3^8 f_2^8 f_1^8) (f_5^4 f_4^4 f_3^4 f_2^4) (f_6 f_5 f_4 f_3) b_{\hw}.
\]
\end{example}

In addition, we can show that $f_0(\overline{H_{\hw}}) \neq \zero$, when $n\ge \ell(\mu)$.

\begin{lemma}
  \label{lem:f0barb}
  Fix $\mu \in \N^\ell$ and suppose $2 \le \ell \leq n$. 
  For any $H_{\hw}\in \KR{\mu}_{\mathrm{hw}}$, we write $\overline{H_{\hw}}=\bar{h}_\ell\otimes \bar{h}_{\ell-1}\otimes\cdots\otimes \bar{h}_1$.
  Then, calling $k$ is the smallest index such that $\overline{h}_k$ contains an $n$, we have
  \begin{align}
    \label{eq:f0bb_rep}
        f_0(\overline{H_{\hw}})=\bar{h}_{\ell}\otimes\cdots\otimes \bar{h}_{k+1}\otimes f_0(\bar{h}_k) \otimes\bar{h}_{k-1}\otimes\cdots\otimes\bar{h}_1.
  \end{align}
\end{lemma}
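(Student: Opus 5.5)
The plan is to apply the signature rule of \Cref{def:kashiwara_HWT} for $i=0$ directly. In that case every letter $n$ becomes ``$)$'', every letter $1$ becomes ``$($'', and the word is $w=\tilde{\bar h}_\ell\cdots\tilde{\bar h}_1$, with each row reversed. The lemma then reduces to two claims: every ``$)$'' in $w$ survives the cancellation, and the rightmost surviving ``$)$'' lies in $\bar h_k$. Once both hold, step (4f) changes one $n$ of $\bar h_k$ into a $1$ and leaves all other factors untouched. After re-sorting the row, this is exactly $f_0(\bar h_k)$ computed in the single KR crystal $\KR{1,\mu_k}$, which is \eqref{eq:f0bb_rep}.

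First I will check that an $n$ occurs at all, so that $k$ is well defined. $\overline{H_\hw}$ is $H_\hw$ with $M=n-\max(H_\hw)$ added to every entry, so the maximal entry $\max(H_\hw)$ becomes $n$. By \Cref{rem:b1_bl}, in a classically highest weight element the letter $i$ can occur only in $h_j$ with $j\ge i$. Hence $\max(H_\hw)\le \ell\le n$, so $M\ge 0$.

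I then split into two cases.

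\emph{Case $\max(H_\hw)<n$.} Then $M\ge1$ and every entry of $\overline{H_\hw}$ lies in $\{M+1,\dots,n\}$, so $w$ contains no letter $1$ and hence no ``$($''. Nothing cancels, and all ``$)$'' are unmatched. The rightmost one in $w$ comes from the factor containing an $n$ that is read last. Since $w$ reads $h_\ell,\dots,h_1$ from left to right, this factor has the smallest index, namely $k$.

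\emph{Case $\max(H_\hw)=n$.} Then $M=0$, so $\overline{H_\hw}=H_\hw$. The inequalities above force $\ell=n$, and by \Cref{rem:b1_bl} the letter $n$ occurs only in $h_\ell$, so $k=\ell$. The reversed row $\tilde h_\ell$ begins with its block of $n$'s, so every ``$)$'' sits at the very left of $w$, with no ``$($'' to its left. Since cancellation only removes a ``$($'' followed by a ``$)$'', all ``$)$'' survive. The rightmost of them lies in $h_\ell=h_k$, as claimed.

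The only delicate step is matching the conventions: the reversal of each row when $i=0$, the left-to-right order $h_\ell,\dots,h_1$ in $w$, and the fact that cancellation only pairs a ``$($'' with a ``$)$'' to its right. Once these are fixed, both cases follow immediately from the constraints of \Cref{rem:b1_bl}.
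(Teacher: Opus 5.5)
Your proposal is correct and is essentially the paper's argument: apply the signature rule for $f_0$, use the highest-weight constraint from \Cref{rem:b1_bl} (the letter $i$ occurs only in $h_j$ with $j\ge i$, so $\max(H_\hw)\le \ell\le n$), and split into two cases. Your split on whether $\max(H_\hw)<n$, rather than the paper's split on whether $\ell<n$, is slightly cleaner. It explicitly covers the case $\ell=n$ with no $n$ in $H_\hw$, which the paper's proof leaves implicit.
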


\begin{proof}
    From \Cref{def:hwshift}, it is clear that there exists at least one $n$ entry in $\overline{H_{\hw}}$.
    Then, by the signature rule defining $f_0$ (\Cref{def:kashiwara_HWT}), the claim reduces to showing that no $1$ entry appears in any $\bar{h}_j$ for $j>k$ (see also~\Cref{rem:b1_bl}).
    Let $H_\hw =h_\ell \otimes \cdots \otimes h_1$.
    Then, the highest weight condition and a straightforward induction over $j$ imply that $\max(h_j) \le j$. Therefore, if $\ell < n$, all entries in $\overline{H_{\hw}}$ are strictly greater than $1$ and the claim follows.
    For the case $\ell = n$, if there is an $n$ entry it must occur in $h_n$ and in such case we have $\overline{H_{\hw}} = H_{\hw}$, $k=n$ and the claim follows.
\end{proof}

\begin{example}
    \label{ex:f0barb}
    We act $f_0$ on $\overline{H_{\hw}}=6\otimes 55\otimes 567\otimes 566 \otimes 5555$ in \cref{ex:barb}, which applying the algorithm, we easily find $f_0(\overline{H_{\hw}})=6\otimes 55\otimes 156\otimes 566\otimes 5555$.
\end{example}

\begin{example}
Note that the conclusion of Lemma~\ref{lem:f0barb} does not hold for all elements $\overline{H_{\hw}}$ in Figure~\ref{fig:crystal graphs}, where the condition $\ell(\mu)\le n$ fails.
Indeed, consider the element $H_{\hw} = \overline{H_{\hw}} = 1 \otimes 2 \otimes 11$ as $n = \max(H_\hw) = 2$, but we have $f_0 (\overline{H_{\hw}}) = \zero$.
\end{example}

The next proposition states that $\overline{H_{\hw}} \to f_0(\overline{H_{\hw}})$ is a regular 0-arrow whenever $\mu$ is a partition, $\ell(\mu) \le n$ and $H_{\hw} \neq H_{\max}$.

\begin{proposition}\label{th:leading_f0}
    Let $\mu \in \N^{\ell}$ such that $2\le \ell(\mu)\le n$.
    Let $H_{\hw}\in \KR{\mu}_{\mathrm{hw}}\setminus\{H_{\max}\}$, and recall the element $\overline{H_{\hw}}$ in $\KR{\mu}$ of \cref{def:hwshift}.
    Then, we have
    \begin{align}\label{eq:leading_f0}
        D\bigl( f_0(\overline{H_{\hw}}) \bigr) = D(\overline{H_{\hw}}) + 1.
    \end{align}
\end{proposition}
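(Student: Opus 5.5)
The plan is to reduce \eqref{eq:leading_f0} to the criterion of \Cref{lem:HKKOTY99}, transported from $e_0$ to $f_0$ through the Lusztig-type involution $\eta$. Set $G = f_0(\overline{H_{\hw}})$, so that $\overline{H_{\hw}} = e_0(G)$ and the claim is equivalent to $D(e_0(G)) = D(G)-1$.

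\Cref{lem:HKKOTY99} singles out the \emph{rightmost} factor $h_1$ and asks for a compatibility between $e_0$ acting on $h_j\otimes h_1$ and on its $R$-matrix image. So I first want to bring the distinguished factor $\bar h_k$ of \Cref{lem:f0barb} into position $1$. I will do this with the crystal isomorphism $\sigma=\sigma_1\cdots\sigma_{k-1}$, which commutes with $e_0,f_0$. This reordering changes $D$ only by a quantity independent of the element within the relevant comparison; more precisely, I will use the standard fact that $D\circ\sigma - D$ is constant on $\KR{\mu}$, because $D$ is determined up to a constant by its behaviour under $e_0$.

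The factor $\bar h_k$ moves to the last position after scattering through $\bar h_{k-1},\dots,\bar h_1$. Here I will use the structure from the proof of \Cref{lem:f0barb}: every entry of $\overline{H_{\hw}}$ is at least $n-\max(H_{\hw})+1$, and $\max(h_j)\le j$. These constraints let me run the Nakayashiki--Yamada rule explicitly. For single-row factors whose entries all lie in the top part of the alphabet $\{M',\dots,n\}$, the rule reduces to pairing dots among large letters. In particular:
\begin{itemize}
\item the $n$'s of the factor carrying the rightmost unbracketed ``$)$'' for $f_0$ end up in the first tensor position after scattering;
\item no $1$'s are created, so $f_0$ acts on that factor alone both before and after each $R$-move.
\end{itemize}

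Next I verify the hypothesis of \Cref{lem:HKKOTY99} for the element $G$ (after reordering). For each $j$ I must check that if $e_0$ acts on the left factor of $g_j\otimes g_1$, then it also acts on the left factor of the $R$-image. The factor $g_1$ now contains the newly created $1$ from $f_0(\bar h_k)$, and the other factors contain no $1$'s. So $e_0$ on $g_j\otimes g_1$ acts on the left factor only when $g_j$ contains a $1$ and $g_1$'s $1$ is bracketed. Since only $g_1$ contains $1$'s, this case is nearly vacuous, and it reduces to a two-factor computation: $e_0$ on $g_j\otimes g_1$ acts on $g_1$ unless a $n$ in $g_j$ brackets it.

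The delicate case is when $g_j$ contains $n$'s. Then I must show that the $R$-image $\tilde g_1\otimes\tilde g_j$ still has $e_0$ acting on $\tilde g_1$. I expect this to follow from the winding-pair description: the single $1$ sits at the top of the dot column, and the pairing rule forces it to wind unless the partner column has enough $n$'s.

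To handle the direction of the lemma, I will apply $\eta$ via \Cref{lem:eta_ef}. This converts the $e_0$-statement for $D$ into an $f_0$-statement for $D'$; combined with \Cref{lem:unique_highest}, it pins down the sign. Note that $D'(H_{\max})=D(H_{\max})$ and the energies are graded.

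The hypothesis $H_{\hw}\neq H_{\max}$ enters to guarantee $M'=n-\max(H_{\hw})+1\ge 2$ when $\ell<n$, or $k=n$ in the boundary case. This ensures there is some letter strictly between $1$ and $n$, which prevents the degenerate bracketing seen in the $n=2$ counterexample.

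The main obstacle I expect is the case analysis of the two-factor $R$-matrix computation in the hypothesis of \Cref{lem:HKKOTY99}, especially when $\ell(\mu)=n$ and $\bar h_k=h_n$ contains both $n$'s and smaller letters that could pair with the created $1$. There I would first try a direct dot-pairing argument using $\max(h_j)\le j$. If that fails, I would fall back to computing $D$ directly through the $\asl_n$-KKR cocharge formula used in \Cref{lem:unique_highest}.
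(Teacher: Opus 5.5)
\paragraph{There is a genuine gap, at the step you treat as preparatory: moving the modified factor into the rightmost slot.}

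First, the $R$-matrix does not carry the new $1$ along. Suppose a factor containing the $1$ scatters past factors with no $n$. The Nakayashiki--Yamada rule then keeps the $1$ in the \emph{left} output factor (compare C.2 of \Cref{lem:a'ad'd}). For instance, take $n=3$, $\mu=(2,1)$, $H_{\hw}=2\otimes 11$. Then $G=f_0(3\otimes 22)=1\otimes 22$, while $\mathcal{R}(1\otimes 22)=12\otimes 2$. For equal lengths $\mathcal{R}$ is simply the identity. So the configuration ``only $g_1$ contains $1$'s'', on which your whole verification rests, does not arise.

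Second, even if it did arise, it is the wrong configuration for \Cref{lem:HKKOTY99}. If $e_0$ acts on the rightmost factor and no other factor has a $1$, the implication in the lemma's hypothesis is vacuous for every $j$. In that regime the energy need not drop. Take $n=2$, $\mu=(1,1)$, $H=2\otimes 1$: then $e_0(H)=2\otimes 2$, and $D$ goes from $0$ to $1$. Your bracketing discussion is also off. A $1$ in $g_1$ sits at the very end of the $0$-signature word, so nothing in $g_j$ can bracket it.

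The $\eta$-transport step is not needed either. Since $e_0(G)=\overline{H_{\hw}}$, the lemma already gives exactly the claim about $D$. Conjugating by $\eta$ would instead produce a statement about $D'$.

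\paragraph{What the paper does instead.}

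The paper applies \Cref{lem:HKKOTY99} to $G=f_0(\overline{H_{\hw}})$ as it stands, with no reordering. The hypothesis $H_{\hw}\neq H_{\max}$ is used to get $k>1$, not to guarantee an intermediate letter. Indeed, $\bar h_1$ consists of copies of $n-\max(H_{\hw})+1<n$. Hence $h_1$ contains no $n$, and $e_0$ acts on $h_k$, a factor other than the rightmost one. Only then does the lemma's condition have content.

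\emph{Case $\max(H_{\hw})<n$.} For $j\neq k$, no relevant factor contains a $1$, so both sides of the condition vanish. For $j=k$, the Nakayashiki--Yamada rule puts the unique $1$ into $\tilde h_1$ and leaves $\tilde h_k$ without $n$. Hence $e_0$ acts on $\tilde h_1$.

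\emph{Case $\max(H_{\hw})=n$.} Here $\ell(\mu)=n$, $\overline{H_{\hw}}=H_{\hw}$, and $h_1=1\cdots 1$. Whenever $h_j$ contains a $1$, $\tilde h_1$ is $h_j$ padded with $1$'s and $\tilde h_j$ consists only of $1$'s. So again $e_0$ acts on $\tilde h_1$.

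The appendix gives a second, self-contained proof. It tracks the changed entry through the corner-transfer-matrix diagram using \Cref{lem:a'ad'd} and \Cref{lem:b'b}.
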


\begin{proof}
    We first consider the case $\max(H_\hw) < n$, where we have $\ell=\ell(\mu)<n$. By \Cref{lem:f0barb}, $H = f_0 (\overline{H_{\hw}}) \neq \zero$ is an element with a single $1$ entry and all other entries strictly greater than $1$. Suppose such $1$ entry occurs in the $k$-th tensor factor of $H = h_\ell \otimes \cdots \otimes h_1$ and thus there are no $n$'s appearing in any tensor factors $h_j$ for all $j < k$. Since $H_\hw \neq H_{\max}$, we have $k>1$. We now check that $H$ satisfies the hypotheses of \Cref{lem:HKKOTY99}.
    
    For $j\neq 1$, let $\widetilde{h}_1 \otimes \widetilde{h}_j = \mathcal{R}(h_j \otimes h_1)$. For all $j \neq k$, we have $e_0(h_j \otimes h_1) = e_0(\widetilde{h}_1 \otimes \widetilde{h}_j) = \zero$, since there are no $1$ entries in either factors $h_1,h_j$ (recall the combinatorial $R$-matrix is weight preserving). 
    We also have $e_0(h_k \otimes h_1) = e_0 (h_k) \otimes h_1$, since $h_1$ does not contain any $n$ entry. The Nakayashiki--Yamada rule (\Cref{def:Nakayashiki_yamada}) implies that in the tensor product $\widetilde{h}_1 \otimes \widetilde{h}_k$, the only 1 entry belongs to $\widetilde{h}_1$, while $\widetilde{h}_k$ posseses no $n$ entries. As a result $e_0(\widetilde{h}_1 \otimes \widetilde{h}_k) = e_0(\widetilde{h}_1) \otimes \widetilde{h}_k$ and \Cref{lem:HKKOTY99} implies that
    \[
        D\bigl( e_0(H) \bigr) = D(H) - 1,
    \]
    which is equivalent to \eqref{eq:leading_f0}. 

    
    Now we assume $n = \max(H_\hw)$, which also forces $\ell(\mu) = n$ and $\overline{H_{\hw}} = H_{\hw}$. Denote $H_\hw= h_n' \otimes \cdots \otimes h_1'$ and $H=f_0(H_{\hw}) = h_n \otimes \cdots \otimes h_1$. Then $h_n = f_0(h_n')$, since $h_n'$ is the only tensor factor over which $f_0$ has non-vanishing action, and $h_j=h_j'$ for all $j\in \{1,\dots,n-1\}$. Notice also that $h_1$ consists of a row of length $\mu_1$ with only 1 entries. 
    
    For any $j\neq 1$ denote again $\widetilde{h}_1 \otimes \widetilde{h}_j = \mathcal{R} (h_j \otimes h_1)$. 
    A simple application of the signature rule (\Cref{def:kashiwara_HWT}) gives
    \[
        e_0(h_j \otimes h_1) = 
        \begin{cases}
            e_0(h_j) \otimes h_1 \qquad &\text{if $h_j$ contains at least a 1 entry},
            \\
            h_j \otimes e_0(h_1) \qquad &\text{otherwise}.
        \end{cases}
    \]
    To apply \Cref{lem:HKKOTY99} we are only interested in the first case. If $h_j$ contains at least a 1 entry, then the Nakayashiki--Yamada rule implies that $\widetilde{h}_1$ consists of a single row having all the entries of $h_j$ and additional $\mu_1-\mu_j$ 1 entries, while $\widetilde{h}_j$ is a single row of length $\mu_j$ with only 1's. Then, the signature rule implies that 
    \[
        e_0(\widetilde{h}_1 \otimes \widetilde{h}_j) = e_0(\widetilde{h}_1) \otimes \widetilde{h}_j,
    \]
    if $h_j$ contains at least a 1-entry. By \Cref{lem:HKKOTY99}, also in this case we have $D(e_0(H)) = D(H)-1$, which implies \eqref{eq:leading_f0} and completes the proof.
\end{proof}

The proof of \Cref{th:leading_f0} relies on \Cref{lem:HKKOTY99}, a result from \cite{HKKOTY99}. A direct, alternative proof is given in \cref{sec:proof+1}.

\begin{conjecture}
\label{conj:general_energy_change_one}
The assumption $\ell(\mu) \leq n$ in \cref{th:leading_f0} can be reduced to simply $n > 2$ or $\ell(\mu) \leq 2$ for some other element $\overline{H_{\hw}}$ when $n = \max(H_\hw)$ in the same classical component as $H_{\hw}$.
\end{conjecture}

We also note the necessity of changing the element $\overline{H_{\hw}}$ in~\Cref{conj:general_energy_change_one}.
Consider the element $H_{\hw} = 1 \otimes n \otimes \cdots \otimes 2 \otimes 1$, which has $\overline{H}_{\hw} = H_{\hw}$ (as defined in~\Cref{lem:classical_bar_component}).
Then by the bracketing rule, we have $f_0 (\overline{H_{\hw}}) = \zero$.
Analogous elements exist for any such $\KR{\mu}$ for $\ell(\mu) > n$.


The result of \Cref{th:leading_f0} yields an explicit regular 0-arrow in every classically connected component of $\KR{\mu}$ whose energy is not maximal.

We are now ready to prove \Cref{thm:connectedness}.

\begin{proof}[Proof of \Cref{thm:connectedness}]
    We show that any element $H \in \KR{\mu}$ is connected in the regular subgraph of $\KR{\mu}$ to $H_{\max}$. Let $H \in \KR{\mu}$ and let $H_{\hw}$ be the classically highest weight element in its classically connected component. If $H_{\hw} = H_{\max}$, we simply join $H$ to $H_{\max}$ applying to $H$ sufficiently many $e_i$ operators with $i \in \{1,\dots,n-1\}$. Otherwise, by \Cref{lem:classical_bar_component}, $H$ is connected to $\overline{H_{\hw}}$ via classical arrows and by \Cref{th:leading_f0}, the arrow $\overline{H_{\hw}} \to f_0(\overline{H_{\hw}})$ is a regular $0$-arrow and increases the energy $D$ by $1$. Iterating this procedure, we construct a path in the regular subgraph along which the energy strictly increases at each step where we move from one classically connected component to another. Since the energy is bounded above and the maximal element is unique, this process terminates at the unique classically connected component with $H_{\max}$.
\end{proof}

\begin{remark} \label{rem:alternative_def_energy}
    The explicit path constructed in the proof of \Cref{thm:connectedness} can be used as a definition of the energy function $D$ of a generic element $H \in \KR{\mu}$ as long as $n \ge \ell ( \mu )$. Let $g_1 \cdots g_K$ be the sequence of Kashiwara operators required to connect $H$ to $H_{\max}$ following the construction in the proof of \Cref{thm:connectedness}. Then, we have $D(H) = \Abs{\mu} - \# \{j \mid g_j = f_0\} $.
\end{remark}

\subsection{Dual regular subgraphs and leading paths}

Adapting the strategy of \Cref{subsec:leadingHWT}, we introduce a distinguished subgraph of $\KR{\mu}$ obtained by selecting the $e_0$ arrows that increase the dual energy by one unit, and study its structural properties.

\begin{definition}[Dual regular subgraph] \label{def:dual_subgraph}
    Let $H \in \KR{\mu}$ and assume that $e_0(H) \neq \zero$. We say that $H \to e_0(H)$ is a \defn{dual regular 0-arrow} if 
    \[
        D'\bigl(e_0(H)\bigr) = D'(H) + 1.
    \]
    The \defn{dual regular subgraph} of $\KR{\mu}$ is the subgraph obtained by removing all 0-arrows that are not dual regular. More generally, $H \to g(H)$ is a \defn{dual regular arrow} if it is either a regular 0-arrow or if $g \in \{ e_1,f_1,\dots,e_{n-1},f_{n-1} \}$.
\end{definition}

\begin{remark}
    The dual regular subgraph of $\KR{\mu}$ is \emph{graded} by the dual energy function $D'$. In particular along any path in the dual regular subgraph of $\KR{\mu}$
    \[
        H' = g_{1} \cdots g_{k}(H),
    \]
    we have
    \[
        D'(H') = D'(H) + \#\{ j \mid g_{j} = e_0 \} - \#\{ j \mid g_{j} = f_0 \}.
    \] 
\end{remark}

The following lemma relates regular arrows and dual regular arrows.

\begin{lemma} \label{lem:from regular to dual regular}
    Let $H \to H'$ be a regular arrow. Then $\eta(H) \to \eta(H')$ is a dual regular arrow.
\end{lemma}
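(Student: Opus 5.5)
The plan is to reduce everything to the intertwining relation of \Cref{lem:eta_ef} and the definition $D' = D\circ\eta$. Since $\eta$ is an involution with $e_{n-i}\circ\eta = \eta\circ f_i$, applying $\eta$ on both sides also gives $\eta\circ e_{n-i} = f_i\circ\eta$, so $\eta\circ e_j = f_{n-j}\circ\eta$. Hence $\eta$ maps an $i$-arrow of the crystal graph to an $(n-i)$-arrow with its direction reversed. Concretely, if $H' = f_i(H)$ then $\eta(H') = e_{n-i}(\eta(H))$, and if $H' = e_i(H)$ then $\eta(H') = f_{n-i}(\eta(H))$. With the convention $e_n=e_0$, the index $i=0$ is sent to $n-0\equiv 0$, while $i\in\{1,\dots,n-1\}$ is sent to $n-i\in\{1,\dots,n-1\}$.

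We split according to the type of the regular arrow $H\to H'=g(H)$.

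\emph{Classical arrow.} Suppose $g\in\{e_1,f_1,\dots,e_{n-1},f_{n-1}\}$. Then $\eta(H') = g'(\eta(H))$ with $g'\in\{e_{n-i},f_{n-i}\}$ and $1\le n-i\le n-1$. Classical arrows are dual regular by definition, so there is nothing to prove in this case.

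\emph{Regular $0$-arrow.} Suppose $H' = f_0(H)$ and $D(H') = D(H)+1$. The relation gives $\eta(H') = e_0(\eta(H))$, which is nonzero because $\eta$ is a bijection and $H'\neq\zero$. Using $\eta^2=\mathrm{id}$ together with the definition of $D'$, we get
\[
D'(e_0(\eta(H))) = D'(\eta(H')) = D(\eta^2(H')) = D(H') = D(H)+1 = D'(\eta(H))+1.
\]
So $\eta(H)\to e_0(\eta(H))$ is a dual regular $0$-arrow.

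If regular arrows are also read in the reverse direction, i.e.\ $H'=e_0(H)$ with $D(H')=D(H)-1$, the same computation gives $\eta(H')=f_0(\eta(H))$ with $D'$ decreasing by one. This is the reversal of a dual regular $e_0$-arrow, so it is consistent with the undirected notion of the subgraph.

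Since the whole argument is essentially formal, I do not expect a real obstacle. The only points needing care are the conventions. First, the index $0$ must be correctly matched to $e_n=e_0$. Second, the definition of a dual regular arrow contains an apparent typo, ``regular $0$-arrow'', where ``dual regular $0$-arrow'' is clearly intended, and the lemma should be read that way. Third, one should confirm that $D'(\eta(X)) = D(X)$, which follows directly from $D' = D\circ\eta$ and the involutivity established in \Cref{lem:eta_ef}.
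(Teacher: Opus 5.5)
Your proposal is correct and follows the paper's own proof essentially step for step. You use \Cref{lem:eta_ef} to see that $\eta$ sends classical arrows to classical arrows and $f_0$-arrows to $e_0$-arrows. You then transfer the energy increment via $D' = D\circ\eta$ and $\eta^2=\mathrm{id}$, and handle the $e_0$ case analogously, which is exactly what the paper does.
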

\begin{proof}
    Let $H' = g(H)$ for a Kashiwara operator $g \in \{ e_0,f_0, \dotsc, e_{n-1},f_{n-1} \}$. 
    Then, by \Cref{lem:eta_ef} we have $\eta(H') =g^*(\eta(H))$ for some Kashiwara operator $g^*$.
    If $g \notin \{e_0,f_0\}$, then also $g^*\notin \{e_0,f_0\}$ and  $\eta(H) \to \eta (H')$ is a dual regular arrow. If $g=f_0$, then $g^*=e_0$. By regular 0-arrow condition we have $D(H') = D(H)+1$, which implies
    \[
        D'\bigl(e_0\eta(H)\bigr) = D'\bigl(\eta(H')\bigr) = D(H') = D(H)+1 = D'\bigl(\eta(H)\bigr) + 1.
    \]
    In the second equality we used the definition of dual energy $D'$ and the fact that $\eta$ is an involution. Hence $\eta(H) \to \eta (H')$ is a dual regular 0-arrow. The case $g=e_0$ and $g^*=f_0$ is treated analogously.
\end{proof}

As a consequence of \Cref{thm:connectedness} and \Cref{lem:from regular to dual regular}, we give the following result.

\begin{theorem}\label{thm:dual_connectedness}
    Let $\mu \in \N^{\ell}$ such that $\ell(\mu) \le n$. Then, the dual regular subgraph of $\KR{\mu}$ is connected.
\end{theorem}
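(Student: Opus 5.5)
The plan is to transport the connectivity statement of \Cref{thm:connectedness} through the involution $\eta$ of \Cref{def:Lusztig_involution}. Since $\eta\colon \KR{\mu}\to\KR{\mu}$ is a bijection (indeed an involution, by \Cref{lem:eta_ef}), it suffices to show that $\eta$ maps every edge of the regular subgraph to an edge of the dual regular subgraph, and conversely. Connectivity then transfers directly: given $H,H'\in\KR{\mu}$, we join $\eta(H)$ and $\eta(H')$ by a path in the regular subgraph, which exists by \Cref{thm:connectedness} because $\ell(\mu)\le n$. We then apply $\eta$ edge by edge to obtain a path joining $H=\eta(\eta(H))$ to $H'$ in the dual regular subgraph.

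The forward direction is exactly \Cref{lem:from regular to dual regular}: if $H\to g(H)$ is a regular arrow, then $\eta(H)\to\eta(g(H))$ is a dual regular arrow. Here one should use connectivity in the undirected sense, as in \Cref{thm:connectedness}. A path there may traverse arrows in either orientation, and each traversed arrow is some regular arrow $H_1\to H_2$ with $H_2=g(H_1)$. Its image $\eta(H_1)\to\eta(H_2)$ is then a dual regular arrow, traversed in the corresponding orientation. Classical arrows are unproblematic, since $\eta$ exchanges $f_i$ with $e_{n-i}$ for $i\ne 0$, and both are allowed in either subgraph.

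The only point needing care is a bookkeeping one about $0$-arrows. One must check that a regular $0$-arrow corresponds to a dual regular $0$-arrow \emph{as edges of the graph}, not merely as operators. Suppose the regular edge is $H\to f_0(H)$ with $D(f_0H)=D(H)+1$. By \Cref{lem:eta_ef}, $\eta(f_0H)=e_0(\eta H)$. Since $D'=D\circ\eta$ and $\eta^2=\mathrm{id}$, we get $D'(e_0\eta H)=D'(\eta H)+1$. Hence the image is exactly the dual regular edge $\eta(H)\to e_0(\eta(H))$, which is the computation already done in \Cref{lem:from regular to dual regular}. The case where a path uses a regular edge backwards (via $e_0$) is the same edge read in reverse, so no new case arises. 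I expect no real obstacle beyond this verification; the substantive difficulty lies entirely in \Cref{thm:connectedness}, which is assumed.
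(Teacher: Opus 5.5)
Your proposal is correct and follows the paper's proof. Both join $\eta(H)$ and $\eta(H')$ by a path in the regular subgraph, which exists by \Cref{thm:connectedness}, and then push that path forward edge by edge via \Cref{lem:from regular to dual regular}, using $\eta^2=\mathrm{id}$; the ``conversely'' direction you mention is not needed, and your argument in fact never uses it.
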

\begin{proof}
    Fix $H,H' \in \KR{\mu}$ and call $\widetilde{H}=\eta(H)$, $\widetilde{H}'=\eta(H')$. By \Cref{thm:connectedness}, there exists a path in the regular subgraph of $\KR{\mu}$ connecting  $\widetilde{H}$ to $\widetilde{H}'$. By \Cref{lem:from regular to dual regular} the image of such path under the involution $\eta$ consists of dual regular arrows and it joins $\eta(\widetilde{H})=H$ and $\eta(\widetilde{H}')=H'$. Since $H,H'$ are generic, the dual regular subgraph of $\KR{\mu}$ is connected.
\end{proof}

Based on \Cref{thm:dual_connectedness}, we now construct an explicit path on the dual regular graph joining any element $H \in \KR{\mu}$ to $H_{\max}$. For this we define, similarly to \Cref{def:hwshift} the map $\underline{\mathcal{P}}: \KR{\mu}_\lw \to \KR{\mu}$ such that for any $H_\lw \in \KR{\mu}_\lw$
\begin{equation} \label{eq:underline_P}
    \underline{\mathcal{P}} (H_\lw) = \pr^{-(m-1)}(H_\lw)
    \qquad
    \text{with}
    \qquad
    m = \min(H_\lw),
\end{equation}
where $\min(H)$ is the value of the minimal entry in $H$.

\begin{lemma} \label{lem:explicit leading path}
    Let $\mu \in \N^{\ell}$.
    Given $H_\lw \in \KR{\mu}_\lw$, denote
    \[
        \underline{H_\lw} = \underline{\mathcal{P}}(H_\lw),
    \]
    and let $H_\hw = \eta (H_\lw) \in \KR{\mu}_\hw$.
    Then we have
    \begin{equation} \label{eq:lwshift}
        \underline{H_\lw} = \eta (\overline{H_\hw}).
    \end{equation}
    Moreover, if $H_\hw \neq H_{\max}$, then $\underline{H_\lw} \to e_0 (\underline{H_\lw})$ is a dual regular arrow.
\end{lemma}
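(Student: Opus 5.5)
The plan is to reduce \eqref{eq:lwshift} to an intertwining relation between $\eta$ and promotion, and then to deduce the dual regularity of $\underline{H_\lw}\to e_0(\underline{H_\lw})$ from \Cref{th:leading_f0} via \Cref{lem:from regular to dual regular}.

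\emph{Step 1: the identity $\eta\circ \pr = \pr^{-1}\circ \eta$.}
We first check that $\eta'$ satisfies this relation. Promotion relabels $i\mapsto i+1$ cyclically and re-sorts each factor. The map $\mathrm{comp}$ reverses each word and relabels $j\mapsto n+1-j$. Conjugating $i\mapsto i+1$ by $j\mapsto n+1-j$ gives $i\mapsto i-1$, which is $\pr^{-1}$. The map $\mathrm{flip}$ only permutes tensor factors, so it commutes with $\pr^{\pm1}$ factorwise.

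Next we need $\sigma^-\circ\pr=\pr\circ\sigma^-$. This holds because $\pr$ intertwines $e_i$ with $e_{i+1}$ by \eqref{eq:0 operators by promotion}. Hence $\pr^{-1}\sigma^-\pr$ is again a crystal isomorphism $\KR{\mu}\to\KR{\mu^-}$. By the uniqueness statement after \eqref{eq:sigma_HWT} it equals $\sigma^-$.

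Combining the two facts gives $\eta\circ\pr^{M}=\pr^{-M}\circ\eta$ for every $M$.

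\emph{Step 2: the identity \eqref{eq:lwshift}.}
We have $\eta(\overline{H_\hw})=\eta(\pr^{M}H_\hw)=\pr^{-M}\eta(H_\hw)=\pr^{-M}H_\lw$, where $M=n-\max(H_\hw)$ and $H_\lw=\eta(H_\hw)$. It remains to show $M=m-1$, i.e.\ $\min(H_\lw)=n+1-\max(H_\hw)$.

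The map $\sigma^-$ preserves weight, and $\mathrm{comp}$ sends the letter $j$ to $n+1-j$. Therefore the multiset of letters of $\eta(H_\hw)$ is the image of that of $H_\hw$ under $j\mapsto n+1-j$. This gives the required equality of minimum and maximum, and hence \eqref{eq:lwshift}.

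\emph{Step 3: dual regularity.}
Assume $H_\hw\neq H_{\max}$. If $\ell(\mu)\le 1$ there is nothing to prove, since then the only highest weight element is $H_{\max}$. Under the standing hypothesis $2\le\ell(\mu)\le n$, \Cref{th:leading_f0} says that $\overline{H_\hw}\to f_0(\overline{H_\hw})$ is a regular $0$-arrow. By \Cref{lem:from regular to dual regular}, $\eta(\overline{H_\hw})\to\eta(f_0\overline{H_\hw})$ is dual regular.

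By \Cref{lem:eta_ef} with $i=0$ we have $\eta\circ f_0=e_0\circ\eta$ (recall $e_n=e_0$). Together with Step 2 this gives $\eta(f_0\overline{H_\hw})=e_0(\underline{H_\lw})$, which is the claim.

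\emph{Main obstacle.}
The delicate point is the commutation $\sigma^-\pr=\pr\,\sigma^-$ in Step 1. If the uniqueness argument feels too quick, the fallback is to use that each $\sigma_i$ is an affine crystal isomorphism. The intertwining property then transfers through each $R$-matrix, since $\pr$ conjugates every $e_i$ to $e_{i\pm1}$ and crystal isomorphisms between connected (by \Cref{rem:KR_connected}) crystals are determined by the image of one element.

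A second point to track is the hypothesis $\ell(\mu)\le n$ needed for \Cref{th:leading_f0}. We will state it explicitly, as in \Cref{thm:dual_connectedness}.
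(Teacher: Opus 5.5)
Your proof is correct. The second assertion is argued exactly as in the paper: \Cref{th:leading_f0} gives a regular arrow, \Cref{lem:from regular to dual regular} makes its image dual regular, and $\eta\circ f_0=e_0\circ\eta$ identifies that image.

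For \eqref{eq:lwshift} your route is different.
\begin{itemize}
\item The paper transports the explicit classical path $\overline{H_\hw}=g_1\cdots g_M(H_\hw)$ of \Cref{lem:classical_bar_component} through $\eta$. Using \Cref{lem:eta_ef}, each $f_i$ becomes $e_{n-i}$. It then asserts that the resulting string $g_1^*\cdots g_M^*$ of $e$'s lowers every entry of $H_\lw$ by $m-1$, which tacitly needs a lowest-weight mirror of \Cref{lem:classical_bar_component}.
\item You instead prove the global identity $\eta\circ\pr=\pr^{-1}\circ\eta$. It holds factorwise for $\eta'=\mathrm{comp}\circ\mathrm{flip}$. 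The commutation $\sigma^-\pr=\pr\,\sigma^-$ follows from uniqueness of the crystal isomorphism $\KR{\mu}\to\KR{\mu^-}$. After that you only need $n-\max(H_\hw)=\min(H_\lw)-1$, which you read off from contents.
\end{itemize}

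Trade-offs:
\begin{itemize}
\item Your argument avoids the unverified mirror-inspection step and gives an identity valid on all of $\KR{\mu}$, at the price of invoking uniqueness of affine crystal isomorphisms.
\item The paper's argument exhibits an explicit classical path from $H_\lw$ to $\underline{H_\lw}$, which is the kind of $\mathcal{G}_H$ used in \Cref{def:leading_path}. This also follows from your identity together with \Cref{lem:eta_ef}.
\end{itemize}

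One small detail: \eqref{eq:0 operators by promotion} is stated only for $i=1,\dots,n-1$. The remaining case $\pr\circ e_{n-1}=e_0\circ\pr$, which you need for $\pr^{-1}\sigma^-\pr$ to commute with all $e_i$, must be derived from the others together with $\pr^n=\mathrm{id}$.

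Your remark about hypotheses is well taken. The lemma as stated omits $\ell(\mu)\le n$, and the second assertion genuinely requires it. Take $n=2$, $\mu=(2,1,1)$ and $H_\hw=1\otimes2\otimes11$. Then $\overline{H_\hw}=H_\hw$ and $f_0(\overline{H_\hw})=\zero$, so $e_0(\underline{H_\lw})=\eta\bigl(f_0(\overline{H_\hw})\bigr)=\zero$ and there is no arrow at all.
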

\begin{proof}
    First recall that $H_\hw$ is a highest weight element by \Cref{rem:eta_hw_lw}. The identity \eqref{eq:lwshift} is a consequence of \Cref{lem:classical_bar_component} and \Cref{lem:eta_ef}, from which we have, setting $M= \max(H_\hw)$,
    \[
        \eta(\overline{H_{\hw}}) = \eta\bigl(g_1 \cdots g_M(H_\hw)\bigr) = g_1^* \cdots g_M^*(H_\lw) = \underline{H_\lw}.
    \]
    Here operators $g_i$ are combinations of Kashiwara operators which raise the value of every $i$-entry in $H_{\hw}$ by $n-M$. Similarly, by \Cref{lem:eta_ef}, operators $g_i^*$ lower the value of the $n-i+1$ entry in $H_{\lw}$ by $m-1$, where $m = n-M+1$. 

    The fact that $\underline{H_\lw} \to e_0 (\underline{H_\lw})$ is a dual regular arrow follows from \Cref{th:leading_f0} and \Cref{lem:from regular to dual regular}.
\end{proof}

In the definition below we denote by $\langle g_1,\dots, g_k \rangle$ the free group generated by operators $g_1,\dots, g_k$.

\begin{definition} \label{def:leading_path}
    Consider the $\asl_n$-crystal $\KR{\mu}$ with $\mu \in \N^{\ell}$.
    Fix $H \in \KR{\mu}$, and assume that $n\ge \ell(\mu)$.
    The \defn{leading path} $\mathcal{L}_H$ from $H$ to $H_{\max}$ is a composition of Kashiwara operators constructed recursively as follows.
    \begin{enumerate}
        \item If $D'(H)=D'(H_{\max})$, then $H$ and $H_{\max}$ lie in the same classical connected component by \Cref{lem:unique_highest}. In this case we choose a composition of classical raising operators
        \[
            \mathcal{L}_H \in \langle e_1,\dots,e_{n-1}\rangle,
            \qquad \text{such that} \qquad \mathcal{L}_H(H)=H_{\max}.
        \]
    \item If $D'(H)\ne D'(H_{\max})$, let $\mathcal{G}_H \in \langle e_1 ,f_1,\dots, e_{n-1}, f_{n-1} \rangle$ be a composition of classical Kashiwara operators such that $\mathcal{G}_H(H)=\underline{H_{\mathrm{lw}}},$ where $H_{\mathrm{lw}}$ is the lowest weight element in the classical connected component of $H$. Set $H' = e_0(\underline{H_{\mathrm{lw}}})$. By \Cref{lem:explicit leading path}, the arrow $\underline{H_{\mathrm{lw}}}\longrightarrow H'$ is dual regular and $D'(H')=D'(H)+1$.
    We then define
    \[
        \mathcal{L}_H
        =
        \mathcal{L}_{H'}\circ e_0\circ \mathcal{G}_H.
    \]
    \end{enumerate}
\end{definition}

\begin{remark} \label{rem:alternative_def_dual_energy}
    The leading path gives an alternative definition of the dual energy $D'$. Assuming the leading path of the element $H$ is written as $\mathcal{L}_H = g_1 \cdots g_K$ for $g_i$ Kashwara operators, then we have $D'(H) = \Abs{\mu} - \#\{ j \mid g_j=e_0 \}$.
\end{remark}

\begin{figure}
    \centering
    \includegraphics[width=.7\linewidth]{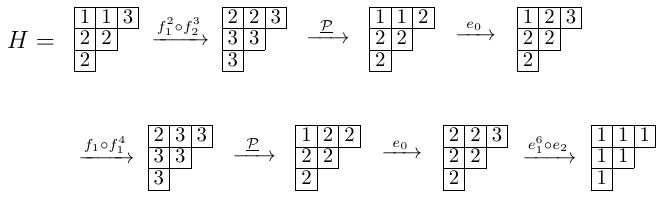}
    \caption{An example of a leading path.}
    \label{fig:example_leading_path}
\end{figure}

\subsection{$(\asl_n \oplus \asl_n)$-crystal on pairs of HWTs} \label{subs:crystal pairs HWT}

We now equip the set of pairs $(H,H')$ of horizontally weak tableaux of the same shape with two commuting $\asl_n$-crystal structures acting independently on each factor.
(Recall that we have identified $\HWT(\mu, n)$ with $\KR{\mu}$.) This makes $\bigsqcup_{\mu} \HWT(\mu,n)^2$ into an $(\asl_n \oplus \asl_n)$-crystal.

\begin{definition}
    For $k \in \{1,2\}$ and $i \in \{0,\dotsc,n-1\}$, define operators
    \[
        E^{(k)}_i,\, F^{(k)}_i \colon
        \bigsqcup_{\mu} \HWT(\mu,n)^2
        \;\longrightarrow\;
        \bigsqcup_{\mu} \HWT(\mu,n)^2\;\sqcup\;\{\zero\},
    \]
    which act componentwise for all $i$
    \begin{align}
        \label{eq:Kashiwara_skew}
        E^{(1)}_i = e_i \times \mathbf{1},
        \qquad\qquad
        E^{(2)}_i = \mathbf{1} \times e_i,
        \qquad\qquad
        F^{(1)}_i = f_i \times \mathbf{1},
        \qquad\qquad
        F^{(2)}_i = \mathbf{1} \times f_i.
    \end{align}
\end{definition}

\begin{definition}[Leading path for pairs of HWTs] \label{def:leading path pair HWT}
    Given a partition $\mu$ and a pair of elements $H,H' \in \HWT(\mu,n)$, a \defn{leading path for the pair $(H,H')$} is a composition of Kashiwara operators
    \[
        \mathcal{L}_{H,H'} = G^{(2)}_{k'} \cdots G^{(2)}_1 G^{(1)}_k \cdots G^{(1)}_1,
    \]
    where $G^{(1)}_j = g_j \times \mathbf{1}$, $G^{(2)}_j = \mathbf{1} \times g_j'$ with $g_j,g_{j}' \in \{e_i,f_i, 0\le i \le n-1\}$ are such that $\mathcal{L}_H = g_k \cdots g_1$ and $\mathcal{L}_{H'} = g_{k'}' \cdots g_1'$ are leading paths for $H$ and $H'$ respectively.
\end{definition}

If the partition $\mu$ is such that $\ell(\mu)\le n$, then for any pair $H,H' \in \HWT(\mu,n)$ a leading path $\mathcal{L}_{H,H'}$ exists and we have
\[
    \mathcal{L}_{H,H'}(H,H') = (H_{\max}, H_{\max}).
\]

\section{Commuting affine crystal structures on pairs of Young tableaux} \label{sec:crystals_pairs}

\subsection{Commuting affine crystal structures}

In this subsection, we equip the set $\bigsqcup_{\lambda,\rho} \SST(\lambda/\rho,n)^2$ with two commuting affine crystal structures.
However, unlike for pairs of HWT, the crystal operators do not entirely act on each component independently.
Let us first extend the action of classical Kashiwara operators on semistandard tableaux; for more details see \cite{BumpSchilling_crystal_book,kashiwara_nakashima1994}.

\begin{definition}
    For any $P \in \SST(\lambda/\rho,n)$ and $i\in \{1,\dots, n-1\}$, we define  $e_i(P)$ and $f_i(P)$ through the signature rule of \Cref{def:kashiwara_HWT} (1)-(4e) and (4f). Recall that the row reading word of a semistandard tableaux was defined in \Cref{def:rowreadingword}.
\end{definition}

We now define two families of Kashiwara operators on pairs of tableaux. In \Cref{prop:bicrystal_sym}, we will prove that they commute with each other.

\begin{definition}\label{def:Kashiwara_skew}
For $k \in \{1,2\}$ and $i \in \{0,\dotsc,n-1\}$, define operators
\[
    E^{(k)}_i,\, F^{(k)}_i \colon
    \bigsqcup_{\lambda,\rho}\SST(\lambda/\rho,n)^2
    \;\longrightarrow\;
    \bigsqcup_{\lambda,\rho} \SST(\lambda/\rho,n)^2\;\sqcup\;\{\zero\}.
\]
For $i\in \{1,\dots,n-1\}$ they act componentwise
\begin{align}
    \label{eq:Kashiwara_skew_tab}
    E^{(1)}_i = e_i \times \mathbf{1},
    \qquad\qquad
    E^{(2)}_i = \mathbf{1} \times e_i,
    \qquad\qquad
    F^{(1)}_i = f_i \times \mathbf{1},
    \qquad\qquad
    F^{(2)}_i = \mathbf{1} \times f_i,
\end{align}
while the 0-operators are defined by conjugation as 
\begin{align}\label{eq:EF0}
E_0^{(k)}=\iota_k\circ E^{(k)}_1\circ \iota_k^{-1},
\qquad\qquad
F_0^{(k)}=\iota_k\circ F^{(k)}_1\circ \iota_k^{-1},
\end{align}
where $\iota_k$ ($k=1,2$) was given in \cref{{def:cRSK_i1_i2}}. 
\end{definition}

\begin{example} \label{ex:Kashiwara_operators_tableaux}
    Consider the pair $(P,Q)$ of \Cref{ex:projection}. Below we compute $F^{(1)}_0 \circ E^{(1)}_1(P,Q)$
    \begin{equation} \label{eq:Kashiwara_operators_tableaux}
        \begin{split}
            \ytableausetup{aligntableaux = center,smalltableaux}
            &
            (P,Q) = \left(            \begin{ytableau}
            \none[\scriptstyle \ao{\cdots}] & \none[\scriptstyle \ao{\overline{1}}] & \none[\scriptstyle \ao{0}] & \none[\scriptstyle \ao{1}] & \none[\scriptstyle \ao{2}]& \none[\scriptstyle \ao{3}]
            \\
            \none[\scriptstyle \cdots] & *(gray) & *(gray) & *(gray) & *(gray) & 2
            \\
            \none[ \scriptstyle \cdots] & *(gray) & *(gray) & *(gray) & 1 
                \\
                \none[\scriptstyle \cdots]  & *(gray) & 1 & 2
            \end{ytableau}
            \,
            ,
            \, \begin{ytableau}
            \none[\scriptstyle \ao{\cdots}] & \none[\scriptstyle \ao{\overline{1}}] & \none[\scriptstyle \ao{0}] & \none[\scriptstyle \ao{1}] & \none[\scriptstyle \ao{2}]& \none[\scriptstyle \ao{3}]
            \\
            \none[\scriptstyle \cdots] & *(gray) & *(gray) & *(gray) & *(gray) & 2
            \\
            \none[ \scriptstyle \cdots] & *(gray) & *(gray) & *(gray) & 2 
                \\
                \none[\scriptstyle \cdots]  & *(gray) & 1 & 1
            \end{ytableau}
            \right)
            \xrightarrow[]{\quad E^{(1)}_1 \quad}
            \left(            \begin{ytableau}
            \none[\scriptstyle \ao{\cdots}] & \none[\scriptstyle \ao{\overline{1}}] & \none[\scriptstyle \ao{0}] & \none[\scriptstyle \ao{1}] & \none[\scriptstyle \ao{2}]& \none[\scriptstyle \ao{3}]
            \\
            \none[\scriptstyle \cdots] & *(gray) & *(gray) & *(gray) & *(gray) & \textcolor{red}{1}
            \\
            \none[ \scriptstyle \cdots] & *(gray) & *(gray) & *(gray) & 1 
                \\
                \none[\scriptstyle \cdots]  & *(gray) & 1 & 2
            \end{ytableau}
            \,
            ,
            \, \begin{ytableau}
            \none[\scriptstyle \ao{\cdots}] & \none[\scriptstyle \ao{\overline{1}}] & \none[\scriptstyle \ao{0}] & \none[\scriptstyle \ao{1}] & \none[\scriptstyle \ao{2}]& \none[\scriptstyle \ao{3}]
            \\
            \none[\scriptstyle \cdots] & *(gray) & *(gray) & *(gray) & *(gray) & 2
            \\
            \none[ \scriptstyle \cdots] & *(gray) & *(gray) & *(gray) & 2 
                \\
                \none[\scriptstyle \cdots]  & *(gray) & 1 & 1
            \end{ytableau}
            \right)
            \\
            &
            \xrightarrow[]{\quad \iota_1^{-1} \quad}
            \left(            \begin{ytableau}
            \none[\scriptstyle \ao{\cdots}] & \none[\scriptstyle \ao{\overline{2}}] & \none[\scriptstyle \ao{\overline{1}}] & \none[\scriptstyle \ao{0}] & \none[\scriptstyle \ao{1}] & \none[\scriptstyle \ao{2}]& \none[\scriptstyle \ao{3}]
            \\
            \none[\scriptstyle \cdots] & *(gray) & *(gray) & *(gray) & *(gray) & *(gray) & 2
            \\
            \none[ \scriptstyle \cdots] & *(gray) & *(gray) & *(gray) & *(gray) & 2 
                \\
                \none[\scriptstyle \cdots] & *(gray) & 1 & 2
            \end{ytableau}
            \,
            ,
            \, \begin{ytableau}
            \none[\scriptstyle \ao{\cdots}] & \none[\scriptstyle \ao{\overline{2}}] & \none[\scriptstyle \ao{\overline{1}}] & \none[\scriptstyle \ao{0}] & \none[\scriptstyle \ao{1}] & \none[\scriptstyle \ao{2}]& \none[\scriptstyle \ao{3}]
            \\
            \none[\scriptstyle \cdots] & *(gray) & *(gray) & *(gray) & *(gray) & *(gray) & 2
            \\
            \none[ \scriptstyle \cdots] & *(gray) & *(gray) & *(gray) & *(gray) & 2 
                \\
                \none[\scriptstyle \cdots] & *(gray) & 1 & 1
            \end{ytableau}
            \right)
            \xrightarrow[]{\quad F_1^{(1)} \quad}
            \left(            \begin{ytableau}
            \none[\scriptstyle \ao{\cdots}] & \none[\scriptstyle \ao{\overline{2}}] & \none[\scriptstyle \ao{\overline{1}}] & \none[\scriptstyle \ao{0}] & \none[\scriptstyle \ao{1}] & \none[\scriptstyle \ao{2}]& \none[\scriptstyle \ao{3}]
            \\
            \none[\scriptstyle \cdots] & *(gray) & *(gray) & *(gray) & *(gray) & *(gray) & 2
            \\
            \none[ \scriptstyle \cdots] & *(gray) & *(gray) & *(gray) & *(gray) & 2 
                \\
                \none[\scriptstyle \cdots] & *(gray) & 2 & 2
            \end{ytableau}
            \,
            ,
            \, \begin{ytableau}
            \none[\scriptstyle \ao{\cdots}] & \none[\scriptstyle \ao{\overline{2}}] & \none[\scriptstyle \ao{\overline{1}}] & \none[\scriptstyle \ao{0}] & \none[\scriptstyle \ao{1}] & \none[\scriptstyle \ao{2}]& \none[\scriptstyle \ao{3}]
            \\
            \none[\scriptstyle \cdots] & *(gray) & *(gray) & *(gray) & *(gray) & *(gray) & 2
            \\
            \none[ \scriptstyle \cdots] & *(gray) & *(gray) & *(gray) & *(gray) & 2 
                \\
                \none[\scriptstyle \cdots] & *(gray) & 1 & 1
            \end{ytableau}
            \right)
            \\
            &
            \xrightarrow[]{\quad \iota_1 \quad}
            \left(            \begin{ytableau}
            \none[\scriptstyle \ao{\cdots}] & \none[\scriptstyle \ao{\overline{2}}] & \none[\scriptstyle \ao{\overline{1}}] & \none[\scriptstyle \ao{0}] & \none[\scriptstyle \ao{1}] & \none[\scriptstyle \ao{2}]& \none[\scriptstyle \ao{3}]
            \\
            \none[\scriptstyle \cdots] & *(gray) & *(gray) & *(gray) & *(gray) & *(gray) & 1
            \\
            \none[ \scriptstyle \cdots] & *(gray) & *(gray) & *(gray) & *(gray) & 1 
                \\
                \none[\scriptstyle \cdots] & *(gray) & 1 & 1
            \end{ytableau}
            \,
            ,
            \, \begin{ytableau}
            \none[\scriptstyle \ao{\cdots}] & \none[\scriptstyle \ao{\overline{2}}] & \none[\scriptstyle \ao{\overline{1}}] & \none[\scriptstyle \ao{0}] & \none[\scriptstyle \ao{1}] & \none[\scriptstyle \ao{2}]& \none[\scriptstyle \ao{3}]
            \\
            \none[\scriptstyle \cdots] & *(gray) & *(gray) & *(gray) & *(gray) & *(gray) & 2
            \\
            \none[ \scriptstyle \cdots] & *(gray) & *(gray) & *(gray) & *(gray) & 2 
                \\
                \none[\scriptstyle \cdots] & *(gray) & 1 & 1
            \end{ytableau}
            \right) = F^{(1)}_0 \circ E^{(1)}_1(P,Q).
        \end{split}
    \end{equation}
    In the right hand side of the first line we highlighted in red the entry of $P$ that was modified by the action of $E^{(1)}_1$ Recall that the operator $\iota_1$ can also be evaluated through column internal insertion as described in \Cref{prop:iota2 by column insertion}.
\end{example}

\Cref{ex:Kashiwara_operators_tableaux} shows that, while the action of operators $E_i^{(k)},F_i^{(k)}$ for $i\in \{1,\dots,n-1\}$ and $k\in\{1,2\}$ does not modify the shape of tableaux $(P,Q)$, the same is not true for 0 operators $E_0^{(k)},F_0^{(k)}$. For instance, the action of $F_0^{(1)}$ in \eqref{eq:Kashiwara_operators_tableaux} shifted by one box to the left the third row of both tableaux. 

In the next example we observe the action of a 0-operator on stable pairs of tableaux.

\begin{example}
    The pair $(P,Q)$ below is stable and we apply $F^{(1)}_0$ to it
    \begin{equation} \label{eq:F0_PQ}
        \begin{split}
            \ytableausetup{aligntableaux = center,smalltableaux}
            &
            (P,Q) = \left(    \,        \begin{ytableau}
            *(gray) & *(gray) & *(gray) & *(gray) & *(gray) & *(gray) & *(gray) & 1 & 3 & 3
            \\
            *(gray) & *(gray) & *(gray) & 2 & 2 
                \\
           2
            \end{ytableau}
            \,
            ,
            \, \begin{ytableau}
                *(gray) & *(gray) & *(gray) & *(gray) & *(gray) & *(gray) & *(gray) & 1 & 1 & 3
            \\
            *(gray) & *(gray) & *(gray) & 2 & 2 
                \\
                1
            \end{ytableau}
            \,
            \right)
            \\
            &
            \xrightarrow[]{\,\, \iota_1^{-1} \,\,}
            \left(   \,         \begin{ytableau}
            *(gray) & *(gray) & *(gray) & *(gray) & *(gray) & 1 & 1 & 2
            \\
            *(gray) & *(gray) & *(gray) & 3 & 3 
                \\
            3
            \end{ytableau}
            \,
            ,
            \, \begin{ytableau}
            *(gray) & *(gray) & *(gray) & *(gray) & *(gray) & 1 & 1 & 3
            \\
            *(gray) & *(gray) & *(gray) & 2 & 2 
                \\
                1
            \end{ytableau}
            \,
            \right)
            \xrightarrow[]{\,\, F_1^{(1)} \,\,}
            \left(      \,      \begin{ytableau}
            *(gray) & *(gray) & *(gray) & *(gray) & *(gray) & 1 & 2 & 2
            \\
            *(gray) & *(gray) & *(gray) & 3 & 3 
                \\
            3
            \end{ytableau}
            \,
            ,
            \, \begin{ytableau}
            *(gray) & *(gray) & *(gray) & *(gray) & *(gray) & 1 & 1 & 3
            \\
            *(gray) & *(gray) & *(gray) & 2 & 2 
                \\
            1
            \end{ytableau}
            \,
            \right)
            \\
            &
            \xrightarrow[]{\,\, \iota_1 \,\,}
            \left(      \,      \begin{ytableau}
            *(gray) & *(gray) & *(gray) & *(gray) & *(gray) & *(gray) & 1 & 1 & 3
            \\
            2 & 2 
                \\
                2
            \end{ytableau}
            \,
            ,
            \, \begin{ytableau}
            *(gray) & *(gray) & *(gray) & *(gray) & *(gray) & *(gray) & 1 & 1 & 3
            \\
            *(gray) & *(gray) & *(gray) & 2 & 2 
                \\
              1
            \end{ytableau}
            \,
            \right)
            = F^{(1)}_0(P,Q).
        \end{split}
    \end{equation}
    Notice that the asymptotic stable pair associated to $(P,Q)$ is
    \[
        \Phi(P,Q) = \left( \,  \begin{ytableau} 1 & 3 & 3 \\ 2 & 2 \\ 2 \end{ytableau}
            \,
            ,
            \, 
            \begin{ytableau} 1 & 1 & 3 \\ 2 & 2 \\ 1 \end{ytableau}
            \,
            \right)
            = (H_1,H_2).
    \]
    Let us now compute $f_0(H_1)$ using \eqref{eq:0 operators by promotion}
    \begin{equation} \label{eq:F0_H}
        H_1=
        \begin{ytableau} 1 & 3 & 3 \\ 2 & 2 \\ 2 \end{ytableau}
        \xrightarrow{\quad \mathrm{pr}\quad}
        \begin{ytableau} 1 & 1 & 2 \\ 3 & 3 \\ 3 \end{ytableau}
        \xrightarrow{\quad  f_1 \quad}
        \begin{ytableau} 1 & 2 & 2 \\ 3 & 3 \\ 3 \end{ytableau}
        \xrightarrow{\quad \mathrm{pr}^{-1} \quad }
        \begin{ytableau} 1 & 1 & 3 \\ 2 & 2 \\ 2 \end{ytableau}
        = f_0(H_1).
    \end{equation}
    Compare now the change of $H_1$ in \eqref{eq:F0_H} with that of the $P$-tableau in \eqref{eq:F0_PQ}. We see that they are completely equivalent as long as we focus only on the labeled cells in $P$. That is, $\iota_1^{-1}$ act on the positive entries in $P$ in the same way as $\text{pr}$ on $H_1$. Moreover we notice that the row that gets shifted to the left by the action of $F^{(1)}_0$ is precisely the row that gets modified by the action of $F_1^{(1)}$. 
\end{example}

The above example suggests a precise correspondence between the operator $\iota_1$ and the inverse promotion, as well as a commutativity between the projection $\Phi$ and the action of Kashiwara operators $E^{(k)}_{i},F^{(k)}_{i}$ for $k\in\{1,2\}$ and $i\in \{0,\dots,n-1\}$. 
    
\begin{proposition} \label{prop:EF_stable}
    Fix $k \in \{1,2\}$ and let $(P_1,P_2)\in \SST(\lambda/\rho,n)^2$ be such that $\iota_k^{-1}(P_1,P_2)$ is stable in the sense of \cref{prop:stable}. Let $(H_1,H_2)=\Phi(P_1,P_2)$ and let $\mu$ be the shape of $H_1$ and $H_2$. 
    \begin{enumerate}[label = {\rm (\Roman*)}]
    \item \label{item:iota_k} Call $(P_1^*,P_2^*) = \iota_k^{-1}(P_1,P_2)$ and $H_k^*=\pr(H_k)$. Then, entries of $P_k^*$ are row-by-row equal to those of $H_k^*$. Moreover, calling $\lambda^*/\rho^*$ the skew shape of $(P_1^*,P_2^*)$, we have
    \begin{equation} \label{eq:shape_iota_k}
        \lambda_i^* = \lambda_i - \#\{ n\text{-entries at row $i$ of }P_k \}, \qquad \rho_i^* = \rho_i - \#\{ n\text{-entries at row $i$ of }P_k \}.
    \end{equation}
    \item \label{item:action_e_0} Suppose that $e_0(H_k) \neq \zero$ and that the action of $e_0$ changes the $r$-th row of $H_k$. Then, $(\widetilde{P}_1, \widetilde{P}_2)=E_0^{(k)}(P_1,P_2) \neq \zero$ and the line up of the positive entries in $\widetilde{P}_k$ coincides with that of $e_0(H_k)$. Moreover, calling $\widetilde{\lambda}/\widetilde{\rho}$ the skew shape of $(\widetilde{P}_1, \widetilde{P}_2)$, we have 
    \[
        \tilde{\lambda}_i= \lambda_i+\mathbf{1}_{i=r}, \qquad\qquad \tilde{\rho}_i= \rho_i+\mathbf{1}_{i=r}.
    \]
    \item \label{item:action_f_0} Similarly, if the action of $f_0(H_k) \neq \zero$ and that the action of $f_0$ changes the $r$-th row of $H_k$, we have $(\widehat{P}_1, \widehat{P}_2)=F_0^{(k)}(P_1,P_2) \neq \zero$. Moreover entries of $\widehat{P}_k$ are row-by-row equal to those of $f_0(H_k)$ and
   \[
        \widehat{\lambda}_i=\lambda_i-\mathbf{1}_{i=r}, \qquad\qquad \widehat{\rho}_i=\rho_i-\mathbf{1}_{i=r},
   \]
   where $\widehat{\lambda}/\widehat{\rho}$ is the skew shape of $(\widehat{P}_1, \widehat{P}_2)$.
    \end{enumerate}
\end{proposition}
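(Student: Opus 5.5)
By the symmetry $\iota_2(P,Q)=\iota_1(Q,P)$ of \Cref{prop:properties of iota and cRSK}~\ref{item:iota_swap}, I will treat only $k=2$; the case $k=1$ follows by swapping the two tableaux. The plan is to prove \ref{item:iota_k} first and then obtain \ref{item:action_e_0} and \ref{item:action_f_0} from it via the definition $E_0^{(2)}=\iota_2\circ E_1^{(2)}\circ\iota_2^{-1}$ (and likewise for $F_0^{(2)}$).

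\textbf{Step 1 (part \ref{item:iota_k}).} Set $(P_1^*,P_2^*)=\iota_2^{-1}(P_1,P_2)$, which is stable with shape $\mu$. In a stable pair each row slides rigidly by $\mu_j$ per time step. I will show that the row contents of stable tableaux are frozen along the whole orbit of $\iota_2$; this uses the row monotonicity of \Cref{prop:cRSK_upright} together with stability. Under this hypothesis, \Cref{prop:iota2 by column insertion} describes $\iota_2(P_1^*,P_2^*)$ as follows.
\begin{itemize}
\item The $1$-cells of $P_2^*$ are vacated, the remaining labels of $P_2^*$ drop by one, and the new cells receive label $n$.
\item On $P_1^*$, column internal insertions are performed at the columns of those $1$-cells.
\end{itemize}
Because rows never change content, each internal insertion bumps the corner entry straight along its own row to the right end. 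So row $j$ of $P_2^*$ loses its $1$'s on the left and gains the same number of $n$'s on the right. After relabeling, this is exactly the row-wise action of $\pr^{-1}$ on $H_2^*$, sending $1\to n$ and $i\to i-1$ and then re-sorting. Row $j$ of both tableaux is also translated right by $\#\{1\text{-entries of row } j\}$, which equals $\#\{n\text{-entries of row } j \text{ of } P_2\}$. This is \eqref{eq:shape_iota_k}. The asymptotic data satisfy $\Phi(P_1,P_2)_2=\pr^{-1}(H_2^*)$; to see this, I will read $\Phi$ off \eqref{eq:H_2 limit} along the shifted loop, using \Cref{prop:properties of iota and cRSK}~\ref{item:iota2 field}. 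Hence $H_2^*=\pr(H_2)$.

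\textbf{Step 2 (parts \ref{item:action_e_0}, \ref{item:action_f_0}).} By Step 1, $P_2^*$ is row-by-row equal to $\pr(H_2)$. On a tableau whose rows are translated far apart, the signature rule used for $e_1$ reads the same row word, bottom to top, as on the HWT. Therefore $E_1^{(2)}$ acts on $P_2^*$ exactly as $e_1$ acts on $\pr(H_2)$: it changes one $2$ into a $1$ in row $r$, and it is nonzero iff $e_1\pr(H_2)=\pr(e_0H_2)\neq\zero$, using \eqref{eq:0 operators by promotion}. The resulting pair is still stable. Indeed, changing a label within a row keeps rows frozen, because the Fomin field along the stable orbit is determined row by row; this is the delicate point. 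So Step 1 applies again to $\iota_2$. Now row $r$ contains one more $1$ (resp.\ one fewer for $f$), so the translation of row $r$ under $\iota_2$ increases by one relative to the original. Comparing with \eqref{eq:shape_iota_k} applied to the unmodified pair gives $\tilde\lambda_i=\lambda_i+\mathbf{1}_{i=r}$ and $\tilde\rho_i=\rho_i+\mathbf{1}_{i=r}$, and the entries are those of $\pr^{-1}e_1\pr H_2=e_0H_2$. Part \ref{item:action_f_0} is identical with $-\mathbf{1}_{i=r}$.

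\textbf{Main obstacle.} The hard step is showing that stability persists under $E_1^{(2)}$ and $F_1^{(2)}$. Concretely, I need that moving one label inside a row of a pair whose rows are mutually far apart does not trigger inter-row bumping in later insertions. To get this, I will show that stability is equivalent to a separation condition: each row's leftmost cell lies to the right of the next row's rightmost cell, with sufficient margin. Label changes within rows preserve this condition, so once it is established, the frozen-row description of $\iota_2$ applies to the modified pair.
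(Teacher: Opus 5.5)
Your Step~1 and the reduction of (II)--(III) to conjugating $E^{(2)}_1,F^{(2)}_1$ by $\iota_2$ follow the paper. You also rightly isolate what (II)--(III) actually require: $E^{(2)}_1(P_1^*,P_2^*)$ and $F^{(2)}_1(P_1^*,P_2^*)$ must again be stable, so that Step~1 can be rerun on them.

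\textbf{The gap.} You propose to prove this persistence through a separation characterization of stability, and that characterization is false. Take $n=2$ and $P=Q=\bigl((0,0)\preceq(1,0)\preceq(1,1)\bigr)$, the single column with $1$ above $2$.
\begin{enumerate}
\item The first $\iota_2$ performs $\mathcal{C}_{[1]}$, which moves the $1$ to cell $(1,2)$.
\item The second $\iota_2$ performs $\mathcal{C}_{[1]}$ on the $2$. The entry $1$ already in column $2$ is smaller, so the $2$ goes to the bottom of column $2$, i.e.\ to $(2,2)$.
\end{enumerate}
Hence $\cRSK(P,Q)=(P,Q)+(1,1)$. Adding $(1,1)$ is a global column translation, which commutes with $\cRSK$, so the pair is stable with $\mu=(1,1)$. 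Yet its two rows share a column, and since $\mu_1=\mu_2$, no gap ever opens along the orbit. Conversely, for $n=1$ a cluster of three balls far to the left of a single ball has well separated rows but is not stable.

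So no purely geometric margin condition is equivalent to stability. Any condition that works must involve the entries where rows meet. The statement ``label changes within a row preserve it'' is then exactly the claim that remains unproved.

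\textbf{The fix.} Get persistence from the crystal symmetry instead. By \Cref{lem:com_EF_iota}, $E^{(2)}_1=\mathbf{1}\times e_1$ commutes with $\iota_1$, hence with $\cRSK=\iota_1^n$. For non-$0$ operators, the proof of that lemma uses only \Cref{lem:com_EF_C} and the column-insertion description of $\iota_k$, so there is no circularity.

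Moreover, $e_1$ on a skew tableau sees only the row reading word, which depends only on row contents. Hence $e_1(T+\mu)=e_1(T)+\mu$; this also makes your ``rows far apart'' proviso unnecessary. Therefore
\[
\cRSK^t\bigl(E^{(2)}_1(P_1^*,P_2^*)\bigr)=E^{(2)}_1\bigl(\cRSK^t(P_1^*,P_2^*)\bigr)=E^{(2)}_1\bigl((P_1^*,P_2^*)+t\mu\bigr)=E^{(2)}_1(P_1^*,P_2^*)+t\mu,\qquad t\ge 0.
\]
So the modified pair is stable with the same $\mu$, and likewise for $F^{(2)}_1$. With this, your Step~2 goes through as written.

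The paper folds this point into ``basic consequences of (I)''; the commutation argument is what actually justifies it.
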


\begin{proof}
    Call $k' = 3-k$, so that $k'=2$ if $k=1$ and $k'=1$ if $k=2$.
    
    Let us start by proving \ref{item:iota_k}. We use \Cref{prop:iota2 by column insertion} to evaluate the action of $\iota_k$ on $(P_1^*,P_2^*)$. Since $(P_1^*,P_2^*)$ is stable, the row content of $P^*_{k'}$ is equal to that of $P_{k'}$ and the two tableaux differ by a shift in the external and internal shapes. On the other hand, the $i$-th row of $P_k$ is obtained from the $i$-th row of $P_k^*$ as follows: first vacate all its cells with entry $1$, then subtract 1 from all remaining entries and finally append as many $n$-cells as $1$-cells were vacated. This proves the relation \eqref{eq:shape_iota_k}. Moreover, comparing the procedure transforming $P_k^*$ into $P_k$ with the promotion $\pr$, we see that $H_k = \pr^{-1}(H_k^*)$, since the stability of $(P_1^*,P_2^*)$ implies that the rows of $P_k$ and $P_k^*$ correspond to those of $H_k$ and $H_k^*$.

    The statements \ref{item:action_e_0}, \ref{item:action_f_0} are basic consequences of \ref{item:iota_k} and of the expressions $e_0=\pr \circ e_1 \circ \pr^{-1}$ and $f_0=\pr \circ f_1 \circ \pr^{-1}$, given in \Cref{rem:promotion}.
\end{proof}

In the following lemma we state that the column internal insertion in a tableau $P$ commutes with the action of classical Kashiwara operators.

\begin{lemma}
    \label{lem:com_EF_C}
    Let $P\in\bigsqcup_{\lambda,\rho}\SST(\lambda/\rho,n)$  and $\mathcal{C}_{[c]}$ be the column internal insertion introduced in \cref{def:c_insertion}.
    For any corner cell $c$ of $P$ and any choice of $g \in \{e_1,f_1,\dots, e_{n-1}, f_{n-1} \}$ we have
    \begin{align}
        \label{eq:com_EF_C}
       g\circ\mathcal{C}_{[c]}(P)=\mathcal{C}_{[c]}\circ g(P).
    \end{align}
\end{lemma}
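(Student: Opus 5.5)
The plan is to reduce the statement to a single elementary column insertion and then invoke the classical fact that the Kashiwara operators on tableaux are coplactic, i.e.\ they commute with Schensted insertion and jeu de taquin slides.

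\emph{Step 1: rewrite $\mathcal{C}_{[c]}$ as a slide plus an insertion.} By definition, $\mathcal{C}_{[c]}(P)$ removes the corner cell $(r,c)$, whose entry is $x$, and column-inserts $x$ into column $c+1$. The removed cell is a corner: nothing lies above it in column $c$ and nothing lies to its left in row $r$. In the row reading word (rows read from the bottom) the entry $x$ is the first letter of row $r$. I would show that the row reading word of $\mathcal{C}_{[c]}(P)$ is Knuth equivalent to that of $P$. Two standard facts give this.
\begin{itemize}
\item Column insertion of a letter into a column-strict filling is realized by Knuth moves on reading words; this is the Schensted bumping analysis carried out column by column.
\item Replacing the cell $(r,c)$ by an empty inner cell is exactly an inner corner of the skew shape $\widetilde{\lambda}/\widetilde{\rho}$, with $\widetilde{\rho}=(\rho'+\mathbf e_c)'$. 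Internal insertion is therefore the reverse of a jeu de taquin slide into that inner corner, so it preserves the plactic class.
\end{itemize}
A cleaner route avoids this bookkeeping. Through \Cref{prop:Fomin and column insertion}, internal insertion is Fomin growth, and dual equivalence and growth diagrams show that the result depends only on the shapes and the plactic class. I would state whichever of the two is easier to justify. My inclination is to cite the standard theorem that a jeu de taquin slide (forward or reverse) commutes with $e_i$ and $f_i$, e.g.\ \cite{BumpSchilling_crystal_book}.

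\emph{Step 2: pass from Knuth equivalence to commutation.} The classical operators $e_i,f_i$ for $i\in\{1,\dots,n-1\}$ are computed by the signature rule on the row reading word. Their action on a word commutes with Knuth relations: Knuth-equivalent words $w\sim w'$ satisfy $g(w)\sim g(w')$, and $g(w)=\zero$ if and only if $g(w')=\zero$. The bracket structure for the letters $i,i+1$ is unchanged by elementary Knuth moves, which one checks case by case on the moves $yxz\leftrightarrow yzx$ and $xzy\leftrightarrow zxy$. Moreover, the internal insertion map is a crystal morphism once one knows that its effect on cells is determined by shape data alone. The intended consequence is that the cell of $P$ modified by $g$ is tracked to the corresponding cell of $\mathcal{C}_{[c]}(P)$, and both routes yield the same filling. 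Since $g$ preserves the shape, and $c$ remains a column with a corner cell after applying $g$, the composition on the right-hand side is defined.

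\emph{Step 3: the main obstacle.} The delicate point is the case where $g$ changes the entry of the corner cell $x$ itself, say $x=i+1\mapsto i$. There one must check that the bump path of the inserted letter changes exactly as prescribed. I would handle this by partial standardization (\Cref{prop:standardization_field}): restrict attention to the letters $i,i+1$, so that only the $2$-letter subtableau matters. By the Bender--Knuth description of $e_i,f_i$ on the skew subtableau of $i,i+1$ cells, the two computations agree on unpaired letters in each row. A direct check of the finitely many local configurations in columns $c,c+1$ should then finish the argument. If this becomes messy, I would fall back entirely on the coplacticity of reverse jeu de taquin from Step 1.
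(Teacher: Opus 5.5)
Your intended backbone is the paper's. There, the lemma is proved by combining two facts: the theorem that $e_i,f_i$ commute with jeu de taquin slides \cite{BumpSchilling_crystal_book}, and the fact that $\mathcal{C}_{[c]}$ is a finite \emph{composition} of slides, which the paper imports from \cite{Stanley1999,imamura2021skewRSK}. Your version of the second ingredient is false. $\mathcal{C}_{[c]}$ is not the inverse of the slide into the vacated cell $(r,c)$, and in general it is not a single slide at all.

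Take $n=3$, $\rho=(1,0)$, $\lambda=(2,2)$, and let $P$ have entry $1$ in cell $(1,2)$ and entries $2,3$ in cells $(2,1),(2,2)$. The corner of column $1$ is $(2,1)$, so $x=2$. It bumps the $3$ in column $2$, which lands in the new cell $(1,3)$. Thus $\mathcal{C}_{[1]}(P)$ has shape $(3,2)/(1,1)$, with entries $1,3$ in row $1$ and entry $2$ in cell $(2,2)$.
\begin{itemize}
\item Sliding $\mathcal{C}_{[1]}(P)$ forward into $(2,1)$ moves the $2$ left and vacates $(2,2)$. This gives shape $(3,1)/(1,0)$ instead of $(2,2)/(1,0)$.
\item The only reverse slide available on $P$, from $(1,3)$, stops at $(1,2)$. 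This gives inner shape $(2,0)\neq(1,1)$.
\end{itemize}
Here one actually needs three slides: forward into $(1,1)$, then reverse from $(1,3)$, then reverse from $(2,2)$. Starting the reverse slides at $(2,2)$ instead merely undoes the forward slide. So the real content is a realization of $\mathcal{C}_{[c]}$ by slides that applies verbatim to $g(P)$, and you have not supplied one. With such a realization, your Step~1 citation finishes the proof exactly as in the paper.

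The fallbacks do not close the gap.
\begin{itemize}
\item In Step~2, coplacticity only shows that $g(\mathcal{C}_{[c]}(P))$ and $\mathcal{C}_{[c]}(g(P))$ are Knuth equivalent. For skew tableaux this does not give equality, even for tableaux of the same shape. On the antidiagonal shape $(3,2,1)/(2,1)$, the two fillings with reading words $213$ and $231$ are distinct and Knuth equivalent.
\item You also have not shown that the two output shapes agree, and the position of the new outer cell depends on the filling. Your phrase ``its effect on cells is determined by shape data alone'' is precisely the dual-equivalence statement that needs proof. The Fomin-growth description of $\mathcal{C}_{[c]}$ does not provide it by itself.
\item Step~3 is only a sketch. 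The bumped letter can travel through arbitrarily many columns, and $g$ may change any letter on its path, not just the corner entry. A finite check of configurations in columns $c,c+1$ cannot settle the question.
\end{itemize}
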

\begin{proof}
    It is well known that the action of the classical Kashiwara operators commutes with another combinatorial operation called \defn{jeu de taquin slides (jdt)}; see e.g. ~\cite[Thm.~8.7]{BumpSchilling_crystal_book}. We will not introduce the jdt here, and we refer the interested reader to \cite{lothaire_2002,sagan2001symmetric} and references therein. The internal column insertion $\mathcal{C}_{[c]}$ on the tableau $P$ can be realized by a finite sequence of jdt slides; see \cite[App.~A1.2]{Stanley1999} and \cite[Prop.~A.4, Thm.~A.2, Rem~A.3]{imamura2021skewRSK}. This shows that the commutation relation \eqref{eq:com_EF_C} holds.
\end{proof}

\begin{lemma} \label{lem:com_EF_iota}
    Let $(P,Q)\in\bigsqcup_{\lambda,\rho}\SST(\lambda/\rho,n)^2$and let $(k,k')\in \{(1,2),(2,1)\}$. Then, for any $G^{(k)} \in \{E_i^{(k')},F_i^{(k')} \mid 0\le i \le n-1 \}$, we have
    \begin{align}
        \label{eq:com_EF_iota}
       G^{(k)}\circ \iota_{k'} (P,Q)= \iota_{k'} \circ G^{(k)} (P,Q).
    \end{align}
\end{lemma}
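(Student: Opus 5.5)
The plan is to reduce the statement to the case $k'=2$, $k=1$: that $\iota_2$ commutes with every operator $E^{(1)}_i, F^{(1)}_i$. The other case then follows from the symmetry $\iota_2(P,Q)=\iota_1(Q,P)$ of \Cref{prop:properties of iota and cRSK}\ref{item:iota_swap}. Under the swap $(P,Q)\mapsto(Q,P)$ the roles of the superscripts $(1)$ and $(2)$ are exchanged, both for the classical operators and, by \eqref{eq:EF0}, for the $0$-operators. I will treat the classical operators ($i\ge1$) first and the $0$-operators afterwards.

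\emph{Classical case, $1\le i\le n-1$.} Here $E^{(1)}_i=e_i\times\mathbf 1$ acts only on $P$, and it does not change the shape or the weight of $P$. By \Cref{prop:iota2 by column insertion}, $\iota_2(P,Q)=(\overline P,\overline Q)$ is computed in two parts.
\begin{enumerate}
\item $\overline P=\mathcal C_{[i_k]}\cdots\mathcal C_{[i_1]}(P)$, where the columns $i_1<\cdots<i_k$ are read off from the $1$-cells of $Q$ alone.
\item $\overline Q$ depends only on $Q$ and on the skew shape of $\overline P/P$.
\end{enumerate}
Applying $e_i$ to $P$ leaves $Q$ unchanged, so the list of columns $i_1,\dots,i_k$ is the same. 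Iterating \Cref{lem:com_EF_C} then gives $e_i(\overline P)=\mathcal C_{[i_k]}\cdots\mathcal C_{[i_1]}(e_iP)$. The new cells created by the insertions depend only on shapes, and the shapes are unaffected by $e_i$ (again by \Cref{lem:com_EF_C}, since the commuting operators preserve the shape). Hence $\overline Q$ is also unchanged. I also need to check the vanishing case: $e_i(P)=\zero$ exactly when $e_i(\overline P)=\zero$. This should hold because $\mathcal C_{[c]}$ is realized by jeu de taquin slides, and these preserve the $i$-string lengths $\varepsilon_i,\varphi_i$.

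\emph{$0$-operators.} By definition $E^{(1)}_0=\iota_1E^{(1)}_1\iota_1^{-1}$. By \Cref{prop:properties of iota and cRSK}\ref{item:commutation iota}, $\iota_1$ commutes with $\iota_2$. Therefore
\[
\iota_2E^{(1)}_0=\iota_2\iota_1E^{(1)}_1\iota_1^{-1}=\iota_1\iota_2E^{(1)}_1\iota_1^{-1}=\iota_1E^{(1)}_1\iota_2\iota_1^{-1}=\iota_1E^{(1)}_1\iota_1^{-1}\iota_2=E^{(1)}_0\iota_2 .
\]
The third equality is the classical case already proved. The same argument applies to $F^{(1)}_0$, and the convention $\iota_2(\zero)=\zero$ takes care of vanishing.

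\emph{Main obstacle.} The delicate point is the classical case, specifically that the column insertion sequence and the shape bookkeeping for $\overline Q$ do not change. Commutation with $\mathcal C_{[c]}$ is only given for a single insertion at a corner cell. So I must verify that the corner-cell hypothesis at each successive column $i_j$ holds equally for $P$ and for $e_i(P)$. This follows because admissibility depends only on the shape, and the shape is preserved.
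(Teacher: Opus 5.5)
Your proposal is correct and follows essentially the same route as the paper. The classical case goes through \Cref{prop:iota2 by column insertion} and \Cref{lem:com_EF_C}, since the insertion columns and the new cells of $\overline Q$ depend only on $Q$ and on shapes; the $0$-operators then follow from the conjugation \eqref{eq:EF0} together with $\iota_1\iota_2=\iota_2\iota_1$. Your handling of the vanishing case and of the swap reduction from $(k,k')=(2,1)$ to $(1,2)$ just fills in details the paper leaves as ``analogous''.
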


\begin{proof}
    Let us prove only the case $k=1,k'=2$, as the complementary case $k=2,k'=1$ can be shown in similar fashion. Let us first consider the case where $G$ is not a 0-Kashiwara operator $G^{(1)} = g \times \mathbf{1}$ for $g \in \{e_i,f_i: 1 \le i \le n-1 \}$. Denote by $(\overline{P},\overline{Q})=\iota_2(P,Q)$. By \Cref{prop:iota2 by column insertion}, we have
    \[
         \overline{P} = \mathcal{C}_{[c_1]} \circ \cdots \circ \mathcal{C}_{[c_s]}(P),
    \]
    where the columns $c_1,\dots, c_s$ are determined by the cells with entry 1 in $Q$. Then, by \Cref{lem:com_EF_C} and by the description of $\overline{Q}$ provided by \Cref{prop:iota2 by column insertion}, we have
    \begin{equation} \label{eq:com_EF_iota_classical}
        G^{(1)} \circ \iota_2(P,Q) = (g \circ \mathcal{C}_{[r_1]} \circ \cdots \circ \mathcal{C}_{[r_s]}(P), \overline{Q} ) = ( \mathcal{C}_{[r_1]} \circ \cdots \circ \mathcal{C}_{[r_s]} \circ g(P), \overline{Q} ) = \iota_2 \circ G^{(1)} (P,Q).
    \end{equation}
    This proves the relation \eqref{eq:com_EF_iota} for $k=1,k'=2$, whenever $G^{(1)}$ is not a 0-Kashiwara operator. Now, let $G^{(1)} \in \{E_0^{(1)}, F_0^{(1)}\}$ and by the definition \eqref{eq:EF0} we can write $G^{(1)} = \iota_1 \circ \widetilde{G}^{(1)} \circ \iota_1^{-1}$ for $\widetilde{G}^{(1)} \in \{E_1^{(1)}, F_1^{(1)}\}$. Then, we have
    \begin{equation*}
        \begin{split}
            G^{(1)} \circ \iota_2 
            & = \iota_1 \circ \widetilde{G}^{(1)} \circ \iota_1^{-1} \circ \iota_2 
            = \iota_1 \circ \widetilde{G}^{(1)} \circ \iota_2 \circ \iota_1^{-1} 
            \\
            &= \iota_1 \circ \iota_2\circ \widetilde{G}^{(1)} \circ \iota_1^{-1} 
            = \iota_2 \circ \iota_1 \circ \widetilde{G}^{(1)} \circ \iota_1^{-1} 
            = \iota_2 \circ G^{(1)}.
        \end{split}
    \end{equation*}
    Above, in the second and in the fourth equality we used the commutation relation \Cref{prop:properties of iota and cRSK}, \ref{item:commutation iota}, while in the third equality we used \eqref{eq:com_EF_iota_classical}.
    This completes the proof.
\end{proof}

\begin{theorem}
    \label{prop:bicrystal_sym}
    The families of Kashiwara operators $\{ E_i^{(1)}, F_i^{(1)} \mid 0\le i \le n-1 \}$ and $\{ E_i^{(2)}, F_i^{(2)} \mid 0\le i \le n-1 \}$ commute with each other.
    That is, they equip $\bigsqcup_{\lambda,\rho} \SST(\lambda/\rho,n)^2$ with a $(\asl_n \oplus \asl_n)$-crystal structure. 
\end{theorem}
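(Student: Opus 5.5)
The argument splits into cases according to whether each of the two operators is classical (index $i\in\{1,\dots,n-1\}$) or a $0$-operator, and reduces every case to \Cref{lem:com_EF_iota} and the definition \eqref{eq:EF0}. Fix $G^{(1)}\in\{E_i^{(1)},F_i^{(1)}\}$ and $G^{(2)}\in\{E_j^{(2)},F_j^{(2)}\}$. The goal is $G^{(1)}\circ G^{(2)}=G^{(2)}\circ G^{(1)}$ as partial maps, with the convention that both sides equal $\zero$ simultaneously.

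\textbf{Case $i,j\ge1$.} Here $G^{(1)}=g\times\mathbf{1}$ and $G^{(2)}=\mathbf{1}\times g'$ act on different components without changing the common shape $\lambda/\rho$. Classical Kashiwara operators never change the shape, so the pair remains of a common skew shape after either operator. Commutation is therefore immediate, including the vanishing behaviour, since whether $g(P)=\zero$ does not depend on $Q$.

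\textbf{Case $i\ge1$, $j=0$.} Write $G^{(2)}=\iota_2\circ\widetilde G^{(2)}\circ\iota_2^{-1}$ with $\widetilde G^{(2)}\in\{E_1^{(2)},F_1^{(2)}\}$. \Cref{lem:com_EF_iota} with $k=1$, $k'=2$ gives $G^{(1)}\iota_2=\iota_2G^{(1)}$. Since $\iota_2$ is a bijection, this also yields $G^{(1)}\iota_2^{-1}=\iota_2^{-1}G^{(1)}$. Then
\[
G^{(1)}G^{(2)}=\iota_2G^{(1)}\widetilde G^{(2)}\iota_2^{-1}=\iota_2\widetilde G^{(2)}G^{(1)}\iota_2^{-1}=G^{(2)}G^{(1)},
\]
where the middle step is the classical case just treated. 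The case $i=0$, $j\ge1$ is symmetric, using \Cref{lem:com_EF_iota} with $k=2$, $k'=1$. Note that \Cref{lem:com_EF_iota} already covers $0$-operators of the other family, so the classical-versus-classical and mixed cases only need the first half of that lemma.

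\textbf{Case $i=j=0$.} Write $G^{(1)}=\iota_1\widetilde G^{(1)}\iota_1^{-1}$. By \Cref{lem:com_EF_iota} with $(k,k')=(2,1)$, the operator $G^{(2)}$ (a $0$-operator of the second family) commutes with $\iota_1$, and hence with $\iota_1^{-1}$. Therefore
\[
G^{(2)}G^{(1)}=\iota_1G^{(2)}\widetilde G^{(1)}\iota_1^{-1}=\iota_1\widetilde G^{(1)}G^{(2)}\iota_1^{-1}=G^{(1)}G^{(2)},
\]
where the middle equality is the mixed case with $\widetilde G^{(1)}$ classical.

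\textbf{Main obstacle.} The delicate point is the bookkeeping of $\zero$. The identities above are identities of partial maps, and one must check that the lemma's commutation, which is stated for maps defined on all pairs, holds with the convention that both $\iota$ maps send $\zero$ to $\zero$. One must also check that $\iota_k^{-1}$ commutes with the other family's operators wherever either side is nonzero. Both follow from bijectivity of $\iota_k$ and the fact that \Cref{lem:com_EF_iota} is an equality that includes vanishing.

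Finally, each family separately is an $\asl_n$-crystal. The classical operators come from the usual crystal structure on skew tableaux, and $E_0^{(k)},F_0^{(k)}$ are conjugates by $\iota_k$ of $E_1^{(k)},F_1^{(k)}$. The weight is transported by $\iota_k$ via the cyclic relabelling in \Cref{prop:properties of iota and cRSK}\ref{item:content_i2}. Together with the commutation just proved, this gives the $(\asl_n\oplus\asl_n)$-crystal structure.
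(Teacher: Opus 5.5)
Your proposal is correct and follows essentially the same argument as the paper. The classical operators commute because they act componentwise, and every case involving a $0$-operator is reduced to the classical case by writing it as an $\iota_k$-conjugate and applying \Cref{lem:com_EF_iota}. The differences are cosmetic: in the mixed case you conjugate the second-family $0$-operator by $\iota_2$, where the paper conjugates the first-family one by $\iota_1$. You also write out the $i=j=0$ case, which the paper dismisses as analogous.
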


\begin{proof}
    It is straightforward from the definition \eqref{eq:Kashiwara_skew_tab} that for all
    \[
        G^{(k)} \in \{ E_i^{(k)}, F_i^{(k)} \mid 1\le i \le n-1 \}, 
        \qquad \text{where } k\in \{1,2\},
    \]
    and all pairs of tableaux $(P,Q) \in \bigsqcup_{\lambda,\rho} \SST(\lambda/\rho,n)^2$ we have
    \[
        G^{(1)} \circ G^{(2)} (P,Q) = G^{(2)} \circ G^{(1)} (P,Q).
    \]
    This is because non-$0$ operators act component wise.
    Therefore we only need to show commutation relations involving the Kashiwara $0$ operators.
    Let $G_0^{(1)} = \iota_1 \circ G_1^{(1)} \circ \iota_1^{-1}$ for $G_1^{(1)} \in \{E^{(1)}_1,F^{(1)}_1\}$.
    Consider the non-$0$ operator $G_i^{(2)} \in \{ E_i^{(1)}, F_i^{(1)} \mid 1\le i \le n-1 \}$.
    Then, using \Cref{lem:com_EF_iota}, we have
    \begin{equation*}
        \begin{split}
            G^{(1)}_0 \circ G_i^{(2)}
            & = \iota_1 \circ G_1^{(1)} \circ \iota_1^{-1} \circ G_i^{(2)} 
            = \iota_1 \circ G_1^{(1)} \circ G_i^{(2)} \circ \iota_1^{-1} 
            \\
            &
            = \iota_1 \circ G_i^{(2)} \circ G_1^{(1)} \circ \iota_1^{-1} 
            = G_i^{(2)} \circ \iota_1 \circ G_1^{(1)} \circ \iota_1^{-1} 
            = G_i^{(2)} \circ G_0^{(1)}.
        \end{split}
    \end{equation*}
    The remaining cases of when $G^{(2)}$ is the Kashiwara $0$ operator (with possibly $G^{(1)}$ being a Kashiwara $0$ operator, where we need to use the above case) is analogous.
\end{proof}

\begin{theorem}
    \label{th:crystal_sym}
     The skew column RSK map is an isomorphism of $(\asl_n \oplus \asl_n)$-crystals; that is, for all $G\in \{ E_i^{(k)}, F_i^{(k)} \mid 0\le i \le n-1, 1\le k \le 2 \}$
     we have 
     \begin{align}
         \label{eq:crystal_sym}
         \mathbf{cRSK}\circ G=G\circ\mathbf{cRSK}.
     \end{align}
\end{theorem}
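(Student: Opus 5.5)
The plan is to reduce everything to commutation relations with $\iota_1$ and $\iota_2$ that are already available, using the factorization $\cRSK=\iota_1^n=\iota_2^n$ from \Cref{prop:properties of iota and cRSK}, \ref{item:cRSK iota}. Fix $G$ and let $k\in\{1,2\}$ be the index of the crystal structure it belongs to, with $k'=3-k$. I will write $\cRSK$ as a power of the shift $\iota_{k'}$ acting on the \emph{other} coordinate, namely $\cRSK=\iota_{k'}^{\,n}$.

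By \Cref{lem:com_EF_iota}, every operator $G\in\{E_i^{(k)},F_i^{(k)}\mid 0\le i\le n-1\}$ commutes with $\iota_{k'}$. This includes the $0$-operators, since that lemma was proved for them as well via the relation $\iota_1\iota_2=\iota_2\iota_1$. Iterating $n$ times gives
\[
G\circ\cRSK=G\circ\iota_{k'}^{\,n}=\iota_{k'}^{\,n}\circ G=\cRSK\circ G .
\]
One must take care with the value $\zero$: I adopt the convention that $\iota_{k'}(\zero)=\zero$. Then the identity in \Cref{lem:com_EF_iota} means that $G(P,Q)=\zero$ if and only if $G(\iota_{k'}(P,Q))=\zero$. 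This follows from invertibility of $\iota_{k'}$ together with the lemma applied to $\iota_{k'}^{-1}$, which commutes with $G$ because $\iota_{k'}$ does. Hence the equation \eqref{eq:crystal_sym} holds as an identity of partial maps.

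Next, to conclude that $\cRSK$ is an isomorphism of $(\asl_n\oplus\asl_n)$-crystals, I check two more things. First, it is a bijection, which is clear since it is invertible. Second, it preserves weights, so that the functions $\varepsilon_i$ and $\varphi_i$, which are determined by the Kashiwara operators, are also preserved. Weight preservation follows from \Cref{prop:properties of iota and cRSK}, \ref{item:content_i2}: under $\iota_2$ the weight of $P$ is unchanged while the weight of $Q$ is cyclically rotated by one. After $n$ steps the weights of both components return to their original values. The symmetric statement for $\iota_1$ is obtained from \ref{item:iota_swap}.

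The main point to double-check is that \Cref{lem:com_EF_iota} really covers the $0$-operators of the structure not shifted by $\iota_{k'}$. In its proof, $G_0^{(1)}=\iota_1 G_1^{(1)}\iota_1^{-1}$ is commuted past $\iota_2$ using the commutation of $\iota_1$ and $\iota_2$. So it is essential to choose $\iota_{k'}$ rather than $\iota_k$: the operator $E_0^{(k)}$ does not commute with $\iota_k$ in general. The identity $\cRSK=\iota_1^n=\iota_2^n$ is exactly what allows us to make this choice for each $k$.
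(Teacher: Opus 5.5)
Your proof is correct and follows the paper's argument: write $\cRSK=\iota_{k'}^{\,n}$ with $k'=3-k$, then use \Cref{lem:com_EF_iota} to commute each operator of the $k$-th structure, including the $0$-operators, past $\iota_{k'}$. Your extra remarks on the handling of $\zero$ and on weight preservation do not appear in the paper's one-line proof, but they are correct and harmless.
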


We call the commutation relations \eqref{eq:crystal_sym} the \defn{crystal symmetry of the column skew RSK dynamics}.

\begin{proof}
    By \Cref{prop:properties of iota and cRSK}, \ref{item:cRSK iota} we have $\cRSK = \iota_1^n = \iota_2^n$. For $k\in \{1,2\}$, let $G\in \{ E_i^{(k)}, F_i^{(k)} \mid 0\le i \le n-1\}$. Then by \Cref{lem:com_EF_iota}, $G$ commutes with $\iota_{k'}$, for $k'=3-k$ and hence with $\cRSK$.
\end{proof}

\subsection{Leading paths for pairs of skew tableaux}
\label{sec:leading_skew}

The result of \Cref{th:crystal_sym} allows us to relate the $(\asl_n \oplus \asl_n)$-crystal structures on the sets of pairs of skew tableaux $\bigsqcup_{\lambda,\rho} \SST(\lambda/\rho,n)^2$ and on the set $\bigsqcup_\mu \HWT(\mu,n)^2$, the latter defined in \Cref{subs:crystal pairs HWT}. For the next lemma recall the projections $\Phi,\Phi^-$ introduced in \Cref{def:Phi} and \Cref{def:Phi_minus}.

\begin{lemma}
    \label{lem:kashiwara_phi} 
    For any $k\in \{1,2\}$ and $i\in \{0,\dots,n-1\}$, we have
    \begin{equation} \label{eq:kashiwara_phi}
        E^{(k)}_i \circ \Phi = \Phi \circ E^{(k)}_i
        \qquad
        \text{and}
        \qquad
        F^{(k)}_i \circ \Phi = \Phi \circ F^{(k)}_i.
    \end{equation}
    In other words $\Phi$ is a strict crystal morphism. Similarly, also $\Phi^-$ is a strict crystal morphism.
\end{lemma}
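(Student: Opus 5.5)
The plan is to reduce to stable pairs using the crystal symmetry of the dynamics, and then read off the action on the soliton data through \Cref{prop:EF_stable}. By \Cref{th:crystal_sym}, every Kashiwara operator $G\in\{E^{(k)}_i,F^{(k)}_i\}$ commutes with $\cRSK$. The soliton data is also invariant: $\Phi\circ\cRSK=\Phi$, since $\Phi$ is defined through limits $t\to\infty$ of the Fomin field, which is only shifted by $\cRSK$ (\Cref{prop:properties of iota and cRSK}\ref{item:cRSK field}). Hence it suffices to prove $\Phi(G(P,Q))=G(\Phi(P,Q))$ after replacing $(P,Q)$ by $\cRSK^t(P,Q)$ with $t$ large. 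One also has to check that $G(P,Q)=\zero$ if and only if $G(\Phi(P,Q))=\zero$; this too will be verified on stable representatives.

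\emph{Classical operators} ($i\ge1$). Take $t$ large enough that $(P_t,Q_t)$, $\iota_k^{-1}(P_t,Q_t)$, and also $G(P_t,Q_t)$ are stable in the sense of \Cref{def:stability}. The last is possible because $G(P_t,Q_t)=\cRSK^t G(P,Q)$ and \Cref{prop:stable} applies to $G(P,Q)$. For a stable pair, rows are far apart horizontally. Indeed, $P_t=P+t\mu$ with strictly separated row speeds in the generic case; in general rows with equal $\mu_j$ travel together but remain rigid. The row reading word of $P_t$ is the concatenation of its rows from bottom to top, and this is exactly the reading word $h_\ell\cdots h_1$ of $H_k$ in \Cref{def:kashiwara_HWT}. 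The rows of $H_k$ are the row contents of $P_t$, by \eqref{eq:H_1 limit} and \eqref{eq:H_2 limit} together with the stabilization \eqref{eq:forward stabilization field}. The signature rule therefore acts identically on both objects. Rows beyond $\ell(\mu)$, which form the kernel part of zero width, carry no labels once stable, by \Cref{prop:properties_kernel}\ref{item:ker_length}. So $e_i,f_i$ change the same letter in the same row, which gives the commutation and the simultaneous vanishing.

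\emph{Zero operators.} Apply \Cref{prop:EF_stable}\ref{item:action_e_0},\ref{item:action_f_0} to the stable representative. They state that $E^{(k)}_0(P_t,Q_t)$ (resp. $F^{(k)}_0$) is nonzero whenever $e_0(H_k)$ (resp. $f_0$) is nonzero, and that its $k$-th tableau has row contents equal to those of $e_0(H_k)$. Only row $r$ of the shape shifts, by one box, in both tableaux; the $k'$-th tableau is unchanged in content because $\iota_k^{\pm1}$ and $E^{(k)}_1$ do not alter it on stable pairs. The main obstacle is to conclude that the shifted pair is still stable, so that its $\Phi$ can be read off rowwise. For this I would argue that shifting one row by a bounded amount can be absorbed by applying $\cRSK^{s}$ for further $s$: row $r$ travels with speed $\mu_r$ and stays separated from its neighbours after finitely many steps, so the row contents of the result are those of $(e_0(H_k),H_{k'})$. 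This yields $\Phi\circ E^{(k)}_0=E^{(k)}_0\circ\Phi$, and similarly for $F^{(k)}_0$. For the converse vanishing statement, use $E^{(k)}_0=\iota_kE^{(k)}_1\iota_k^{-1}$ and the classical case applied to $\iota_k^{-1}(P_t,Q_t)$, whose soliton data is $\pr(H_k)$ by \Cref{prop:EF_stable}\ref{item:iota_k}.

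For $\Phi^-$ the argument is the same with $t\to-\infty$, negatively stable pairs, and the bottom rows labelled $L+j$ as in \Cref{def:Phi_minus}. The reading word again agrees with the concatenation order of $H^-_k$.
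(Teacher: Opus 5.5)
Your proposal is correct and follows the paper's proof: you commute the operators with $\cRSK^t$ via \Cref{th:crystal_sym}, read the classical operators off the row reading word of a stable representative, invoke \Cref{prop:EF_stable} for the $0$-operators, and treat $\Phi^-$ analogously. The step you flag as the main obstacle does not need the speed-separation heuristic, which in any case says nothing about rows of equal speed. Exactly as in your classical case, $E^{(k)}_0$ commutes with $\cRSK$, so you can take $t$ large enough that $E^{(k)}_0(P_t,Q_t)=\cRSK^t E^{(k)}_0(P,Q)$ is itself stable and then read its $\Phi$ off rowwise.
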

\begin{proof}
    For any pair of semistandard Young tableaux $(P,Q)$, call $(H_1,H_2) = \Phi(P,Q)$. By \Cref{th:crystal_sym}, we have $\cRSK^t \circ E^{(k)}_i(P,Q) = E^{(k)}_i \circ \cRSK^t (P,Q)$ for all $t\in \mathbb{Z}$. Taking $t>0$ large enough this implies that, when $i \neq 0$, we have $\Phi \circ E^{(k)}_i(P,Q) = E^{(k)}_i(H_1,H_2)$. On the other hand case $i=0$ was already covered in \Cref{prop:EF_stable}\ref{item:action_e_0}. This proves the first relation in \eqref{eq:kashiwara_phi}. The remaining relation in \eqref{eq:kashiwara_phi} as well as the intertwining relations involving $\Phi^-$ can be proven analogously.
\end{proof}

In view of \cref{lem:kashiwara_phi}, we define the leading path for a pair of skew tableaux as the pull-back along $\Phi$ of the leading path for pairs of horizontally weak tableaux.

\begin{definition}
    \label{def:leading_PQ}
    Let $(P,Q)\in\bigsqcup_{\lambda,\rho} \SST(\lambda/\rho,n)$ and  $(H_1,H_2)=\Phi(P,Q)$.
    The \defn{leading path $\mathcal{L}_{P,Q}$ for the pair $(P,Q)$} is
    \[
        \mathcal{L}_{P,Q} = \mathcal{L}_{H_1,H_2},
    \]
    where $\mathcal{L}_{H_1,H_2}$ is the leading path for the pair $(H_1,H_2)$ introduced in \Cref{def:leading path pair HWT}.
\end{definition}

\begin{lemma}
    \label{lem:bottom_row}
    
    Let $\lambda, \rho \in \signatures$ be signatures, and let $(P_1,P_2)\in \SST(\lambda/\rho,n)$. Assume that $(H_1,H_2) = \Phi(P_1,P_2) \in \HWT(\mu,n)$ for some partition $\mu$ with $\ell(\mu) \le n$. Assume that, for some $k\in \{1,2\}$ and for some $H_\lw \in \KR{\mu}_\lw$ such that $D'(H_\lw) < D'(H_{\max})$, we have
    \[
       H_k = \underline{H_{\lw}},
    \]
    where the element $\underline{H_{\lw}}$ was introduced in \Cref{lem:explicit leading path}. Let $(\overline{P}_1,\overline{P}_2) = E^{(k)}_0(P_1,P_2)$ and let $\overline{\lambda} / \overline{\rho}$ we the shape of $\overline{P}_1,\overline{P}_2$. Then, for $\ell = \ell(\lambda)$, we have
    \begin{subequations}
    \begin{gather}
        \label{eq:bottom_row_shape}
        \overline{\lambda}_\ell = \lambda_\ell, \qquad\qquad \overline{\rho}_\ell = \rho_\ell,
        \\
        \label{eq:rho_+1}
        \abs{\overline{\rho}} = \abs{\rho} +1.
        \end{gather}
    \end{subequations}
\end{lemma}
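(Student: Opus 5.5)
The plan is to compare $(P_1,P_2)$ and $(\overline P_1,\overline P_2)=E_0^{(k)}(P_1,P_2)$ through their full cRSK trajectories. By \Cref{th:crystal_sym}, $\cRSK^t\circ E_0^{(k)}=E_0^{(k)}\circ\cRSK^t$ for every $t\in\Z$. So the Fomin field of $(\overline P_1,\overline P_2)$ is the field of $(P_1,P_2)$ with $E_0^{(k)}$ applied along every loop. I would then read off the change of shape in the two asymptotic regimes $t\to\pm\infty$, where the action of $E_0^{(k)}$ is explicit, and transport it back to $t=0$.

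First, \eqref{eq:rho_+1}. For $t$ large, $\iota_k^{-1}\cRSK^t(P_1,P_2)$ is stable by \Cref{prop:stable}. Hence \Cref{prop:EF_stable}\ref{item:action_e_0} applies: $E_0^{(k)}$ increases both $\lambda_r$ and $\rho_r$ by one, where $r$ is the row of $H_k$ modified by $e_0$, and $e_0(H_k)\neq\zero$ by \Cref{lem:explicit leading path}. So the inner shapes at time $t$ satisfy $|\overline\rho^{(t)}|=|\rho^{(t)}|+1$. On the other hand, $|\boldsymbol\lambda(tn,0)|-|\boldsymbol\lambda(0,0)|=t\,|\lambda/\rho|$, by \Cref{prop:conservation_Fomin} and \Cref{prop:properties of iota and cRSK}\ref{item:sum environment}. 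The number of cells $|\lambda/\rho|$ is preserved by $E_0^{(k)}$: the map $\iota_k$ preserves it, and $E_1^{(k)}$ does not change the shape. Both fields therefore grow by the same amount, and the difference of $+1$ at time $t$ descends to $|\overline\rho|=|\rho|+1$.

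Second, \eqref{eq:bottom_row_shape}. Here I would use the backward regime instead. Let $\ell(\nu)$ be the length of the kernel of \Cref{prop:properties_kernel}. If $\ell>\ell(\mu)$, the bottom row belongs to $\ker(P_1,P_2)$, which is invariant under $\iota_k$ and under classical operators, so it is untouched and we are done. If $\ell=\ell(\mu)$, then for $t\to-\infty$ the bottom row is a soliton row of the negatively stable pair. Its length and position at time $0$ are determined by its value at time $-t$ plus the rigid shift $t\mu^-_1$, and this shift is the same for both fields. I would prove the backward analogue of \Cref{prop:EF_stable}\ref{item:action_e_0}: on a negatively stable pair, $E_0^{(k)}$ shifts only the row that $e_0$ modifies in $H_k^-$, where $(H_1^-,H_2^-)=\Phi^-(P_1,P_2)$. \Cref{lem:kashiwara_phi} guarantees that $\Phi^-$ intertwines the operators. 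It then remains to check that $e_0$ does not act on the factor of $H_k^-$ that corresponds to the bottom row. By \Cref{thm:scattering_rules}, $H_k^-$ should be the $R$-matrix reordering $\sigma^-(H_k)$.

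The main obstacle is this last crystal computation. Since $\sigma^-$ is a crystal isomorphism, I want to know where $e_0$ acts on $\sigma^-(\underline{H_\lw})$. By \eqref{eq:lwshift}, $\underline{H_\lw}=\eta(\overline{H_\hw})$, so $\sigma^-(\underline{H_\lw})=\eta'(\overline{H_\hw})=\mathrm{comp}\circ\mathrm{flip}(\overline{H_\hw})$. By \Cref{lem:cf_lusztig}, $e_0$ on this element corresponds to $f_0$ on $\overline{H_\hw}$. By \Cref{lem:f0barb}, $f_0$ acts on the factor $\bar h_k$ with $k$ minimal containing an $n$. In the proof of \Cref{th:leading_f0} it is shown that $k>1$ when $H_\hw\neq H_{\max}$, which holds here because $D'(H_\lw)<D'(H_{\max})$. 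After $\mathrm{flip}$, the factor $\bar h_1$ is the one matched with the bottom row $\ell$ of the backward asymptotics, so $e_0$ does not touch it. I expect the delicate part to be the bookkeeping of the row/factor identification, namely whether the bottom row corresponds to $\bar h_1$ or to $\bar h_\ell$ under $\Phi^-$ and $\mathrm{flip}$. I would verify this convention first on the example of \Cref{ex:projection}.
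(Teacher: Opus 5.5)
Your argument for \eqref{eq:rho_+1} works. The paper reads it off more directly: in $E_0^{(k)}=\iota_kE_1^{(k)}\iota_k^{-1}$, the new inner shape is the intermediate signature that gains the cell turned from a $2$ into a $1$. The gap is in \eqref{eq:bottom_row_shape}, and it occurs in two places.

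First, the kernel case is wrong. By \Cref{prop:properties_kernel}, the kernel rows are the bottom rows only as $t\to+\infty$, and they are the top rows as $t\to-\infty$. At time $0$ the bottom row can still contain labelled cells and move. Invariance of $\ker(P_1,P_2)$ as a signature says nothing about $\rho_\ell$ at time $0$. In the far past the bottom row is always a soliton row, so no case split is needed at all.

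Second, and decisively, the bottom row does not move rigidly from the negatively stable regime to time $0$, because solitons interact. Take $n=2$, $\mu=(2,1)$ and $P_1=P_2$ with $H_1=H_2=2\otimes 12=\underline{H_\lw}$, where $H_\lw=2\otimes 12$. Then the bottom row carries $22$, advancing by $2$ per $\cRSK$ step, in the past. In the future it carries $2$, advancing by $1$ per step. So agreement of the two fields on the bottom row at time $-t$ tells you nothing about time $0$. Which row receives the extra cell at intermediate times is exactly what has to be controlled. A backward analogue of \Cref{prop:EF_stable} only gives information at time $-t$.

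The missing idea is to turn the claim into a property that is monotone in time. Since $E_0^{(k)}=\iota_kE_1^{(k)}\iota_k^{-1}$, the extra cell of $\overline\rho$ lies in the row where $e_1$ turns a $2$ into a $1$ in the $k$-th tableau of $\iota_k^{-1}(P_1,P_2)$. The $2$'s in the last row of that tableau sit exactly at the $1$'s of the last row of $P_k$. So it suffices to show that the last row of $P_k$ contains no $1$. For $\overline\lambda_\ell$, apply the same argument to $\cRSK(P_1,P_2)$, using \Cref{th:crystal_sym} and the fact that the outer shape of a pair is the inner shape of its $\cRSK$-image.

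This is the reduction made in the paper's proof, which then appeals to \Cref{prop:cRSK_upright}. Labelled cells of $P_k$ move weakly up-right in forward time, so a $1$ can leave the last row but never enter it. Hence the input must come from $t\to-\infty$. Here your own computation supplies exactly what is needed, once you extract the stronger fact behind it. The bottom factor of $\sigma^-(\underline{H_\lw})=\eta'(\overline{H_\hw})=\mathrm{comp}\circ\mathrm{flip}(\overline{H_\hw})$ is $\mathrm{comp}(\bar h_1)$. Since $h_1$ of a classically highest weight element consists only of $1$'s, this factor consists only of the letter $\max(H_\hw)\ge 2$. So it contains no $1$ at all, and that is the property that propagates forward to time $0$. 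The weaker statement that ``$e_0$ does not act on it'' would not propagate.
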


\begin{proof}
    Let $\ell' = \ell(\mu)$, so that $\ell' \le \ell$ and let $m = \min(H_\lw)$. Denote by $H_\lw = h_{\ell'}' \otimes \cdots \otimes h_1'$ where $\ell' \le \ell$ and $\underline{H_\lw} = \underline{h_{\ell'}'} \otimes \cdots \otimes \underline{h_1'}$. Since $H_\lw$ is a lowest weight element, $\min(h_{\ell'}')=m$ only if $m = n$, in which case $H_\lw$ is in the same classically connected component of $H_{\max}$. Then, the hypothesis that $D'(H_\lw) < D'(H_{\max})$ implies that $h_{\ell'}'$ contains no $m$ entry and $\underline{h_{\ell'}'}$ contains no $1$ entries.

    Let us prove the Lemma in the case $k=1$, since an analogous proof also works for the $k=2$ case. By hypothesis we have $\Phi(P_1,P_2) = (\underline{H_\lw},H)$ for some $H\in \KR{\mu}$. Since $\underline{H_{\ell'}'}$ does not possess any $1$ entry, then also the last row of $P_1$ does not possess any $1$ entry: this is a basic consequence of \cref{prop:cRSK_upright}, of the fact that the map $\cRSK = \iota_2^n$ can be realized through column internal insertion (\Cref{prop:iota2 by column insertion}) and the definition of the projection $\Phi$. 

    By direct analysis of the action of the Kashiwara operator $E^{(1)}_0$, we see that the only way any of the equalities \eqref{eq:bottom_row_shape} fails is that the action of $E^{(1)}_1$ on $(\widetilde{P}_1,\widetilde{P}_2) = \iota_1^{-1}(P_1,P_2)$ turns a 2 entry in the last row of $\widetilde{P}_1$ into a 1 entry. Since the number of 2 entries in the last row of $\widetilde{P}_1$ is equal to the number of 1 entries in the last row of $P_1$ (which is zero), equalities \eqref{eq:bottom_row_shape} always hold if $H_\hw \neq H_{\max}$. Finally, \eqref{eq:rho_+1} follows from the definition of the operator $E^{(k)}_0$ and always holds as long as $E^{(1)}_0(P,Q) \neq \zero$.
\end{proof}

We can now examine the transformation of a pair of tableaux under leading paths.

\begin{theorem}
    \label{cor:bottom_row}
    Let $\lambda, \rho \in \signatures_\ell$ be signatures of length $\ell$ and let $(P,Q)\in \SST(\lambda/\rho,n)$. Assume that $(H_1,H_2) = \Phi(P_1,P_2) \in \HWT(\mu,n)^2$ for some partition $\mu$ with $\ell(\mu) \le n$. Then, for any leading paths $\mathcal{L}_{H_1},\mathcal{L}_{H_2}$ for $H_1, H_2$, letting $\mathcal{L}_{P,Q}$ be the corresponding leading path for $(P,Q)$ and letting $\overline{\lambda}/\overline{\rho}$ be the shape of $\mathcal{L}_{P,Q}(P,Q)$, we have
    \begin{equation} \label{eq:rho_difference}
        \abs{\overline{\rho}} = \abs{\rho} + 2 D'(H_{\max}) - D'(H_1) - D'(H_2)
    \end{equation}
    and
    \begin{equation} \label{eq:bottom_row_shape_lm}
        \bar{\lambda}_\ell=\lambda_\ell,
        \qquad\qquad
        \bar{\rho}_{\ell}=\rho_{\ell}.
    \end{equation}
\end{theorem}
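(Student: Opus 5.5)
The plan is to follow the leading path $\mathcal{L}_{P,Q}=G^{(2)}_{k'}\cdots G^{(2)}_1G^{(1)}_k\cdots G^{(1)}_1$ one operator at a time. At each step we track two things: the change of $\abs{\rho}$ and the last row of the shape. The bookkeeping rests on \Cref{lem:kashiwara_phi}, which says $\Phi$ intertwines every Kashiwara operator. Consequently, after applying any initial segment of the path to $(P,Q)$, the image under $\Phi$ is exactly the corresponding intermediate point of the leading paths $\mathcal{L}_{H_1}$, $\mathcal{L}_{H_2}$ on $\HWT(\mu,n)^2$. In particular no intermediate operator returns $\zero$: the operator is nonzero on the $\HWT$ side, and for the $0$-operators this also forces nonvanishing on the tableau side, via the definition \eqref{eq:EF0} together with \Cref{lem:kashiwara_phi}. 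Moreover, the shape $\mu$, and hence the hypothesis $\ell(\mu)\le n$, is preserved throughout.

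First consider the classical operators $E^{(k)}_i,F^{(k)}_i$ with $i\ge1$. These act by the signature rule on a single tableau, changing entries but never the skew shape, so they leave $\rho$ and $\lambda$, and in particular $\lambda_\ell,\rho_\ell$, unchanged. Next consider the $0$-operators. By \Cref{def:leading_path}, the only $0$-arrows occurring in a leading path are of the form $E_0$ applied at a point whose relevant $\HWT$ component equals $\underline{H_{\lw}}$ with $D'<D'(H_{\max})$. At such a step \Cref{lem:bottom_row} applies directly to the current pair: its hypothesis $\Phi(\cdot)_k=\underline{H_{\lw}}$ is guaranteed by the intertwining above. Hence each such step leaves the last row of $\lambda$ and $\rho$ unchanged and increases $\abs{\rho}$ by exactly one. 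Combining the two cases by induction along the path gives \eqref{eq:bottom_row_shape_lm}. It also gives $\abs{\overline\rho}=\abs{\rho}+N_1+N_2$, where $N_j$ is the number of $e_0$'s in $\mathcal{L}_{H_j}$.

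It remains to count the $e_0$'s. By \Cref{lem:explicit leading path}, each $e_0$ in $\mathcal{L}_{H_j}$ is a dual regular arrow and raises $D'$ by one. Every other step is classical and preserves $D'$ by \Cref{lem:energy invariance}. The path ends at $H_{\max}$, so $N_j=D'(H_{\max})-D'(H_j)$, which is equivalently \Cref{rem:alternative_def_dual_energy} combined with $D'(H_{\max})=\Abs{\mu}$ from \Cref{lem:unique_highest}. Summing over $j=1,2$ yields \eqref{eq:rho_difference}.

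The main obstacle is verifying that \Cref{lem:bottom_row} genuinely applies at each intermediate $0$-step, since the lemma is stated in terms of the original pair. The worry is that the shape length $\ell$ might change along the path, and that the second crystal structure acting after the first might disturb the hypotheses. I would handle this by noting two things. First, all operators preserve the length of the signatures, since the Fomin field has constant length. Second, by \Cref{prop:bicrystal_sym} the two families commute, so the $(1)$-part and the $(2)$-part of the path can be treated independently, each via \Cref{lem:kashiwara_phi}.
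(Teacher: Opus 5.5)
Your proposal is correct and is essentially the paper's argument: classical operators fix the skew shape, each $e_0$-step of the leading path is an application of \Cref{lem:bottom_row} (raising $\abs{\rho}$ by one and fixing the last row), and $\mathcal{L}_{H_j}$ contains exactly $D'(H_{\max})-D'(H_j)$ such steps. Your use of \Cref{lem:kashiwara_phi} to verify the hypothesis of \Cref{lem:bottom_row} at each intermediate pair just makes explicit what the paper calls ``repeated applications''. The appeal to commutation of the two crystal structures is harmless but not needed, since the $(2)$-steps come after the $(1)$-steps and \Cref{lem:bottom_row} allows an arbitrary other component.
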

\begin{proof}
    Let $\mathcal{L}_{H_1}$, $\mathcal{L}_{H_2}$ be the leading paths corresponding to $H_1,H_2$. They consist of classical Kashiwara operators and of 0-arrows joining $\underline{H_\lw} \to e_0(\underline{H_\lw})$ for some lowest weight element $H_\lw$. Then, each path $\mathcal{L}_{H_i}$ contains exactly $D'(H_{\max}) - D'(H_i)$ $e_0$ arrows and, by repeated applications of \Cref{lem:bottom_row}, the action of the path $\mathcal{L}_{P,Q}$ on the pair $(P,Q)$ transforms the internal shape according to \eqref{eq:rho_difference}. Similarly, by \Cref{lem:bottom_row} the length last row of the tableaux $(P,Q)$ never gets modified and \eqref{eq:bottom_row_shape_lm} also holds.
\end{proof}

\subsection{Scattering rules}

Using the crystal symmetry of \Cref{th:crystal_sym} we deduce asymptotic properties of the cRSK dynamics. For this recall the notion of stability of \Cref{def:stability}.

\begin{proposition}
    \label{prop:Kashiwara_ker}
        Let $(P,Q)\in\bigsqcup_{\lambda,\rho} \SST (\lambda/\rho,n)$ and for 
       \[
            G \in \{ \iota_k,E^{(k)}_i,F^{(k)}_i \mid 0\le i \le n-1, 1\le k \le 2 \},
        \]
        let $(\widetilde{P},\widetilde{Q}):=G(P,Q)$. Then we have
        \begin{enumerate} [label = {\rm (\Roman*)}]
            \item \label{item:same asympotic increment} $(P,Q)$ and $(\widetilde{P},\widetilde{Q})$ have a common positive and negative asymptotic shape;
            \item \label{item:same kernel}$\ker(P,Q)=\ker(\widetilde{P},\widetilde{Q})$.
       \end{enumerate}
\end{proposition}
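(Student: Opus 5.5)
We would treat the three kinds of operators separately, using that the asymptotic shapes and the kernel are determined by the Fomin field $\boldsymbol{\lambda}[P,Q]$ together with its long-time behaviour.

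\emph{The case $G=\iota_k$.} By \Cref{prop:properties of iota and cRSK} \ref{item:iota2 field}, \ref{item:iota1 field}, the field of $\iota_k(P,Q)$ is a unit coordinate shift of $\boldsymbol{\lambda}[P,Q]$. Forward and backward stabilization in \Cref{prop:stable field} concern limits along $t\to\pm\infty$ of $\boldsymbol{\lambda}(tn+x,y)$, so they are invariant under such shifts. This gives \ref{item:same asympotic increment}. Property \ref{item:same kernel} is exactly \Cref{prop:properties_kernel} \ref{item:kernel_iota}.

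\emph{Classical operators $E^{(k)}_i,F^{(k)}_i$ with $i\ge1$.} We would use \Cref{th:crystal_sym}: $\cRSK^t\circ G=G\circ\cRSK^t$ for all $t\in\Z$. Choose $|t|$ large enough that $\cRSK^{t}(P,Q)$ is stable (respectively, negatively stable). The operator $G$ only relabels entries within each tableau and never changes the skew shape $\lambda/\rho$. Hence the shapes of $\cRSK^t(\widetilde P,\widetilde Q)=G(\cRSK^t(P,Q))$ coincide with those of $\cRSK^t(P,Q)$ for all $t$. Since the asymptotic shapes are read off from the growth of the external shapes $\lambda(t)$ as $t\to\pm\infty$ (\Cref{prop:stable}), \ref{item:same asympotic increment} follows. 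For the kernel we would use \Cref{prop:properties_kernel} \ref{item:ker_asymptotic_pos} (or \ref{item:ker_asymptotic_neg}). This expresses $\nu$ through the rows $\ell(\mu)+i$ of the shapes $\boldsymbol{\lambda}(t,0)=$ external shape of the pair at time $t$, again as $t\to+\infty$. These shapes, and $\ell(\mu)$, are unchanged by $G$, so $\ker$ is preserved.

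\emph{Affine operators.} For $G=E^{(k)}_0$ or $F^{(k)}_0$, write $G=\iota_k\circ G_1\circ\iota_k^{-1}$ with $G_1$ classical, as in \eqref{eq:EF0}. Each factor preserves both the asymptotic shapes and the kernel by the two previous cases, and the conclusion follows by composition. One caveat must be checked: if $G(P,Q)=\zero$ there is nothing to prove, and otherwise each intermediate pair is a genuine pair of tableaux, so the previous cases apply.

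The main subtlety is the kernel claim for classical operators. One has to make sure that \Cref{prop:properties_kernel} \ref{item:ker_asymptotic_pos} really reads $\nu$ only off the shapes along the loop $\boldsymbol{\lambda}(t,0)$. It does: those signatures are the internal/external shapes of $\cRSK^{t}$-iterates and of intermediate $\iota_1$-shifts, and the intermediate shapes are also unaffected because $G$ commutes with $\iota_{k'}$ by \Cref{lem:com_EF_iota}. If that reading were delicate, we would instead use \Cref{prop:properties_kernel} \ref{item:kernel_loop} on the simple loop of $(P,Q)$. There we would argue directly that $\nu$ is the maximal signature removable column-wise from both tableaux. This property depends only on shapes (the intermediate shapes of the $G$-modified tableau are controlled through \Cref{lem:com_EF_C} and jeu de taquin invariance), but we expect the first route to suffice.
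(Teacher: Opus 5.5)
Your proposal is correct and follows the paper's proof: coordinate shifts of the Fomin field handle $\iota_k$, the crystal symmetry $\cRSK^t\circ G=G\circ\cRSK^t$ together with shape preservation handles classical $G$, and the conjugation \eqref{eq:EF0} handles the $0$-operators. One small fix to your side remark on the kernel: when $G$ acts on the first tableau it does change the intermediate signatures $\boldsymbol{\lambda}(tn+i,0)$ with $0<i<n$ (it commutes with $\iota_2$, not with $\iota_1$), so take the limit in item (IV) of \Cref{prop:properties_kernel} along $t\in n\Z$, where $\boldsymbol{\lambda}(tn,0)$ is the inner shape of $\cRSK^t(P,Q)$ and is genuinely unchanged.
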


\begin{proof}
    Consider first the case $G\in \{\iota_1, \iota_2\}$. It is evident from \Cref{prop:stable field}, \Cref{prop:properties of iota and cRSK}, \ref{item:iota2 field}, \ref{item:iota1 field} and \Cref{prop:properties_kernel}, \ref{item:ker_asymptotic_pos}, that the action of $G$ cannot modify neither the asymptotic shape nor the kernel of $(P,Q)$. 
    
    Consider then the case $G \in \{ E^{(k)}_i,F^{(k)}_i \mid 1\le i \le n-1, 1\le k \le 2 \}$. By \Cref{th:crystal_sym} and \Cref{prop:properties of iota and cRSK}, \ref{item:cRSK iota}, we have $\cRSK^t (\widetilde{P},\widetilde{Q}) = \cRSK^t\circ G(P,Q) = G \circ \cRSK^t (P,Q)$ for all $t\in\Z$.
    In particular, if $t$ is large enough so that both $\cRSK^t(P,Q)$ and $\cRSK^t (\widetilde{P},\widetilde{Q})$ are stable, it follows that the asymptotic increments and the kernel of $(P,Q)$ and $(\widetilde{P},\widetilde{Q})$ are the same since $G$ does not modify the shape of the tableaux it acts upon. 
    
    Finally, the statements \ref{item:same asympotic increment}, \ref{item:same kernel} for the case $G \in \{ E_0^{(k)}, F_0^{(k)}, 1\le k \le 2\}$, follow combining the above two cases, since 0-operators are conjugations of 1-operators and $\iota$-maps as in \eqref{eq:EF0}.
\end{proof}

\begin{theorem} \label{thm:scattering_rules}
    Fix $(P,Q) \in \bigsqcup_{\rho,\lambda} \SST (\lambda/\rho,n)^2$ and let $(H_1,H_2) = \Phi(P,Q)$ and $(H_1^-,H_2^-) = \Phi^-(P,Q)$. Assume that $H_1,H_2 \in \HWT(\mu,n)$ and $H_1^-,H_2^- \in \HWT(\widetilde{\mu},n)$ with $\ell(\mu) \le n$. Then, we have
    \begin{enumerate} [label = {\rm (\Roman*)}]
        \item \label{item:negative asymptotic shape} $\widetilde{\mu} = \mu^-$, where $\mu^- = \sigma^-(\mu)$ and $\sigma^-$ is the total reverse permutation; 
        \item \label{item:negative asymptotic tableaux} $H_{k}^- = \sigma^-(H_k)$ for $k\in \{1,2\}$, where $\sigma^-$ is the isomorphism of crystals defined in \Cref{rem:reverse_iso}.
    \end{enumerate}
\end{theorem}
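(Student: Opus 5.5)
Our plan is to reduce the statement to the single pair $(H_{\max},H_{\max})$ by means of the crystal symmetries, and then to compute that case directly through the BBS. We first move $(P,Q)$ along its leading path. Since $\ell(\mu)\le n$, \Cref{def:leading_PQ} and the remark after \Cref{def:leading path pair HWT} give a composition $\mathcal{L}_{P,Q}$ of Kashiwara operators such that $\Phi(\mathcal{L}_{P,Q}(P,Q)) = (H_{\max},H_{\max})$; this uses \Cref{lem:kashiwara_phi}, the fact that $\Phi$ is a strict crystal morphism. By \Cref{prop:Kashiwara_ker}\ref{item:same asympotic increment}, every step preserves both the positive and the negative asymptotic shape. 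Hence $\widetilde{\mu}$ is unchanged along the path, and it suffices to prove \ref{item:negative asymptotic shape} for $(P',Q') := \mathcal{L}_{P,Q}(P,Q)$.

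Next we reduce to the BBS. For $(P',Q')$, $\Phi(P',Q') = (H_{\max},H_{\max})$, so every asymptotic row contains only $1$'s. For $t$ large, $\cRSK^t(P',Q')$ is stable with $P_t=Q_t$ on the moving rows, all of whose entries are $1$. Removing the kernel (\Cref{prop:properties_kernel}, \Cref{prop:Kashiwara_ker}\ref{item:same kernel}), we expect $P_t=Q_t=T\in\SST(1)$ for the whole pair. Indeed, once both tableaux have all entries $1$ in a stable configuration, the Fomin field is determined by an $n=1$ evolution on each block. More carefully, a pair $(T,T)$ with $T\in\SST(1)$ is invariant under the cRSK description, since $\iota_1=\iota_2$ there. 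The pair is then governed by \Cref{prop:cRSK_BBS}: the cRSK dynamics becomes the BBS dynamics of $B=\Pr(T)$. By \Cref{thm:KKR linearization}, the forward and backward asymptotic shapes of a BBS configuration coincide as multisets. Translated through $\Pr$, forward solitons of lengths $\mu$ occupy the top rows, and backward ones occupy the bottom rows in reversed order. This gives $\widetilde{\mu}=\mu^-$.

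For \ref{item:negative asymptotic tableaux} we use that $\Phi^-$ is also a strict crystal morphism (\Cref{lem:kashiwara_phi}). Consider the map $\Phi^-\circ\Phi^{-1}$ on the image of $\Phi$, taken on crystal components reached via the leading paths. This gives a crystal isomorphism $\KR{\mu}\to\KR{\mu^-}$ on each factor. Well-definedness holds because both sides depend only on $(P,Q)$ and commute with all $E^{(k)}_i,F^{(k)}_i$. Explicitly, start at the base point $(H_{\max},H_{\max})$, where $H_k^-$ consists only of $1$'s, which equals $\sigma^-(H_{\max})$. Then follow the inverse leading path: applying the same operators to both sides yields $H_k^- = g\,\sigma^-(H_{\max}) = \sigma^-(g H_{\max}) = \sigma^-(H_k)$, because $\sigma^-$ commutes with Kashiwara operators. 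By \Cref{thm:dual_connectedness}, or already by plain connectivity (\Cref{rem:KR_connected}), every $H_k$ is reached this way, and uniqueness of the crystal isomorphism concludes the argument.

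The main obstacle is the BBS step. We must justify that, once $\Phi=(H_{\max},H_{\max})$, the pair actually evolves as a BBS, that is, the two tableaux agree with all entries equal to $1$ after the crystal moves. Also, the leading path may only make the soliton part, not the whole pair, of that form. To handle this, we would additionally apply classical operators $E_1^{(k)}$ repeatedly (these do not change shapes) to lower all entries to $1$. We would then argue via \Cref{prop:properties_kernel} and the column-insertion description \Cref{prop:iota2 by column insertion} that a pair of all-$1$ tableaux of the same shape must be of the form $(T,T)$, since $\SST(\lambda/\rho,1)$ has at most one element.
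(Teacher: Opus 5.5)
Your proposal follows the paper's proof. For (I) you push $(P,Q)$ along $\mathcal{L}_{P,Q}$ to an all-$1$ symmetric pair and conclude with \Cref{prop:Kashiwara_ker}, \Cref{prop:cRSK_BBS} and \Cref{thm:KKR linearization}. For (II) you use that $\Phi$ and $\Phi^-$ are strict crystal morphisms (\Cref{lem:kashiwara_phi}) and that $\sigma^-$ commutes with Kashiwara operators.

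Your version of (II) evaluates $\Phi^-(T,T)=(\sigma^-(H_{\max}),\sigma^-(H_{\max}))$ and then applies $\mathcal{L}_{P,Q}^{-1}$, which carries $(T,T)$ back to $(P,Q)$, to both sides. This is a more explicit form of the paper's argument that ``$\cRSK^{2t}$ induces a surjective crystal morphism, hence it is $\sigma^-$''. The a priori well-definedness of $\Phi^-\circ\Phi^{-1}$ that you assert is not available, since $\Phi$ is far from injective. You do not need it, because the transport argument proves it. Your aside about plain connectivity is also right. The energy grading of leading paths plays no role in this theorem: any path of Kashiwara operators from $(H_1,H_2)$ to $(H_{\max},H_{\max})$ lifts to $(P,Q)$ with nonvanishing steps, by strictness of $\Phi$.

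The one real correction concerns the paragraph you call the main obstacle, where your proposed remedy does not work. $E_1^{(k)}$ only turns $2$'s into $1$'s. Moreover, since $e_i(H_{\max})=\zero$ for $i\ge 1$, strictness of $\Phi$ forces every $E_i^{(k)}$ with $i\ge 1$ to act as $\zero$ on a pair $(P',Q')$ with $\Phi(P',Q')=(H_{\max},H_{\max})$.

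What is actually needed is weight conservation:
\begin{itemize}
\item By \Cref{prop:properties of iota and cRSK}\,\ref{item:content_i2}, $\iota_2$ preserves $\wt(P)$ and cyclically rotates $\wt(Q)$, so $\cRSK=\iota_2^n$ preserves both weights.
\item For $t$ large, the rows of index $>\ell(\mu)$ are constant along the loop defining $(P_t,Q_t)$, by \Cref{prop:properties_kernel}\,\ref{item:ker_asymptotic_pos}. So all labeled cells of $P_t$ and $Q_t$ lie in the soliton rows, giving $\wt(P)=\wt(H_1)$ and $\wt(Q)=\wt(H_2)$.
\item Hence $\Phi(P',Q')=(H_{\max},H_{\max})$ forces both $P'$ and $Q'$ to have only $1$-entries. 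Having the same skew shape, they coincide.
\end{itemize}
So $\mathcal{L}_{P,Q}(P,Q)=(T,T)$ with $T\in\SST(1)$ for the whole pair, not just its soliton part. The paper asserts this step without comment; with it, your argument is complete.
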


\begin{proof}
    Consider a leading path $\mathcal{L}_{P,Q}$ for the pair $(P,Q)$ and let $(T,T)= \mathcal{L}_{P,Q}(P,Q)$ be its image. Then, $T = \overline{\rho} \preceq \overline{\lambda}$ is a tableaux with 1-entries only. By \Cref{prop:Kashiwara_ker}, the positive and negative asymptotic shapes $\mu, \widetilde{\mu}$ of the pair $(P,Q)$ are the same as those of the pair $(T,T)$. By \Cref{prop:cRSK_BBS} the cRSK dynamics with initial data $(T,T)$ is equivalent to the BBS dynamics with initial data $B=\Pr(T)$, where $\Pr$ is the projection of \Cref{def:map_to_BBS}. Hence, $\mu$ is also the asymptotic shape of the configuration $B$, which can be computed through the KKR map $\KKR(B)=(\mu,J)$ for some rigging $J$. By \Cref{thm:KKR linearization} this is equal, upon reversal, to the negative asymptotic shape $\widetilde{\mu} = \mu^-$ and this proves \Cref{item:negative asymptotic shape}.

    To prove \Cref{item:negative asymptotic tableaux} recall that, by \Cref{th:crystal_sym}, the map $\cRSK^{2t}:(P_{-t},Q_{-t}) \to (P_{t},Q_{t})$ commutes with the Kashiwara operators $E^{(k)}_i,F^{(k)}_i$ for $k\in\{1,2\}$ and $i\in \{0,\dots,n-1\}$.
    Let $\mathcal{G}_{P,Q}$ be a finite collection of compositions of Kashiwara operators so that the projection of the orbit of the pair $(P,Q)$ $\{\Phi(g(P,Q)) \mid g \in \mathcal{G}_{P,Q} \} = \HWT(\mu,n)$ is the full set of HWT of shape $\mu$. Take $t>0$ large enough so that all pairs in the sets $\{g(P_{-t},Q_{-t}) \mid g \in \mathcal{G}_{P,Q}\}$ and $\{(P_{t},Q_{t}) \mid g \in \mathcal{G}_{P,Q}\}$ are respectively negatively and positively stable. Then the map 
    \[
        \cRSK^{2t} \colon \{g(P_{-t},Q_{-t}) \mid g \in \mathcal{G}_{P,Q}\} \to \{(P_{t},Q_{t}) \mid g \in \mathcal{G}_{P,Q}\}
    \]
    induces a surjective morphism of crystals $\HWT(\mu^-,n)^2 \to \HWT(\mu,n)^2$, which maps the element $(H_{\max}^-,H_{\max}^-)$ to $(H_{\max},H_{\max})$, where $H_{\max}^- \in \HWT(\mu^-,n)$, $H_{\max} \in \HWT(\mu,n)$ are the unique elements with only 1 entries. By symmetry also, the map $\cRSK^{-2t}$ induces a surjective morphism $\HWT(\mu,n)^2 \to \HWT(\mu^-,n)^2,$ and we must have $(H_1,H_2) \to (\sigma^-(H_1),\sigma^-(H_2))$, since $\sigma^-$ is the unique crystal isomorphism between $\KR{\mu}$ and $\KR{\mu^-}$. This completes the proof.
\end{proof}

\section{The linearizing map: proof and examples}

\subsection{Proof of \Cref{th:main_bij}}

In this subsection we construct explicitly the bijection $\Upsilon_{\mathrm{col}}$ transforming a pair of semistandard tableaux $(P,Q)$ into a quadruple $(H_1,H_2,\kappa,\nu)$ where $H_1,H_2$ are horizontally weak tableaux of same partition shape, $\kappa$ is a list of integers and $\nu$ is a signature. Most of the necessary ingredients to our construction have already been defined and studied in the above section.

\begin{definition}[The map $\Upsilon_{\mathrm{col}}$] \label{def:upsilon}
    Let $n\in \N$ and let $(P,Q) \in \bigsqcup_{\rho,\lambda} \SST(\lambda/\rho,n)^2$. We define the map
    \[
        \Upsilon_{\mathrm{col}}(P,Q) = (H_1,H_2,\kappa,\nu)
    \]
    as follows. First, we set $ (H_1,H_2) = \Phi(P,Q)$, where $\Phi$ is the soliton data map of \Cref{def:Phi} and denote by $\mu$ be the common shape of $H_1,H_2$. Set
    \[
        n'=\max \{ \ell(\mu),n \}.
    \]
    We regard $H_1,H_2$ as elements of $\HWT(\mu,n')$ and $P,Q$ as elements in $\bigsqcup_{\rho,\lambda} \SST(\lambda/\rho,n')^2$.
    
    Let $\mathcal{L}_{H_1}, \mathcal{L}_{H_2}$ be the leading paths in $\HWT(\mu,n')$ constructed in \Cref{def:leading_path}. Using the corresponding operators on pairs of tableaux define $\mathcal{L}_{P,Q} = \mathcal{L}_{H_1,H_2}$ as in \Cref{def:leading_PQ}. Then
    \[
        (T,T) = \mathcal{L}_{P,Q} (P,Q),
    \]
    for some tableau $T \in \SST(1)$. Applying to $T$ the map $\Xi$ of \eqref{eq:Xi} we obtain a pair $(B,\nu) = \Xi(T)$. Finally let $(\mu,J) = \KKR(B)$ be the rigged configuration associated with the BBS state $B$ and set $\kappa = J+\mu$.
\end{definition}

\begin{remark}
    In the definition of $\Upsilon_{\mathrm{col}}$, the passage from $n$ to $n'=\max\{\ell(\mu),n\}$ should be understood only as an embedding of sets
    \[
        \HWT(\mu,n) \subseteq \HWT(\mu,n').
    \]
    It is not an embedding of $U_q(\asl_n)$-crystals.
    This enlargement of the alphabet is needed to guarantee the existence of the leading paths $\mathcal{L}_{H_1}$ and $\mathcal{L}_{H_2}$ used in the construction; see \Cref{def:leading_path}. Since $n'$ is fixed by definition, the construction is unambiguous.
\end{remark}

\begin{definition}[The inverse map $\Upsilon_{\mathrm{col}}^{-1}$] \label{def:upsilon_inv}
    Let $n \in \N$ and fix a quadruple 
    \[
        (H_1,H_2,\kappa,\nu) \in \bigsqcup_{\mu \in \mathbb{Y}}\HWT(\mu,n)^2 \times \mathcal{K}(\mu)\times \mathbb{S}.
    \]
    Let $\mu$ be the common shape of $H_1,H_2$ and let $J=\kappa-\mu$. Let $B =\KKR^{-1}(\mu,J)$ be the corresponding BBS state and define $T=\Xi^{-1}(B,\nu)$.
    
    Set $n'=\max(\ell(\mu),n)$. Viewing $H_1,H_2$ as elements of $\HWT(\mu,n')$, let $\mathcal{L}_{H_1}, \mathcal{L}_{H_2}$ be the leading paths for $H_1,H_2$ constructed in \Cref{def:leading_path}. Define $\mathcal{L}^{-1} = \mathcal{L}_{H_1,H_2}^{-1}$. Finally, set $\mathcal{L}^{-1}(T,T) = (P,Q)$ and define
    \[
        \Upsilon_{\mathrm{col}}^{-1} (H_1,H_2,\kappa,\nu) = (P,Q).
    \]
\end{definition}

We are now ready to prove our main theorem.

\begin{proof}[Proof of \Cref{th:main_bij}]
    The map $\Upsilon_{\mathrm{col}}$ constructed in \Cref{def:upsilon} is well posed and so is its inverse $\Upsilon_{\mathrm{col}}^{-1}$ given in \Cref{def:upsilon_inv}. The property \eqref{eq:PQHH} is part of the definition of $\Upsilon_{\mathrm{col}}$. We then turn to the volume identity \eqref{eq:volume_pre}. Let $\mathcal{L}_{P,Q}$ be the explicit leading map associated to the pair $(P,Q)$ and let $(T,T) = \mathcal{L}_{P,Q}(P,Q)$ with $T = \overline{\lambda} / \overline{\rho}$. Then, if $\lambda/\rho$ is the shape of $P,Q$ we have, combining \Cref{cor:bottom_row} and \Cref{prop:volume_BBS_tableau}
    \begin{equation*}
        \begin{split}
            |\overline{\rho}| &= |\rho| + 2 D'(H_{\max}) - D'(H_1) - D'(D_2) 
            \\
            &= |\nu| + |J| + |\mu| + 2 \Abs{\mu} 
        \end{split}
    \end{equation*}
    which implies \eqref{eq:volume_pre}, since, $\kappa = J+\mu$ and $D'(H_{\max}) = \Abs{\mu}$ by \Cref{lem:unique_highest}.
    
    Let us now establish relation \eqref{eq:length_pre}. Let $\boldsymbol{\lambda} = \boldsymbol{\lambda}[P,Q]$ be the field of interlacing signatures associated to $P,Q$. Then, by definition, all signatures $\boldsymbol{\lambda}(p)$ for $p\in \mathscr{C}_n$ have the same length $\ell = \ell(\lambda)$. Recall the definition \eqref{eq:nu} of the signature $\nu$. Then for $i \in \{1,\dots, \ell(\nu)\}$ the limit $\lim_{t \to - \infty} \boldsymbol{\lambda}_i(t,0)$ exists finite, while for $i \in \{ \ell(\nu)+1 ,\dots , \ell \}$ we have, by \Cref{prop:stable field} and \Cref{thm:scattering_rules}, 
    \[
        \lim_{t \to - \infty} \frac{\boldsymbol{\lambda}_i(t,0)}{t} = \mu_{i-\ell(\nu)}^- = \mu_{\ell(\mu) - i + \ell(\nu) + 1}. 
    \]
    This shows that $\ell = \ell(\nu) + \ell(\mu)$.

    Finally, assume that the shape $\lambda / \rho$ of the pair $P,Q$ is such that $\lambda,\rho$ are partitions (i.e. all their entries are non-negative). By \Cref{cor:bottom_row}, the action of the leading path $\mathcal{L}_{P,Q}$ on the pair $(P,Q)$ produces a pair $(T,T) = \mathcal{L}_{P,Q}(P,Q)$ where the tableau $T$ has shape $\overline{\lambda}/\overline{\rho}$ such that $\overline{\lambda} = \lambda_\ell \ge 0$ and $\overline{\rho} = \rho_\ell \ge 0$. Then, $\Xi(T) = (B,\nu)$ is such that $B \in \BBS_{>0}$ and $\nu$ is a signature with non negative entries. Since by \Cref{thm:KKR linearization}, the image of the KKR map $\KKR(B) = (\mu,J)$ is such that $J_i \ge - \mu_i$ we have $\kappa \in \mathcal{K}_+(\mu)$. This completes the proof.
\end{proof}

\begin{corollary}
    Let $(P,Q) \in \bigsqcup_{\rho,\lambda} \SST(\lambda/\rho,n)$ and let $(P',Q')=\cRSK(P,Q)$. Then, 
    \begin{equation} \label{eq:linearization cRSK}
        \text{if}
        \quad 
        \Upsilon_{\mathrm{col}}(P,Q)=(H_1,H_2;\kappa,\nu), 
        \quad 
        \text{we have} 
        \quad 
        \Upsilon_{\mathrm{col}}(P',Q')=(H_1,H_2;\kappa+ \mu,\nu).
    \end{equation}
\end{corollary}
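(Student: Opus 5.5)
The plan is to track each component of $\Upsilon_{\mathrm{col}}$ separately under $\cRSK$, using the crystal symmetry of \Cref{th:crystal_sym} together with the BBS linearization of \Cref{thm:KKR linearization}.

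\emph{Conserved components.} The field $\boldsymbol{\lambda}[P',Q']$ is the shift of $\boldsymbol{\lambda}[P,Q]$ by $n$ in the first coordinate, by \Cref{prop:properties of iota and cRSK}\ref{item:cRSK field}. The limits in \Cref{def:Phi} are taken as $t\to\infty$, so they do not change under this shift. Hence $\Phi(P',Q')=\Phi(P,Q)=(H_1,H_2)$, and the common shape $\mu$ is also preserved. Likewise, \eqref{eq:nu} is a limit as $t\to-\infty$ along the same field, so $\nu$ is unchanged. Alternatively, $\nu=\ker$ by \Cref{prop:properties_kernel}, and \ref{item:kernel_iota} of that proposition gives the same conclusion. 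In particular $n'=\max\{\ell(\mu),n\}$ is the same for both pairs.

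\emph{Leading path.} Since $(H_1,H_2)$ is unchanged, the leading path $\mathcal{L}_{P',Q'}=\mathcal{L}_{H_1,H_2}=\mathcal{L}_{P,Q}$ is the same composition of operators, with the same choices. By \Cref{th:crystal_sym}, $\cRSK$ commutes with every $E^{(k)}_i$ and $F^{(k)}_i$, including the $0$-operators. This commutation also holds with $n$ replaced by $n'$, since the construction is uniform in the alphabet size and $\cRSK$ for alphabet $n$ on these tableaux is what is used. Therefore
\[
\mathcal{L}_{P',Q'}(P',Q')=\mathcal{L}_{P,Q}\,\cRSK(P,Q)=\cRSK\,\mathcal{L}_{P,Q}(P,Q)=\cRSK(T,T).
\]
The identity $\cRSK(T,T)=(T^\sharp,T^\sharp)$ with $T^\sharp=\cRSK(T)$ in the notation of \Cref{prop:cRSK_BBS} then gives $\Pr(T^\sharp)=\sfT(\Pr T)=\sfT(B)$. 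The kernel of $(T^\sharp,T^\sharp)$ equals $\nu$, again by \Cref{prop:properties_kernel}\ref{item:kernel_iota}, so $\Xi(T^\sharp)=(\sfT(B),\nu)$.

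\emph{Rigging.} By \Cref{thm:KKR linearization}, $\KKR(\sfT B)=(\mu,J+\mu)$, so the new rigging is $\kappa'=J+2\mu=\kappa+\mu$. This gives \eqref{eq:linearization cRSK}.

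The main obstacle is the subtle point in the leading-path step: $\cRSK$ is defined with period $n$, while the leading path may use $n'$-crystal operators when $\ell(\mu)>n$. In that case one must check that $\cRSK$ for alphabet $n$ still commutes with the $\widehat{\mathfrak{sl}}_{n'}$ operators on the enlarged set, and this is not automatic. My first attempt would be to observe that for $n'$ the field on $\mathscr{C}_{n'}$ built from $(P,Q)$, viewed with labels in $\{1,\dots,n'\}$ but no entries above $n$, has $\cRSK_{n'}$ agreeing with $\cRSK_n$. This should follow from \Cref{prop:iota2 by column insertion}: the extra $\iota_2$ steps with no $1$-cells act trivially up to relabeling. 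I would then apply \Cref{th:crystal_sym} for $n'$ directly.
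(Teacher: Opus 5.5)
Your proposal is correct and follows the paper's proof essentially step by step: you show that $H_1,H_2,\nu$ are invariant, commute the leading path past $\cRSK$ using \Cref{th:crystal_sym}, and then conclude with \Cref{prop:cRSK_BBS} and \Cref{thm:KKR linearization}. Your extra treatment of the case $n'>n$, which the paper passes over silently, is also sound: in $\iota_2^{n'}$ the last $n'-n$ applications of $\iota_2$ find no $1$-cells in $Q$ and only relabel, so $\cRSK$ for alphabet $n'$ agrees with $\cRSK$ for alphabet $n$ on these pairs, and \Cref{th:crystal_sym} for $n'$ applies directly.
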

\begin{proof}
    It is clear, by \Cref{def:Phi} of the map $\Phi$ and \eqref{eq:nu} that the elements $H_1,H_2,\nu$ are common in the image under $\Upsilon_{\mathrm{col}}$ of $(P,Q)$ and $(P',Q')$.
    Let $\mathcal{L}_{P,Q}$ be the leading map associated to the pair $(P,Q)$. By the crystal symmetry of the cRSK dynamics (\Cref{th:crystal_sym}), $\mathcal{L}_{P,Q}$ is also a leading map for the pair $(P',Q')$. Let $(T,T)=\mathcal{L}_{P,Q}(P,Q)$ and $(T',T')=\mathcal{L}_{P,Q}(P',Q')$. Then, always by \Cref{th:crystal_sym}, we have $(T',T') = \cRSK(T,T)$. By \Cref{prop:cRSK_BBS}, denoting $B=\Pr(T)$ and $B' = \Pr(T')$, we have $B' = \sfT (B)$ and their image under the KKR map is, by \Cref{thm:KKR linearization}
    \[
        \KKR (B) = (\mu, J),
        \qquad  
        \KKR (B') = (\mu, J+\mu).
    \]
    Then, by \Cref{def:upsilon}, \eqref{eq:linearization cRSK} follows.
\end{proof}

\subsection{An example}

Let us review the construction of the map $\Upsilon_{\mathrm{col}}$ through an example. Consider the pair of tableaux $(P,Q)$ given below
\begin{equation} \label{eq:pair_example_2}
    (P,Q)
    =
    \left(
    \,
    \begin{ytableau}
    	*(gray) & *(gray) & *(gray) & *(gray) & 1 & 3
    	\\
    	*(gray) & *(gray) & *(gray) & 2 &  3
    	\\
    	*(gray) & 2
        \\
        2
	\end{ytableau}
    ,
    \begin{ytableau}
    	*(gray) & *(gray) & *(gray) & *(gray) & 1 & 3
    	\\
    	*(gray) & *(gray) & *(gray) & 1 &  2
    	\\
    	*(gray) & 2
        \\
        2
	\end{ytableau}
    \,
    \right).
\end{equation}
Running the cRSK dynamics with initial data $(P_0,Q_0) = (P,Q)$, as in \Cref{fig:skew_RSK_dynamics_example_2}, we get
\[
    \Phi(P,Q) = (H_1 , H_2), 
    \qquad
    \text{with}
    \qquad
    H_1 = 
    \begin{ytableau}
    	1 & 3 & 3
    	\\
        2 & 2
    	\\
    	2
	\end{ytableau}
    ,
    \qquad
    H_2 = 
    \begin{ytableau}
    	1 & 1 & 3
    	\\
        2 & 2
    	\\
    	2
	\end{ytableau}
    .
\]
\begin{figure}
    \centering
    \includegraphics[width=.9\linewidth]{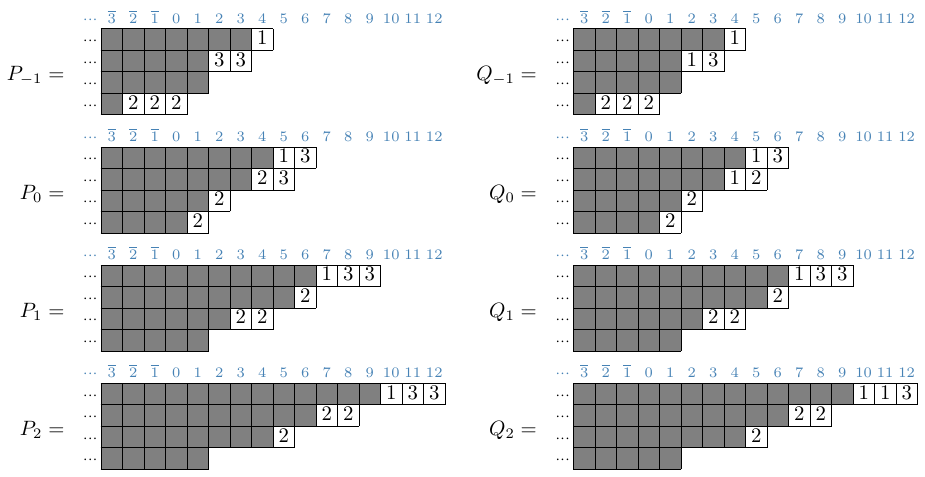}
    \caption{An example of a skew column RSK dynamics.}
    \label{fig:skew_RSK_dynamics_example_2}
\end{figure}
The explicit leading path for the horizontally weak tableau $H_2$ was presented in \Cref{fig:example_leading_path}, whereas the one for $H_1$ is given below
\[
    \includegraphics[width=.7\linewidth]{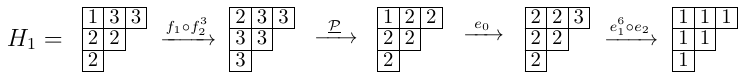}.
\]
Then, the leading path for the pair $(P,Q)$ is
\begin{equation} \label{eq:explicit_leading_path_example_2}
    \begin{split}
        \mathcal{L}_{P,Q} =&
        F^{(1)}_1 \circ \bigl( F^{(1)}_2 \bigr)^3 \circ \underline{\mathcal{P}}^{(1)} \circ E_0^{(1)} \circ \bigl( E^{(1)}_1 \bigr)^6 \circ E_2^{(1)}
        \\
        & \quad \circ \bigl( F_1^{(2)} \bigr)^2 \circ \bigl( F_2^{(2)} \bigr)^3 \circ \underline{\mathcal{P}}^{(2)} \circ E_0^{(2)} \circ  F_1^{(2)} \circ \bigl(  F_1^{(2)} \bigr)^4  \circ \underline{\mathcal{P}}^{(2)} \circ E_0^{(2)} \circ \left( E^{(2)}_1 \right)^6 \circ E_2^{(2)},
    \end{split}
\end{equation}
where we denoted by $\underline{\mathcal{P}}^{(1)} = \underline{\mathcal{P}} \times 1$ and $\underline{\mathcal{P}}^{(2)} = 1 \times \underline{\mathcal{P}}$ the componentwise action of the map $\underline{\mathcal{P}}$ of \eqref{eq:underline_P}.
We can then compute its action as in \Cref{fig:leading path tableaux example 2}, from which we find
\[
    \mathcal{L}_{P,Q}(P,Q) = (T,T) \qquad \text{with} \qquad 
    T = 
    \begin{ytableau}
    	*(gray) & *(gray) & *(gray) & *(gray) & *(gray) & *(gray) & 1 & 1 &1
    	\\
    	*(gray) & *(gray) & *(gray) & *(gray) & 1
    	\\
    	*(gray) & 1
		\\
		1
	\end{ytableau}
    .
\]
The tableau $T$ can be parameterized by the pair $(B,\nu) \in \BBS \times \mathbb{Y}$ as
\[
    \Xi(T) = (B,\nu)
    \qquad
    \text{with}
    \qquad
    B = \,\, 
    \begin{matrix}
        (\, 1 & 1 & \grc{0} & \grc{0} & 1 & \grc{0} & 1 & 1 & 1 & \grc{0} &\grc{\cdots} \, )
    \end{matrix}
    \,\,
    ,
    \qquad
    \nu = 
    \ydiagram{1}
    \,
    .
\]
We can finally compute the action of the KKR bijection on the BBS configuration $B$ as
\[
    \KKR (B) = (\mu,J) 
    \qquad
    \text{with}
    \qquad
    \mu = \ydiagram{3,2,1}
    ,
    \qquad
    J = (-3,-2,1).
\]
Then, setting $\kappa = J + \mu = (0,0,2)$, we conclude that, for the pair $(P,Q)$ of \eqref{eq:pair_example_2}, we have
\[
    \Upsilon_{\mathrm{col}} (P,Q) = \left(
    H_1=
    \begin{ytableau}
    	1 & 3 & 3
    	\\
        2 & 2
    	\\
    	2
	\end{ytableau}
    ,
    H_2=
    \begin{ytableau}
    	1 & 1 & 3
    	\\
        2 & 2
    	\\
    	2
	\end{ytableau} ; \kappa = (0,0,2), \nu = \ydiagram{1} \,
    \right).
\]
Notice that in this case we have $\kappa \in \mathcal{K}_+(\mu)$ and $\nu \in \mathbb{Y}$ as a result of the fact that the tableaux $P,Q$ have partition shape.

\begin{figure}
    \centering
    \includegraphics[width=\linewidth]{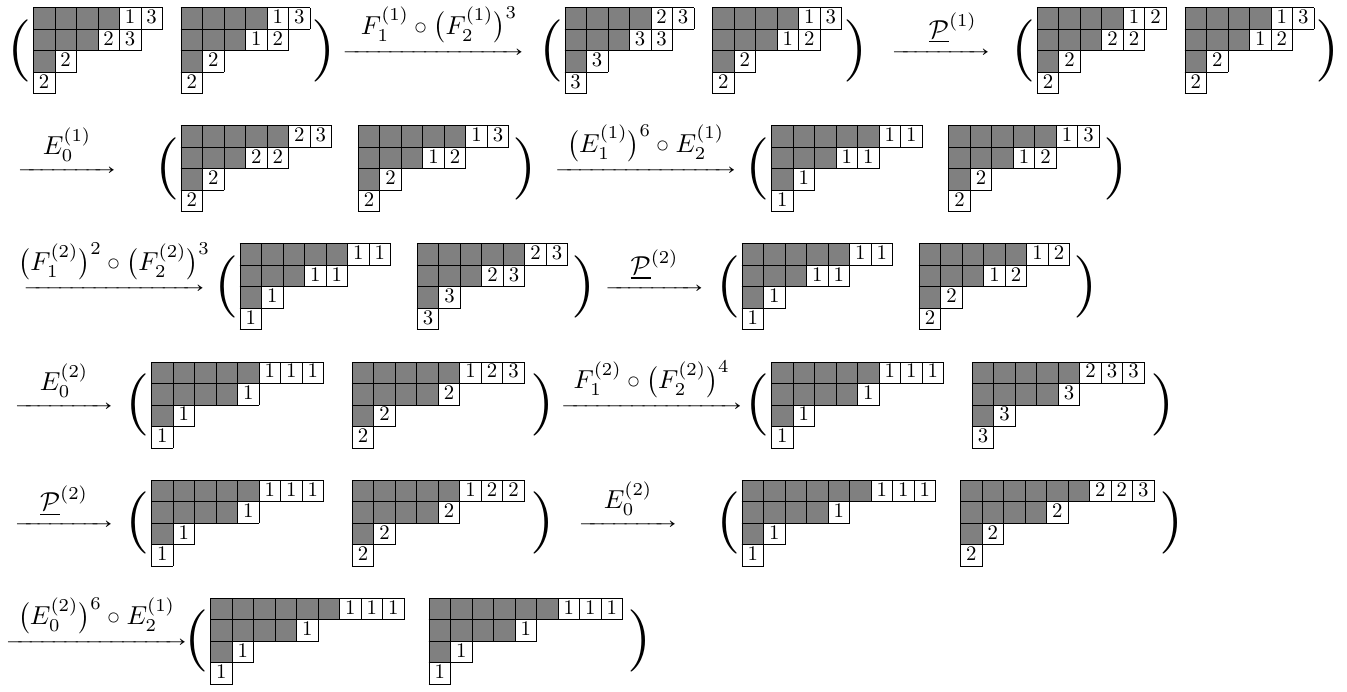}
    \caption{The evaluation of the leading path $\mathcal{L}_{P,Q}$ \eqref{eq:explicit_leading_path_example_2} on the pair $(P,Q)$ of \eqref{eq:pair_example_2}.}
    \label{fig:leading path tableaux example 2}
\end{figure}

\section{Generalized Greene's invariants} \label{sec:Greene}

In this section we prove \Cref{thm:LPP} which gives a last passage percolation characterization of the asymptotic shape $\mu$ of the cRSK dynamics.

\subsection{The skew row RSK dynamics and the skew column RSK dynamics} \label{subs:row_RSK_dynamics}

Let us describe a basic relation between the cRSK dynamics and the \emph{skew (row) RSK dynamics} introduced in \cite{imamura2021skewRSK}. We will only discuss the case of standard Young tableaux, as this is all we will need.

Given a signature $\lambda \in \mathbb{S}_\ell$ of length $\ell$ recall that its Young diagram is the set of cells $(i,j)\in \Z \times \Z_+$ such that $i \le \lambda_j$ and $j\in\{1,\dotsc,\ell\}$.
We define its \defn{transposed diagram} $\lambda^{\mathsf{t}}$ as the set of cells $(i,j)\in \Z_+ \times \Z$ such that $(j,i)$ belongs to the Young diagram of $\lambda$, i.e.\ $j \le \lambda_i$ and $i\in\{1,\dotsc,\ell\}$.
In other words, the transposed diagram $\lambda^\mathsf{t}$ is obtained from $\lambda$ by a reflection across the main diagonal of $\Z^2$.
When $\lambda$ is a partition, the notion of transposed $\lambda^\mathsf{t}$ coincides with the conjugate diagram $\lambda'$.
 
We have defined a standard Young tableau $P$ of shape $\lambda/\rho$ as a sequence of signatures $P=(\rho = \lambda^{(0)} \preceq \cdots \preceq \lambda^{(n)} = \lambda)$ such that $|\lambda^{(k)}/\lambda^{(k-1)}|\in \{0,1\}$ for all $k=1,\dots,n$. We define the \defn{transposed standard tableau} $P^\mathsf{t}$ as the sequence of transposed diagrams 
\[
    P^\mathsf{t}= \left( \rho^\mathsf{t} = \left(\lambda^{(0)} \right)^\mathsf{t} \dot{\preceq} \cdots \dot{\preceq} \left(\lambda^{(N)} \right)^\mathsf{t} = \lambda^\mathsf{t}  \right).
\]
(Here the notation $\eta^\mathsf{t} \dot{\preceq} \, \nu^\mathsf{t}$ means that $\eta \preceq \nu$.)
We can also represent the transposed standard tableau $P^\mathsf{t}$  by assigning label $k$ to the cell $(i,j) \in \Z_+ \times \Z$ if $\lambda^{(k)} / \lambda^{(k-1)} = \{(j,i)\} \in \Z \times \Z_+$. In other words, the transposed tableau $P^\mathsf{t}$ is obtained from $P$ through a reflection across the main diagonal of $\Z^2$.

Given a \defn{fully refined} Fomin field $\boldsymbol{\lambda}$, let $(P_t,Q_t)_{t \in \Z}$ be the associated cRSK dynamics with initial data $(P_0,Q_0)$ as in \Cref{def:cRSK}.
The \defn{skew row RSK dynamics} associated to the field $\boldsymbol{\lambda}$ is the sequence of pairs of transposed standard tableaux $(P_t^\mathsf{t}, Q_t^\mathsf{t})_{t \in \Z}$.
In \Cref{subs:alternative_description_cRSK} we presented an alternative construction of the cRSK dynamics through a sequence of column Schensted insertions.
An analogous description of the skew row RSK dynamics can be given replacing column insertions by row insertion. 

\begin{example} \label{ex:row RSK dynamics}
    The fully refined Fomin field of signatures $\boldsymbol{\lambda}$ of \Cref{fig:refinement_field} is associated to the skew row RSK dynamics $(P_t^\mathsf{t}, Q_t^\mathsf{t})$ which at times $t=-2,0,2$ take values
    \[
        \includegraphics[width=.9\linewidth]{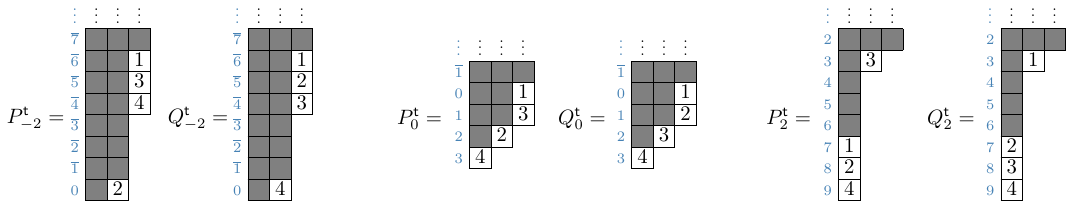}.
    \]
    Notice that tableaux $(P_t^\mathsf{t}, Q_t^\mathsf{t})$ are simply the transposition of tableaux $(P_t, Q_t)$ given in \Cref{fig:RSK dynamics example}, after standardization.
\end{example}

Just as in the case of the skew (column) RSK dynamics, we can define soliton data of skew row RSK dynamics $(P_t^\mathsf{t}, Q_t^\mathsf{t})$.
We define the \defn{asymptotic pair of vertically strict (standard) taubleax} $(V_1,V_2)$ associated to the skew row RSK dynamics $(P_t^\mathsf{t}, Q_t^\mathsf{t})$, or equivalently to the Fomin field $\boldsymbol{\lambda}$, as
\[
    V_1 = H_1^\mathsf{t}, \qquad\qquad V_2 = H_2^\mathsf{t},
\]
where $(H_1,H_2)$ is the asymptotic pair of horizontally strict tableaux associated to the Fomin field $\boldsymbol{\lambda}$. (Here, as above, the transposed of a horizontally weak tableau $H$ is the tableau $V=H^\mathsf{t}$ which has at cell $(i,j)$ the value $H$ has at cell $(j,i)$.) For example, the asymptotic pair of vertically strict tableaux associated to the dynamics of \Cref{ex:row RSK dynamics} is  
\[
    V_1 =
    \begin{ytableau}
        1 & 3
        \\
        2
        \\
        4
    \end{ytableau}
    \,
    ,
    \qquad\qquad
    \, 
    V_2=
    \begin{ytableau}
        2 & 1
        \\
        3
        \\
        4
    \end{ytableau}
    .
\]

Let us now cite a result from \cite{imamura2021skewRSK}, which will be useful to us.

\begin{proposition}[Special case of \cite{imamura2021skewRSK}, Theorem 6.6] \label{prop:Generalized_Greene_for_skew_row_RSK}
    Let $\boldsymbol{\lambda}$ be a fully refined Fomin field on $\mathscr{C}_n$ and let $\mathbf{c} \in \mathfrak{F}_{\mathrm{fin}}(\mathscr{C}_n',\{0,1\})$ be its environment; recall the notation \eqref{eq:set_environments}. Let $(P_t^\mathsf{t},Q_t^\mathsf{t})_{t \in \Z}$ be the skew row RSK dynamics associated to $\boldsymbol{\lambda}$ and let $(V_1,V_2)$ be the asymptotic pair of vertically strict tableau of shape $\mu$. Then, we have
    \[
        \mu_1' +\cdots +\mu_k' = \mathrm{LPP}^{\circlearrowleft}_k(\mathbf{c})
        \qquad
        \text{and}
        \qquad
        \mu_1+\dots+\mu_k = \mathrm{LPP}^{\nearrow}_k(\mathbf{c}).
    \]
\end{proposition}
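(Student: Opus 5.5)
The plan is to obtain the statement as a translation of \cite[Theorem 6.6]{imamura2021skewRSK}, so the work lies in matching conventions rather than in new combinatorics. The first step is to check that the skew row RSK dynamics $(P_t^\mathsf{t},Q_t^\mathsf{t})$, defined here by transposition, coincides with the skew RSK dynamics of \cite{imamura2021skewRSK} started from $(P_0^\mathsf{t},Q_0^\mathsf{t})$. Because $\boldsymbol{\lambda}$ is fully refined, every elementary cell involves triples $\mu\succeq\kappa\preceq\lambda$ with $|\mu/\kappa|,|\lambda/\kappa|\in\{0,1\}$. In that regime the local rule $\mathsf f$ of \eqref{eq:f Fomin_1} has only a few cases. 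I will list them (as in the proof of \Cref{prop:Fomin and column insertion}) and compare them, after reflecting the diagrams across the main diagonal, with the standard Fomin local rules for row insertion used in \cite{imamura2021skewRSK}. The case analysis should show the following: a cell with $\mathbf{c}(p')=1$ is exactly a cell in which the two added boxes of $\kappa$ both lie in column $0$ and produce a box in column $1$; after transposition, this is exactly a cell in which the row-insertion growth rule creates a new box in the first row from a ``bumped'' entry, i.e. a cell carrying a $1$ in the environment (the permutation/matrix weight) of \cite{imamura2021skewRSK}. \Cref{prop:refinement_environment} and \eqref{eq:value_c_0} give the formula $\mathbf{c}=\sum_{i\ge1}(\nu_i'-\lambda_i'-\mu_i'+\kappa_i')$, which I will use to make this identification precise.

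Second, I need to handle signatures. In \cite{imamura2021skewRSK} the tableaux have partition shapes, while here $\boldsymbol{\lambda}$ takes values in $\mathbb{S}_\ell$. I will use \Cref{prop:symmetry Fomin}: adding a fixed large signature column-wise to the whole field produces a new Fomin field with partition values, which after transposition amounts to adding full rows. This changes neither the environment $\mathbf{c}$, since $c_0$ depends only on the increments, nor the soliton data $(H_1,H_2)$, since \eqref{eq:H_1 limit}--\eqref{eq:H_2 limit} only involve differences. So without loss of generality the field is partition-valued.

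Third, I will identify the soliton data. The asymptotic vertically strict tableaux $V_k=H_k^\mathsf{t}$ should equal the pair of asymptotic tableaux whose common shape is given by Theorem 6.6 of \cite{imamura2021skewRSK}. This is where the roles of rows and columns swap: under transposition, rows of $H_k$ become columns of $V_k$, so the shape of $V_k$ is $\mu'$. This should match the cited formulas, where simple loops compute partial sums of the conjugate, $\mu_1'+\cdots+\mu_k'=\mathrm{LPP}^{\circlearrowleft}_k$, and up-right paths compute $\mu_1+\cdots+\mu_k=\mathrm{LPP}^{\nearrow}_k$. Finally I must check that the cylinder $\mathscr{C}_n$, its dual lattice, and the definitions of down-right loops and strict up-right paths agree with the ones in \cite{imamura2021skewRSK}, possibly after the reflection $(x,y)\mapsto(y,x)$. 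Both LPP functionals are symmetric under that reflection, up to relabeling which tableau is called $P$ and which $Q$.

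I expect the main obstacle to be this bookkeeping in the first and third steps: the local rule check after transposition, and making sure the cited theorem is stated with the same orientation and indexing of $\mathscr{C}_n'$. If a mismatch appears, for instance a shift by $\tfrac12$ in the dual lattice or a reversed time direction, I would correct it by a lattice translation or by using the time-reversal symmetry. Neither changes the LPP values, since translations preserve both families of paths and time reversal preserves the set of simple loops and of strict up-right paths.
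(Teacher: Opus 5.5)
Your overall route is the paper's. There the statement rests only on the citation of \cite[Theorem 6.6]{imamura2021skewRSK}, and your Steps 1 and 3 supply the convention matching that the citation leaves implicit.

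Step 2, however, fails as written.
\begin{itemize}
\item Column-wise addition via \Cref{prop:symmetry Fomin} does preserve $\mathbf{c}$, but it never produces partitions. $(\lambda'+\rho')'$ is the sorted union of the parts of $\lambda$ and $\rho$, so every negative part of $\lambda$ survives.
\item The operation that does make entries nonnegative is $\lambda\mapsto\lambda+(m,\dots,m)$, i.e.\ $\lambda'_i\mapsto\lambda'_{i-m}$. It translates all increments, so the new $c_0$ is the old carrier load $c_{-m}$, and $\mathbf{c}$ changes.
\item More fundamentally, the reduction contradicts your own Step 1. If $\kappa\preceq\mu,\lambda$ are partitions of common length $\ell$, then $\kappa'_i=\mu'_i=\lambda'_i=\ell$ for all $i\le 0$. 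No ball ever reaches column $0$, so $c_0=0$. A partition-valued Fomin field therefore has $\mathbf{c}\equiv 0$. The weight-one cells you describe in Step 1 (both boxes added in column $0$) occur only where the field is not partition-valued.
\end{itemize}
Consistently with this, by \Cref{prop:properties_kernel} exactly $\ell(\mu)$ rows are unbounded as $t\to-\infty$. So unless $\mu=\varnothing$, no global normalization can make the whole field partition-valued.

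The right bridge is truncation to positive columns: set $\boldsymbol{\lambda}^+_j(p)=\max\{\boldsymbol{\lambda}_j(p),0\}$. Because the carrier scans left to right, the columns $\ge 1$ of $\nu=\Fomin(\kappa,\mu,\lambda)$ depend only on the increments in columns $\ge 1$ and on the load $c_0=\mathbf{c}(p')$ entering column $1$. In the fully refined case this is exactly the column-insertion growth rule for partitions with input $\mathbf{c}(p')\in\{0,1\}$. The case $c_0=1$ forces $\mu^+=\lambda^+=\kappa^+$ and adds a box in column $1$. After transposition this is the row-insertion rule of \cite{imamura2021skewRSK}.

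So $\boldsymbol{\lambda}^+$ is a partition growth diagram on $\mathscr{C}_n$ with weight array $\mathbf{c}$. It has the same soliton data, because for large $t$ the soliton rows are positive and the remaining rows converge, so their increments vanish.

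If the cited theorem is stated for partition-shaped pairs, two further normalizations are needed.
\begin{itemize}
\item First subtract the kernel column-wise. This is the legitimate use of \Cref{prop:symmetry Fomin}: it preserves $\mathbf{c}$ and $(H_1,H_2)$.
\item Then replace $(P,Q)$ by $\cRSK^s(P,Q)$ for large $s$. With empty kernel, all $\ell=\ell(\mu)$ rows tend to $+\infty$, so the pair at the origin and all later pairs have partition shape. This only translates $\mathbf{c}$ along the cylinder, which leaves both LPP statistics and $\mu$ unchanged.
\end{itemize}

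Finally, keep the notation straight in Step 3: in the statement $\mu$ is the shape of $V_k$. If you write $\mu$ for the shape of $H_k$, as you do, the loop statistic computes $\mu_1+\cdots+\mu_k$, not $\mu_1'+\cdots+\mu_k'$.
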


\begin{remark}
    The result of \Cref{prop:Generalized_Greene_for_skew_row_RSK} cannot be naively generalized to Fomin fields which are not fully refined. The generalization to such cases involves changing the definition of the statistics $\mathrm{LPP}^{\nearrow}_k$ and $\mathrm{LPP}^{\circlearrowleft}_k$; see \cite{imamura2021skewRSK}, Theorem 6.6 for more details.
\end{remark}

\subsection{Generalized Greene's invariants for the skew column RSK dynamics} \label{subs:LPP proof}

In this subsection we prove \Cref{thm:LPP}. Let us first present the proof of items \ref{item:weight_environment}, \ref{item:volume_environment}, \ref{item:bij environment HWT restriction}.

\begin{proof}[Proof of \Cref{thm:LPP}  \ref{item:weight_environment}, \ref{item:volume_environment}, \ref{item:bij environment HWT restriction}]
    By the periodic column RSK correspondence of \Cref{prop:column RSK}, any environment $\mathbf{c} \in 
    \mathfrak{F}_{\mathrm{fin}}(\mathscr{C}_n',\N)$ is in bijection with a pair $(P,Q)$ with $\ker(P,Q) = \varnothing$. Then, we define $\widetilde{\Upsilon}_{\mathrm{col}}(\mathbf{c}) = (H_1,H_2,\kappa)$, where  $(H_1,H_2,\kappa,\nu = \varnothing) = \Upsilon_{\mathrm{col}}(P,Q)$. Let $\boldsymbol{\lambda} = \boldsymbol{\lambda}[P,Q]$ be the Fomin field of interlacing signatures on $\mathscr{C}_n$ associated to the pair $(P,Q)$ and denote by $\mu$ the shape of $H_1,H_2$.


    The restriction property \ref{item:bij environment HWT restriction} is immediate from the definition of $\widetilde{\Upsilon}_{\mathrm{col}}$. In fact, by \Cref{th:main_bij}, $\kappa \in \mathcal{K}_+(\mu)$ if and only if the tableaux $P,Q$ have partition shape. This happens if and only if for each $i,j \in \{0,\dots, n-1\}$ and $k\ge 0$ the signature $\boldsymbol{\lambda}(i,j+n k)$ is a partition. Then, by \Cref{prop:conservation_Fomin} and \eqref{eq:value_c_0} we have
    \begin{equation}
        \begin{split}
        &\mathbf{c}(i+\frac{1}{2},j+\frac{1}{2}+k n) 
        \\
        &
        = \sum_{s \ge 1} \boldsymbol{\lambda}_s'(i+1,j+1+k n) - \boldsymbol{\lambda}_s'(i,j+1+k n) - \boldsymbol{\lambda}_s'(i+1,j+k n) + \boldsymbol{\lambda}_s'(i,j+kn) 
        \\
        &
        = |\boldsymbol{\lambda}(i+1,j+1+k n) / \boldsymbol{\lambda}(i,j+1+k n)| - |\boldsymbol{\lambda}(i+1,j+k n) / \boldsymbol{\lambda}(i,j+k n)|
        = 0,
    \end{split}
    \end{equation}
    for each $i,j \in \{0,\dots, n-1\}$ and $k\ge 0$. Then, $\kappa \in \mathcal{K}_+(\mu)$ implies that $\mathbf{c}$ is supported in $\mathscr{C}_n^-$. The same argument also shows that if $\mathbf{c}$ is supported in $\mathscr{C}_n^-$ we also necessarily have $\kappa \in \mathcal{K}_+(\mu)$.

    
    The property \eqref{eq:weight_environment} relating the weights of tableaux $H_1,H_2$ with the environment $\mathbf{c}$ follows from relation \eqref{eq:sum environment} and from the definition of $H_1,H_2$ as the limits \eqref{eq:H_1 limit}, \eqref{eq:H_1 limit}. 

    To prove \eqref{eq:volume_environment}, let $K_*>0$ be large enough so that $\mathbf{c}(i',j' + Kn) = \mathbf{c}(i',j' - Kn) = 0$ for all $i',j' \in \{ \frac{1}{2},\dots,n-\frac{1}{2} \}$ and $K\ge K_*$. This implies that $\boldsymbol{\lambda}(i,j + Kn)$ (resp. $\boldsymbol{\lambda}(i,j - Kn)$) is a signature with only negative elements for all $K\ge K_*$. Then, we have
    \begin{equation}
        \begin{split}
            |\boldsymbol{\lambda}(0,K_* n)|
            &
            =
            \sum_{s \ge 1} \boldsymbol{\lambda}_s'(0,K_* n)
            \\
            &
            = \sum_{s \ge 1} \left[ \boldsymbol{\lambda}_s'(0,K_* n) - 2 \boldsymbol{\lambda}_s'(0,(K_* -1)-n) + \boldsymbol{\lambda}_s'(-n,(K_* -1)n) \right]
            \\
            &
            \hspace{20pt}
            + \sum_{s \ge 1} \left[ 2 \boldsymbol{\lambda}_s'(0,(K_* -1)n) - \boldsymbol{\lambda}_s'(-n,(K_* -1)n) \right]
            \\
            &
            = \sum_{i',j'\in\{ \frac{1}{2} ,\dots,n-\frac{1}{2} \} } \mathbf{c}(i',j'+ (K_* -1) n) 
            \\
            &
            \hspace{20pt} + \sum_{s \ge 1} \left[ 2 \boldsymbol{\lambda}_s'(0,(K_* -1)n) - \boldsymbol{\lambda}_s'(-n,(K_* -1)n) \right],
        \end{split}
    \end{equation}
    where in the last equality we used \eqref{eq:value_c_0}. Iterating the above relation we find
    \begin{equation} \label{eq:volume_environment_iter}
        \begin{split}
            |\boldsymbol{\lambda}(0,K_* n)|
            &
            = \sum_{k=1}^{2K_*} \sum_{i',j'\in\{ \frac{1}{2} ,\dots,n-\frac{1}{2} \} } k \,\mathbf{c}(i',j'+ (K_* -k) n) 
            \\
            &
            \hspace{20pt} + \sum_{s \ge 1} \left[ 2 \boldsymbol{\lambda}_s'(0,-K_* n) - \boldsymbol{\lambda}_s'(-n,-K_* n) \right],
            \\
            &
            = \sum_{k\in \mathbb{Z}} \sum_{i',j'\in\{ \frac{1}{2} ,\dots,n-\frac{1}{2} \} } k \,\mathbf{c}(i',j'+ (K_* -k) n)
            \\
            &
            = \sum_{k\in \mathbb{Z}} \sum_{i',j'\in\{ \frac{1}{2} ,\dots,n-\frac{1}{2} \} } k' \,\mathbf{c}(i',j' -k' n) - K_* \sum_{k\in \mathbb{Z}} \sum_{i',j'\in\{ \frac{1}{2} ,\dotsc,n-\frac{1}{2} \} } \,\mathbf{c}(i',j' -k' n),
        \end{split}
    \end{equation}
    where in the second equality, we extended the sum over $k \in \mathbb{Z}$, using the fact that the environment $\mathbf{c}$ is zero for these terms, and we suppressed the sum $ \sum_{s \ge 1} \left[ 2 \boldsymbol{\lambda}_s'(0,-K_* n) - \boldsymbol{\lambda}_s'(-n,-K_* n) \right]$, since every of its summands is zero. In the last equality we simply operated the change of variables $k' = k-K_*$. From \eqref{eq:weight_environment} and the fact that $|\boldsymbol{\lambda}(0,K n) | - |\boldsymbol{\lambda}(0,(K-1) n)|$ for any $K \in \mathbb{Z}$, we have 
    \[
        |\mu| = \sum_{k\in \mathbb{Z}} \sum_{i',j'\in\{ \frac{1}{2} ,\dots,n-\frac{1}{2} \} } \,\mathbf{c}(i',j' -k' n) = \abs{\boldsymbol{\lambda}(0,K n)} - \abs{\boldsymbol{\lambda}(0,(K-1) n)},
    \]
    and plugging the above relation in \eqref{eq:volume_environment_iter} we obtain
    \[
        \sum_{k\in \mathbb{Z}} \sum_{i',j'\in\{ \frac{1}{2} ,\dots,n-\frac{1}{2} \} } k' \,\mathbf{c}(i',j' -k' n) = \boldsymbol{\lambda}(0,0) .
    \]
    Then, since $\boldsymbol{\lambda}(0,0) = \rho$ is the empty shape of the tableaux $P,Q$, this proves \eqref{eq:volume_environment} using \eqref{eq:volume_pre}.

    \end{proof}

    From \Cref{prop:Generalized_Greene_for_skew_row_RSK} we can easily prove \Cref{thm:LPP}.

    \begin{proof}[Proof of \Cref{thm:LPP} \ref{item:LPP}]

    Adopt the notation used above in the subsection. Define also $\widetilde{\boldsymbol{\lambda}}$ the refinement of the field $\boldsymbol{\lambda}$, let $(\widetilde{P}_t,\widetilde{Q}_t)_{t \in \Z}$ be the associated cRSK dynamics and let $\widetilde{\mathbf{c}} \in \mathfrak{F}_{\mathrm{fin}}(\mathscr{C}_N',\{0,1\})$ be the corresponding environment, where $N=|\boldsymbol{\lambda}(n,0)/\boldsymbol{\lambda}(0,0)|$.
    
    Consider $(\widetilde{P}^\mathsf{t}_t, \widetilde{Q}^\mathsf{t}_t)_{t \in \Z}$ the skew row RSK dynamics associated to the field $\widetilde{\boldsymbol{\lambda}}$ and its soliton data consisting of a pair of vertically strict tableaux $(V_1,V_2)$ of shape $\mu'$. Then \Cref{prop:Generalized_Greene_for_skew_row_RSK} implies that
    \[
        \mu_1 +\cdots +\mu_k = \mathrm{LPP}^{\circlearrowleft}_k(\widetilde{\mathbf{c}})
        \qquad
        \text{and}
        \qquad
        \mu_1'+\dots+\mu_k' = \mathrm{LPP}^{\nearrow}_k(\widetilde{\mathbf{c}}),
    \]
    for all $k \ge 1$.
    To complete the proof we need to observe that, for all $k\ge 1$, we have 
    \[
        \mathrm{LPP}^{\circlearrowleft}_k(\widetilde{\mathbf{c}})= 
        \mathrm{LPP}^{\circlearrowleft}_k(\mathbf{c})
        \qquad
        \text{and}
        \qquad
        \mathrm{LPP}^{\nearrow}_k(\widetilde{\mathbf{c}}) = \mathrm{LPP}^{\nearrow}_k(\mathbf{c}).
    \]
    This is relatively straightforward from the construction of $\widetilde{\mathbf{c}}$ from $\mathbf{c}$. Below we only show the identity $\mathrm{LPP}^{\circlearrowleft}_k(\widetilde{\mathbf{c}})= \mathrm{LPP}^{\circlearrowleft}_k(\mathbf{c})$ as the remaining one can be proved analogously. 

    Recall, from \Cref{prop:standardization_field}, the embedding $\phi: \mathscr{C}_n \to \mathscr{C}_N$ and for each $p'\in \mathscr{C}_n'$ define the rectangle $\mathscr{R}(p') \subset \mathscr{C}_N$ of all faces in the rectangular sublattice of $\mathscr{C}_N$ with vertices $\phi( p' \pm \frac{1}{2} \mathbf{e}_1 \pm \frac{1}{2} \mathbf{e}_2 )$. Denote also by $\mathscr{D}(p')$ the set of all faces $\widetilde{p}\,' \subset \mathscr{R}(p')$ such that $\widetilde{\mathbf{c}} (\widetilde{p} \, ') = 1$. Then, $\mathscr{D}(p')$ consists of $\mathbf{c}(p')$ faces arranged along an anti-diagonal of $\mathscr{R}(p')$ as described in \Cref{prop:refinement_environment} and \Cref{fig:refinement_Fomin_cell}.
    
    Let $\mathbf{p}^{(1)},\dots,\mathbf{p}^{(k)}$ be $k$ non-intersecting simple loops in $\mathscr{C}_n'$. Then, we can construct $k$ non-intersecting simple loops $\widetilde{\mathbf{p}}^{(1)},\dots,\widetilde{\mathbf{p}}^{(k)}$ in $\mathscr{C}_N'$ such that 
    \[
        \sum_{i=1}^k \sum_{j = 1}^{2n+1} \mathbf{c}(\mathbf{p}_j^{(i)}) = \sum_{i=1}^k \sum_{j = 1}^{2n+1} \widetilde{\mathbf{c}}(\widetilde{\mathbf{p}}_j^{(i)}),
    \]
    by first replacing any node $\mathbf{p}_j^{(i)} \in \mathscr{C}_n'$ by a sequence of nodes crossing all faces of $\mathscr{D}(\mathbf{p}_j^{(i)})$ and later connecting these nodes with down-right paths. An example of such construction is given below
    \[
        \includegraphics[width=.4\linewidth]{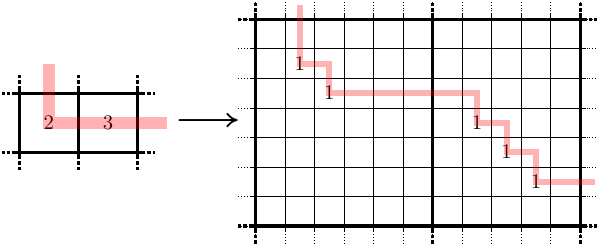}.
    \]
    Such procedure is possible since the set of faces $\mathscr{D}(\mathbf{p}_{j+1}^{(i)})$ lies strictly down and to the right of $\mathscr{D}(\mathbf{p}_{j}^{(i)})$ for all $j$. This implies that 
    \[
        \mathrm{LPP}^{\circlearrowleft}_k(\widetilde{\mathbf{c}}) \ge  
        \mathrm{LPP}^{\circlearrowleft}_k(\mathbf{c}).
    \]
    
    For the opposite inequality consider a $k$-tuple of non-intersecting down-right loops $\widetilde{\boldsymbol{q}}^{(1)}, \dots, \widetilde{\boldsymbol{q}}^{(k)}$ of $\mathscr{C}_N'$. We start by re-routing $\widetilde{\mathbf{q}}^{(1)}$ as follows: if $\widetilde{\mathbf{q}}^{(1)}$ intersects $\mathscr{R}(p')$ for some $p' \in \mathscr{C}_n'$, then we deform it so that contains all cells of $\mathscr{D}(p')$: such deformation can be obtained through a finite sequence of "flips" $(\widetilde{p}' , \widetilde{p}' + \mathbf{e}_1 , \widetilde{p}' + \mathbf{e}_1 -  \mathbf{e}_2) \leftrightarrow (\widetilde{p}' , \widetilde{p}' - \mathbf{e}_2 , \widetilde{p}' + \mathbf{e}_1 -  \mathbf{e}_2)$ of right-down turn into down-right turn (or viceversa) of $\widetilde{\mathbf{q}}^{(1)}$. If during such deformation the loop $\widetilde{\mathbf{q}}^{(1)}$ crosses other loops $\widetilde{\mathbf{q}}^{(i)}$, we push the latter in the direction suggested by the flips, so to avoid intersections. After completing such procedure we obtain a new $k$-tuple of non-intersecting down-right loops, which we call $\widetilde{\mathbf{q}}^{(1,1)},\widetilde{\mathbf{q}}^{(2,1)},\dots,\widetilde{\mathbf{q}}^{(k,1)}$ on $\mathscr{C}_N'$, with the property that
    \[
        \sum_{i=1}^k \sum_{j=1}^{2N+1} \widetilde{\mathbf{c}}(\widetilde{\mathbf{q}}^{(i,1)}_j) \ge \sum_{i=1}^k \sum_{j=1}^{2N+1} \widetilde{\mathbf{c}}(\widetilde{\mathbf{q}}^{(i)}_j).
    \]
    We then sequentially proceed to generate $k$-tuple of loops $\widetilde{\mathbf{q}}^{(1,\ell)},\widetilde{\mathbf{q}}^{(2,\ell)},\dots,\widetilde{\mathbf{q}}^{(k,\ell)}$ for $\ell=1,\dots,k$, where at each $\ell$
    we re-route the paths $\widetilde{\mathbf{q}}^{(j,\ell)}$ for $j>\ell$ with the same strategy described in the case $\ell = 1$ and the additional condition that if $\widetilde{\mathbf{q}}^{(j,\ell)}$ crosses some rectangle $\mathscr{R}(p')$ which is already crossed by $\widetilde{\mathbf{q}}^{(i,\ell)}$ with $i \le \ell$ (which implies that the $\widetilde{\mathbf{q}}^{(i,\ell)}$ contains all faces of $\mathscr{D}(p')$), then we deform $\widetilde{\mathbf{q}}^{(j,\ell)}$ so that it does not intersect the rectangle $\mathscr{R}(p')$. As in the $\ell = 1$ case we still have the property
    \[
        \sum_{i=1}^k \sum_{j=1}^{2N+1} \widetilde{\mathbf{c}}(\widetilde{\mathbf{q}}^{(i,\ell)}_j) \ge \sum_{i=1}^k \sum_{j=1}^{2N+1} \widetilde{\mathbf{c}}(\widetilde{\mathbf{q}}^{(i,\ell-1)}_j).
    \]

    By the end of this procedure we will be left with a $k$-tuple $\widetilde{\mathbf{p}}^{(i)} = \widetilde{\mathbf{q}}^{(i,k)}$ for $i=1,\dotsc,k$.
    By construction, none of the rectangles $\mathscr{R}(p')$ is crossed by more that one loop $\widetilde{\mathbf{p}}^{(i)}$ and whenever a loop $\widetilde{\mathbf{p}}^{(i)}$ crossed $\mathscr{R}(p')$ it also crosses the faces $\mathscr{D}(p')$.
    As a result we can associate to the $k$-tuple of loops $\widetilde{\mathbf{p}}^{(1)}, \dotsc, \widetilde{\mathbf{p}}^{(k)}$ on $\mathscr{C}_N'$ a $k$-tuple of non-intersecting loops $\mathbf{p}^{(1)},\dots,\mathbf{p}^{(k)}$ on $\mathscr{C}_n'$ with the property that
    \[
        \sum_{i=1}^k \sum_{j=1}^{2N+1} \widetilde{\mathbf{c}}(\widetilde{\mathbf{p}}^{(i)}_j) = \sum_{i=1}^k \sum_{j=1}^{2N+1} \mathbf{c}(\mathbf{p}^{(i)}_j).
    \]
    As a result we have
    \[
        \mathrm{LPP}^{\circlearrowleft}_k(\widetilde{\mathbf{c}}) \le \mathrm{LPP}^{\circlearrowleft}_k(\mathbf{c}),
    \]
    which completes the proof.
\end{proof}

\section{Symmetric environments and symmetric fields}

In this section we analyze special cases of the bijections of \Cref{th:main_bij} and \Cref{thm:LPP} under symmetric inputs. For this we define the transposition of an environment $\mathbf{c}$ as $\mathbf{c}^\mathsf{t}(i,j) = \mathbf{c}(j,i)$. Similar to \eqref{eq:set_environments}, for any set $B \subseteq \mathbb{Z}$ such that $0 \in B$, we define the set of symmetric environments 
\[
    \mathfrak{F}_{\mathrm{fin}}^{\mathrm{sym}}(\mathscr{C}_n',B) = \{\mathbf{c} \in \mathfrak{F}_{\mathrm{fin}}(\mathscr{C}_n',B) \mid \mathbf{c}^\mathsf{t} = \mathbf{c} \}.
\]

\begin{corollary} \label{cor:symmetric_column_RSK}
    Restricting the bijection $\Psi$ of \Cref{prop:column RSK} to symmetric inputs $(P,P)$ yields a bijection
    \begin{equation}
        \Psi\colon P \in \bigsqcup_{\lambda,\rho \in \mathbb{S}} \mathrm{SST} ( \lambda/\rho , n) \longrightarrow (\mathbf{c},\nu) \in \mathfrak{F}_{\mathrm{fin}}^{\mathrm{sym}}(\mathscr{C}_n',\N) \times \mathbb{S}.
    \end{equation}
\end{corollary}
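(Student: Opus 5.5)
The plan is to show that the bijection $\Psi$ of \Cref{prop:column RSK} intertwines the swap $(P,Q)\mapsto(Q,P)$ on pairs of tableaux with the transposition $\mathbf{c}\mapsto\mathbf{c}^\mathsf{t}$ on environments, while leaving $\nu$ unchanged. Granting this, the corollary follows formally. $\Psi$ is a bijection commuting with two involutions, so it restricts to a bijection between the fixed-point sets. The fixed points of the swap are exactly the diagonal pairs $(P,P)$, identified with $P\in\bigsqcup_{\lambda,\rho}\SST(\lambda/\rho,n)$. The fixed points of $(\mathbf{c},\nu)\mapsto(\mathbf{c}^\mathsf{t},\nu)$ are $\mathfrak{F}^{\mathrm{sym}}_{\mathrm{fin}}(\mathscr{C}_n',\N)\times\mathbb{S}$.

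First I check that transposition is well defined on $\mathscr{C}_n'$. The relation $(x,y)\sim(x+kn,y-kn)$ is preserved by $(x,y)\mapsto(y,x)$, since it becomes $(y,x)\sim(y-kn,x+kn)$, which is the same relation with $-k$. So $\mathbf{c}^\mathsf{t}$ is a finitely supported environment on $\mathscr{C}_n'$.

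Next I relate the Fomin fields. Let $\boldsymbol{\lambda}=\boldsymbol{\lambda}[P,Q]$ and define $\boldsymbol{\lambda}^\mathsf{t}(x,y)=\boldsymbol{\lambda}(y,x)$. The key claim is that $\boldsymbol{\lambda}^\mathsf{t}$ is again a Fomin field. The interlacing conditions along $\mathbf{e}_1$ and $\mathbf{e}_2$ are simply exchanged. For the growth rule, I need $\Fomin(\kappa,\mu,\lambda)=\Fomin(\kappa,\lambda,\mu)$. This follows from the symmetry of $\mathsf f$ under $(a,b)\leftrightarrow(b,a)$, which exchanges $a'$ and $b'$ and fixes $c'$. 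By \eqref{eq:f Fomin_1} we have $c'=\max\{0,a+b+c-1\}$, which is symmetric in $a,b$, and the formulas $a'=b+c-c'$ and $b'=a+c-c'$ swap into each other. Consequently $\mathsf F(A,B)=(A',B')$ implies $\mathsf F(B,A)=(B',A')$, so $\nu$ defined by \eqref{eq:fomin-operator} is the same in both orders.

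The same symmetry shows that the carrier values agree, $c_0(A,B)=c_0(B,A)$. It follows that the environment of $\boldsymbol{\lambda}^\mathsf{t}$ is $\mathbf{c}^\mathsf{t}$. Reading $\boldsymbol{\lambda}^\mathsf{t}$ along the loop of \eqref{eq:tableaux_from_loop} gives $\boldsymbol{\lambda}^\mathsf{t}=\boldsymbol{\lambda}[Q,P]$. This matches \Cref{prop:properties of iota and cRSK}\ref{item:iota_swap}.

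It remains to show that $\nu$ is unchanged, and this is the only mildly delicate step. By definition \eqref{eq:nu}, $\nu$ is computed from the limit $t\to-\infty$ of $\boldsymbol{\lambda}(t,0)$, and for the swapped field this becomes a limit of $\boldsymbol{\lambda}(0,t)$. I would avoid comparing the two limits directly and instead use \Cref{prop:properties_kernel}: by items \ref{item:ker_asymptotic_neg} and \ref{item:kernel_loop}, $\nu=\ker(P,Q)$, and the kernel is defined as a maximal signature over the whole field. The kernel is therefore manifestly invariant under $(x,y)\mapsto(y,x)$, so $\ker(Q,P)=\ker(P,Q)$. Hence $\Psi(Q,P)=(\mathbf{c}^\mathsf{t},\nu)$ whenever $\Psi(P,Q)=(\mathbf{c},\nu)$, and the restriction argument above completes the proof.
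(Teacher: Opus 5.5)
Your proof is correct and rests on the same idea as the paper, which disposes of this corollary in one remark: the symmetry of $\mathbf{c}$ follows from the $a\leftrightarrow b$ symmetry of the Fomin local rule, presented as a variant of Sagan--Stanley. Your version makes this explicit by intertwining $(P,Q)\mapsto(Q,P)$ with $(\mathbf{c},\nu)\mapsto(\mathbf{c}^\mathsf{t},\nu)$ and restricting to fixed-point sets, which also supplies the surjectivity onto symmetric environments that the paper leaves implicit. The $\nu$ step is simpler than you suggest: since $(0,kn)\sim(kn,0)$ on $\mathscr{C}_n$, one has $\boldsymbol{\lambda}^\mathsf{t}(kn,0)=\boldsymbol{\lambda}(kn,0)$, so the monotone limits in \eqref{eq:nu} agree directly, without invoking \Cref{prop:properties_kernel}.
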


The result of \Cref{cor:symmetric_column_RSK} is a variant of \cite[Corollary 4.4]{sagan1990robinson}. The fact that for any tableau $P$ the associated environment $\mathbf{c}$ such that $\Psi(P) = (\mathbf{c},\nu)$ is symmetric is consequence of the symmetry of the Fomin local rule.

For the next theorem recall that for any array $\alpha \in \mathbb{Z}^\ell$, $\mathrm{odd}(\alpha)$ is the number of its odd entries.

\begin{theorem} \label{cor:symmetric_Upsilon}
    Restricting the bijection $\Upsilon_{\mathrm{col}}$ of \Cref{th:main_bij} to symmetric inputs $(P,P)$ yields a bijection
    \begin{equation}
        \Upsilon_{\mathrm{col}} \colon P \in \bigsqcup_{\lambda,\rho \in \mathbb{S}} \mathrm{SST} ( \lambda/\rho , n)
        \longrightarrow (H,\kappa,\nu) \in
        \bigsqcup_{\mu\in\partitions} \HWT(\mu,n) \times \mathcal{K}(\mu) \times \mathbb{S}.
    \end{equation}
    The shapes $\rho, \lambda$ are integer partitions if and only if $\nu$ is also a partition and $\kappa \in \mathcal{K}_+(\mu)$.
    Moreover, we have
    \begin{equation} \label{eq:odd}
        \mathrm{odd}(\lambda) = \mathrm{odd}(\nu) + \mathrm{odd}(\kappa + \mu)
        \qquad
        \text{and}
        \qquad
        \mathrm{odd}(\rho) = \mathrm{odd}(\nu) + \mathrm{odd}(\kappa).
    \end{equation}
\end{theorem}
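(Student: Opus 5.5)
The plan is to check that the construction of $\Upsilon_{\mathrm{col}}$ in \Cref{def:upsilon} is compatible with the diagonal $P=Q$, and then to read off the parity identities from the final one-row tableau $T$.

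The first step is to show that $\Phi(P,P)=(H,H)$ for some $H$. By \Cref{prop:properties of iota and cRSK} \ref{item:iota_swap} we have $\iota_2(P,Q)=\iota_1(Q,P)$, so the Fomin field $\boldsymbol{\lambda}[P,P]$ is symmetric under $(x,y)\mapsto(y,x)$. The limits \eqref{eq:H_1 limit} and \eqref{eq:H_2 limit} then coincide, since $\cRSK=\iota_1^n=\iota_2^n$.

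Next I choose the leading path $\mathcal{L}_{H,H}$ with $\mathcal{L}_{H_1}=\mathcal{L}_{H_2}=\mathcal{L}_H$. Because the families $E^{(1)},F^{(1)}$ and $E^{(2)},F^{(2)}$ commute (\Cref{prop:bicrystal_sym}), the path can be reordered as $(g_k\times g_k)\cdots(g_1\times g_1)$. The key claim is that $(G\times G)$ maps diagonal pairs to diagonal pairs, i.e.\ $G^{(2)}G^{(1)}(R,R)=(g(R),g(R))$. For classical $g$ this is immediate from \eqref{eq:Kashiwara_skew_tab}. For $g=e_0,f_0$ I would use $G_0^{(k)}=\iota_k G_1^{(k)}\iota_k^{-1}$, the swap symmetry $\iota_2=\iota_1\circ\mathrm{swap}$, and the commutation \Cref{lem:com_EF_iota}. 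This yields the symmetric version of the statement.

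With that in hand, the restricted map $P\mapsto (H,\kappa,\nu)$ is defined, and injectivity follows from \Cref{th:main_bij}. For surjectivity, given $(H,\kappa,\nu)$, \Cref{def:upsilon_inv} with the symmetric path applied to $(T,T)$ returns a diagonal pair $(P,P)$. The partition criterion is inherited directly from \Cref{th:main_bij}.

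For \eqref{eq:odd}, the plan is to track the parity of rows. This is where I expect the main obstacle. A $0$-operator changes $\lambda_r$ and $\rho_r$ by one unit in a single row $r$ (\Cref{prop:EF_stable}, \Cref{lem:bottom_row}). In the symmetric setting the same row is shifted twice, once by $G^{(1)}$ and once by $G^{(2)}$, so each step shifts it by $\pm2$ and all parities of $\lambda_i$ and $\rho_i$ are preserved, up to a permutation of rows. I would then verify that rows are only re-sorted, which follows from interlacing. This shows $\mathrm{odd}(\lambda)=\mathrm{odd}(\overline\lambda)$ and $\mathrm{odd}(\rho)=\mathrm{odd}(\overline\rho)$, where $T=(\overline\rho\preceq\overline\lambda)$. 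The main worry is that a $0$-operator acting on a non-stable pair may touch rows differently than in the stable case; I would handle this by first passing to a stable time via the crystal symmetry \Cref{th:crystal_sym}.

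It remains to prove the $\SST(1)$ identity: if $\Xi(T)=(B,\nu)$ and $\KKR(B)=(\mu,J)$, then
\[
\mathrm{odd}(\overline\rho)=\mathrm{odd}(\nu)+\mathrm{odd}(J+\mu)
\qquad\text{and}\qquad
\mathrm{odd}(\overline\lambda)=\mathrm{odd}(\nu)+\mathrm{odd}(J+2\mu).
\]
Note that $J+\mu=\kappa$. By \Cref{lem:pr_bij} the rows of $\overline\rho$ split into those of $\nu$ together with the positions $\theta(\gamma)=\nabla\sigma_{\mu,J}(\gamma)$ from the proof of \Cref{prop:volume_BBS_tableau}. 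Those positions, and the corresponding right ends $\overline\lambda_i=\overline\rho_i+(\text{cluster length})$, should be congruent modulo $2$ to $J_\alpha+\mu_\alpha$ and $J_\alpha+2\mu_\alpha$ respectively. I would establish this from \eqref{eq:sigma}, where the cross terms $2\mu_\alpha n_\alpha n_\beta$ are even, so that consecutive differences of $\sigma$ pick out $J_\alpha+\mu_\alpha$ modulo $2$. I would confirm the matching $\gamma\leftrightarrow\alpha$ against the example in \Cref{ex:KKR}.
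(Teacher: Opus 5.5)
Your restriction to diagonal inputs through a symmetric leading path is sound, and more complete than the paper's one-line argument. You do need the swap relation in the form $\iota_2=\mathrm{swap}\circ\iota_1\circ\mathrm{swap}$, which is what the transposition symmetry of the Fomin rule gives. Read literally as $\iota_2=\iota_1\circ\mathrm{swap}$, it would force $E^{(2)}_0=E^{(1)}_0$. The correct form gives $G^{(2)}=\mathrm{swap}\circ G^{(1)}\circ\mathrm{swap}$ for every operator, and then \Cref{prop:bicrystal_sym} makes $G^{(2)}G^{(1)}$ commute with $\mathrm{swap}$.

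\textbf{The genuine gap is the parity step.} The one-row description of $E^{(k)}_0$ in \Cref{prop:EF_stable} requires $\iota_k^{-1}$ of the pair to be stable, and along the leading path of $(P,P)$ this generally fails. In that case the symmetric step need not move a single row by $2$. For example, take $n=2$, $R=((1,0)\preceq(2,0)\preceq(2,2))$ and $(P,P)=\iota_1\iota_2(R,R)$. By \Cref{lem:com_EF_iota}, $E^{(2)}_0E^{(1)}_0=(\iota_1\iota_2)E^{(2)}_1E^{(1)}_1(\iota_1\iota_2)^{-1}$. So the inner shape goes from $\Fomin((1,0),(2,0),(2,0))=(3,0)$ to $\Fomin((1,0),(2,1),(2,1))=(4,1)$: two different rows each move by one.

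Passing to a stable time does not repair this. \Cref{th:crystal_sym} gives $\mathcal{L}_{P,P}(P_t,P_t)=(T_t,T_t)$, hence parity equalities between the shapes of $P_t$ and $T_t$ for large $t$. To return to $t=0$ you must know how the parities of the shapes $\rho_t,\lambda_t$ of $P_t$ evolve under $\cRSK$. The only relation available, $\rho_{t+1}=\lambda_t$, turns the $\rho$-identity at time $t+1$ into the $\lambda$-identity at time $t$. It never recovers both identities at an earlier time, so the argument is circular with \eqref{eq:odd}.

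What does work at every time is the box-ball form of the symmetric step. Write $(\iota_1\iota_2)^{-1}(P,P)=(R,R)$ and $B=\mu_0'-\kappa_0'$, where $\kappa_0$ is the inner shape of $R$ and $\mu_0/\kappa_0$ are its $1$-cells. Then $e_1$ adds a ball at some column $c$, and the new inner shape has conjugate $\rho'+\mathbf{e}_c+\sfT(B+\mathbf{e}_c)-\sfT(B)=\rho'+\mathbf{e}_x+\mathbf{e}_y$ with $y-x$ odd. This holds because the carrier of $B$ arrives empty at both positions and the stretch between them contains as many balls as holes. Hence $\mathrm{odd}(\rho)$ is preserved whether or not the two cells share a row. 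The statement for $\mathrm{odd}(\lambda)$ follows by applying this to $\cRSK(P,P)$.

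\textbf{The $\SST(1)$ identity also needs repair.} Cluster lengths are not soliton lengths, so no single $\alpha$ serves both ends of a cluster. Take $B$ from \Cref{ex:KKR} with $\nu=\varnothing$; then $\overline{\rho}=(2,-1)$ and $\overline{\lambda}=(4,1)$. Use the rigging $J=(-4,-1)$, as in the introduction. The printed $(-4,0)$ is a misprint, contradicts \Cref{prop:volume_BBS_tableau}, and would make your check fail. With this $J$ you get $\kappa=(-1,0)$ and $\kappa+\mu=(2,1)$. The cluster at $\{0,1\}$ has left end $-1\equiv\kappa_1$ but right end $1\not\equiv\kappa_1+\mu_1$.

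Drop the right-end matching. Instead, obtain the $\overline{\lambda}$-identity from the $\overline{\rho}$-identity for $\cRSK(T)$, whose inner shape is $\overline{\lambda}$ and whose rigging is $\kappa+\mu$ (\Cref{prop:cRSK_BBS}, \Cref{thm:KKR linearization}).

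For the $\overline{\rho}$-identity, evenness of the cross terms only gives $\sigma_{\mu,J}(\gamma)\equiv\sum_{\alpha\in S_\gamma}\kappa_\alpha$ for some minimizer $S_\gamma$. To conclude $\theta(\gamma)\equiv\kappa_{\alpha_\gamma}$ with $\gamma\mapsto\alpha_\gamma$ a bijection, you need minimizers that are nested in $\gamma$. This is true: the objective equals $\sum_{\alpha\in S}(\kappa_\alpha-\mu_\alpha)+\sum_{k\ge1}|S\cap\{\alpha:\mu_\alpha\ge k\}|^2$, a laminar convex set function, and such functions admit nested greedy minimizers. But it has to be proved.

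For comparison, the paper does not track parities along the leading path at all. It reduces to partition shape and standard $P$, transposes to the skew row RSK dynamics, and imports the parity identities from \cite[Lemmas 3.2, 3.6]{He2023boundaryASEP}.
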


\begin{proof}
    The cRSK dynamics with initial data $(P,P)$ is a sequence of pairs of identical tableaux $(P_t,P_t)_{t \in \Z}$ as a consequence of the symmetry of the Fomin local rules. As a result the associated soliton data also consists of a pair of identical horizontally weak tableaux $\Phi(P,P) = (H,H)$. This proves the first part of the theorem. 

    Relations \eqref{eq:odd} are less trivial. Notice that we can assume that $\lambda,\rho$ are partitions, otherwise we would consider the tableau $\widetilde{P}$ obtained shifting each row of $P$ by $2m$ cells to the right with $2m \ge \rho_\ell$. The image under $\Upsilon_{\mathrm{col}}$ of $\widetilde{P}$ can be checked to be the triple $(H, \widetilde{\kappa} , \widetilde{\nu})$ with $\widetilde{\kappa}_i = \kappa + 2m$ and $\widetilde{\nu}_i = \nu_i + 2m$ and such even shifts leave relations \eqref{eq:odd} unchanged.
    Moreover, since relations \eqref{eq:odd} only concern the shape of the tableau $P$ and $H$ and the value of $\kappa$, we can further assume that $P$ is a standard tableau. The transposed tableau $P^\mathsf{t}$ can be put in bijection with the triple $(H^\mathsf{t}, \kappa, \nu')$, where $(H^\mathsf{t},H^\mathsf{t})$ is the asymptotic pair of vertically strict tableaux associated to the skew row RSK dynamics $(P_t^\mathsf{t},P_t^\mathsf{t})_{t \in \Z}$ with initial condition $(P^\mathsf{t}, P^\mathsf{t})$. Such bijection is a particular case of the one discovered in \cite{imamura2021skewRSK} and for that we can invoke \cite[Lemmas 3.2, 3.6]{He2023boundaryASEP}, which claims that
    \[
        \mathrm{odd}(\lambda') = \mathrm{odd}(\nu') + \mathrm{odd}(\mu' + \kappa), 
        \qquad
        \mathrm{odd}(\rho') = \mathrm{odd}(\nu') + \mathrm{odd}(\kappa),
    \]
    where $\mu'$ is the shape of $H^\mathsf{t}$. Transposing partitions $\lambda,\rho,\mu,\nu$ in the above relation proves \eqref{eq:odd}.
\end{proof}

For any environmnent $\mathbf{c} \in \mathfrak{F}_{\mathrm{fin}}(\mathscr{C}_n',\N)$ define
\[
    \mathrm{oddTr}(\mathbf{c}) = \sum_{k \ge 0} \sum_{i=1}^n \mathbf{1}_{\mathbf{c}(i+kn,i) \text{ is odd}}
    \qquad
    \text{and}
    \qquad
    \mathrm{oddTr}^-(\mathbf{c}) = \sum_{k \ge 0} (-1)^k \sum_{i=1}^n \mathbf{1}_{\mathbf{c}(i+kn,i)\text{ is odd}}.
\]

\begin{theorem} \label{cor:symmetric_env}
    Restricting the bijection $\widetilde{\Upsilon}_{\mathrm{col}}$ of \Cref{thm:LPP} to symmetric environments yields a bijection
    \begin{equation}
        \widetilde{\Upsilon}_{\mathrm{col}} \colon \mathbf{c} \in \mathfrak{F}_{\mathrm{fin}}^{\mathrm{sym}}(\mathscr{C}_n',\N) \longrightarrow ( H,\kappa) \in \bigsqcup_{\mu\in\partitions} \HWT(\mu,n) \times \mathcal{K}(\mu).
    \end{equation}
    The environment $\mathbf{c}$ is supported in $\mathscr{C}_n'^-$ if and only if $\kappa \in \mathcal{K}_+(\mu)$.
    Moreover, we have
    \begin{equation} \label{eq:odd_Trace}
        \mathrm{oddTr}(\mathbf{c}) = \mathrm{odd}(\kappa + \mu) +  \mathrm{odd}(\kappa)
        \qquad
        \text{and}
        \qquad
        \mathrm{oddTr}^-(\mathbf{c}) = \mathrm{odd}(\kappa + \mu) -  \mathrm{odd}(\kappa).
    \end{equation}
\end{theorem}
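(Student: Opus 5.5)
The plan is to compose the symmetric periodic column RSK correspondence of \Cref{cor:symmetric_column_RSK} with the symmetric restriction of $\Upsilon_{\mathrm{col}}$ in \Cref{cor:symmetric_Upsilon}, mirroring how \Cref{thm:LPP} was deduced from \Cref{th:main_bij} and \Cref{prop:column RSK}.

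\emph{The bijection.} Given $\mathbf{c}\in\mathfrak{F}_{\mathrm{fin}}^{\mathrm{sym}}(\mathscr{C}_n',\N)$, \Cref{cor:symmetric_column_RSK} produces a unique tableau $P$ with $\Psi(P,P)=(\mathbf{c},\varnothing)$, i.e.\ with $\ker(P,P)=\varnothing$. Applying \Cref{cor:symmetric_Upsilon} gives $\Upsilon_{\mathrm{col}}(P,P)=(H,H,\kappa,\varnothing)$, and we set $\widetilde{\Upsilon}_{\mathrm{col}}(\mathbf{c})=(H,\kappa)$. This map is a bijection because both ingredients are bijections and the constraint $\nu=\varnothing$ corresponds exactly on both sides. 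It is also the restriction of the map $\widetilde{\Upsilon}_{\mathrm{col}}$ of \Cref{thm:LPP}. The support statement then follows verbatim from \Cref{thm:LPP}\ref{item:bij environment HWT restriction}.

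\emph{The odd-trace identities.} Set $\rho=\boldsymbol{\lambda}(0,0)$ and $\lambda=\boldsymbol{\lambda}(n,0)$. By \eqref{eq:odd} with $\nu=\varnothing$,
\[
\mathrm{odd}(\lambda)=\mathrm{odd}(\kappa+\mu), \qquad \mathrm{odd}(\rho)=\mathrm{odd}(\kappa).
\]
Since the two traces are the sum and difference of these quantities, it suffices to prove
\begin{equation*}
\mathrm{odd}(\lambda)=\sum_{k\ge0}\,\mathbf{1}_{k\ \mathrm{even}}\sum_{i=1}^n\mathbf{1}_{\mathbf{c}(i+kn,i)\ \mathrm{odd}},
\qquad
\mathrm{odd}(\rho)=\sum_{k\ge0}\,\mathbf{1}_{k\ \mathrm{odd}}\sum_{i=1}^n\mathbf{1}_{\mathbf{c}(i+kn,i)\ \mathrm{odd}}.
\end{equation*}
Both displays should be read up to the paper's half-integer indexing of diagonal faces, which I take to mean the faces $(i-\tfrac12+kn,\,i-\tfrac12)$.

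\emph{Diagonal parity lemma.} The core is a local statement: for a symmetric Fomin cell with corners $\kappa\preceq\mu=\lambda'$, the diagonal face's value $c$ satisfies
\[
\mathrm{odd}(\nu)-\mathrm{odd}(\kappa)\equiv c \pmod 2 \quad\text{in the sense }\ \mathrm{odd}(\nu)=\mathrm{odd}(\kappa)\pm\mathbf{1}_{c\ \mathrm{odd}}.
\]
This is the classical Schensted/Knuth fact that diagonal fixed points of an involution track odd columns; here, in column form with $\lambda=\mu$, it comes from analyzing $\mathsf f(a,a,c)$. By \eqref{eq:value_c_0}, $c=\sum_{i\ge1}(\nu'_i-2\mu'_i+\kappa'_i)$, and one needs the finer parity information obtained from the full carrier sequence.

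\emph{Tracking along the diagonal.} Walk the diagonal from $(0,0)$ to $(n,n)\sim(0,0)$ shifted by $\mathscr{C}_n$ periodicity, and keep going until the field stabilizes to kernel-free negative signatures. The signatures met along the way alternate between inner shapes and outer shapes of successive $(P_t,P_t)$, which is what produces the $(-1)^k$ alternation and the splitting into $\lambda$ versus $\rho$. Telescoping the parity lemma then gives the two displayed identities, exactly as the volume identity in \Cref{thm:LPP}\ref{item:volume_environment} telescoped \eqref{eq:value_c_0}.

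I expect the main obstacle to be making the parity lemma precise, in particular getting the sign right (whether odd parts are gained or lost) and matching the indexing of $k$ to the alternation. If the direct local analysis becomes messy, the fallback is to standardize via \Cref{prop:standardization_field} and transpose as in the proof of \Cref{cor:symmetric_Upsilon}. One would then invoke the analogous symmetric-environment parity statements for the skew row RSK bijection from \cite{imamura2021skewRSK,He2023boundaryASEP}, after checking that the diagonal refinement of \Cref{prop:refinement_environment} preserves the parity of the diagonal counts.
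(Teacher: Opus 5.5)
Your construction of the bijection and of the support statement is the same as the paper's. Your fallback is exactly the paper's proof: refine via \Cref{prop:standardization_field}; observe through \Cref{prop:refinement_environment} that a diagonal face of value $c$ refines to an anti-diagonal of $c$ ones, which meets the diagonal iff $c$ is odd, so $\mathrm{oddTr}(\mathbf{c})=\mathrm{Tr}(\widetilde{\mathbf{c}})$; then cite \cite[Lemma 3.6]{He2023boundaryASEP}.

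Your main route is genuinely different, but as written it has a gap: the diagonal parity lemma is false for signatures, and the telescoping starts from the wrong base. Take $n=1$ and $P=((0)\preceq(1))$, a single ball at site $1$, so that $\boldsymbol{\lambda}(x,y)=(x+y)$. The symmetric cell with $\kappa=(-1)$, $\mu=\lambda=(0)$, $\nu=(1)$ has $c=1$ at its face, yet $\mathrm{odd}(\nu)=\mathrm{odd}(\kappa)=1$. Moreover, along the transposition-fixed line through $\boldsymbol{\lambda}(1,0)$ every signature is $(1+2s)$. So $\mathrm{odd}(\cdot)$ is constantly $1$ on that line, while the line carries exactly one odd face. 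The far-past ``kernel-free negative signatures'' need not have zero odd parts, since the even shifts of the stable regime freeze their parities. Hence no bookkeeping of the literal odd count can produce your two displays.

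What does work is to telescope $r(\kappa):=\#\{i:\kappa_i\ge 1\text{ odd}\}=\sum_{j\ge1}(-1)^{j-1}\kappa'_j$. This statistic equals $\mathrm{odd}(\kappa)$ on partitions and vanishes in the far past, because $\ker=\varnothing$ forces every row to $-\infty$.

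\emph{Local lemma.} A diagonal cell of a symmetric field has $\mu=\lambda$, so by \Cref{rem:from_two_lane_to_BBS} it is a one-lane step $\nu'-\mu'=\mathsf{T}(B)$ with $B=\mu'-\kappa'$. One then has exactly $r(\nu)-r(\kappa)=\mathbf{1}_{c\ \mathrm{odd}}$, never $-1$:
\begin{itemize}
\item In the bracket matching, each ball at $x$ lands at some $y$ with $y-x$ odd. So pairs lying on one side of the origin contribute $0$ to $\sum_{j\ge1}(-1)^{j-1}(B_j+\mathsf{T}(B)_j)$.
\item The $c$ crossing pairs are nested, with landing sites $y_c<\dots<y_1$. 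Since the intervals $[1,y_c-1]$ and $(y_{i+1},y_i)$ are balanced, $y_c$ is odd and consecutive gaps are odd. These pairs therefore contribute $1-1+1-\cdots=\mathbf{1}_{c\ \mathrm{odd}}$.
\end{itemize}

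\emph{Source of the sign $(-1)^k$.} It does not come from a single diagonal walk alternating inner and outer shapes. Since $\boldsymbol{\lambda}(n,n)=\boldsymbol{\lambda}(2n,0)$, the line $x=y$ only meets inner shapes of $P_{2j}$. It comes instead from the two fixed lines $x-y\equiv 0$ and $x-y\equiv n \pmod{2n}$ of the transposition, which pass through $\rho$ and $\lambda$ respectively and split the diagonal faces by the parity of $k$.

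\emph{Conclusion.} Telescoping $r$ along each line up to $\rho$ and $\lambda$ proves your displays when $\rho,\lambda$ are partitions. This requires $k$ to be the $q$-level of faces in $\mathscr{C}_n'^-$, so that the past face adjacent to $\lambda$ on its line has $k=0$. Under your reading $(i-\tfrac12+kn,i-\tfrac12)$, $k\ge0$, these faces vanish for partition shapes, and the parity of $k$ lands on the wrong line. For genuine signatures $r(\rho)\neq\mathrm{odd}(\rho)$, so you must continue the telescope into the far future, which brings in faces on both sides.

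Repaired this way, your route replaces the standardization-plus-citation step by a self-contained local BBS computation. The price is that it relies on \Cref{cor:symmetric_Upsilon}, which itself rests on \cite{He2023boundaryASEP}.
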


\begin{proof}
    The only non immediate statements are those in \eqref{eq:odd_Trace} and as in the proof of \Cref{cor:symmetric_Upsilon}, we will borrow results from \cite{imamura2021skewRSK,He2023boundaryASEP}.
    Let $\boldsymbol{\lambda} \colon \mathscr{C}_n \to \mathbb{S}$ be a symmetric Fomin field of signatures associated to the environment $\mathbf{c}$ and let $\widetilde{\boldsymbol{\lambda}} \colon \mathscr{C}_N \to \mathbb{S}$ be its refinement for $N \ge n$ chosen appropriately. Let $\widetilde{\mathbf{c}} \in \mathfrak{F}_{\mathrm{fin}}^\mathrm{sym}(\mathscr{C}_N',\{0,1\})$ be the environment associated to the field $\widetilde{\boldsymbol{\lambda}}$. The fields $\mathbf{c}$ and $\widetilde{\mathbf{c}}$ are related by \Cref{prop:refinement_environment} and \Cref{prop:standardization_field}. Then, defining
    \[
        \mathrm{Tr}(\mathbf{c}) = \sum_{k \ge 0} \sum_{i=1}^n \mathbf{c}(i+kn,i)
        \qquad
        \text{and}
        \qquad
        \mathrm{Tr}^-(\mathbf{c}) = \sum_{k \ge 0} (-1)^k \sum_{i=1}^n \mathbf{c}(i+kn,i),
    \]
    we notice that
    \begin{equation} \label{eq:equality_oddTr}
        \mathrm{oddTr}(\mathbf{c}) = \mathrm{oddTr}(\widetilde{\mathbf{c}}) = \mathrm{Tr}(\widetilde{\mathbf{c}})
        \qquad
        \text{and}
        \qquad
        \mathrm{oddTr}^-(\mathbf{c}) = \mathrm{oddTr}^-(\widetilde{\mathbf{c}}) = \mathrm{Tr}^-(\widetilde{\mathbf{c}}).
    \end{equation}
    The equalities $\mathrm{oddTr}(\widetilde{\mathbf{c}}) = \mathrm{Tr}(\widetilde{\mathbf{c}})$ and $\mathrm{oddTr}^-(\widetilde{\mathbf{c}}) = \mathrm{Tr}^-(\widetilde{\mathbf{c}})$ are straightforward, since $\widetilde{\mathbf{c}}$ only takes values in $\{0,1\}$. The equality $\mathrm{oddTr}(\mathbf{c}) = \mathrm{oddTr}(\widetilde{\mathbf{c}})$ follows from \Cref{prop:refinement_environment}. The refinement of the generic diagonal cell $(i,i)$, where the environment takes value $c=\mathbf{c}(i,i)$ produces a $N_i \times N_i$ square sublattice $\Lambda_{N_i,N_i} \subset \mathscr{C}_N$ for some $N_i$ where $\widetilde{\mathbf{c}}(p') = 1$ only if $p' = (A_i+k,A_i+c+1-k)$ for some $k \in \{1,\dots,c\}$ and where the number $A_i$ is determined through \Cref{prop:standardization_field} and \Cref{prop:refinement_environment}. It is clear that the set $\{ (A+k,A+c+1-k) \mid  1 \le k \le c\} \}$ admits a point on the diagonal only when $c$ is odd. Similarly one shows that $\mathrm{oddTr}^-(\mathbf{c}) = \mathrm{oddTr}^-(\widetilde{\mathbf{c}})$.
    
    Now consider the images $\widetilde{\Upsilon}_{\mathrm{col}}(\mathbf{c}) = (H, \kappa)$ and $\widetilde{\Upsilon}_{\mathrm{col}}(\widetilde{\mathbf{c}}) = (\widetilde{H}, \kappa)$. By construction $\widetilde{H}$ is the standardization of $H$ and both tableaux have the same shape $\mu$. Moreover the map $\widetilde{\Upsilon}_{\mathrm{row}}(\widetilde{\mathbf{c}}) = (\widetilde{H}^\mathsf{t}, \kappa)$, is a bijection between the set of fully refined environments $\widetilde{\mathbf{c}}$ and pairs $(\widetilde{H}^\mathsf{t},\kappa)$ where $\widetilde{H}^\mathsf{t}$ is a vertically strict standard tableau of shape $\mu'$ and $\kappa \in \mathcal{K}(\mu)$ which is a particular case of \cite[Corollary 8.2]{imamura2021skewRSK}. For the bijection $\widetilde{\Upsilon}_{\mathrm{row}}$ it was found in \cite[Lemma 3.6]{He2023boundaryASEP} that
    \[
        \mathrm{Tr}(\widetilde{\mathbf{c}}) = \mathrm{odd}(\kappa + (\mu')') +  \mathrm{odd}(\kappa)
        \qquad
        \text{and}
        \qquad
        \mathrm{Tr}^-(\widetilde{\mathbf{c}}) = \mathrm{odd}(\kappa + (\mu')') -  \mathrm{odd}(\kappa).
    \]
    In the above relations we kept the dependence on $\mu'$ as it is the shape of $\widetilde{H}^\mathsf{t}$.
    Since $(\mu')'= \mu$, combining these identities with relations \eqref{eq:equality_oddTr}, completes the proof of \eqref{eq:odd_Trace}.
\end{proof}

\section{Summation identities for transformed Hall--Littlewood polynomials} \label{sec:summation_id}

\subsection{Transformed Hall--Littlewood polynomials}

In this section we present bijective proofs of Cauchy, Littlewood identities and their refinements for transformed Hall--Littlewood polynomials.

\begin{definition} \label{def:transformed_Hall_Littlewood}
Fix parameters $q, x_1,\dots,x_n \in \mathbb{C}$ and a partition $\mu$.
The \defn{transformed Hall--Littlewood polynomial} is
\begin{equation}
    \THL_\mu (x;q) = \sum_{H \in \mathrm{HWT}(\mu,n)} q^{D'(H)} x^H.
\end{equation}
\end{definition}

There is another combinatorial description of the transformed Hall--Littlewood polynomials as the graded character of $\KR{\mu}$ (equivalently $\HWT(\mu, n)$).
Recall that $\wt_j(b)$ being the number of entries `$j$'s in $b$ and $D$ is the energy function defined in~\cref{def:energy_function}.

\begin{remark}
    The transformed Hall--Littlewood polynomials are symmetric with respect to the permutation of the $x$ variables. This is consequence of the fact that the action of classical Kashiwara operators induces an action of the symmetric group $S_n$ on $\HWT(\mu,n)$: see \cite[Thm.~7.2.2]{Kashiwara94}. Then, the relation $D'(H)=D(\eta(H))$ and an inspection of the involution $\eta$ of \eqref{eq:lusztig_involution}, implies that the polynomials $\THL_\mu (x;q)$ could be equivalently defined, replacing the dual energy $D'$ with $D$, as 
    \begin{equation}
        \THL_\mu (x;q) = \sum_{H \in \mathrm{HWT}(\mu,n)} q^{D(H)} x^H.
    \end{equation} 
    Such definition is more common in literature; see \cite[Thm.~2.10]{Kirillov-Schilling-Shimozono2002_RCbijection} and \cite{KKR86,KR86,Nakayashiki_Yamada}.
\end{remark}

\begin{remark}
    \label{rem:HL_conventions}
    Our convention for the transformed Hall--Littlewood polynomials $\THL_{\lambda}(x; q)$ match what were called the modified Hall--Littlewood polynomials in~\cite{Garbali_Wheeler_2020}.
    Indeed, these differ from what are usually called the modified Hall--Littlewood polynomials in the combinatorics literature (\textit{e.g.},~\cite{Haiman_Hilber_Schemes,haglund_haiman_loehr,Haglund08}), which are a specialization of the modified Macdonald polynomials $\widetilde{\mathscr{H}}_{\mu}(x; q, t)$ by $\THL_{\mu} = q^{\Abs{\mu}} \widetilde{\mathscr{H}}_{\mu}(x; 0, q^{-1})$, where $\Abs{\mu}$ is given in \eqref{eq:b_1}.

    The difference reflects our energy convention: the maximal weight tableau, whose entries are all $1$, has maximal rather than minimal energy as in e.g. ~\cite{HKOTT02,HKOTY02,HKOTY99,Inoue-Kuniba-Takagi2012BBS,Kirillov-Schilling-Shimozono2002_RCbijection,Kuniba-Sakamoto-Yamada2007_tau}. Thus, we have the specialization $\THL_{\mu}(1, 0, \ldots; q) = q^{\Abs{\mu}}$.
    In terms of Macdonald polynomials $P_\lambda(x;q,t),Q_\lambda(x;q,t)$, borrowing the notation of \cite{Macdonald1995}, we have
    \[
        \THL_{\mu}(x; q) = Q_{\mu}(x/(1-q); 0, q) = \omega P_{\mu'}(x; q,0),
    \]
    where the substitution $x/(1-q)$ is to be interpreted as a plethysm and $\omega$ is the Hall involution. The above equality relates the transformed Hall-Littlewood polynomials with the Hall-Littlewood polynomials $Q_\mu(x;0,q)$ and with the $q$-Whittaker polynomials $P_\mu(x;q,0)$.

\end{remark}

In the following subsections we will use the following coefficients
\begin{equation} \label{eq:b_mu}
    \mathpzc{b}_\mu(q) = \prod_{i \ge 1} \frac{1}{(q;q)_{m_i(\mu)}}
    \quad
    \text{and}
    \quad
    \mathpzc{b}_\mu(\alpha,\beta;q) = \prod_{i = 1,3,5,\dots } \frac{[\alpha+q\beta]_{q^2}^{m_i(\mu)}}{(q^2;q^2)_{m_i(\mu)}} \prod_{i =2,4,6,\dots} \frac{[1+q \alpha \beta]_{q^2}^{m_i(\mu)}}{(q^2;q^2)_{m_i(\mu)}} ,
\end{equation}
where $m_i(\mu) = \# \{ j: \mu_j=i \}$ is the multiplicity of $i$ in $\mu$ and the symbol $[a+b]_p^n$ denotes
\begin{equation} \label{eq:rogers_szego}
    [a+b]_p^n=\sum_{k=0}^n \binom{n}{k}_p a^{n-k} b^k,
    \qquad \text{with} \qquad 
    \binom{n}{k}_q = \frac{(q;q)_n}{(q;q)_k(q;q)_{n-k}}, \quad \text{for } 0\le k\le n.
\end{equation}

\subsection{Cauchy identities and refinements}

Here we prove identities \eqref{eq:Cauchy_id} from \cref{thm:Cauchy and Littlewood id} and \eqref{eq:refined_Cauchy_id} from \Cref{thm:refined Cauchy and Littlewood id}.

\begin{proof}[Proof of \cref{thm:Cauchy and Littlewood id}, eq. \eqref{eq:Cauchy_id}]

Using the geometric series to expand every factor $1/(1-q^k x_i y_j)$ in the left hand side of \eqref{eq:Cauchy_id}, we have
\begin{equation*}
    \begin{split}
        \prod_{i,j=1}^n \frac{1}{(x_i y_j;q)_\infty} &= \prod_{i,j=1}^n \prod_{k \ge 0} \left( \sum_{ c_{i,j,k} \ge 0 } \left( q^k x_i y_j \right)^{c_{i,j,k}} \right) 
        \\
        & = \sum_{ \mathbf{c} \in \mathfrak{F}_{\mathrm{fin}}(\mathscr{C}_n'^-,\N) } \left(q^{\sum_{i,j=1}^n\sum_{k\ge 0} k \, \mathbf{c}(i,j+k)} \right) \left(\prod_{i=1}^n x_i^{ \sum_{\ell \ge 0} \mathbf{c}(i,\ell) } \right) \left(\prod_{j=1}^n y_j^{ \sum_{\ell \ge 0} \mathbf{c}(\ell,j) } \right) 
        \\
        & = \sum_{ (H_1,H_2;\kappa) \in \bigsqcup_{\mu \in \mathbb{Y}}  \HWT(\mu,n)^2 \times \mathcal{K}_+(\mu) } q^{D'(H_1) + D'(H_2) + |\kappa|} x^{H_1} y^{H_2} 
        \\
        & = \sum_{\mu \in \mathbb{Y}} \left( \sum_{ \kappa \in \mathcal{K}_+(\mu) } q^{|\kappa|} \right) \left( \sum_{H_1 \in \HWT(\mu,n)} q^{D'(H_1)} x^{H_1} \right) \left( \sum_{H_1 \in \HWT(\mu,n)} q^{D'(H_2)} y^{H_2} \right)
        \\
        & = \sum_{\mu \in \mathbb{Y}}  \mathpzc{b}_\mu(q) \THL_{\mu}(x;q) \THL_\mu(y;q).
    \end{split}
\end{equation*}
Above, in the third equality we used \Cref{thm:LPP}, \ref{item:bij environment HWT restriction} and relations \eqref{eq:weight_environment} \eqref{eq:volume_environment}. In the last equality we used elementary geometric summations to write $\mathpzc{b}_\mu(q) = \sum_{ \kappa \in \mathcal{K}_+(\mu) } q^{|\kappa|}$.
\end{proof}

\begin{proof}[Proof of \cref{thm:refined Cauchy and Littlewood id}, eq. \eqref{eq:refined_Cauchy_id}]

Expanding Schur polynomials as generating functions of semistandard Young tableaux of skew shape we can manipulate the left hand side of \eqref{eq:refined_Cauchy_id} as
\begin{equation*}
    \begin{split}
        \sum_{ \substack{ \rho,\lambda \in \mathbb{Y} \\ \, \lambda_1'=N }} q^{|\rho|} s_{\lambda/\rho}(x) s_{\lambda/\rho}(y) &= \sum_{ \substack{ \rho,\lambda \in \mathbb{Y} \\ \, \lambda_1'=N }} \sum_{P,Q \in \SST(\lambda/\rho,n)} q^{|\rho|} x^P y^Q
        \\
        &= \sum_{ \substack{ \nu, \mu \in \mathbb{Y} \\ \nu_1'+\mu_1'=N } } \sum_{(H_1,H_2;\kappa) \in  \HWT(\mu,n)^2 \times \mathcal{K}_+(\mu) } q^{|\nu|+|\kappa|+D(H_1)+D(H_2)} x^{H_1} y^{H_2}
        \\
        &= \sum_{\ell = 0}^N  \bigg( \sum_{ \substack{ \nu \in \mathbb{Y} \\ \nu_1'=\ell } } q^{|\nu|} \bigg) \sum_{\substack{ \mu \in \mathbb{Y} \\ \mu_1'=N-\ell } }  \mathpzc{b}_\mu(q) \mathcal{Q}_\mu'(x;q) \mathcal{Q}_\mu'(y;q)
        \\
        &= \sum_{\ell = 0}^N  \frac{q^\ell}{(q;q)_\ell} \sum_{\substack{ \mu \in \mathbb{Y} \\ \mu_1'=N-\ell } }  \mathpzc{b}_\mu(q) \mathcal{Q}_\mu'(x;q) \mathcal{Q}_\mu'(y;q).
    \end{split}
\end{equation*}
Above, in the second equality we used \Cref{th:main_bij} and the fact that \eqref{eq:PQHH}, \eqref{eq:volume_pre} imply that, if $\Upsilon_{\mathrm{col}}(P,Q) = (H_1,H_2,\kappa,\nu)$, then $x^P = x^{H_1}$, $y^Q=y^{H_2}$ and $|\rho| = D(H_1)+D(H_2)+|\kappa|+|\nu|$. We also used \eqref{eq:length_pre} to relate $\lambda_1'$ to $\mu_1'+\nu_1'$. In the third and in the last last equality we used elementary geometric summations express $\mathpzc{b}_\mu(q)$ and to write $\frac{q^\ell}{(q;q)_\ell} = \sum_{  \nu \in \mathbb{Y}, \,\nu_1'=\ell  } q^{|\nu|}$.
\end{proof}

\subsection{Littlewood identities and refinements}

Let us recall a useful identity proven in \cite{imamura2021skewRSK}.

\begin{lemma}[\cite{imamura2021skewRSK}, Lemma 10.6] \label{lem:sum_from_IMS}
    Fix $z,q \in \mathbb{C}$ with $|q|<1$. Then, we have
    \[
        \sum_{\varkappa \in \mathbb{Y}~:~ \ell(\varkappa) \le m} z^{\mathrm{odd}(\varkappa)} q^{|\varkappa|} = \frac{[1+qz]_{q^2}^m}{(q^2;q^2)_m},
        \qquad
         \sum_{\varkappa \in \mathbb{Y}~:~ \ell(\varkappa) =m} z^{\mathrm{odd}(\nu)} q^{|\nu|} = \frac{[q^2+q z ]_{q^2}^m}{(q^2;q^2)_m}.
    \]
\end{lemma}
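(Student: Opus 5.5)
The identity is a finite multiset count, so I would prove it directly rather than appeal to \cite{imamura2021skewRSK}. The idea is to split $\varkappa$ according to the parity of its parts. Throughout, I regard a partition $\varkappa$ with $\ell(\varkappa)\le m$ as a weakly decreasing sequence $(\varkappa_1,\dots,\varkappa_m)$ of exactly $m$ non-negative integers, padded with zeros; zero entries count as even.

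\textbf{Step 1 (bijection).} Let $k$ be the number of odd entries of $\varkappa$. Such a $\varkappa$ is the same thing as a pair of multisets: $k$ odd non-negative integers and $m-k$ even non-negative integers. Indeed, sorting the union of the two multisets recovers $\varkappa$, and conversely the parity of each entry recovers the two multisets. Writing the odd entries as $2\alpha_j+1$ and the even entries as $2\beta_j$, this gives a bijection
\[
\varkappa \longleftrightarrow (k,\alpha,\beta), \qquad 0\le k\le m,
\]
where $\alpha$ is a partition with at most $k$ parts and $\beta$ is a partition with at most $m-k$ parts. Under it, $\mathrm{odd}(\varkappa)=k$ and $|\varkappa| = k + 2|\alpha| + 2|\beta|$.

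\textbf{Step 2 (first identity).} Using $\sum_{\ell(\alpha)\le k} p^{|\alpha|} = 1/(p;p)_k$ with $p=q^2$, the left-hand side of the first identity becomes
\[
\sum_{k=0}^m \frac{(qz)^k}{(q^2;q^2)_k\,(q^2;q^2)_{m-k}}
= \frac{1}{(q^2;q^2)_m}\sum_{k=0}^m \binom{m}{k}_{q^2} 1^{m-k}(qz)^k .
\]
By \eqref{eq:rogers_szego}, the last sum is $[1+qz]_{q^2}^m$.

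\textbf{Step 3 (second identity).} Requiring $\ell(\varkappa)=m$ means that no entry is zero. The odd entries are automatically positive, so the only new constraint is that every even entry is at least $2$, i.e.\ $\beta_j\ge 1$. Subtracting $1$ from each of the $m-k$ parts of $\beta$ shows that the $\beta$-sum equals $q^{2(m-k)}/(q^2;q^2)_{m-k}$. The total is therefore
\[
\frac{1}{(q^2;q^2)_m}\sum_{k=0}^m \binom{m}{k}_{q^2}(q^2)^{m-k}(qz)^k = \frac{[q^2+qz]_{q^2}^m}{(q^2;q^2)_m}.
\]
(In the second displayed sum of the lemma, the summation variable should read $\varkappa$ rather than $\nu$.)

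\textbf{Main point of care.} The only step needing attention is Step 1: one must check that the parity splitting is genuinely a bijection, with no multiplicity or ordering factors. This holds because entries of different parity are never equal, so the sorted interleaving of the two multisets is unique. The rest is the standard partition generating function combined with the convention $[a+b]^n_p=\sum_k\binom{n}{k}_p a^{n-k}b^k$.
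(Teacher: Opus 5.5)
Your proof is correct and complete. Pad $\varkappa$ with zeros to $m$ entries and split them by parity, writing odd entries as $2\alpha_j+1$ and even ones as $2\beta_j$. Both sums then reduce to $\frac{1}{(q^2;q^2)_m}\sum_{k=0}^m\binom{m}{k}_{q^2}a^{m-k}(qz)^k$, with $a=1$ when $\beta$ is unconstrained and $a=q^2$ when $\ell(\varkappa)=m$ forces every $\beta_j\ge 1$. This is exactly $[a+qz]^m_{q^2}$ in the convention \eqref{eq:rogers_szego}. You are also right that the summation variable in the second sum should read $\varkappa$.

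The paper does not prove the lemma at all; it quotes it from \cite{imamura2021skewRSK}. So your argument is a self-contained replacement for that citation rather than a competing route. The citation buys brevity. Your derivation shows that both identities are just the $q^2$-binomial (Rogers--Szeg\H{o}) expansion. It also builds in the convention of padding with zeros counted as even, which the paper uses implicitly when it evaluates $\mathrm{odd}(i+\varkappa)$ on all $m_i(\mu)$ entries in the proof of \Cref{lem:b_mu}.

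An equivalent packaging is to multiply the first identity by $t^m$ and sum over $m$. Zero-padded $\varkappa$ are exactly multisets of size $m$ in $\mathbb{N}$, so the left side becomes $\prod_{j\ge 0}\bigl(1-t\,z^{\mathbf{1}_{j\text{ odd}}}q^j\bigr)^{-1}=1/\bigl((t;q^2)_\infty(qzt;q^2)_\infty\bigr)$. This is the generating function of the Rogers--Szeg\H{o} polynomials, and your computation is its coefficientwise form.

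The only point you leave implicit is the rearrangement of the series, which is harmless. For $|q|<1$ the sum is dominated by $\max(1,|z|)^m\sum_{\varkappa}|q|^{|\varkappa|}<\infty$; alternatively, work with formal power series in $q$.
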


\begin{lemma} \label{lem:b_mu}
    Fix $\alpha,\beta,q \in \mathbb{C}$ with $|q|<1$. Then, we have
    \[
        \mathpzc{b}_{\mu}(\alpha,\beta,q) = \sum_{\kappa \in \mathcal{K} (\mu) } \alpha^{ \mathrm{odd}(\kappa + \mu)} \beta^{\mathrm{odd}(\kappa)} q^{|\kappa|}.
    \]
\end{lemma}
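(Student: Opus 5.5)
The plan is to factor the sum over $\kappa$ into independent pieces, one for each distinct part size of $\mu$, and evaluate each piece with the first identity of \Cref{lem:sum_from_IMS}.

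\emph{Range of summation.} Because of the weight $q^{|\kappa|}$ with $|q|<1$, the sum only converges if it runs over nonnegative riggings. I will therefore read the summation as being over $\mathcal{K}_+(\mu)$; this is the set that arises from \Cref{thm:LPP}\ref{item:bij environment HWT restriction} and in the Cauchy computation.

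\emph{Factorization.} The constraint defining $\mathcal{K}_+(\mu)$ only links $\kappa_j$ and $\kappa_{j+1}$ when $\mu_j=\mu_{j+1}$. So $\kappa$ decomposes into independent blocks, one for each part size $i$ with $m_i(\mu)>0$. On the block where $\mu_j=i$, the entries $(\kappa_j)$ form a weakly decreasing sequence of $m=m_i(\mu)$ nonnegative integers. This is exactly a partition $\varkappa$ with $\ell(\varkappa)\le m$, and its size is the block's contribution to $|\kappa|$. The statistics $|\kappa|$, $\mathrm{odd}(\kappa)$ and $\mathrm{odd}(\kappa+\mu)$ are all additive over blocks. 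Hence the sum factors as a product over $i$ of block sums.

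\emph{Evaluation of the block sums.} The parity of $i$ determines how $\mathrm{odd}(\kappa+\mu)$ restricts to a block.
\begin{itemize}
\item If $i$ is even, then $\mathrm{odd}(\kappa+\mu)=\mathrm{odd}(\varkappa)$ on the block. The block sum is $\sum_{\ell(\varkappa)\le m}(\alpha\beta)^{\mathrm{odd}(\varkappa)}q^{|\varkappa|}$. By the first identity of \Cref{lem:sum_from_IMS} with $z=\alpha\beta$, this equals $[1+q\alpha\beta]^m_{q^2}/(q^2;q^2)_m$.
\item If $i$ is odd, then on the block $\mathrm{odd}(\kappa+\mu)=m-\mathrm{odd}(\varkappa)$, because every entry flips parity. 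The block sum is $\alpha^m\sum_{\ell(\varkappa)\le m}(\beta/\alpha)^{\mathrm{odd}(\varkappa)}q^{|\varkappa|}$. By the same identity with $z=\beta/\alpha$, this equals $\alpha^m[1+q\beta/\alpha]^m_{q^2}/(q^2;q^2)_m$.
\end{itemize}

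\emph{Homogeneity and the case $\alpha=0$.} The definition \eqref{eq:rogers_szego} gives the homogeneity relation $a^m[1+b/a]^m_p=[a+b]^m_p$. Applied to the odd blocks, it turns $\alpha^m[1+q\beta/\alpha]^m_{q^2}$ into $[\alpha+q\beta]^m_{q^2}$. Taking the product over all $i$ then reproduces \eqref{eq:b_mu}.

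The one delicate point is dividing by $\alpha$ in the odd blocks. I would avoid it by expanding the odd-block sum directly as a polynomial in $\alpha$: each exponent $m-\mathrm{odd}(\varkappa)$ is nonnegative, so both sides are polynomials in $\alpha$. Alternatively, one proves the identity for $\alpha\neq0$ and extends to $\alpha=0$ by continuity. Beyond this, the argument is pure bookkeeping and I expect no real obstacle.
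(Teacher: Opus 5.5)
Your proposal is correct and follows essentially the same route as the paper. Both decompose $\kappa$ into one partition $\varkappa^{(i)}$ with $\ell(\varkappa^{(i)})\le m_i(\mu)$ per part size, use $\mathrm{odd}(i+\varkappa)=\mathrm{odd}(\varkappa)$ for even $i$ and $\mathrm{odd}(i+\varkappa)=m_i(\mu)-\mathrm{odd}(\varkappa)$ for odd $i$, and evaluate each factor with the first identity of \Cref{lem:sum_from_IMS}.

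Reading the sum as running over $\mathcal{K}_+(\mu)$ matches what the paper's proof does implicitly, since it takes $\varkappa^{(i)}\in\mathbb{Y}$. Your homogeneity step $\alpha^m[1+q\beta/\alpha]^m_{q^2}=[\alpha+q\beta]^m_{q^2}$ and your treatment of $\alpha=0$ fill in details the paper leaves to the reader.
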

\begin{proof}
    We can decompose any $\kappa \in \mathcal{K}(\mu)$ as $\kappa = (\varkappa^{(1)},\varkappa^{(2)},\dots)$ with $\varkappa^{(i)} \in \mathbb{Y}$, and $\ell(\varkappa^{(i)}) \le m_i(\mu)$. Then, we have
    \begin{equation*}
        \sum_{\kappa \in \mathcal{K} (\mu) } \alpha^{ \mathrm{odd}(\kappa + \mu)} \beta^{\mathrm{odd}(\kappa)} q^{|\kappa|} 
        = \prod_{i\ge 1} \left( \sum_{\varkappa \in \mathbb{Y} ~:~ \ell(\varkappa) \le m_i(\mu)} \alpha^{\mathrm{odd}(i+\varkappa)} \beta^{\mathrm{odd}(\varkappa)} q^{|\varkappa|} \right).
    \end{equation*}
    Then, since for every $\varkappa \in \mathbb{Y}$ such that $ \ell(\varkappa) \le m_i(\mu)$ we have $\mathrm{odd}(i + \varkappa) = \mathrm{odd}(\varkappa)$ if $i$ is even and $\mathrm{odd}(i + \varkappa) = m_i(\mu) - \mathrm{odd}(\varkappa)$ if $i$ is odd, the right hand side in the above expression reduced, using \Cref{lem:sum_from_IMS}, to the factorized expression in \eqref{eq:b_mu}.
\end{proof}

\begin{proof}[Proof of \Cref{thm:Cauchy and Littlewood id}, eq. \eqref{eq:littlewood_id}]
    Expanding the right hand side of \eqref{eq:littlewood_id} we find
    \begin{equation*}
        \begin{aligned}
            \prod_{i = 1}^n & \frac{(-\alpha x_i;q^2)_\infty (-q \beta x_i;q^2)_\infty}{(x_i^2;q^2)_\infty} \prod_{1\le i < j \le n} \frac{1}{(x_i x_j;q^2)_\infty}
            \\
            & =\sum_{\mathbf{c} \in \mathfrak{F}_{\mathrm{fin}}^{\mathrm{sym}}(\mathscr{C}_n'^-,\N) } \left( \alpha^{\sum_{k \ge 0} \sum_{i=1}^n \mathbf{1}_{\mathbf{c}(i+2kn,i) \text{ is odd}} } \right)  \left( \beta^{\sum_{k \ge 0} \sum_{i=1}^n \mathbf{1}_{ \mathbf{c}(i+(2k+1)n,i) \text{ is odd}}  } \right) 
            \\
            & \qquad \qquad \qquad \qquad \qquad \times \left(q^{ \sum_{k\ge 0} \sum_{i,j=1}^n k \, \mathbf{c}(i+kn,j)} \right)  \left( \prod_{i=1}^n x_i^{\sum_{k \ge 0} \sum_{j=i}^n \mathbf{c}(i+kn,j)  }  \right)
            \\
            & = \sum_{\mathbf{c} \in \mathfrak{F}_{\mathrm{fin}}^{\mathrm{sym}}(\mathscr{C}_n'^-,\N) } (\alpha \beta)^{\frac{1}{2} \mathrm{oddTr}(\mathbf{c})} \left( \frac{\alpha}{\beta} \right)^{\frac{1}{2} \mathrm{oddTr}^-(\mathbf{c})}
            \left(q^{ \sum_{k\ge 0} \sum_{i,j=1}^n k \, \mathbf{c}(i+kn,j)} \right)  \left( \prod_{i=1}^n x_i^{\sum_{k \ge 0} \sum_{j=i}^n \mathbf{c}(i+kn,j)  }  \right)
            \\
            &
            = \sum_{(H;\kappa) \in \bigsqcup_{\mu \in \mathbb{Y} } \HWT(\mu,n) \times \mathcal{K}(\mu) }  \alpha^{ \mathrm{odd}(\kappa + \mu)} \beta^{\mathrm{odd}(\kappa)} q^{|\kappa|} q^{2 D(H)} x^{H}
            \\
            & =
            \sum_{\mu} c_\mu(\alpha,\beta;q) \THL_\mu(x;q^2).
        \end{aligned}
    \end{equation*}
    Above in the first equality we expanded through the geometric series the factors in the $q$-Pochhammer symbols in the denominator and combined them with those in the numerator in the right hand side. In the second equality we simply recognized the quantities $\mathrm{oddTr}(\mathbf{c}),\mathrm{oddTr}^-(\mathbf{c})$ in the exponents of coefficients $\alpha,\beta$ and in the third equality we used \Cref{cor:symmetric_env}. In the last equality we used \Cref{lem:b_mu}.
\end{proof}

\begin{proof}[Proof of \Cref{thm:refined Cauchy and Littlewood id}, eq. \eqref{eq:littlewood_id_refined}]

    We expand the skew Schur polynomials as generating functions of semistandard tableaux in the left hand side of  \eqref{eq:littlewood_id_refined} and we find
    \begin{equation}
        \begin{aligned}
            \sum_{\rho \subset \lambda : \lambda_1' = N} & \alpha^{\mathrm{odd}(\lambda)} \beta^{\mathrm{odd}(\rho)} q^{|
            \rho|} s_{\lambda/\rho}(x) 
            \\
            & = \sum_{\rho \subset \lambda : \lambda_1' = N} \sum_{P \in \SST(\lambda/\rho,n)} \alpha^{\mathrm{odd}(\lambda)} \beta^{\mathrm{odd}(\rho)} q^{|\rho|} x^P
            \\
            & = \sum_{ \substack{\nu, \mu \in \mathbb{Y} \\ \nu_1'+\mu_1' = N } } \sum_{(H_1,\kappa) \in \HWT(\mu,n) \times \mathcal{K}_+(\mu) } (\alpha\beta)^{\mathrm{odd}(\nu)} \alpha^{\mathrm{odd}(\kappa+\mu)} \beta^{\mathrm{odd}(\kappa)} q^{2D'(H)+|\kappa| + |\nu|} x^H
            \\
            &
            = \sum_{k=0}^N \left( \sum_{\nu \in \mathbb{Y}: \nu_1'=k} (\alpha \beta)^{\mathrm{odd}(\nu)} q^{|\nu|} \right) \left( \sum_{\mu \in \mathbb{Y}: \mu_1'=N-k} \mathpzc{b}_\mu (\alpha, \beta ,q) \mathscr{P}_\mu(x;q^2) \right).
        \end{aligned}
    \end{equation}
    Above, the second equality uses \Cref{cor:symmetric_Upsilon}, while the third equality uses \Cref{lem:b_mu}. Then, using \Cref{lem:sum_from_IMS} the right hand side reduces to the right hand side of \eqref{eq:littlewood_id_refined}, completing the proof.
\end{proof}

\appendix

\section{Alternative proof of \texorpdfstring{\cref{th:leading_f0}}{}}\label{sec:proof+1}

Here we give a direct proof of \cref{th:leading_f0}.
For this recall the notion of energy and dual energy function of \Cref{def:energy_function}. Below we state another property of the energy function that is useful to us.
To do so, we note that we can organize the energy function in the form of a corner transfer matrix by the scattering diagram in Figure~\ref{fig:scattering diagrams}.
Recall that $\sigma^- \in S_n$ is the longest element as in \Cref{rem:reverse_iso}.

\begin{lemma}[{\cite[Lemma 4.4]{Kuniba-Sakamoto-Yamada2007_tau}}]
\label{lem:D_winding}
    For $H \in \KR{\mu}$, the energy function $D(H)$ is equal to the total sum of the winding numbers in all $\ell(\ell-1)/2$ crossings~\eqref{eq:scattering_diagram} from the action of $\sigma^- \in S_{\ell}$.
\end{lemma}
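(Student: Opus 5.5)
The plan is to read the scattering diagram of $\sigma^-$ crossing by crossing and to match each crossing with exactly one summand of the definition of $D$ in \Cref{def:energy_function}.

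Each crossing in the diagram of \Cref{fig:scattering diagrams} (left panel) is an application of $\mathcal{R}$ to two adjacent tensor factors. The two strands meeting there descend from two original factors $h_i,h_j$ with $i<j$. Since $\sigma^-$ is the long element, every pair of strands crosses exactly once. So the $\ell(\ell-1)/2$ crossings are indexed by the pairs $i<j$, and the winding number at the $(i,j)$ crossing is $W$ of the two elements currently carried by these strands. The lemma therefore reduces to showing that this winding number equals $W_i\bigl(\sigma_{i+1}\cdots\sigma_{j-1}(H)\bigr)$.

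The first step is to show that the winding number attached to a labelled crossing $(i,j)$ does not depend on the reduced word chosen for $\sigma^-$. By Matsumoto's theorem, any two reduced words are related by braid moves, so it suffices to treat one Yang--Baxter move \eqref{eq:YBE} acting on $a\otimes b\otimes c$. Both sides of \eqref{eq:YBE} have three crossings, labelled by the pairs $(a,b)$, $(a,c)$ and $(b,c)$. One must check that the winding number at each labelled crossing agrees on the two sides. This is the known energy form of the Yang--Baxter equation for the local energy $H=W$ on single-row KR crystals (Nakayashiki--Yamada; Okado and coauthors). I would either cite it or verify it directly from the dot-pairing rule of \Cref{def:Nakayashiki_yamada}, using \Cref{rem:pairing_order} to choose a convenient processing order. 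I expect this to be the main obstacle if a self-contained proof is required. The natural route is to reduce, via the classical crystal structure, to checking classically highest weight elements, using that $W$ is constant on classical components of $\KR{1,s}\otimes\KR{1,t}$.

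The second step fixes a pair $i<j$ and chooses a reduced word of $\sigma^-$ that begins, in the order of application, with $\sigma_{j-1},\sigma_{j-2},\dots,\sigma_{i+1},\sigma_i$. This is possible because $s_i s_{i+1}\cdots s_{j-1}$ is reduced, and every reduced word of an element below the long element extends to a reduced word of $\sigma^-$. In this word the operators $\sigma_{j-1},\dots,\sigma_{i+1}$ carry the strand of $h_j$ across the strands $j-1,\dots,i+1$ while leaving $h_i$ untouched. The next operator, $\sigma_i$, realises the $(i,j)$ crossing. Its winding number is then literally $W_i\bigl(\sigma_{i+1}\cdots\sigma_{j-1}(H)\bigr)$, with the orientation of the factors matching the convention of $W_i$.

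By the first step, this value is the winding number at the $(i,j)$ crossing for every reduced word, in particular for the one drawn in \Cref{fig:scattering diagrams}. Summing over all pairs $i<j$ gives exactly $D(H)$, as claimed.
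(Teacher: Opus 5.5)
Your Step~1 is false, so the transport argument fails (the paper itself gives no proof and only cites Kuniba--Sakamoto--Yamada). A braid move \eqref{eq:YBE} preserves neither the winding number at each labelled crossing nor even their total.

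Take $\mu=(1,1,1)$. On $\KR{1,1}\otimes\KR{1,1}$ the Nakayashiki--Yamada rule always pairs the two dots, so $\mathcal{R}$ is the identity map, with $W(a\otimes b)=1$ if $a\le b$ and $W(a\otimes b)=0$ if $a>b$. Hence every crossing in any diagram for $\sigma^-$ just reads one of the two adjacent pairs of the original element. Let $A,B$ be the winding numbers of the left and right adjacent pairs. The two reduced words of the longest element of $S_3$ then give totals $2A+B$ and $A+2B$. For $2\otimes1\otimes1$ these are $1$ and $2$. The table comparing $D$ and $D'$ for $\mu=(1,1,1)$ records $D(2\otimes1\otimes1)=2$, matching only one word.

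So no matching of crossings can preserve winding numbers. The crossing-by-crossing ``energy Yang--Baxter equation'' you plan to cite, or to check on highest weight elements, does not exist. The lemma is really a statement about the specific factorization $\sigma^-=(s_1\cdots s_{\ell-1})\cdots(s_1s_2)(s_1)$ of \Cref{rem:reverse_iso} drawn in \Cref{fig:scattering diagrams}. Your Step~2 correctly identifies $W_i(\sigma_{i+1}\cdots\sigma_{j-1}(H))$ with the $(i,j)$ crossing of an adapted word, but that value cannot be carried over to the figure's word.

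What survives is weaker. Comparing degrees in the affine (spectral-parameter) Yang--Baxter equation shows that each of the two outer strands of a braid move collects the same total winding on both sides. By \eqref{eq:YBE} it also leaves with the same content. Individual crossings may change, but the changes cancel along each outer strand. On three factors, in the notation of \Cref{def:energy_function}, this reads $W_2\circ\sigma_1+W_1\circ\sigma_2\circ\sigma_1=W_2+W_1\circ\sigma_2$.

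This suffices once you group crossings by strands rather than by pairs:
\begin{enumerate}
\item For fixed $j$, the terms $W_i(\sigma_{i+1}\cdots\sigma_{j-1}(H))$ with $i<j$ are the winding numbers collected by strand $j$ as it is pushed across the original factors in positions $1,\dots,j-1$.
\item In the factorization above, the block $s_1\cdots s_{j-1}$ pushes the same strand across those factors, after the earlier blocks have reversed them.
\item The outer-strand identity, together with commutation of distant crossings, shows that pushing a strand across $\sigma_m(Y)$ collects the same total as pushing it across $Y$.
\item Hence each block contributes exactly the $j$-th group of terms of $D$, and summing over $j$ proves the lemma.
\end{enumerate}
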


\begin{example}\label{ex:bdb}
When $H$ is the second tableau in \cref{ex:HWTs}, i.e., $H= 12\otimes 223\otimes 1122$, we can calculate the local energies $D_1(H), D_2(H)$ as follows:
\begin{align*}
D_1(H) & = W(1122 \otimes 223) = 0,
\\ 
D_2(H) & = W_1\sigma_2\sigma_1(H) + W_2(H) = W(2223,12) + W(112,23) = 2+0 = 2.
\end{align*}
The corresponding diagram is illustrated in \Cref{fig:scattering diagrams}, right panel.
\end{example}

Next, let us describe the idea of our direct proof of~\Cref{th:leading_f0} using the example with $\mu=(4,3,2,2,1)$ and $n=5$.
Consider the tensor products
\begin{subequations}
\begin{align}\label{eq:tildeb_ex}
\overline{H}&=5\otimes 44\otimes 45\otimes 455\otimes 4444,
& D(\overline{H}) & = 9,
\\
\label{eq:f0_tildeb_ex}
f_0(\overline{H})&=5\otimes 44\otimes 45\otimes 145\otimes 4444,
& D(f_0(\overline{H})) & = 10,
\end{align}
\end{subequations}
and note that they only differ in the fourth elements ($455$ and $145$).
To see their energies, let us compare the scattering diagrams given in~\cref{fig:scattering_diagram}.

\begin{figure}
    \iftikz
    \[
    \begin{tikzpicture}[scale=.85]
    \draw (3, 1) node {$\overline{H}$};
    \node[anchor=south] (s1) at (0, -0.2) {$4444$};
    \node[anchor=south] (s2) at (1.5, -0.2) {$45{\color{red}5}$};
    \node[anchor=south] (s3) at (3, -0.2) {$45$};
    \node[anchor=south] (s4) at (4.5, -0.2) {$44$};
    \node[anchor=south] (s5) at (6, -0.2) {$5$};
    \node[anchor=west] (t1) at (6.8, -1) {$4445$};
    \node[anchor=west] (t2) at (6.8, -2) {$455$};
    \node[anchor=west] (t3) at (6.8, -3) {$45$};
    \node[anchor=west] (t4) at (6.8, -4) {$44$};
    \node[anchor=west] (t5) at (6.8, -5) {$4$};
    \draw (2.25,-1) node[anchor=north] {\tiny $445{\color{red}5}$};
    \draw (3.75,-1) node[anchor=north] {\tiny $4455$};
    \draw (5.25,-1) node[anchor=north] {\tiny $4444$};
    \draw (2.25,-2) node[anchor=north] {\tiny $444$};
    \draw (3.75,-2) node[anchor=north] {\tiny $44{\color{red}5}$};
    \draw (5.25,-2) node[anchor=north] {\tiny $55{\color{red}5}$};
    \draw (3.75,-3) node[anchor=north] {\tiny $44$};
    \draw (5.25,-3) node[anchor=north] {\tiny $44$};
    \draw (5.25,-4) node[anchor=north] {\tiny $44$};
    \draw (3,-1.5) node[anchor=west] {\tiny $4{\color{red}5}$};
    \draw (4.5,-1.5) node[anchor=west] {\tiny $55$};
    \draw (6,-1.5) node[anchor=west] {\tiny $4$};
    \draw (4.5,-2.5) node[anchor=west] {\tiny $44$};
    \draw (6,-2.5) node[anchor=west] {\tiny ${\color{red}5}$};
    \draw (6,-3.5) node[anchor=west] {\tiny $4$};
    \foreach \i in {1,2,3,4,5} {
      \draw[-,rounded corners] (s\i) -- (1.5*\i-1.5, -\i) -- (t\i);
    }
    \draw (1.5,-1) node[anchor=south east, color=purple] {\tiny $1$};
    \draw (3,-1) node[anchor=south east, color=purple] {\tiny $1$};
    \draw (4.5,-1) node[anchor=south east, color=purple] {\tiny $2$};
    \draw (6,-1) node[anchor=south east, color=purple] {\tiny $0$};
    \draw (3,-2) node[anchor=south east, color=purple] {\tiny $1$};
    \draw (4.5,-2) node[anchor=south east, color=purple] {\tiny $0$};
    \draw (6,-2) node[anchor=south east, color=purple] {\tiny $1$};
    \draw (4.5,-3) node[anchor=south east, color=purple] {\tiny $2$};
    \draw (6,-3) node[anchor=south east, color=purple] {\tiny $0$};
    \draw (6,-4) node[anchor=south east, color=purple] {\tiny $1$};
    \draw[color=red,line width=1pt] (s2) -- (1.5,-1) -- (3,-1) -- (3,-2) -- (6,-2) -- (6,-3) -- (t3);
    \end{tikzpicture}
    \quad
    \begin{tikzpicture}[scale=.85]
    \draw (3, 1) node {$f_0(\overline{H})$};
    \node[anchor=south] (s1) at (0, -0.2) {$4444$};
    \node[anchor=south] (s2) at (1.5, -0.2) {${\color{blue}1}45$};
    \node[anchor=south] (s3) at (3, -0.2) {$45$};
    \node[anchor=south] (s4) at (4.5, -0.2) {$44$};
    \node[anchor=south] (s5) at (6, -0.2) {$5$};
    \node[anchor=west] (t1) at (6.8, -1) {$4445$};
    \node[anchor=west] (t2) at (6.8, -2) {$455$};
    \node[anchor=west] (t3) at (6.8, -3) {$15$};
    \node[anchor=west] (t4) at (6.8, -4) {$44$};
    \node[anchor=west] (t5) at (6.8, -5) {$4$};
    \draw (2.25,-1) node[anchor=north] {\tiny ${\color{blue}1}445$};
    \draw (3.75,-1) node[anchor=north] {\tiny $4455$};
    \draw (5.25,-1) node[anchor=north] {\tiny $4444$};
    \draw (2.25,-2) node[anchor=north] {\tiny $444$};
    \draw (3.75,-2) node[anchor=north] {\tiny ${\color{blue}1}44$};
    \draw (5.25,-2) node[anchor=north] {\tiny ${\color{blue}1}55$};
    \draw (3.75,-3) node[anchor=north] {\tiny $44$};
    \draw (5.25,-3) node[anchor=north] {\tiny $44$};
    \draw (5.25,-4) node[anchor=north] {\tiny $44$};
    \draw (3,-1.5) node[anchor=west] {\tiny ${\color{blue}1}4$};
    \draw (4.5,-1.5) node[anchor=west] {\tiny $55$};
    \draw (6,-1.5) node[anchor=west] {\tiny $4$};
    \draw (4.5,-2.5) node[anchor=west] {\tiny $44$};
    \draw (6,-2.5) node[anchor=west] {\tiny ${\color{blue}1}$};
    \draw (6,-3.5) node[anchor=west] {\tiny $4$};
    \foreach \i in {1,2,3,4,5} {
      \draw[-,rounded corners] (s\i) -- (1.5*\i-1.5, -\i) -- (t\i);
    }
    \draw (1.5,-1) node[anchor=south east, color=purple] {\tiny $2$};
    \draw (3,-1) node[anchor=south east, color=purple] {\tiny $0$};
    \draw (4.5,-1) node[anchor=south east, color=purple] {\tiny $2$};
    \draw (6,-1) node[anchor=south east, color=purple] {\tiny $0$};
    \draw (3,-2) node[anchor=south east, color=purple] {\tiny $2$};
    \draw (4.5,-2) node[anchor=south east, color=purple] {\tiny $0$};
    \draw (6,-2) node[anchor=south east, color=purple] {\tiny $0$};
    \draw (4.5,-3) node[anchor=south east, color=purple] {\tiny $2$};
    \draw (6,-3) node[anchor=south east, color=purple] {\tiny $1$};
    \draw (6,-4) node[anchor=south east, color=purple] {\tiny $1$};
    \draw[color=blue,line width=1pt] (s2) -- (1.5,-1) -- (3,-1) -- (3,-2) -- (6,-2) -- (6,-3) -- (t3);
    \end{tikzpicture}
    \]
    \fi
    \caption{}
   \label{fig:scattering_diagram}
\end{figure}

In this diagrams we can observe:
\begin{enumerate}
\item The edges in the diagram can be classified into two cases.
    \begin{enumerate}
        \item The edges where the attached scattering data are the same between $\overline{H}$ and $f_0(\overline{H})$.
        We color these edges in black.
        \item Those edges where they are different (more precisely one element `5' in $\overline{H}$ is changed as `1' in $f_0(\overline{H})$).
        We colored these red for $\overline{H}$ and blue for $f_0(\overline{H})$.
    \end{enumerate}
    We can observe that the colored edges form paths that enter from one of the top vertical edges and  exit from one of the rightmost horizontal edges.
\item None of the edges from left or bottom of the colored paths have elements `$n$'.
\item We have the following explicit rules about the local energies:
\begin{align}
\label{eq:localrules}
W\left(\!\begin{tikzpicture}[scale=.3,baseline=-3]
\draw[-] (-1,0) -- (0,0) -- (0,-1);
\draw[-,color=red,line width=1pt] (0,1) -- (0,0) -- (1,0);
\end{tikzpicture}\!\right)
=
W\left(\!\begin{tikzpicture}[scale=.3,baseline=-3]
\draw[-] (-1,0) -- (0,0) -- (0,-1);
\draw[-,color=blue,line width=1pt] (0,1) -- (0,0) -- (1,0);
\end{tikzpicture}\!\right)
- 1,
\quad
W\left(\!\begin{tikzpicture}[scale=.3,baseline=-3]
\draw[-] (0,1) -- (0,0) -- (1,0);
\draw[-,color=red,line width=1pt] (-1,0) -- (0,0) -- (0,-1);
\end{tikzpicture}\!\right)
=
W\left(\!\begin{tikzpicture}[scale=.3,baseline=-3]
\draw[-] (0,1) -- (0,0) -- (1,0);
\draw[-,color=blue,line width=1pt] (-1,0) -- (0,0) -- (0,-1);
\end{tikzpicture}\!\right)
+ 1,
\quad
W\left(\!\begin{tikzpicture}[scale=.3,baseline=-3]
\draw[-] (0,1) -- (0,-1);
\draw[-,color=red,line width=1pt] (-1,0) -- (1,0);
\end{tikzpicture}\!\right)
=
W\left(\!\begin{tikzpicture}[scale=.3,baseline=-3]
\draw[-] (0,1) -- (0,-1);
\draw[-,color=blue,line width=1pt] (-1,0) -- (1,0);
\end{tikzpicture}\!\right)
= 0.
\end{align}
Here $W(\cdot)$ is the local energy for the scattering diagram ``$\cdot$'' defined in \cref{def:energy_function}. 
\end{enumerate}

In fact we can prove that the above properties (1)-(3) hold for any $\overline{H}$ with $H\in\KR{\mu}_{\mathrm{hw}}\setminus\{H_{\max}\}$.
We can show Equation~\eqref{eq:leading_f0} using them.
By (3), one sees that the sum of the local energies along the red path is one less than that along the blue path.
The local energies at the vertex surrounding to black edges are exactly the same between the two scattering diagrams.

For proving the above (1)-(3), we prepare two lemmas performing a comparison between two combinatorial $R$-matrices, here denoted $\mathcal{R}(a \otimes b) = d \otimes c$ and $\mathcal{R}(a' \otimes b') = d' \otimes c'$.
Hereafter we will always assume that the length of $a$ (resp.\ $a'$) is shorter than or equal to $b$ (resp.\ $b'$).
The associated local energies are denoted by $W(a\otimes b)$ and $W(a'\otimes b')$.

\begin{lemma}\label{lem:a'ad'd}
Suppose that
\begin{itemize}
    \item[A.1:] $a$ includes at least one $n$ and $a'$ is obtained from $a$ with one $n$ replaced by $1$;
    \item[A.2:] $b=b'$ and they do not include $n$.
\end{itemize}
Then we have the following output data
$c,d$ and $c',d'$.
\begin{itemize}
\item[C.1:] $c=c'$ and they do not have $n$.
\item[C.2:] $d$ and $d'$ differ in only one element. In $d'$, one $n$ element in $d$ is exchanged to $1$.
\item[C.3:] $W(a'\otimes b')=W(a\otimes b)+1$.
\end{itemize}
\end{lemma}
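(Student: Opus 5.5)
We will prove the lemma by a direct analysis of the Nakayashiki--Yamada rule (\Cref{def:Nakayashiki_yamada}), exploiting the freedom in the processing order of the pairing step guaranteed by \Cref{rem:pairing_order}. Recall that $a$ is the shorter factor, so its dots sit in the right column and are paired with dots of $b$ in the left column. The outputs are read off after the pairing: $c$ consists of the dots of the right column after unpaired left dots are moved over, and $d$ of the paired left dots. More precisely, $d$ has weight $\wt(a)$ shifted along the pairing, while $c$ receives $\wt(a)$ plus the unpaired dots of $b$. We must be careful about which output is the transported factor. With the convention $\mathcal{R}(a\otimes b)=d\otimes c$, $d$ has the length of $b$ and $c$ the length of $a$. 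The paired left dots stay in the left column, the unpaired ones move right and join $a$'s dots. Hence the new right column, of length $|b|$, is $d$. The left column keeps exactly the paired dots of $b$, giving $c$ of length $|a|$, consistent with C.1.

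The plan is to process the distinguished dot first. In $a$ there is an $n$-dot $x$ that in $a'$ becomes a $1$-dot $x'$; all other dots of $a$ and $a'$ coincide. We pair $x$ (respectively $x'$) first and then process the remaining dots of the right column in the same order for both.

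For $x$, in box $n$: since $b$ has no $n$, every dot of $b$ lies strictly higher, so $x$ pairs with the lowest dot $y$ of $b$, non-winding. For $x'$, in box $1$: no box is strictly higher, so $x'$ pairs with the lowest dot $y$ of $b$, winding. Both pick the \emph{same} dot $y$, and the remaining unpaired pools of the left column are identical. The subsequent pairings therefore coincide step by step, so the sets of paired and unpaired dots of $b$ agree and the winding statuses of all other pairs agree. This yields $W(a'\otimes b)=W(a\otimes b)+1$, which is C.3.

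The left column after moving is the same in both cases, so $c=c'$, and it has no $n$ because $b$ has none, giving C.1. The right column is the dots of $a$ (respectively $a'$) together with the unpaired dots of $b$. These differ only in $x$ versus $x'$, so $d'$ is $d$ with one $n$ replaced by $1$, giving C.2.

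The main obstacle is getting the bookkeeping of which column becomes which output factor right, in particular whether the original right-column dots remain as part of the output. We will fix this using \Cref{ex:NYrule} ($a=112$, $b=1223$, $a'=123$, $b'=1122$) before writing the argument, and adjust the identification of $c,d$ accordingly. The choice of processing order, justified by \Cref{rem:pairing_order}, is the key idea.
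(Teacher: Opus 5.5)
Your proposal is correct and follows the paper's proof essentially step for step. Both arguments process the distinguished dot first, which the order-independence of the Nakayashiki--Yamada pairing allows. Both then observe that the $n$-dot of $a$ and the $1$-dot of $a'$ grab the same lowest dot of $b$, non-winding in one case and winding in the other, so that all remaining pairings coincide and C.1--C.3 follow. The one blemish is the opening of your first paragraph, which initially swaps the roles of $c$ and $d$. The identification you finally use is the right one: $d$ is the new right column of length $\abs{b}$, and $c$ is the set of paired dots of $b$, of length $\abs{a}$. \Cref{ex:NYrule} confirms this.
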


A graphical expression of \cref{lem:a'ad'd} is shown in \cref{fig:graph_a'ab'b}.
Recall the scattering diagram depiction of a combinatorial $R$-matrix~\eqref{eq:scattering_diagram}.

\begin{figure}[ht]
    \[
    \begin{tikzpicture}[scale=.7,baseline=0]
    \draw[-] (0,-1) -- (0,0) -- (1,0);
    \draw[-,line width=2pt] (-1,0) -- (0,0);
    \draw[-,color=red,line width=2pt] (0,1) -- (0,0);
    \end{tikzpicture}
    \quad
    \begin{tikzpicture}[scale=.7,baseline=0]
    \draw[-] (0,-1) -- (0,0) -- (1,0);
    \draw[-,line width=2pt] (-1,0) -- (0,0);
    \draw[-,color=blue,line width=2pt] (0,1) -- (0,0);
    \end{tikzpicture}
    \quad
    \rightsquigarrow
    \quad
    \begin{tikzpicture}[scale=.7,baseline=0]
    \draw[-,line width=2pt] (-1,0) -- (0,0) -- (0,-1);
    \draw[-,color=red,line width=2pt] (0,1) -- (0,0) -- (1,0);
    \end{tikzpicture}
    \quad
    \begin{tikzpicture}[scale=.7,baseline=0]
    \draw[-,line width=2pt] (-1,0) -- (0,0) -- (0,-1);
    \draw[-,color=blue,line width=2pt] (0,1) -- (0,0) -- (1,0);
    \end{tikzpicture}
    \]
    \caption{A graphical expression of \cref{lem:a'ad'd} as scattering diagrams. Let $h\in\{a,b,c,d\}$.
    When $h=h'$ and $h$ does not contain an $n$, we mark the edges for $h, h'$ in black.
    When an $n$ in $h$ is replaced with a $1$ in $h'$, we mark the edge for $h$ (resp.\ $h'$) in red (resp.\ blue).}
    \label{fig:graph_a'ab'b}
\end{figure}

\begin{example}
\label{ex:scattering+1}
\cref{fig:scattering+1} illustrates
the case for $n=3$, $a=1122233$, $a' = 1112223$ and $b=b'=11111222$, in which~\cref{lem:a'ad'd} is applicable.
Note that in this example, $b=b'$ and $c=c'$ while $a$ (resp.~$d$) differs from $a'$ (resp.~$d'$) in only one element.
Indeed, one ``$3$'' in $a$ (resp.~$d$) is replaced by a ``$1$'' in $a'$ (resp.~$d'$).
As in \cref{fig:graph_a'ab'b}, we mark the top vertical and right horizontal edges in red in the scattering diagram for $a,b,c,d$ and in blue for $a',b'c'd'$.

\begin{figure}[ht]
    \iftikz
    \[
    \begin{tikzpicture}[baseline=1cm]
        \draw[-,line width=2pt] (-1,0) node[anchor=south,scale=.7] {$b=11111222$} -- (0,0) -- (0,-1) node[anchor=north,scale=.7] {$c=1111222$};
        \draw[-,red,line width=2pt] (0,1) node[black,anchor=south,scale=.7] {$a=1122233$} -- (0,0) -- (1,0) node[black,anchor=south,scale=.7] {$d=11122233$};
    \end{tikzpicture}
    \quad
    \begin{tikzpicture}[scale=.8,baseline=0,>=to]
      \draw (.5,0) node[anchor=south] {$b$};
      \draw (0,0) grid (1,-3);
      \draw (2,0) grid (3,-3);
      \draw[-,rounded corners,dgreencolor] (.25,-.25) -- (.25,-.55) -- (1.1,-.55) -- (1.6,-1.5) -- (2.25,-1.5);
      \draw[-,rounded corners,dgreencolor] (.5,-.25) -- (.5,-.4) -- (1.25,-.4) -- (1.75,-1.35) -- (2.5,-1.35) -- (2.5,-1.5);
      \draw[-,rounded corners,dgreencolor] (.75,-.25) -- (1.4,-.25) -- (1.9,-1.2) -- (2.75,-1.2) -- (2.75,-1.5);
      \draw[-<,rounded corners,UQpurple] (.6,-.75) -- (1.1,-.75) -- (1.6,-1.7) -- (1.6,-3.2);
      \draw[<-,rounded corners,UQpurple] (1.6,.3) -- (1.6,-.25) -- (2.5,-.25) -- (2.6,-.5);
      \draw[-,rounded corners,dgreencolor] (.25,-1.5) -- (.25,-1.8) -- (1.1,-1.8) -- (1.5,-2.5) -- (2.3,-2.5);
      \draw[-,rounded corners,dgreencolor] (.5,-1.5) -- (.5,-1.65) -- (1.2,-1.65) -- (1.6,-2.35) -- (2.6,-2.35) -- (2.6,-2.5);
      \draw[-<<,rounded corners,UQpurple] (.75,-1.5) -- (1.3,-1.5) -- (1.3,-3.2);
      \draw[<<-,rounded corners,UQpurple] (1.3,.3) -- (1.3,-.1) -- (1.8,-.5) -- (2.3,-.5);
      \foreach \x/\y in {.25/-.25,.5/-.25,.75/-.25,.3/-.75,.6/-.75, .25/-1.5,.5/-1.5,.75/-1.5}
        \fill[black] (\x,\y) circle (0.07);
      \draw (2.5,0) node[anchor=south] {$a$};
      \foreach \x/\y in {2.3/-.5,2.6/-.5, 2.25/-1.5,2.5/-1.5,2.75/-1.5, 2.3/-2.5,2.6/-2.5}
        \fill[black] (\x,\y) circle (0.07);
      \draw (1.5,.45) node[scale=.8] {$\overbrace{\phantom{XX}}^{W(a,b) = 2}$};
    \end{tikzpicture}
    \qquad\quad
    \begin{tikzpicture}[baseline=1cm]
        \draw[-,line width=2pt] (-1,0) node[anchor=south,scale=.7] {$b'=11111222$} -- (0,0) -- (0,-1) node[anchor=north,scale=.7] {$c'=1111222$};
        \draw[-,blue,line width=2pt] (0,1) node[black,anchor=south,scale=.7] {$a'=1112223$} -- (0,0) -- (1,0) node[black,anchor=south,scale=.7] {$d'=11112223$};
    \end{tikzpicture}
    \quad
    \begin{tikzpicture}[scale=.8,baseline=0,>=to]
      \draw (.5,0) node[anchor=south] {$b'$};
      \draw (0,0) grid (1,-3);
      \draw (2,0) grid (3,-3);
      \draw[-,rounded corners,dgreencolor] (.25,-.25) -- (.25,-.55) -- (1.1,-.55) -- (1.6,-1.5) -- (2.25,-1.5);
      \draw[-,rounded corners,dgreencolor] (.5,-.25) -- (.5,-.4) -- (1.25,-.4) -- (1.75,-1.35) -- (2.5,-1.35) -- (2.5,-1.5);
      \draw[-,rounded corners,dgreencolor] (.75,-.25) -- (1.4,-.25) -- (1.9,-1.2) -- (2.75,-1.2) -- (2.75,-1.5);
      \draw[-<,rounded corners,UQpurple] (.6,-.75) -- (1.1,-.75) -- (1.75,-1.8) -- (1.75,-3.2);
      \draw[<-,rounded corners,UQpurple] (1.75,.3) -- (1.75,-.2) -- (2.75,-.2) -- (2.75,-.5);
      \draw[-,rounded corners,dgreencolor] (.5,-1.5) -- (.5,-1.65) -- (1.2,-1.65) -- (1.8,-2.5) -- (2.5,-2.5);
      \draw[-<<,rounded corners,UQpurple] (.75,-1.5) -- (1.5,-1.5) -- (1.5,-3.2);
      \draw[<<-,rounded corners,UQpurple] (1.5,.3) -- (1.5,-.1) -- (1.8,-.3) -- (2.5,-.3) -- (2.5,-.5);
      \draw[-<<<,rounded corners,UQpurple] (.25,-1.5) -- (.25,-1.8) -- (1.25,-1.8) -- (1.25,-3.2);
      \draw[<<<-,rounded corners,UQpurple] (1.25,.3) -- (1.25,-.15) -- (1.8,-.5) -- (2.25,-.5);
      \foreach \x/\y in {.25/-.25,.5/-.25,.75/-.25,.3/-.75,.6/-.75, .25/-1.5,.5/-1.5,.75/-1.5}
        \fill[black] (\x,\y) circle (0.07);
      \draw (2.5,0) node[anchor=south] {$a'$};
      \foreach \x/\y in {2.25/-.5,2.5/-.5,2.75/-.5, 2.25/-1.5,2.5/-1.5,2.75/-1.5, 2.5/-2.5}
        \fill[black] (\x,\y) circle (0.07);
      \draw (1.5,.45) node[scale=.8] {$\overbrace{\phantom{XX}}^{W(a',b') = 3}$};
    \end{tikzpicture}
    \]
    \fi
    \caption{}
    \label{fig:scattering+1}
\end{figure}
\end{example}

\begin{proof}[Proof of~\Cref{lem:a'ad'd}]
We prove the lemma based on the Nakayashiki--Yamada rule in \cref{def:Nakayashiki_yamada}.
Instead of {\it C.1} and {\it C.2}, we can say in the language of the algorithm that among the dots in the left boxes associated to $b$, the dots connected by the lines are exactly the same as those for $b'$.

Based on the algorithm, we connect each dot in the right boxes and each one in the left boxes with lines from the bottom dot to the top one in the right side.
Note that we can choose the order of connection arbitrarily because the positions of the unconnected dots and also the number of winding pairs do not depend on the order.
On the other hand, for the dots associated to $a'$ and $b'$, we first connect the dot corresponding to the element $1$ in $a'$ appeared by the replacement of an element $n$ in $a$ with a dot on the left.
The order of the second and the following connections is assumed to be the same as that for the case of ``$a$''.
In \cref{fig:boxdot1}, we show the examples of $n=4$.
The left figure represents $a=12344,~b=122233$ while the right one does $a'=11234,~b'=122233$, which satisfy the assumptions {\it A.1} and {\it A.2}.
The red superscripts mean the order of the connections.

\begin{figure}[t]
    \iftikz
    \[
    \begin{tikzpicture}[scale=.7,>=to]
      \draw (.5,0) node[anchor=south] {$b$};
      \draw (0,0) grid (1,-4);
      \draw (2,0) grid (3,-4);
      \draw[-,rounded corners,dgreencolor] (.5,-.5) -- (1.2,-.5) -- (1.8,-1.5) -- (2.5,-1.5);
      \draw[-,rounded corners,dgreencolor] (.75,-1.5) -- (1.2,-1.5) -- (1.8,-2.5) -- (2.5,-2.5);
      \draw[-<,rounded corners,UQpurple] (.5,-1.5) -- (.5,-1.8) -- (1.2,-1.8) -- (1.2,-4.2);
      \draw[<-,rounded corners,UQpurple] (1.2,.3) -- (1.2,-.5) -- (2.5,-.5);
      \draw[-,rounded corners,blue] (.3,-2.5) -- (.3,-2.8) -- (1.2,-2.8) -- (1.8,-3.5) -- (2.3,-3.5);
      \draw[-,rounded corners,dgreencolor] (.6,-2.5) -- (1.2,-2.5) -- (1.8,-3.2) -- (2.6,-3.2) -- (2.6,-3.5);
      \foreach \x/\y in {.5/-.5, .25/-1.5,.5/-1.5,.75/-1.5, .3/-2.5,.6/-2.5}
        \fill[black] (\x,\y) circle (0.07);
      \fill[blue,opacity=.3] (.3,-2.5) circle (.15);
      \draw (2.5,0) node[anchor=south] {$a$};
      \foreach \x/\y [count=\i] in {2.3/-3.5, 2.6/-3.5, 2.5/-2.5, 2.5/-1.5, 2.5/-.5}
        \fill[black] (\x,\y) circle (0.07) node[anchor=north west, color=red, scale=.4, node distance=8pt] {$\i$};
      \fill[blue,opacity=.3] (2.3,-3.5) circle (.15);
    \end{tikzpicture}
    \qquad\qquad
    \begin{tikzpicture}[scale=.7,>=to]
      \draw (.5,0) node[anchor=south] {$b'$};
      \draw (0,0) grid (1,-4);
      \draw (2,0) grid (3,-4);
      \draw[-,rounded corners,dgreencolor] (.5,-.5) -- (1.2,-.5) -- (1.8,-1.5) -- (2.5,-1.5);
      \draw[-,rounded corners,dgreencolor] (.75,-1.5) -- (1.2,-1.5) -- (1.8,-2.5) -- (2.5,-2.5);
      \draw[-<,rounded corners,UQpurple] (.5,-1.5) -- (.5,-1.8) -- (1.2,-1.8) -- (1.2,-4.2);
      \draw[<-,rounded corners,UQpurple] (1.2,.3) -- (1.2,-.5) -- (2.3,-.5);
      \draw[-<<,rounded corners,blue] (.3,-2.5) -- (.3,-2.8) -- (1.5,-2.8) -- (1.5,-4.2);
      \draw[<<-,rounded corners,blue] (1.5,.3) -- (1.5,-.2) -- (2.6,-0.2) -- (2.6,-.5);
      \draw[-,rounded corners,dgreencolor] (.6,-2.5) -- (1.5,-2.5) -- (1.8,-3.5) -- (2.5,-3.5);
      \foreach \x/\y in {.5/-.5, .25/-1.5,.5/-1.5,.75/-1.5, .3/-2.5,.6/-2.5}
        \fill[black] (\x,\y) circle (0.07);
      \fill[blue,opacity=.3] (.3,-2.5) circle (.15);
      \draw (2.5,0) node[anchor=south] {$a'$};
      \foreach \x/\y [count=\i] in {2.6/-.5, 2.5/-3.5, 2.5/-2.5, 2.5/-1.5, 2.3/-.5}
        \fill[black] (\x,\y) circle (0.07) node[anchor=north west, color=red, scale=.4, node distance=2pt] {$\i$};
      \fill[blue,opacity=.3] (2.6,-.5) circle (.15);
    \end{tikzpicture}
    \]
    \fi
    \caption{An example with $n = 4$ of how the Nakayashiki--Yamada algorithm changes under a $f_0$ that changes the winding number (i.e., local energy).}
    \label{fig:boxdot1}
\end{figure}

Comparing the boxes and dots for $a$ and $a'$, we find that the only difference is the position of the dots which we connect at first.
In $a$, the dot is in the bottommost (the $n$th) box while in $a'$ it is in the first box.
From the algorithm, the dot in the $n$th box in $a$ is connected to the one of bottommost dots among the ones which are in the $1$st through $(n-1)$th boxes in $b$.
On the other hand, the dot in the first box in $a'$ is connected to the one of the bottommost boxes among the whole ones.
Combining this with {\it A.2}, we see that they are connected to the same dot in the side of $b(=b')$.
Furthermore we find that after we connect them, the configurations of the boxes and dots have no differences and the order of connections are also exactly the same between the dots associated to $a$ and $a'$.
Thus we find that in the dots on the left sides associated to $b$ and $b'$, exactly the same dots are connected. This leads to {\it C.1} and {\it C.2}.

{\it C.3} also follows from the above considerations. It is clear that for the first connections, the connecting pair of dots is unwinding for $a$ and $b$ while for $a'$ and $b'$, it is winding.  In addition there are no differences for the other connecting pairs. This leads to {\it C.3}.
\end{proof}

\cref{lem:a'ad'd} stated above is about a comparison between two scattering diagrams as depicted 
in \cref{ex:scattering+1}, where $a'$ and $a$
differ in only one element while $b=b'$. 
The next lemma deals with the opposite case, i.e.~$a=a'$, while $b$ and $b'$ are different with only one element.

\begin{lemma}\label{lem:b'b}
Assume
\begin{itemize}
    \item[A.1:] $a=a'$ and they do not have $1$s.
    \item[A.2:] $b$ has at least one $n$ and $b'$ is obtained from $b$ by replacing one $n$ in $b$
    with $1$.
\end{itemize}

\noindent
(1) If additionally $W(a\otimes b) = 0$, then
\begin{itemize}
    \item[C.1:] $c=c'$ and they do not have $n$.
    \item[C.2:] $d$ has at least one $n$ and $d'$ is obtained from $d$ by replacing one $n$ in $d$ with $1$.
    \item[C.3:] $W(a'\otimes b') = 0$.
\end{itemize}

\noindent
(2) If additionally $W(a\otimes b) > 0$, then
\begin{itemize}
    \item[C.4:] $c$ has at least one $n$ and $c'$ is obtained from $c$ by replacing one $n$ in $c$ with $1$.
    \item[C.5:] $d=d'$ and they do not have $1$.
    \item[C.6:] $W(a'\otimes b') = W(a\otimes b) - 1$.
\end{itemize}
\end{lemma}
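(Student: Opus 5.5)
We argue directly from the Nakayashiki--Yamada rule of \Cref{def:Nakayashiki_yamada}, as in the proof of \Cref{lem:a'ad'd}, using the freedom in the order of pairing guaranteed by \Cref{rem:pairing_order}. Recall that the dots of $a$ (right column) are paired with dots of $b$ (left column), and that the unpaired dots of $b$ together with $a$'s content determine $d$, while the paired dots of $b$ determine $c$. (If $\abs{a}>\abs{b}$ the roles are exchanged, and we treat that case symmetrically.) The only difference between $b$ and $b'$ is one dot: it sits in box $n$ for $b$ and in box $1$ for $b'$. We fix an order of pairing for $a$ and use the same order for $a'=a$; since $a$ has no $1$'s (A.1), no dot of $a$ lies in box $1$.

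The plan is to track what happens to the distinguished dot $x$, the extra $n$-dot of $b$ or the corresponding $1$-dot of $b'$.

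\textbf{Step 1.} For a dot of $a$ in box $k\ge 2$, the rule looks for the lowest unpaired dot of $b$ strictly above box $k$. A dot in box $1$ is always strictly above, while a dot in box $n$ is never strictly above. So $x$ in $b'$, lying in box $1$, is a candidate only as the topmost non-winding option, and it is chosen only if no other unpaired dot lies in boxes $2,\dots,k-1$. In $b$, the dot $x$ can only be chosen in a winding pairing, namely when nothing unpaired lies above box $k$ and $x$ is the lowest remaining dot.

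\textbf{Step 2: the case $W(a\otimes b)=0$.} No winding pair occurs, so $x$ is never used in $b$ and stays unpaired. We must then show that $x$ also stays unpaired in $b'$. This is the main obstacle. The approach is to order the pairings by processing the dots of $a$ from the top box downwards. Each $a$-dot then takes the lowest free dot above it. Since in $b$ every $a$-dot found a partner in boxes $2,\dots,n-1$ without winding, the same partners remain available in $b'$. Because the rule picks the lowest such dot, box $1$ is never reached before those partners, so $x$ remains unpaired. An exchange argument should handle the possible competition.

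It follows that the paired sets coincide, giving $c=c'$ (C.1), and $c$ has no $n$ because nothing was paired from box $n$. The unpaired $x$ moves into $d$ as an $n$ for $b$ and as a $1$ for $b'$, which is C.2. The winding counts are equal, so C.3 holds.

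\textbf{Step 3: the case $W(a\otimes b)>0$.} We choose an order in which a winding pair is formed as late as possible. We then argue that in $b$ some winding pairing can be taken to use $x$, since $x$ is the lowest dot and hence the first choice for any winding pair. In $b'$, the same $a$-dot instead finds $x$ in box $1$ strictly above it, so that pair becomes non-winding, and all other pairs are unchanged.

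Consequently $x$ is paired in both cases, so $c$ and $c'$ differ by exactly one dot, an $n$ versus a $1$ (C.4). The unpaired dots coincide, so $d=d'$, and $d$ has no $1$ because $a$ has none and $x$ was consumed; this gives C.5. Exactly one winding pair is lost, so C.6 holds.

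The delicate point throughout is justifying that the same partners are chosen apart from $x$. We would establish this by induction along the chosen processing order, comparing the two sets of unpaired dots at each stage and showing that they differ only in the position of $x$.
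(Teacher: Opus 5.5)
Your strategy matches the paper's. You fix a processing order for the dots of $a$, run the Nakayashiki--Yamada pairing for $b$ and $b'$ in parallel, and keep the invariant that the two multisets of unpaired left dots differ only by the distinguished dot (box $n$ versus box $1$). In case (2) you look, as the paper does, at the first winding connection: the $b$-run is forced onto a box-$n$ dot, while the $b'$-run takes the box-$1$ copy non-windingly, and afterwards the unpaired multisets coincide.

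There is one gap, in the second half of C.5. You justify ``$d$ has no $1$'' by ``$a$ has none and $x$ was consumed''. But $d$ also contains every dot of $b$ left unpaired, and $b$ may have its own $1$'s; nothing you wrote shows these get paired. The missing observation uses $W(a\otimes b)>0$. Take the $a$-dot forming the first winding pair: it sits in a box $\ge 2$ by A.1, and it found no unpaired left dot strictly above it. So at that moment box $1$ of the left column was already exhausted. Unpaired sets only shrink, so no $1$ of $b$ survives into $d$, and hence none into $d'=d$. This is the paper's remark that a winding pair cannot occur while unconnected dots remain in the top left box.

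A few further corrections.
\begin{itemize}
\item Case (1) has no real obstacle and needs neither a top-down order nor an exchange argument. With any fixed order, a non-winding step for an $a$-dot in box $k\ge 2$ picks the lowest unpaired dot among boxes $1,\dots,k-1$, and this lies in the same box in both runs.
\item Your claim that every partner in the $b$-run lies in boxes $2,\dots,n-1$ is false: partners in box $1$ are allowed. It is harmless, though. If the partner is in box $1$, the $b'$-run also picks a box-$1$ dot, and since dots in one box are indistinguishable you may take it to be an original one, leaving the distinguished dot free.
\item The same remark makes your choice of an order in which a winding pair forms ``as late as possible'' unnecessary.
\item Drop the parenthetical about $\abs{a}>\abs{b}$. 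The lemma is stated under the standing assumption $\abs{a}\le\abs{b}$. ``Symmetrically'' is also unjustified: after exchanging the columns, the modified letter sits among the dots being processed rather than among the candidates, so your analysis does not transfer.
\end{itemize}
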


Similar to \cref{fig:graph_a'ab'b}, a graphical expression of~\cref{lem:b'b} is shown in~\cref{fig:graph_b'b} with the black, red and blue edges are drawn in the same way.
When $h=h'$ with $h\in\{a,b,c,d\}$ and $h$ does not contain a $1$s, we mark the edges in gray.
\begin{figure}[ht]
    \iftikz
    \[
    \begin{tikzpicture}[scale=.6,baseline=0]
    \draw[-] (0,-1) -- (0,0) -- (1,0);
    \draw[-,color=gray,line width=2pt] (0,1) -- (0,0);
    \draw[-,color=red,line width=2pt] (-1,0) -- (0,0);
    \draw (0,0) node[anchor=south east] {\tiny $H = 0$};
    \draw (-1,1) node[anchor=east] {\tiny(1)};
    \end{tikzpicture}
    \ 
    \begin{tikzpicture}[scale=.6,baseline=0]
    \draw[-] (0,-1) -- (0,0) -- (1,0);
    \draw[-,color=gray,line width=2pt] (0,1) -- (0,0);
    \draw[-,color=blue,line width=2pt] (-1,0) -- (0,0);
    \end{tikzpicture}
    \ 
    \rightsquigarrow
    \ 
    \begin{tikzpicture}[scale=.6,baseline=0]
    \draw[-,color=gray,line width=2pt] (0,1) -- (0,0);
    \draw[-,line width=2pt] (0,0) -- (0,-1);
    \draw[-,color=red,line width=2pt] (-1,0) -- (1,0);
    \end{tikzpicture}
    \ 
    \begin{tikzpicture}[scale=.6,baseline=0]
    \draw[-,color=gray,line width=2pt] (0,1) -- (0,0);
    \draw[-,line width=2pt] (0,0) -- (0,-1);
    \draw[-,color=blue,line width=2pt] (-1,0) -- (1,0);
    \end{tikzpicture}
    \hspace{30pt}
    \begin{tikzpicture}[scale=.6,baseline=0]
    \draw[-] (0,-1) -- (0,0) -- (1,0);
    \draw[-,color=gray,line width=2pt] (0,1) -- (0,0);
    \draw[-,color=red,line width=2pt] (-1,0) -- (0,0);
    \draw (0,0) node[anchor=south east] {\tiny $H > 0$};
    \draw (-1,1) node[anchor=east] {\tiny (2)};
    \end{tikzpicture}
    \quad
    \begin{tikzpicture}[scale=.6,baseline=0]
    \draw[-] (0,-1) -- (0,0) -- (1,0);
    \draw[-,color=gray,line width=2pt] (0,1) -- (0,0);
    \draw[-,color=blue,line width=2pt] (-1,0) -- (0,0);
    \end{tikzpicture}
    \ 
    \rightsquigarrow
    \ 
    \begin{tikzpicture}[scale=.6,baseline=0]
    \draw[-,color=gray,line width=2pt] (0,1) -- (0,0) -- (1,0);
    \draw[-,color=red,line width=2pt] (-1,0) -- (0,0) -- (0,-1);
    \end{tikzpicture}
    \ 
    \begin{tikzpicture}[scale=.6,baseline=0]
    \draw[-,color=gray,line width=2pt] (0,1) -- (0,0) -- (1,0);
    \draw[-,color=blue,line width=2pt] (-1,0) -- (0,0) -- (0,-1);
    \end{tikzpicture}
    \]
    \fi
    \caption{Graphical expression of \cref{lem:b'b}.}
    \label{fig:graph_b'b}
\end{figure}

\begin{example}    \label{ex:scattering0-}
In~\cref{fig:scattering0-}, we show examples of the scattering diagrams for~\cref{lem:b'b} cases (1) and (2) with $n=3$.
In case (1), we see that $a=a'$ and $c=c'$, while $b$ and $b'$ (resp.~$d$ and $d'$) differ in only one element: one $n$ in $b$ (resp.~$d$) is replaced by $1$ in $b'$ (resp.~$d'$).
Similar things also happen in case~(2). In this case $a=a'$ and $d=d'$, while $b$ and $b'$ (resp.~$c$ and $c'$)
differ in one element in the same way as case~(1).
As in the above figures, to help compare the left and right scattering diagrams, we mark the edges where the data changes in red (resp.\ blue) on the left (resp.\ right) and those that it does not in black.
\end{example}

\begin{figure}
    \[
    \begin{array}{cc@{\hspace{40pt}}cc}
    \begin{tikzpicture}[scale=1,baseline=0]
    \draw[-,line width=2pt,color=gray] (0,1) node[anchor=south,color=black,scale=.7] {$a=2^2 3^2$} -- (0,0);
    \draw[-,line width=2pt] (0,0) -- (0,-1) node[anchor=north,color=black,scale=.7] {$c=1^2 2^2$};
    \draw[-,color=red,line width=2pt] (-1,0) node[anchor=south,color=black,scale=.7] {$b=1^4 2^3 3^2$} -- (1,0) node[anchor=south,color=black,scale=.7] {$d=1^2 2^3 3^4$};
    \end{tikzpicture}
    &
    \begin{tikzpicture}[scale=1,baseline=0]
    \draw[-,line width=2pt,color=gray] (0,1) node[anchor=south,color=black,scale=.7] {$a'=2^2 3^2$} -- (0,0);
    \draw[-,line width=2pt] (0,0)-- (0,-1) node[anchor=north,color=black,scale=.7] {$c'=1^2 2^2$};
    \draw[-,color=blue,line width=2pt] (-1,0) node[anchor=south,color=black,scale=.7] {$b'=1^5 2^3 3$} -- (1,0) node[anchor=south,color=black,scale=.7] {$d'=1^3 2^3 3^3$};
    \end{tikzpicture}
    & 
    \begin{tikzpicture}[scale=1,baseline=0]
    \draw[-,line width=2pt,color=gray] (0,1) node[anchor=south,color=black,scale=.7] {$a=2^2 3^2$} -- (0,0) -- (1,0) node[anchor=south,color=black,scale=.7] {$d=2^3 3^3$};
    \draw[-,color=red,line width=2pt] (-1,0) node[anchor=south,color=black,scale=.7] {$b=1 2^3 3^2$} -- (0,0) -- (0,-1) node[anchor=north,color=black,scale=.7] {$c=1 2^2 3$};
    \end{tikzpicture}
    &
    \begin{tikzpicture}[scale=1,baseline=0]
    \draw[-,line width=2pt,color=gray] (0,1) node[anchor=south,color=black,scale=.7] {$a'=2^2 3^2$} -- (0,0) -- (1,0) node[anchor=south,color=black,scale=.7] {$d'=2^3 3^3$};
    \draw[-,color=blue,line width=2pt] (-1,0) node[anchor=south,color=black,scale=.7] {$b'=1^2 2^3 3$} -- (0,0) -- (0,-1) node[anchor=north,color=black,scale=.7] {$c'=1^2 2^2$};
    \end{tikzpicture}
    \\[45pt]
    W(a,b) = 0
    &
    W(a',b') = 0
    &
    W(a,b) = 1
    &
    W(a',b') = 0
    \\
    \begin{tikzpicture}[scale=.7,>=to,baseline=0]
      \draw (.5,0) node[anchor=south] {$b$};
      \draw (0,0) grid (1,-3);
      \draw (2,0) grid (3,-3);
      \draw[-,rounded corners,dgreencolor] (.3,-.7) -- (.3,-.9) -- (1.1,-.9) -- (1.9,-1.5) -- (2.3,-1.5);
      \draw[-,rounded corners,dgreencolor] (.7,-.7) -- (1.2,-.7) -- (1.8,-1.2) -- (2.7,-1.2) -- (2.7,-1.5);
      \draw[-,rounded corners,dgreencolor] (.5,-1.5) -- (.5,-1.8) -- (1.2,-1.8) -- (1.8,-2.2) -- (2.7,-2.2) -- (2.7,-2.5);
      \draw[-,rounded corners,dgreencolor] (.25,-1.5) -- (.25,-2.1) -- (1.2,-2.1) -- (1.8,-2.5) -- (2.3,-2.5);
      \foreach \x/\y in {.3/-2.5,.7/-2.5, .25/-1.5,.5/-1.5,.75/-1.5, .3/-.3,.7/-.3,.3/-.7,.7/-.7}
        \fill[black] (\x,\y) circle (0.07);
      \draw (2.5,0) node[anchor=south] {$a$};
      \foreach \x/\y in {2.3/-2.5, 2.7/-2.5, 2.3/-1.5, 2.7/-1.5}
        \fill[black] (\x,\y) circle (0.07);
    \end{tikzpicture}
    &
    \begin{tikzpicture}[scale=.7,>=to,baseline=0]
      \draw (.5,0) node[anchor=south] {$b$};
      \draw (0,0) grid (1,-3);
      \draw (2,0) grid (3,-3);
      \draw[-,rounded corners,dgreencolor] (.3,-.7) -- (.3,-.9) -- (1.1,-.9) -- (1.9,-1.5) -- (2.3,-1.5);
      \draw[-,rounded corners,dgreencolor] (.7,-.7) -- (1.2,-.7) -- (1.8,-1.2) -- (2.7,-1.2) -- (2.7,-1.5);
      \draw[-,rounded corners,dgreencolor] (.5,-1.5) -- (.5,-1.8) -- (1.2,-1.8) -- (1.8,-2.2) -- (2.7,-2.2) -- (2.7,-2.5);
      \draw[-,rounded corners,dgreencolor] (.25,-1.5) -- (.25,-2.1) -- (1.2,-2.1) -- (1.8,-2.5) -- (2.3,-2.5);
      \foreach \x/\y in {.5/-2.5, .25/-1.5,.5/-1.5,.75/-1.5, .3/-.3,.7/-.3,.3/-.7,.7/-.7,.5/-.5}
        \fill[black] (\x,\y) circle (0.07);
      \draw (2.5,0) node[anchor=south] {$a$};
      \foreach \x/\y in {2.3/-2.5, 2.7/-2.5, 2.3/-1.5, 2.7/-1.5}
        \fill[black] (\x,\y) circle (0.07);
    \end{tikzpicture}
    &
    \begin{tikzpicture}[scale=.7,>=to,baseline=0]
      \draw (.5,0) node[anchor=south] {$b$};
      \draw (0,0) grid (1,-3);
      \draw (2,0) grid (3,-3);
      \draw[-,rounded corners,dgreencolor] (.5,-.5) -- (1.2,-.5) -- (1.8,-1.5) -- (2.3,-1.5);
      \draw[-,rounded corners,dgreencolor] (.5,-1.5) -- (.5,-1.8) -- (1.2,-1.8) -- (1.8,-2.2) -- (2.7,-2.2) -- (2.7,-2.5);
      \draw[-,rounded corners,dgreencolor] (.25,-1.5) -- (.25,-2.1) -- (1.2,-2.1) -- (1.8,-2.5) -- (2.3,-2.5);
      \draw[-<,rounded corners,UQpurple] (.3,-2.5) -- (.3,-2.8) -- (1.7,-2.8) -- (1.7,-3.2);
      \draw[<-,rounded corners,UQpurple] (1.7,.3) -- (1.7,-1.2) -- (2.7,-1.2) -- (2.7,-1.5);
      \foreach \x/\y in {.5/-.5, .25/-1.5,.5/-1.5,.75/-1.5, .3/-2.5,.7/-2.5}
        \fill[black] (\x,\y) circle (0.07);
      \draw (2.5,0) node[anchor=south] {$a$};
      \foreach \x/\y in {2.3/-2.5, 2.7/-2.5, 2.3/-1.5, 2.7/-1.5}
        \fill[black] (\x,\y) circle (0.07);
    \end{tikzpicture}
    &
    \begin{tikzpicture}[scale=.7,>=to,baseline=0]
      \draw (.5,0) node[anchor=south] {$b'$};
      \draw (0,0) grid (1,-3);
      \draw (2,0) grid (3,-3);
      \draw[-,rounded corners,dgreencolor] (.3,-.5) -- (.3,-.8) -- (1.2,-.8) -- (1.8,-1.5) -- (2.3,-1.5);
      \draw[-,rounded corners,dgreencolor] (.7,-.5) -- (1.2,-.5) -- (1.8,-1.2) -- (2.7,-1.2) -- (2.7,-1.5);
      \draw[-,rounded corners,dgreencolor] (.5,-1.5) -- (.5,-1.8) -- (1.2,-1.8) -- (1.8,-2.2) -- (2.7,-2.2) -- (2.7,-2.5);
      \draw[-,rounded corners,dgreencolor] (.25,-1.5) -- (.25,-2.1) -- (1.2,-2.1) -- (1.8,-2.5) -- (2.3,-2.5);
      \foreach \x/\y in {.5/-2.5, .25/-1.5,.5/-1.5,.75/-1.5, .3/-.5,.7/-.5}
        \fill[black] (\x,\y) circle (0.07);
      \draw (2.5,0) node[anchor=south] {$a'$};
      \foreach \x/\y in {2.3/-2.5, 2.7/-2.5, 2.3/-1.5, 2.7/-1.5}
        \fill[black] (\x,\y) circle (0.07);
    \end{tikzpicture}
    \end{array}
    \]
    \caption{Examples of scattering diagrams for~\cref{lem:b'b} with $n = 3$.}
    \label{fig:scattering0-}
\end{figure}

\begin{proof}[Proof of~\Cref{lem:b'b}]
    As in the proof of \cref{lem:a'ad'd}, we give the proof by considering the algorithm in \cref{def:Nakayashiki_yamada}.
    We compare two box and dots diagrams describing $a$, $b$ and $a'$, $b'$.
    Note that the dots in the left sides associated with $b$ and $b'$ can be classified into two types, the ones connected with the dots in the right sides associated with $a$ and $a'$, and the disconnected ones. 
    
    First we consider the case (1).
    To show {\it C.1}-{\it C.3}, it is sufficient to prove the following claim about the two types of dots:
    \begin{equation}
        \label{eq:b'b_claim1}
        \begin{minipage}{.8\textwidth}
        All connected dots for $b$ are exactly the same as those for $b'$, and they do not includes the dots on the bottom box representing $n$s.
        \end{minipage}
        \tag{$\ast$}
    \end{equation}
    The claim~\eqref{eq:b'b_claim1} is clear from the algorithm.
    From  {\it A.3}, we immediately see that in the box and dots diagram for $a$ and $b$, all dots in the left bottom box are not connected since otherwise we have $W(a\otimes b)>0$.
    Thus we find that even if we remove one dot in the left bottom box and put it in the left top box, the configuration of the connected dots  does not change.
    This leads to the proof of~\eqref{eq:b'b_claim1} and thus the case (1).

    Next we move to the case (2).
    To show {\it C.4} and {\it C.5}, it is sufficient to show the following claim about the dots associated with $b$ and $b'$: 
    \begin{equation}
        \label{eq:b'b_claim2}
        \begin{minipage}{.8\textwidth}
        All unconnected dots about $b$ is exactly the same as those about $b'$ and they do not include the dots in the top box representing $1$s.
        \end{minipage}
        \tag{$\ast\ast$}
    \end{equation}
     
     It is not difficult to check this property.
     As in the proof of \cref{lem:a'ad'd}, we allocate the numbers $1,2,\ldots$ to the dots on the right side associated with $a$ and $a'$ and connect them to the dots on the left side associated with $b$ and $b'$ in the increasing order of the labels.
     In \cref{fig:boxdot-1}, we show the example for the case $a=a'=1223333$, $b=112233344$, and $b'=111223334$.
     There we mark the labels in red.

    \begin{figure}[t]
    \iftikz
    \[
    \begin{tikzpicture}[scale=.7,>=to]
      \draw (.5,0) node[anchor=south] {$b$};
      \draw (0,0) grid (1,-4);
      \draw (2,0) grid (3,-4);
      \draw[-,rounded corners,dgreencolor] (.3,-0.5) -- (.3,-0.8) -- (1.2,-0.8) -- (1.8,-1.5) -- (2.2,-1.5);
      \draw[-,rounded corners,dgreencolor] (.7,-0.5) -- (1.2,-0.5) -- (1.8,-1.2) -- (2.4,-1.2) -- (2.4,-1.5);
      \draw[-<<,rounded corners,blue] (.3,-3.5) -- (.3,-3.8) -- (1.25,-3.8) -- (1.25,-4.2);
      \draw[<<-,rounded corners,blue] (1.25,.3) -- (1.25,-.5) -- (2.6,-.5) -- (2.6,-1.5);
      \draw[-<,rounded corners,UQpurple] (.7,-3.5) -- (1.5,-3.5) -- (1.5,-4.2);
      \draw[<-,rounded corners,UQpurple] (1.5,.3) -- (1.5,-.3) -- (2.8,-.3) -- (2.8,-1.5);
      \draw[-,rounded corners,dgreencolor] (.3,-1.5) -- (.3,-1.8) -- (1.2,-1.8) -- (1.8,-2.5) -- (2.3,-2.5);
      \draw[-,rounded corners,dgreencolor] (.7,-1.5) -- (1.2,-1.5) -- (1.8,-2.2) -- (2.7,-2.2) -- (2.7,-2.5);
      \draw[-,rounded corners,dgreencolor] (.25,-2.5) -- (.25,-2.8) -- (1.2,-2.8) -- (1.8,-3.5) -- (2.5,-3.5);
      \foreach \x/\y in {.3/-.5,.7/-.5, .3/-1.5,.7/-1.5, .25/-2.5,.5/-2.5,.75/-2.5, .3/-3.5,.7/-3.5}
        \fill[black] (\x,\y) circle (0.07);
      \fill[blue,opacity=.3] (.3,-3.5) circle (.15);
      \draw (2.5,0) node[anchor=south] {$a$};
      \foreach \x/\y [count=\i] in {2.5/-3.5, 2.3/-2.5, 2.7/-2.5, 2.2/-1.5, 2.4/-1.5, 2.6/-1.5, 2.8/-1.5}
        \fill[black] (\x,\y) circle (0.07) node[anchor=north west, color=red, scale=.4, node distance=8pt] {$\i$};
      \fill[blue,opacity=.3] (2.6,-1.5) circle (.15);
    \end{tikzpicture}
    \qquad\qquad
    \begin{tikzpicture}[scale=.7,>=to]
      \draw (.5,0) node[anchor=south] {$b'$};
      \draw (0,0) grid (1,-4);
      \draw (2,0) grid (3,-4);
      \draw[-,rounded corners,dgreencolor] (.3,-0.6) -- (.3,-0.8) -- (1.2,-0.8) -- (1.8,-1.5) -- (2.2,-1.5);
      \draw[-,rounded corners,dgreencolor] (.7,-0.6) -- (1.2,-0.6) -- (1.8,-1.2) -- (2.4,-1.2) -- (2.4,-1.5);
      \draw[-,rounded corners,blue] (.5,-.3) -- (1.2,-.3) -- (1.9,-.9) -- (2.6,-.9) -- (2.6,-1.5);
      \draw[-<,rounded corners,UQpurple] (.5,-3.5) -- (1.5,-3.5) -- (1.5,-4.2);
      \draw[<-,rounded corners,UQpurple] (1.5,.3) -- (1.5,-.3) -- (2.8,-.3) -- (2.8,-1.5);
      \draw[-,rounded corners,dgreencolor] (.3,-1.5) -- (.3,-1.8) -- (1.2,-1.8) -- (1.8,-2.5) -- (2.3,-2.5);
      \draw[-,rounded corners,dgreencolor] (.7,-1.5) -- (1.2,-1.5) -- (1.8,-2.2) -- (2.7,-2.2) -- (2.7,-2.5);
      \draw[-,rounded corners,dgreencolor] (.25,-2.5) -- (.25,-2.8) -- (1.2,-2.8) -- (1.8,-3.5) -- (2.5,-3.5);
      \foreach \x/\y in {.3/-.6,.7/-.6,.5/-.3, .3/-1.5,.7/-1.5, .25/-2.5,.5/-2.5,.75/-2.5, .5/-3.5}
        \fill[black] (\x,\y) circle (0.07);
      \fill[blue,opacity=.3] (.5,-.3) circle (.15);
      \draw (2.5,0) node[anchor=south] {$a'$};
      \foreach \x/\y [count=\i] in {2.5/-3.5, 2.3/-2.5, 2.7/-2.5, 2.2/-1.5, 2.4/-1.5, 2.6/-1.5, 2.8/-1.5}
        \fill[black] (\x,\y) circle (0.07) node[anchor=north west, color=red, scale=.4, node distance=8pt] {$\i$};
      \fill[blue,opacity=.3] (2.6,-1.5) circle (.15);
    \end{tikzpicture}
    \]
    \fi
    \caption{Example of how the winding number changes after applying $f_0$.}
    \label{fig:boxdot-1}
    \end{figure}

    Suppose that for the boxes and dots diagram about $a$ and $b$, the winding appears for the first time in the $k$th connection.
    We see from the algorithm that the dot labeled $k$ is connected with the dot in the bottom box on the left side while in the diagram associated with $a'$ and $b'$, the dot with the same label $k$ is connected with the dot in the top box corresponding to the element 1 in $a'$ which appears as the replacement of the element $n$ in $a$.
    In \cref{fig:boxdot-1}, $k=6$ and we mark the associated dots in blue.
    In addition, we see from the algorithm that the remaining dots have exactly the same pairings by the lines between two diagrams about $a$, $b$ and $a'$, $b'$.
    Thus the first statement in~\eqref{eq:b'b_claim2} follows.
    The second statement in~\eqref{eq:b'b_claim2} is clear from the algorithm and the assumption in {\it A.1} that $a(=a')$ does not have $1$s since in the diagram about $a$ and $b$, it is impossible to have unwinding pairs leaving unconnected dots in the top left box.
    Combining these facts we show the claim~\eqref{eq:b'b_claim2} and thus {\it C.4} and~{\it C.5}. 

    {\it C.6} also immediately follows from the derivation above.
    In the boxes and dots diagram about $a$ and $b$, the pairing between the dot labeled $k$ and the one in the bottom box is a winding pair while this winding changes to unwinding in the diagram about $a'$ and $b'$ since the dot labeled $k$ is connected with the dot in the top box.
    Combining this with the fact that other pairings are exactly the same between the diagrams about $a$, $b$ and $a'$, $b'$, we have shown {\it C.6}. 
\end{proof}

Using \cref{lem:a'ad'd} and \cref{lem:b'b}, we obtain the proof of  \cref{th:leading_f0}.

\begin{proof}[Alternative proof of \cref{th:leading_f0}]
From \cref{lem:f0barb}, we see that $\overline{H}$ and $f_0(\overline{H})$ differ in only one factor.
Suppose that the $k$th factors from the right are the different ones.
As stated in the proof of \cref{lem:f0barb}, the $1$st through $(k-1)$th factors do not have the element $n$s while $(k+1)$th through $\ell$ factors do not have the element $1$s.
Thus the input data of the scattering diagrams of $\overline{H}$ and $f_0(\overline{H})$ can be expressed graphically as
\begin{equation}
\label{fig:inputdata}
    \begin{tikzpicture}[xscale=.7,yscale=.5,every node/.style={scale=0.7},baseline=-2cm]
      \foreach \i [count=\y] in {1,2,4,5,6,7,9} {
        \draw[-,rounded corners] (\i,0) -- (\i,-\y) -- (10,-\y);
      }
    \draw (1,0) node[anchor=south] {$b_1$};
    \draw[-,line width=2pt] (1,0) -- (1,-1);
    \draw (2,0) node[anchor=south] {$b_2$};
    \draw[-,line width=2pt] (2,0) -- (2,-1);
    \draw (4,0) node[anchor=south] {$b_{k-1}$};
    \draw[-,line width=2pt] (4,0) -- (4,-1);
    \draw (5,0) node[anchor=south] {$b_k$};
    \draw[-,red,line width=2pt] (5,0) -- (5,-1);
    \draw (6,0) node[anchor=south] {$b_{k+1}$};
    \draw[-,gray,line width=2pt] (6,0) -- (6,-1);
    \draw (7,0) node[anchor=south] {$b_{k+2}$};\draw[-,gray,line width=2pt] (7,0) -- (7,-1);
    \draw (9,0) node[anchor=south] {$b_{\ell}$};\draw[-,gray,line width=2pt] (9,0) -- (9,-1);
    \draw (3,0) node[anchor=south] {$\cdots$};
    \draw (3,-1) node[anchor=south] {$\cdots$};
    \draw (3,-2) node[anchor=south] {$\cdots$};
    \foreach \i in {0,1,2,4,5}
      \draw (8,-\i) node[anchor=south] {$\cdots$};
    \foreach \i in {4,5,6,9}
      \draw (\i+.3,-2.4) node[anchor=west] {$\vdots$};
    \draw (8,-5.4) node {$\ddots$};
    \draw (9.3,-5.4) node[anchor=west] {$\vdots$};
    \draw (8,-2.4) node {$\ddots$};
    \end{tikzpicture}
    \qquad\qquad
    \begin{tikzpicture}[xscale=.7,yscale=.5,every node/.style={scale=0.7},baseline=-2cm]
      \foreach \i [count=\y] in {1,2,4,5,6,7,9} {
        \draw[-,rounded corners] (\i,0) -- (\i,-\y) -- (10,-\y);
      }
    \draw (1,0) node[anchor=south] {$b_1$};
    \draw[-,line width=2pt] (1,0) -- (1,-1);
    \draw (2,0) node[anchor=south] {$b_2$};
    \draw[-,line width=2pt] (2,0) -- (2,-1);
    \draw (4,0) node[anchor=south] {$b_{k-1}$};
    \draw[-,line width=2pt] (4,0) -- (4,-1);
    \draw (5,0) node[anchor=south] {$b_k$};
    \draw[-,blue,line width=2pt] (5,0) -- (5,-1);
    \draw (6,0) node[anchor=south] {$b_{k+1}$};
    \draw[-,gray,line width=2pt] (6,0) -- (6,-1);
    \draw (7,0) node[anchor=south] {$b_{k+2}$};\draw[-,gray,line width=2pt] (7,0) -- (7,-1);
    \draw (9,0) node[anchor=south] {$b_{\ell}$};\draw[-,gray,line width=2pt] (9,0) -- (9,-1);
    \draw (3,0) node[anchor=south] {$\cdots$};
    \draw (3,-1) node[anchor=south] {$\cdots$};
    \draw (3,-2) node[anchor=south] {$\cdots$};
    \foreach \i in {0,1,2,4,5}
      \draw (8,-\i) node[anchor=south] {$\cdots$};
    \foreach \i in {4,5,6,9}
      \draw (\i+.3,-2.4) node[anchor=west] {$\vdots$};
    \draw (8,-5.4) node {$\ddots$};
    \draw (9.3,-5.4) node[anchor=west] {$\vdots$};
    \draw (8,-2.4) node {$\ddots$};
    \end{tikzpicture}
\end{equation}
Using \cref{lem:a'ad'd,lem:b'b}, we will show that under the scatterings with the input data as in \eqref{fig:inputdata} the red and blue edges form a common path starting from the $k$th top vertical edge from the left ending at some right horizontal edge.
  
First we apply \cref{lem:a'ad'd} to the vertices adjacent to the red and blue edges.
Note that since all factors from the first through $(k-1)$th do not contain an $n$, so all edges from the left of the edges colored in red or blue are colored in black.
Thus applying \cref{lem:a'ad'd} to the scattering diagram depiction, we get the diagram
\[
    \begin{tikzpicture}[xscale=.55,yscale=.4,every node/.style={scale=0.6},baseline=-10]
    \draw[-] (0.5,0) node[anchor=east] {$\cdots$} -- (4.5,0) node[anchor=west] {$\cdots$};
    \draw[-,line width=2pt] (0.5,0) -- (2,0);
    \draw[-] (0.5,-1) node[anchor=east] {$\cdots$} -- (4.5,-1) node[anchor=west] {$\cdots$};
    \draw[-,line width=2pt] (0.5,-1) -- (2,-1);
    \draw[-,line width=2pt] (1,1) node[anchor=south] {$b_{k-1}$} -- (1,-1.5) node[anchor=north] {$\vdots$};
    \draw[-] (2,1) node[anchor=south] {$b_k$} -- (2,-1.5) node[anchor=north] {$\vdots$};
    \draw[-,line width=2pt,red] (2,1) -- (2,0);
    \draw[-] (3,1) node[anchor=south] {$b_{k+1}$} -- (3,-1.5) node[anchor=north] {$\vdots$};
    \draw[-,line width=2pt,gray] (3,1) -- (3,0);
    \draw[-] (4,1) node[anchor=south] {$b_{k+2}$} -- (4,-1.5) node[anchor=north] {$\vdots$};
    \draw[-,line width=2pt,gray] (4,1) -- (4,0);
    \end{tikzpicture}
    \quad
    \begin{tikzpicture}[xscale=.55,yscale=.4,every node/.style={scale=0.6},baseline=-10]
    \draw[-] (0.5,0) node[anchor=east] {$\cdots$} -- (4.5,0) node[anchor=west] {$\cdots$};
    \draw[-,line width=2pt] (0.5,0) -- (2,0);
    \draw[-] (0.5,-1) node[anchor=east] {$\cdots$} -- (4.5,-1) node[anchor=west] {$\cdots$};
    \draw[-,line width=2pt] (0.5,-1) -- (2,-1);
    \draw[-,line width=2pt] (1,1) node[anchor=south] {$b_{k-1}$} -- (1,-1.5) node[anchor=north] {$\vdots$};
    \draw[-] (2,1) node[anchor=south] {$b_k$} -- (2,-1.5) node[anchor=north] {$\vdots$};
    \draw[-,line width=2pt,blue] (2,1) -- (2,0);
    \draw[-] (3,1) node[anchor=south] {$b_{k+1}$} -- (3,-1.5) node[anchor=north] {$\vdots$};
    \draw[-,line width=2pt,gray] (3,1) -- (3,0);
    \draw[-] (4,1) node[anchor=south] {$b_{k+2}$} -- (4,-1.5) node[anchor=north] {$\vdots$};
    \draw[-,line width=2pt,gray] (4,1) -- (4,0);
    \end{tikzpicture}
    \;
    \rightsquigarrow
    \;
    \begin{tikzpicture}[xscale=.55,yscale=.4,every node/.style={scale=0.6},baseline=-10]
    \draw[-] (0.5,0) node[anchor=east] {$\cdots$} -- (4.5,0) node[anchor=west] {$\cdots$};
    \draw[-,line width=2pt] (0.5,0) -- (2,0) -- (2,-1.5) node[anchor=north] {$\vdots$};
    \draw[-] (0.5,-1) node[anchor=east] {$\cdots$} -- (4.5,-1) node[anchor=west] {$\cdots$};
    \draw[-,line width=2pt] (0.5,-1) -- (3,-1);
    \draw[-,line width=2pt] (1,1) node[anchor=south] {$b_{k-1}$} -- (1,-1.5) node[anchor=north] {$\vdots$};
    \draw[-,line width=2pt,red] (2,1) node[anchor=south,color=black] {$b_k$} -- (2,0) -- (3,0);
    \draw[-] (3,1) node[anchor=south] {$b_{k+1}$} -- (3,-1.5) node[anchor=north] {$\vdots$};
    \draw[-,line width=2pt,gray] (3,1) -- (3,0);
    \draw[-] (4,1) node[anchor=south] {$b_{k+2}$} -- (4,-1.5) node[anchor=north] {$\vdots$};
    \draw[-,line width=2pt,gray] (4,1) -- (4,0);
    \end{tikzpicture}
    \quad
    \begin{tikzpicture}[xscale=.55,yscale=.4,every node/.style={scale=0.6},baseline=-10]
    \draw[-] (0.5,0) node[anchor=east] {$\cdots$} -- (4.5,0) node[anchor=west] {$\cdots$};
    \draw[-,line width=2pt] (0.5,0) -- (2,0) -- (2,-1.5) node[anchor=north] {$\vdots$};
    \draw[-] (0.5,-1) node[anchor=east] {$\cdots$} -- (4.5,-1) node[anchor=west] {$\cdots$};
    \draw[-,line width=2pt] (0.5,-1) -- (3,-1);
    \draw[-,line width=2pt] (1,1) node[anchor=south] {$b_{k-1}$} -- (1,-1.5) node[anchor=north] {$\vdots$};
    \draw[-,line width=2pt,blue] (2,1) node[anchor=south,color=black] {$b_k$} -- (2,0) -- (3,0);
    \draw[-] (3,1) node[anchor=south] {$b_{k+1}$} -- (3,-1.5) node[anchor=north] {$\vdots$};
    \draw[-,line width=2pt,gray] (3,1) -- (3,0);
    \draw[-] (4,1) node[anchor=south] {$b_{k+2}$} -- (4,-1.5) node[anchor=north] {$\vdots$};
    \draw[-,line width=2pt,gray] (4,1) -- (4,0);
    \end{tikzpicture}
\]
  
In the next step, if the local energy in the vertex labeled $(\dagger)$ in
\begin{subequations}
\begin{align}
\label{fig:sc0}
    \begin{tikzpicture}[xscale=.55,yscale=.4,every node/.style={scale=0.6},baseline=-10]
    \draw[-] (0.5,0) node[anchor=east] {$\cdots$} -- (4.5,0) node[anchor=west] {$\cdots$};
    \draw[-,line width=2pt] (0.5,0) -- (2,0) -- (2,-1.5) node[anchor=north] {$\vdots$};
    \draw[-] (0.5,-1) node[anchor=east] {$\cdots$} -- (4.5,-1) node[anchor=west] {$\cdots$};
    \draw[-,line width=2pt] (0.5,-1) -- (3,-1);
    \draw[-,line width=2pt] (1,1) node[anchor=south] {$b_{k-1}$} -- (1,-1.5) node[anchor=north] {$\vdots$};
    \draw[-] (3,1) node[anchor=south] {$b_{k+1}$} -- (3,-1.5) node[anchor=north] {$\vdots$};
    \draw[-,line width=2pt,gray] (3,1) -- (3,0) node[anchor=south west,black,scale=.7] {$(\ast)$};
    \draw[-] (4,1) node[anchor=south] {$b_{k+2}$} -- (4,-1.5) node[anchor=north] {$\vdots$};
    \draw[-,line width=2pt,gray] (4,1) -- (4,0);
    \draw[-,line width=2pt,red] (2,1) node[anchor=south,color=black] {$b_k$} -- (2,0) -- (3,0);
    \end{tikzpicture}
    \quad
    \begin{tikzpicture}[xscale=.55,yscale=.4,every node/.style={scale=0.6},baseline=-10]
    \draw[-] (0.5,0) node[anchor=east] {$\cdots$} -- (4.5,0) node[anchor=west] {$\cdots$};
    \draw[-,line width=2pt] (0.5,0) -- (2,0) -- (2,-1.5) node[anchor=north] {$\vdots$};
    \draw[-] (0.5,-1) node[anchor=east] {$\cdots$} -- (4.5,-1) node[anchor=west] {$\cdots$};
    \draw[-,line width=2pt] (0.5,-1) -- (3,-1);
    \draw[-,line width=2pt] (1,1) node[anchor=south] {$b_{k-1}$} -- (1,-1.5) node[anchor=north] {$\vdots$};
    \draw[-] (3,1) node[anchor=south] {$b_{k+1}$} -- (3,-1.5) node[anchor=north] {$\vdots$};
    \draw[-,line width=2pt,gray] (3,1) -- (3,0) node[anchor=south west,black,scale=.7] {$(\ast)$};
    \draw[-] (4,1) node[anchor=south] {$b_{k+2}$} -- (4,-1.5) node[anchor=north] {$\vdots$};
    \draw[-,line width=2pt,gray] (4,1) -- (4,0);
    \draw[-,line width=2pt,blue] (2,1) node[anchor=south,color=black] {$b_k$} -- (2,0) -- (3,0);
    \end{tikzpicture}
    \;
    & \rightsquigarrow
    \;
    \begin{tikzpicture}[xscale=.55,yscale=.4,every node/.style={scale=0.6},baseline=-10]
    \draw[-] (0.5,0) node[anchor=east] {$\cdots$} -- (4.5,0) node[anchor=west] {$\cdots$};
    \draw[-,line width=2pt] (0.5,0) -- (2,0) -- (2,-1.5) node[anchor=north] {$\vdots$};
    \draw[-] (0.5,-1) node[anchor=east] {$\cdots$} -- (4.5,-1) node[anchor=west] {$\cdots$};
    \draw[-,line width=2pt] (3,0) -- (3,-1.5) node[anchor=north] {$\vdots$};
    \draw[-,line width=2pt] (0.5,-1) -- (4,-1);
    \draw[-,line width=2pt] (1,1) node[anchor=south] {$b_{k-1}$} -- (1,-1.5) node[anchor=north] {$\vdots$};
    \draw[-,line width=2pt,gray] (3,1) node[anchor=south,black] {$b_{k+1}$} -- (3,0) node[anchor=south west,black,scale=.7] {$(\dagger)$};
    \draw[-] (4,1) node[anchor=south] {$b_{k+2}$} -- (4,-1.5) node[anchor=north] {$\vdots$};
    \draw[-,line width=2pt,gray] (4,1) -- (4,0);
    \draw[-,line width=2pt,red] (2,1) node[anchor=south,color=black] {$b_k$} -- (2,0) -- (4,0);
    \end{tikzpicture}
    \quad
    \begin{tikzpicture}[xscale=.55,yscale=.4,every node/.style={scale=0.6},baseline=-10]
    \draw[-] (0.5,0) node[anchor=east] {$\cdots$} -- (4.5,0) node[anchor=west] {$\cdots$};
    \draw[-,line width=2pt] (0.5,0) -- (2,0) -- (2,-1.5) node[anchor=north] {$\vdots$};
    \draw[-] (0.5,-1) node[anchor=east] {$\cdots$} -- (4.5,-1) node[anchor=west] {$\cdots$};
    \draw[-,line width=2pt] (3,0) -- (3,-1.5) node[anchor=north] {$\vdots$};
    \draw[-,line width=2pt] (0.5,-1) -- (4,-1);
    \draw[-,line width=2pt] (1,1) node[anchor=south] {$b_{k-1}$} -- (1,-1.5) node[anchor=north] {$\vdots$};
    \draw[-,line width=2pt,gray] (3,1) node[anchor=south,black] {$b_{k+1}$} -- (3,0) node[anchor=south west,black,scale=.7] {$(\dagger)$};
    \draw[-] (4,1) node[anchor=south] {$b_{k+2}$} -- (4,-1.5) node[anchor=north] {$\vdots$};
    \draw[-,line width=2pt,gray] (4,1) -- (4,0);
    \draw[-,line width=2pt,blue] (2,1) node[anchor=south,color=black] {$b_k$} -- (2,0) -- (4,0);
    \end{tikzpicture}
\\
\label{fig:sc-1}
    \begin{tikzpicture}[xscale=.55,yscale=.4,every node/.style={scale=0.6},baseline=-10]
    \draw[-] (0.5,0) node[anchor=east] {$\cdots$} -- (4.5,0) node[anchor=west] {$\cdots$};
    \draw[-,line width=2pt] (0.5,0) -- (2,0) -- (2,-1.5) node[anchor=north] {$\vdots$};
    \draw[-] (0.5,-1) node[anchor=east] {$\cdots$} -- (4.5,-1) node[anchor=west] {$\cdots$};
    \draw[-,line width=2pt] (0.5,-1) -- (3,-1) node[anchor=south west,scale=.7] {$(\ddag)$};
    \draw[-,line width=2pt] (1,1) node[anchor=south] {$b_{k-1}$} -- (1,-1.5) node[anchor=north] {$\vdots$};
    \draw[-] (3,1) node[anchor=south] {$b_{k+1}$} -- (3,-1.5) node[anchor=north] {$\vdots$};
    \draw[-,line width=2pt,gray] (3,1) -- (3,0) node[anchor=south west,black,scale=.7] {$(\dagger)$};
    \draw[-] (4,1) node[anchor=south] {$b_{k+2}$} -- (4,-1.5) node[anchor=north] {$\vdots$};
    \draw[-,line width=2pt,gray] (4,1) -- (4,0);
    \draw[-,line width=2pt,red] (2,1) node[anchor=south,color=black] {$b_k$} -- (2,0) -- (3,0);
    \end{tikzpicture}
    \quad
    \begin{tikzpicture}[xscale=.55,yscale=.4,every node/.style={scale=0.6},baseline=-10]
    \draw[-] (0.5,0) node[anchor=east] {$\cdots$} -- (4.5,0) node[anchor=west] {$\cdots$};
    \draw[-,line width=2pt] (0.5,0) -- (2,0) -- (2,-1.5) node[anchor=north] {$\vdots$};
    \draw[-] (0.5,-1) node[anchor=east] {$\cdots$} -- (4.5,-1) node[anchor=west] {$\cdots$};
    \draw[-,line width=2pt] (0.5,-1) -- (3,-1) node[anchor=south west,scale=.7] {$(\ddag)$};
    \draw[-,line width=2pt] (1,1) node[anchor=south] {$b_{k-1}$} -- (1,-1.5) node[anchor=north] {$\vdots$};
    \draw[-] (3,1) node[anchor=south] {$b_{k+1}$} -- (3,-1.5) node[anchor=north] {$\vdots$};
    \draw[-,line width=2pt,gray] (3,1) -- (3,0) node[anchor=south west,black,scale=.7] {$(\dagger)$};
    \draw[-] (4,1) node[anchor=south] {$b_{k+2}$} -- (4,-1.5) node[anchor=north] {$\vdots$};
    \draw[-,line width=2pt,gray] (4,1) -- (4,0);
    \draw[-,line width=2pt,blue] (2,1) node[anchor=south,color=black] {$b_k$} -- (2,0) -- (3,0);
    \end{tikzpicture}
    \;
    &\rightsquigarrow
    \;
    \begin{tikzpicture}[xscale=.55,yscale=.4,every node/.style={scale=0.6},baseline=-10]
    \draw[-,line width=2pt] (0.5,0) node[anchor=east] {$\cdots$} -- (2,0) -- (2,-1.5) node[anchor=north] {$\vdots$};
    \draw[-] (0.5,-1) node[anchor=east] {$\cdots$} -- (4.5,-1) node[anchor=west] {$\cdots$};
    \draw[-,line width=2pt] (3,0) -- (3,-1.5) node[anchor=north] {$\vdots$};
    \draw[-,line width=2pt] (0.5,-1) -- (3,-1) node[anchor=south west,scale=.7] {$(\ddag)$};
    \draw[-,line width=2pt] (1,1) node[anchor=south] {$b_{k-1}$} -- (1,-1.5) node[anchor=north] {$\vdots$};
    \draw[-,line width=2pt,gray] (3,1) node[anchor=south,black] {$b_{k+1}$} -- (3,0) -- (4.5,0) node[anchor=west,black] {$\cdots$};
    \draw[-,line width=2pt,gray] (4,1) node[anchor=south,black] {$b_{k+2}$} -- (4,-1.5) node[anchor=north,black] {$\vdots$};
    \draw[-,line width=2pt,red] (2,1) node[anchor=south,color=black] {$b_k$} -- (2,0) -- (3,0) node[anchor=south west,black,scale=.7] {$(\dagger)$} -- (3,-1) -- (4,-1);
    \end{tikzpicture}
    \quad
    \begin{tikzpicture}[xscale=.55,yscale=.4,every node/.style={scale=0.6},baseline=-10]
    \draw[-,line width=2pt] (0.5,0) node[anchor=east] {$\cdots$} -- (2,0) -- (2,-1.5) node[anchor=north] {$\vdots$};
    \draw[-] (0.5,-1) node[anchor=east] {$\cdots$} -- (4.5,-1) node[anchor=west] {$\cdots$};
    \draw[-,line width=2pt] (3,0) -- (3,-1.5) node[anchor=north] {$\vdots$};
    \draw[-,line width=2pt] (0.5,-1) -- (3,-1) node[anchor=south west,scale=.7] {$(\ddag)$};
    \draw[-,line width=2pt] (1,1) node[anchor=south] {$b_{k-1}$} -- (1,-1.5) node[anchor=north] {$\vdots$};
    \draw[-,line width=2pt,gray] (3,1) node[anchor=south,black] {$b_{k+1}$} -- (3,0) -- (4.5,0) node[anchor=west,black] {$\cdots$};
    \draw[-,line width=2pt,gray] (4,1) node[anchor=south,black] {$b_{k+2}$} -- (4,-1.5) node[anchor=north,black] {$\vdots$};
    \draw[-,line width=2pt,blue] (2,1) node[anchor=south,color=black] {$b_k$} -- (2,0) -- (3,0) node[anchor=south west,black,scale=.7] {$(\dagger)$} -- (3,-1) -- (4,-1);
    \end{tikzpicture}
\end{align}
\end{subequations}
is 0, we apply \cref{lem:b'b}(1) to this vertex while if the local energy is positive, we apply \cref{lem:b'b}(2) to the vertex labeled $(\dag)$ then \cref{lem:a'ad'd} to the one labeled $(\ddag)$ in order.
The pictoral descriptions~\eqref{fig:sc0} and~\eqref{fig:sc-1} illustrates the former and latter cases respectively.
Repeating this step, we complete the proof of the formation of the red and blue paths.

The relation \eqref{eq:leading_f0} readily follows from this property. Note that in any red and blue paths,  
\begin{align}\label{eq:num_difference}
\#\left(
\begin{tikzpicture}[scale=.4,baseline=-3]
\draw[-] (-1,0) -- (0,0) -- (0,-1);
\draw[-,color=red,line width=2pt] (0,1) -- (0,0) -- (1,0);
\end{tikzpicture}
\right)
-
\#\left(
\begin{tikzpicture}[scale=.4,baseline=-3]
\draw[-] (0,1) -- (0,0) -- (1,0);
\draw[-,color=red,line width=2pt] (-1,0) -- (0,0) -- (0,-1);
\end{tikzpicture}
\right)
=
\#\left(
\begin{tikzpicture}[scale=.4,baseline=-3]
\draw[-] (-1,0) -- (0,0) -- (0,-1);
\draw[-,color=blue,line width=2pt] (0,1) -- (0,0) -- (1,0);
\end{tikzpicture}
\right)
-
\#\left(
\begin{tikzpicture}[scale=.4,baseline=-3]
\draw[-] (0,1) -- (0,0) -- (1,0);
\draw[-,color=blue,line width=2pt] (-1,0) -- (0,0) -- (0,-1);
\end{tikzpicture}
\right)
= 1
\end{align}
always holds.
Here $\#(\cdot)$ represents the number of the diagram ``$\cdot$'' in the colored paths.
Combining this with the relations about the local energy, {\it C.3} in \cref{lem:a'ad'd} and {\it C.3} and {\it C.6} in \cref{lem:b'b}, we get \eqref{eq:leading_f0}.
\end{proof}

\bibliographystyle{abbrv} 
\bibliography{bib}

\end{document}